\DeclareMathAlphabet{\mathpzc}{OT1}{pzc}{m}{it}
\newtheorem{thm}{Theorem}[subsection]
\newtheorem{cor}[thm]{Corollary}
\newtheorem{lem}[thm]{Lemma}
\newtheorem{prop}[thm]{Proposition}
\newtheorem{conj}[thm]{Conjecture}
\theoremstyle{definition}
\newtheorem{defn}[thm]{Definition}
\theoremstyle{remark}
\newtheorem{rem}[thm]{Remark}
\let\ncsave\newcommand
\let\newcommand\providecommand
\newcommand{\thmref}[1]{Theorem~\ref{#1}}
\newcommand{\secref}[1]{Sect.~\ref{#1}}
\newcommand{\lemref}[1]{Lemma~\ref{#1}}
\newcommand{\propref}[1]{Proposition~\ref{#1}}
\newcommand{\corref}[1]{Corollary~\ref{#1}}
\newcommand{\conjref}[1]{Conjecture~\ref{#1}}
\newcommand{\remref}[1]{Remark~\ref{#1}}
\newcommand{\defref}[1]{Definition~\ref{#1}}
\newcommand{\et}{\text{\'et}}
\newcommand{\nc}{\newcommand}
\nc{\renc}{\renewcommand} \nc{\ssec}{\subsection}
\nc{\sssec}{\subsubsection} \nc{\on}{\operatorname}
\nc\ol{\overline} \nc\ul{\underline} \nc\wt{\widetilde}
\nc\tboxtimes{\wt{\boxtimes}} \nc{\alp}{\alpha}
\nc{\ZZ}{{\mathbb Z}} \nc{\NN}{{\mathbb N}} \nc{\CC}{{\mathbb C}}
\nc{\OO}{{\mathbb O}} \renc{\SS}{{\mathbb S}} \nc{\DD}{{\mathbb
D}}
\nc{\Fq}{{\mathbb F}_q} \nc{\Fqb}{\ol{{\mathbb F}_q}}
\nc{\Ql}{\ol{{\mathbb Q}_\ell}} \nc{\id}{\text{id}} \nc\X{\mathcal
X}
\nc{\Hom}{\on{Hom}} \nc{\Lie}{\on{Lie}} \nc{\Loc}{\on{Loc}}
\nc{\Pic}{\on{Pic}} \nc{\Bun}{\on{Bun}} \nc{\IC}{\on{IC}}
\nc{\Aut}{\on{Aut}} \nc{\rk}{\on{rk}} \nc{\Sh}{\on{Sh}}
\nc{\Perv}{\on{Perv}} \nc{\pos}{{\on{pos}}} \nc{\Conv}{\on{Conv}}
\nc{\Sph}{\on{Sph}} \nc{\Sym}{\on{Sym}}
\nc{\BunBb}{\overline{\Bun}_B} \nc{\Buno}{\overset{o}{\Bun}}
\nc{\BunPb}{{\overline{\Bun}_P}}
\nc{\BunBM}{\overline{\Bun}_{B(M)}}
\nc{\BunPbw}{{\widetilde{\Bun}_P}}
\nc{\BunBP}{\widetilde{\Bun}_{B,P}} \nc{\GUb}{\overline{G/U}}
\nc{\GUPb}{\overline{G/U(P)}}
\nc{\iso}{\stackrel{\sim}{\longrightarrow}}
\nc{\Hhom}{\underline{\on{Hom}}} \nc\syminfty{\on{Sym}^{\infty}}
\nc\lal{\ol{\lambda}} \nc\xl{\ol{x}} \nc\thl{\ol{\theta}}
\nc\nul{\ol{\nu}} \nc\mul{\ol{\mu}} \nc\Sum\Sigma
\nc{\oX}{\overset{o}{X}{}}
\nc{\M}{{\mathcal M}} \nc{\N}{{\mathcal N}} \nc{\F}{{\mathcal F}}
\nc{\D}{{\mathcal D}} \nc{\Q}{{\mathcal Q}} \nc{\Y}{{\mathcal Y}}
\nc{\G}{{\mathcal G}} \nc{\E}{{\mathcal E}} \nc{\CalC}{{\mathcal
C}}
\nc\Dh{\widehat{\D}}
\renewcommand{\O}{{\mathcal O}}
\nc{\C}{{\mathcal C}} \nc{\K}{{\mathcal K}}
\renewcommand{\H}{{\mathcal H}}
\renewcommand{\S}{{\mathcal S}}
\nc{\T}{{\mathcal T}} \nc{\V}{{\mathcal V}} \renc{\P}{{\mathcal
P}} \nc{\A}{{\mathcal A}} \nc{\B}{{\mathcal B}} \nc{\U}{{\mathcal
U}}
\renewcommand{\L}{{\mathcal L}}
\nc{\Gr}{\on{Gr}}
\nc{\frn}{{\check{\mathfrak u}(P)}}
\nc\f{{\mathfrak f}}
\renewcommand\k{{\mathfrak k}}
\nc{\q}{{\mathfrak q}} \nc{\p}{{\mathfrak p}} \nc{\s}{{\mathfrak
s}} \nc\w{\text{w}}
\nc\Spec{\on{Spec}} \nc\Mod{\on{Mod}}
\nc{\tw}{\widetilde{\mathfrak t}} \nc{\pw}{\widetilde{\mathfrak
p}} \nc{\qw}{\widetilde{\mathfrak q}} \nc{\jw}{\widetilde j}
\nc{\grb}{\overline{\Gr}} \nc{\I}{\mathcal I}
\nc{\lambdach}{{\check\lambda}} \nc{\Lambdach}{{\check\Lambda}{}}
\nc{\much}{{\check\mu}} \nc{\omegach}{{\check\omega}}
\nc{\nuch}{{\check\nu}} \nc{\etach}{{\check\eta}}
\nc{\alphach}{{\check\alpha}} \nc{\betach}{{\check\beta}}
\nc{\rhoch}{{\check\rho}} \nc{\ch}{{\check h}}
\nc{\Hb}{\overline{\H}}
\nc{\MLT}{{\mathcal{M}^\theta(v,w)}}
\nc{\ML}{{\mathcal{M}(v,w)}}
\nc{\ra}{\rightarrow}
\newcommand*{\mf}{\mathfrak}
\newcommand*{\mb}[1]{\mathbb{#1}}
\newcommand*{\mr}{\mathrm}
\newcommand*{\mc}{\mathcal}
\let\newcommand\ncsave
\def\k{\Bbbk}
\DMO*{\colim}{colim}
\DMO*{\hocolim}{hocolim}
\ncmd\red[1]{\textcolor{red}{#1}}
\begin{document}

\title[Resolutions with conical slices and descent for the classes $\lbrack\D_{X,\a}\rbrack$]%
    {Resolutions with conical slices and descent for the Brauer group classes of certain central reductions of differential operators in characteristic $p$}

\author{Dmitry Kubrak}
\address[Dmitry Kubrak]{
Massachusetts Institute of Technology\\
Department of Mathematics\\
{77~Massachusetts~Ave}, Cambridge, MA 02139 USA}
\email{dmkubrak@gmail.com}

\author{Roman Travkin}
\address[Roman Travkin]{\\Skoltech Center for Advanced Studies\\
Skolkovo Institute of Science and Technology
Skolkovo Innovation Center, Building 3
Moscow  143026
Russia}
\email{travkin@alum.mit.edu}
\maketitle

\epigraph{{\em ``Even more so is the word ``crystalline", a glacial and impersonal concept of his which disdains viewing existence from a single portion of time and space"} }{Eileen Myles, {\em ``The Importance of Being Iceland"}}

\medskip

\begin{abstract}
For a smooth variety $X$ over an algebraically closed field of characteristic $p$, to a differential 1-form $\a$ on the Frobenius twist $X\fr$ one can associate an Azumaya algebra $\D_{X,\a}$, defined as a certain central reduction of the algebra $\D_X$ of ``crystalline differential operators" on $X$. For a resolution of singularities $\pi:X\to Y$ of an affine variety $Y$, we study for which $\a$ the class $[\D_{X,\a}]$ in the Brauer group $\Br(X\fr)$ descends to $Y\fr$. In the case when $X$ is symplectic, this question is related to Fedosov quantizations in characteristic $p$ and the construction of non-commutative resolutions of $Y$. We prove that the classes $[\D_{X,\a}]$ descend {\'e}tale locally for all $\a$ if $\O_Y\isoto\pi_*\O_X$ and $R^{1}\pi_*\mc O_X = R^2\pi_*\mc O_X =0$. We also define a certain class of resolutions which we call resolutions with conical slices, and prove that for a general reduction of a resolution with conical slices in characteristic $0$ to an algebraically closed field of characteristic $p$ classes $[\D_{X,\a}]$ descend to $Y\fr$ globally for all $\a$. Finally we give some examples, in particular we show that Slodowy slices, Nakajima quiver varieties and hypertoric varieties are resolutions with conical slices. 
\end{abstract}

 \tableofcontents

\subsection{Motivation} Given a singular variety $Y$ over a field of characteristic 0, one wants to study its resolutions of singularities $\pi:X\ra Y$. Usually there are too many of them, but one can impose certain additional conditions on the resolutions. In particular for a normal singular conical Poisson variety $Y$, one can study its symplectic resolutions $\pi:X\ra Y$. One can then try to classify all of them, and in some form this was done recently by Namikawa \cite{Na1}. Namely, provided there exists at least one symplectic resolution $\pi:X\ra Y$, the vector space $V_{\mb R}=\mr{Pic}(X)\otimes_{\mb Z} \mb R$ can be partitioned into a union of rational cones, and there is an action of a finite group $W$ on $V_{\mb R}$ that maps cones to cones. The set of symplectic resolutions $\pi:X\ra Y$ is then identified with the set of cones modulo the action of $W$.

One can ask, given two symplectic resolutions $\pi:X\ra Y$, $\pi':X'\ra Y$, if their derived categories of coherent sheaves are equivalent. This is a particular case of ``$K$-equivalence implies $D$-equivalence" conjecture, due to Kawamata \cite{Kaw} (see also Conjecture 5.1 in the ICM talk by Bondal and Orlov \cite{BO}). \'Etale locally on the base this was proved first by Bezrukavnikov and Kaledin for quotient singularities in \cite{BK2}, and then separately by Kaledin for general symplectic resolution in \cite{Kal3}. Both proofs are based on the notion of a Fedosov quantization in characteristic $p$, and this notion is what connects the algebras $\D_{X,\a}$ and symplectic resolutions. We will first sketch Kaledin's proof, then we will show how classes $[\D_{X,\a}]$ naturally pop up in the context and why it can be important to study their descent.

Results of \cite{BK1} imply the existence of a canonical Frobenius-constant quantization $\mc A_0$ of $X$.  By definition of being Frobenius-constant, $\mc A_0$ possesses an isomorphism $\mc Z(\mc A_0)=\O_X^p$ and ${(\Fr_X)}_*{\mc A_0}$ is an Azumaya algebra on $X\fr$. For a class $\lambda\in \mr{Pic}(X)$, one can define $\mc A_{\lambda}$ as the twist of $\mc A_0$ by the corresponding line bundle $\O(\lambda)$. It is known that the isomorphism class of $\mc A_\lambda$ depends only on $\ol \lambda \in \mr{Pic}(X)/p\cdot \mr{Pic}(X)$ and that all $\mc A_\lambda$ have the same class in the Brauer group. One can prove (see Theorem \ref{split etale}) that $(\mr{Fr}_X)_*\A_\lambda$ splits on some \'etale neighbourhood $j:U\ra Y$ of each point $y\in Y$, namely $j^*((\mr{Fr}_X)_*\A_\lambda)=\ul{\mr{End}}_{\O_{X\fr}}\!(\mc E_U)$ for some vector bundle $\mc E_U$ on $(U\times_Y\! X)\fr$. Using the fact that $\A_\lambda$ was a quantization one can prove that $\mc E_U$ is a tilting object, namely $R^\blt\mr{Hom}(\mc E_U, \mc E_U)=\Gamma(j^*\A_\lambda)$ and the functor $R^\blt\mr{Hom}(\mc E_U,\blt)$ gives an equivalence between the derived category of coherent sheaves on $(U\times_Y\! X)\fr$ and finitely generated $\Gamma(j^*\A_\lambda)$-modules. We have $\mr{Pic}(X)=\mr{Pic}(X')$, and the quantization $\A'_{\lambda}$ of $X'$ is compatible with $\mc A_\lambda$, in the sense that it has a splitting bundle $\mc E'_U$, such that $\mc E_U$ is isomorphic to $\mc E'_U$ on the smooth part of $Y$. Finally, one has $\Gamma(j^*\A_\lambda)=\Gamma(j^*\A'_\lambda)$ and we obtain derived equivalences
 $$
 D^b(\mr{Coh}((X\times_YU)\fr))\cong D^b(\Gamma(j^*\A_\lambda)-\mr{mod}^{\mathsf{fg}})\cong D^b(\mr{Coh}((X'\times_Y U)\fr)).
 $$ 
Bundles $\mc E_U$, $\mc E'_U$ are rigid and can be lifted to tilting objects in characteristic 0, providing a derived equivalence there. However, the resulting equivalence, that we will obtain, will depend on the original choice of $\lambda\in \mr{Pic}(X)$, and one could try to keep track of this. One could also try to extend this sort of equivalence to a global one. In the same paper Kaledin proves that, if $\pi:X\ra Y$ is conical, the corresponding tilting object on \'etale neighbourhood of the central point $y_0\in Y$ comes as a pull-back of a $\Gm$-equivariant sheaf $\mc E_0$ on $X$, which then produces a global equivalence. However the Azumaya algebra $\mc A_\lambda$ is not globally split and it is not clear what is the global relation between $\mc A_\lambda$ and $\mc E_0$, as well as the relation between $\mc E_0$ and splitting bundles for $\mc A_X$ at the other points.

{\bf This is the place} where one could need the Azumaya algebras $\D_{X,\a}$. Namely one can give another description of the class $[\mc A_\lambda]$ in $\Br(X\fr)$. In the case when $X$ admits an open embedding to the cotangent bundle to a stack it is known that $[\mc A_\lambda]=[\D_{X,\a}]$ for a 1-form $\a$. The form $\a$ can be written down, namely it is given by the contraction of the symplectic 2-form $\omega\in H^0(X,\Omega^2_X)$ with the Euler vector field $\xi$ provided by the contracting $\Gm$-action (see e.g.~the proof of Proposition 10.3 in \cite{BL}). In particular, this is true for all Hamiltonian reductions of the cotangent bundle of a vector space, which is the main set of examples we consider. This is also expected to be true for any conical symplectic resolution $\pi:X\ra Y$.


Though the Azumaya algebra $(\mr{Fr}_X)_*\mc A_\lambda$ is usually not split even for the simplest examples, it is reasonable to expect that the class $[\mc A_\lambda]$ in the Brauer group $\mr{Br}(X\fr)$ (which does not depend on the choice of $\lambda$) descends to $Y\fr$: namely that there exists a class $c\in \mr{Br}(Y\fr)$, such that $[\mc A_\lambda]=\pi^*(c)$. Let $\mc C$ be an Azumaya algebra on $Y\fr$, such that $c=[\mc C]$. Then $\mc A_\lambda\otimes_{\O_{X\fr}} \!\pi^*\mc C^{op}$ is split for any $\lambda\in \mr{Pic}(X)$, and so we obtain an equivalence 
$$
\mr{Coh}(X\fr)\cong\mr{Coh}(\mc A_\lambda\otimes \pi^*\mc C^{op}) ,
$$
where $\mr{Coh}(\mc A_\lambda\otimes \pi^*\mc C^{op})$ is the abelian category of $\mc A_\lambda\otimes \pi^*\mc C^{op}$-modules which are $\mc O_{X\fr}$-coherent. We have $\Gamma(\mc A_\lambda\otimes \pi^*\mc C^{op})=\Gamma(\mc A_\lambda) \otimes \mc C^{op}$ and for each $\lambda\in \mr{Pic}(X)$ we obtain the composite functor 
$$
R\Gamma_{\lambda,\mc C}: D^b(\mr{Coh}(X\fr))\cong D^b(\mr{Coh}(\mc A_\lambda\otimes \pi^*\mc C^{op}))\overset{R\Gamma_\lambda}{\longrightarrow} D^b(\Gamma(\mc A_\lambda) \otimes \mc C^{op}-\mr{mod}^{\mathsf{fg}}),
$$
which is a derived equivalence whenever $R\Gamma_\lambda$ is.
{\bf To summarise:} if we want to define some derived equivalences between $D^b(\mr{Coh}(X\fr))$ and $D^b(R_\lambda-\mr{mod}^{\mathsf{fg}})$, for some non-commutative resolutions $R_\lambda$ indexed by $\lambda\in \mr{Pic}(X)$, there is a natural way to do that, if the class $[\A_\lambda]$ descends to $Y\fr$. The case of $[\mc A_\lambda]$ being of the form $[\D_{X\a}]$ returns us to the question formulated in the beginning. 

We prove that all classes $[\D_{X\a}]$ descend {\'etale} locally on the base for all $\a$, if $R^1\pi_*\O_X=R^2\pi_*\O_X=0$, and we define a class of resolutions (which we call resolutions with conical slices) for which the descent is true globally (for a reduction to big enough characteristic $p$). We also prove that there are a lot of examples of such, e.g.~any  resolution of singularities of a Hamiltonian reduction of a vector space by the GIT-quotient with a non-trivial character, is a resolution with conical slices (provided it is a resolution of singularities). In particular, resolutions of singularities of hypertoric and Nakajima quiver varieties are resolutions with conical slices. 

\begin{rem} Following what we have written above, for regular $\lambda$ (ones for which $R\Gamma_\lambda$ is an equivalence) we obtain equivalences 
$$
R\Gamma_{\lambda,\mc C}:D^b(\mr{Coh}(X\fr))\isoto D^b(\Gamma(\mc A_\lambda)\otimes \C^{op}-\mr{mod}^{\mathsf{fg}})
$$
for some Azumaya algebra $\C$. In this case, $\A_\lambda$ also splits on some \'etale neighbourhoods of the fibers and so, for any $y\in Y$, $R\Gamma_{\lambda,\mc C}$ provides an equivalence between the category of sheaves with support at $\pi^{-1}(y)$ and the category of modules with support at $y$:
$$
R\Gamma_{\lambda,\mc C,y}:D^b(\mr{Coh}_{\pi^{-1}(y)}(X\fr))\isoto D^b(\Gamma(\mc A_\lambda)_y-\mr{mod}^{\mathsf{fg}}).
$$ 
It is known that for any symplectic resolution (over $\CC$)  $K^0(\mr{Coh}_{\pi^{-1}(y)}(X))\otimes_{\mb Z}\mb Q \cong H^\blt(X_y,\mb Q)$ (see the sketch in Corollary 1.10 in \cite{Kal3}), and, since $K^0$ remains the same for generic reduction, we get a canonical basis in $H^\blt(X_y,\mb Q)$, corresponding to the basis of irreducible modules in $K^0(\Gamma(\mc A_\lambda)_y-\mr{mod}^{\mathsf{fg}})$ (depending on $\lambda$). This can be considered as a partial generalisation of the results of \cite{BMR}.
\end{rem}

The program, outlined by Bezrukavnikov and Okounkov and partially written down in Conjecture 1 of \cite{ABM}, proposes that there is some interesting structure behind equivalences $R\Gamma_\lambda$. As a part of the conjecture, categories $D^b(\mr{Coh}(\mc A_\lambda\otimes \pi^*\mc C^{op})$ indexed by $\lambda\in \mr{Pic}(X)$, categories $D^b(\mr{Coh}(X_C\fr))$ indexed by Namikawa's cones $C$, and derived equivalences $R\Gamma_{\lambda,\mc C}$ (for various $X_C$), should give rise to a representation of the Poincar\'e groupoid of a certain space $V^\circ_{\CC}$, which is a complement of $V_{\CC}=\mr{Pic}(X)\otimes_{\mb Z} \CC$ to a certain affine hyperplane arrangement $\Sigma\subset V_{\mb C}$. However, the precise statement of this conjecture is about the sheaves with support at the central fiber $\pi^{-1}(y_0)$, and our paper also provides a way to reformulate it for the global situation.

\subsection{Plan of the paper} \label{plan} Logically the paper can be divided in two parts. The first one is short and relatively simple, it studies the local side of the story. The second one takes the remaining space of the paper and is more technical. It gives a positive answer for the global splitting in some cases.  

In \secref{Twist and differential operators} we give a brief tour through the theory of ``crystalline differential operators" in characteristic $p$ and define Azumaya algebras $\D_{X,\a}$ on the Frobenius twist $X\fr$. The content of this section can intersect in a significant way with sections 4.1-4.3 of \cite{OV}. Algebras $\D_{X,\a}$ are the main objects that we study in this paper and in this section we try to put them into a context of some more widely known objects in algebraic and differential geometry in characteristic $p$. In \secref{Frobenius twist} we remind what  the relative Frobenius twist $X^{(S)}$ is. In \secref{Cartier operator} we recall the Cartier isomorphism (\thmref{Cartier iso}) and define the Cartier operator $\sC:{(\mr{Fr}_X)}_*\Omega_{X,cl}^1\ra \Omega^1_{X\fr}$. In \secref{cdo} we give the definition of  ``crystalline differential operators" and discuss its Azumaya property (\propref{Azumaya}): namely the sheaf of differential operators $\D_X$ defines an Azumaya algebra on $T^*X\fr$. In \secref{Azalg}, to each differential form $\a$ on $X\fr$, we associate an Azumaya algebra $\D_{X,\a}$ on $X\fr$ as the restriction of $\D_X$ to the graph $\Gamma_\a\subset T^*X\fr$ (see \defref{DXA}). This way we obtain a map $c_X:H^0(X\fr, \Omega^1_{X\fr})\ra \Br(X\fr)$.  In \secref{p-curvature} we remind the definition of the $p$-curvature of a flat connection and relate it to algebras $\D_{X,\a}$ (see e.g. \propref{curvpa} and \propref{Cartiergen}). In particular we show that the Azumaya property of $\D_{X}$ implies the classical Cartier theorem (see \remref{CartierAzumaya}). For each differential 1-form $\a$ we define two groupoids: the groupoid $\mr{Spl}(X,\a)$ of splittings of $\D_{X,\a}$ and the groupoid $LIC(X,\a)$ of line bundles with $p$-curvature $\a$, and show that they are equivalent (\propref{Cartiergen}). In \secref{gerbes} we provide a brief introduction to the theory of gerbes and show that the groupoids $\mr{Spl}(U,\a)$ and $LIC(U,\a)$, for various opens $U\ra X$, give rise to a pair of isomorphic gerbes $\S(X,\a)$ and $\L(X,\a)$ on $X_\et$. In \secref{additivity:cohomology} we recall an a priori different from $c_X$ map ${\ol c}_X:H^0(X\fr, \Omega^1_{X\fr})\ra \Br(X\fr)$ that comes from a certain 4-term exact sequence (see \propref{exact}). Finally, in \secref{additivity:groupoids} using a certain Picard groupoid we show that $c_X=\ol c_X$ and in particular that $c_X$ is additive (\propref{additive}).  

In \secref{local descent} we study if given a proper morphism $\pi:X\ra Y$ from a smooth variety $X$, the classes $[\D_{X,\a}]\in \Br(X\fr)$ descend to $Y\fr$ \'etale locally on $Y$. We prove the following result (\thmref{split etale}), which answers the question in some generality:
\begin{thm}
Let $\pi:X\ra Y$ be a proper map from a smooth variety $X$, such that $R^1\pi_*\O_X=R^2\pi_*\O_X=0$. Then there exists an \'etale cover $U\ra Y$, such that the pull-back of $\D_{X,\a}$ to $U\times_Y X$ is split.
\end{thm}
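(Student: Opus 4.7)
The plan is to reduce, via the 4-term exact sequence of \propref{exact}, the vanishing of $c_X(\a)\in\Br(X\fr)$ to a cohomological lifting problem across the Cartier operator, then solve the lifting Zariski locally on $Y$ using the hypothesis on the derived pushforwards.

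By \propref{exact} and the identification $c_X=\ol c_X$ from \propref{additive}, the map $c_X$ factors as a composite $\delta_2\circ\delta_1$ of two boundary maps, obtained by splitting the 4-term sequence at its middle image. In particular $c_X(\a)=0$ once $\delta_1(\a)=0$, and the latter holds as soon as $\a$ admits a lift under the Cartier operator
\[
\sC\colon H^0\bigl(X,\Omega^1_{X,cl}\bigr)\to H^0\bigl(X\fr,\Omega^1_{X\fr}\bigr)
\]
to a global closed 1-form on $X$; this is essentially the content of \corref{azumayasplit}. It therefore suffices to exhibit an \'etale cover $U\to Y$ on which such a lift exists on $X_U:=X\times_Y U$. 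We take $U:=\bigsqcup_i U_i$ for a Zariski affine cover $\{U_i\}$ of $Y$ and work one $i$ at a time; fix $V:=X_{U_i}$.

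The Cartier isomorphism (\thmref{Cartier iso}) supplies the short exact sequence
\[
0\to (\mr{Fr}_V)_*(d\O_V)\to (\mr{Fr}_V)_*\Omega^1_{V,cl}\xrightarrow{\sC}\Omega^1_{V\fr}\to 0
\]
on $V\fr$, whose connecting map locates the obstruction to lifting $\a|_{V\fr}$ in $H^1\bigl(V\fr,(\mr{Fr}_V)_*(d\O_V)\bigr)=H^1(V,d\O_V)$ (the equality by affineness of $\mr{Fr}_V$). The further short exact sequence $0\to\O_V^p\to\O_V\xrightarrow{d}d\O_V\to 0$ then sandwiches $H^1(V,d\O_V)$ between $H^1(V,\O_V)$ and $H^2(V,\O_V^p)$. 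Since $U_i$ is affine, the hypothesis gives $H^j(V,\O_V)=H^0\bigl(U_i,R^j\pi_*\O_X\bigr)=0$ for $j=1,2$; and because the absolute Frobenius $f\mapsto f^p$ is additive in characteristic $p$, it identifies $\O_V^p$ with $\O_V$ as sheaves of abelian groups, so that $H^2(V,\O_V^p)\cong H^2(V,\O_V)=0$ as well. Thus $H^1(V,d\O_V)=0$, the sought Cartier lift of $\a|_{V\fr}$ exists on $V$, and $\D_{X,\a}$ is split on $V$.

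The main conceptual input lies not in this theorem but in Section 2, where $c_X$ is identified with the boundary map $\ol c_X$; once that identification is available, the descent argument above is a straightforward diagram chase through the Cartier-type short exact sequences, each controlled by $R^j\pi_*\O_X$ for $j=1,2$.
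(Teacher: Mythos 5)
There is a genuine gap, and it sits at the exact point where the paper has to work hardest. You correctly reduce splitting of $\D_{X,\a}$ to exactness of the 4-term sequence of \propref{exact} on global sections, but you then lift $\a$ across the wrong map. The surjection in that sequence is $\blt\fr-\sC$, not $\sC$: by \corref{azumayasplit} the algebra $\D_{X,\a}$ splits when $\a=\omega\fr-\sC(\omega)$ for a global closed form $\omega$ (this is the $p$-curvature of $(\O_X,d+\omega)$), whereas your cohomological computation with the sequence $0\to d\O_V\to\Omega^1_{V,cl}\xra{\sC}\Omega^1_{V\fr}\to 0$ only produces $\omega$ with $\sC(\omega)=\a|_{V\fr}$. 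That computation is correct as far as it goes --- it is essentially \lemref{surjective}, and the vanishing of $H^1(V,d\O_V)$ via $R^{1}\pi_*\O_X=R^2\pi_*\O_X=0$ is exactly how the paper proves surjectivity of the Cartier operator --- but a Cartier lift of $\a$ says nothing about the class $c_X(\a)$; the kernel of $\sC$ is $d\O_X$, while the kernel of $\blt\fr-\sC$ is $d\log\O_X^\times$, and these give different connecting maps and different obstruction groups.

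The extra Frobenius-linear term $\omega\fr$ is the whole difficulty. The map $\blt\fr-\sC$ is only additive, not $\O$-linear, so surjectivity of $\sC$ on the global sections over an affine open of $Y$ does not transfer to $\blt\fr-\sC$; those spaces are infinite-dimensional over $\k$, and the paper explicitly flags this as the obstacle. Its solution is \lemref{surj} (a Frobenius-linear perturbation of a surjective linear map between \emph{finite-dimensional} spaces is still surjective), applied on the formal neighbourhood of each fiber after filtering by powers of the maximal ideal so that each graded piece is finite-dimensional and, in positive degrees, the term $\blt\fr$ dies; Popescu's theorem then converts the formal splitting into a splitting on an \'etale neighbourhood. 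Note also that your argument, if it worked, would split $\D_{X,\a}$ Zariski-locally on $Y$, which is strictly stronger than the theorem and stronger than what the subsequent sections of the paper (the sheaves $\mc Q_{\pi,N}$, which measure precisely the failure of $\a$ to be reached by iterating Cartier lifts against the twist $\blt\fr$) suggest is true. To repair the proof you would need to solve $\omega\fr-\sC(\omega)=\a$, not $\sC(\omega)=\a$, and that is where the formal-completion and approximation arguments become unavoidable.
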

The core of our proof is a well-known lemma (see \lemref{surj}), which says that a sum of a Frobenius-linear operator $A$ and a surjective linear operator $B$ between finite-dimensional vector spaces, is a surjective map of abelian groups. Another important input is \lemref{surjective}, where we prove that under the assumption $R^1\pi_*\O_X=R^2\pi_*\O_X=0$ the Cartier operator $\pi_*\sC:\pi_*\Omega^1_{X,cl}\ra \pi_*\Omega^1_X$ is a surjection. We then use \lemref{surj} for $A=\blt\fr$ and $B=\sC$ to prove that the restriction of any differential 1-form $\a$ on $X\fr$ on the formal neighbourhood of any fiber $\pi^{-1}(y)$ is of the form $\omega\fr-\sC(\omega)$ for some $\omega$. From this we deduce that $\D_{X,\a}$ is split on the formal neighbourhood of any fiber of $\pi$. Finally, Popescu's theorem allows us to extend the splitting to some \'etale neighbourhood of the fiber.

\secref{global descent} presents two approaches for studying the global descent. The first is the \textit{Picard obstruction}, it is defined in \secref{picard} and is a class in $H^1_{\et}(Y,\mr{Pic}_{X/Y})[p]$. This obstruction is very natural, but seems somewhat useless for us, or at least we were not able to prove anything interesting about it. Instead, in \secref{sheaves Q}, for the case of an affine $Y=\Spec A$, we define another type of obstructions, which we call $Q_{\pi,N}$ (see \defref{Q-n}). By definition for each $N$ there is a natural projection $H^0(X,\Omega^1_X)\twoheadrightarrow Q_{\pi, N}$ and spaces $Q_{\pi,N}$ for different $N$ are organised into a chain of surjections:
$$
H^0(X,\Omega^1_X)\twoheadrightarrow Q_{\pi, 0}\twoheadrightarrow Q_{\pi,1}\twoheadrightarrow\cdots \twoheadrightarrow Q_{\pi, N} \twoheadrightarrow\cdots 
$$
Moreover, if a differential 1-form $\a$ maps to 0 in some $Q_{\pi, N}$, the corresponding class $[\D_{X,\a}]\in \Br(X\fr)$ descends to $Y\fr$ (see remark in the \defref{Q-n}). The idea of the definition is based on the following observation: the Cartier operator $\sC:H^0(X,\Omega^1_{X,cl})\ra H^0(X\fr,\Omega^1_{X\fr})$ does not necessarily map closed K\"ahler differentials $H^0(Y,\Omega^1_{Y,cl})$ to K\"ahler differentials $H^0(Y\fr,\Omega^1_{Y\fr})$. Moreover, sometimes it can even happen that any globally defined 1-form on $X^{(N)}$ is obtained from a pull-back of a K\"ahler differential on $Y$ by applying Cartier operator big enough number of times. Spaces $Q_{\pi,N}$ are defined as the obstruction for this to be true. The nice thing about spaces $Q_{\pi,N}$ is that they have a natural structure of $(A)^{p^N}$-module, such that the projection $H^0(X,\Omega^1_X)\twoheadrightarrow Q_{\pi, N}$ is $(A)^{p^N}$-linear. This allows to study the corresponding coherent sheaves $\mc Q_{\pi,N}$ on $Y^{(N)}$ using the means of algebraic geometry. The relation between the descent of $[\D_{X,\a}]$ and K\"ahler differentials is given by \lemref{pullback}: namely, if $\a=\pi^*\theta$ for some K\"ahler differential $\theta$, then $[\D_{X,\a}]$ descends to $Y$. 

In \secref{resolutions w conic} we develop some tools, namely we define {\em the category of \'etale germs of resolutions} (\secref{category etale}), {\em \'etale equivalences} between {\em pointed resolutions} (\defref{pointed resolution} and \defref{etale equivalence}) and {\em resolutions with conical slices} (\defref{withconicslice}). The rough idea is to define a class of resolutions for which we could apply \lemref{smprod} and proceed by induction: we want any (or almost any) point of the base of the resolution to have a neighbourhood in a suitable topology, which decomposes as a product of some smooth variety and a resolution of the same sort. The starting point is to take conical resolutions (\defref{conic resolution}), then, by Luna's \'etale slice theorem, every non-central point has an \'etale decomposition into a product of its orbit of $\Gm$-action and a slice to the orbit. But for our purposes this is not enough, as we will not have any information about the slice. However our definition is not very far from this one, in addition we just ask for a contracting $\Gm$-action in an {\'etale} neighbourhood of each non-central point (\defref{withconicslice}). Some technical work is needed to make this definition work. We define the notion of \textit{\'etale equivalence} (\defref{etale equivalence}) between two pointed resolutions as an invertible morphism in a certain category $\mathsf{Res}_*^{et}$, which we call the category of {\em \'etale germs of resolutions}. We define a \textit{resolution with conical slices} as a conical resolution $\pi:X\ra Y$, such that $\O_Y\isoto\pi_*\O_X$, $R^1\pi_*\O_X=R^2\pi_*\O_X=0$ and at every non-central point $y$ it is \'etale equivalent to a conical resolution $\pi':X'\ra Y'$. \textbf{Warning:} our definition of conical resolution by default includes a separability assumption on the action of $\Gm$! In \secref{sliceconstr} we prove that at each non-central point a resolution with conical slices has a slice which is again a resolution with conical slices (\propref{conic slice}). This then will allow us to make the induction step in \secref{desc}, while studying $Q_{\pi,N}$ for resolutions with conical slices. Finally, in \secref{reductions} we prove that given a resolution with conical slices in characteristic 0, a general reduction to characteristic $p$ is again a resolution with conical slices (\propref{reduction}).

The notion of a resolution with conical slices seems to be interesting even outside of the context of this paper. In a certain sense, for this class of resolutions it is enough to prove any statement, that is local in {\'etale} topology and $\mb A^1$-homotopy invariant, just for the neighbourhoods of the central points. As we will see, quite a few well-known resolutions are resolutions with conical slices (\secref{examples}). The application of this structure is demonstrated on the example of spaces $Q_{\pi,N}$ (and sheaves $\Q_{\pi,N}$) and it looks plausible that it can be useful in some other situations too. 

The reason why we need {\'etale} slices and say not formal ones is that we need a statement like \remref{slice-cover}, which says that there is an \'etale correspondence between the complement to a central point in $\pi:X\ra Y$ and a finite union of products of resolutions with conical slices with $\mb A^1$. Finiteness is very important here, if we want to prove something like \propref{reduction} or have a universal bound for $N$, such that $Q_{\pi,N}=0$ on each slice. 

In \secref{weights of 1-forms} we study weights of differential 1-forms on a conical resolution. The $\Gm$-action on $\pi:X\ra Y$ produces a positive grading on the ring of global functions on $X$ and $Y$. A sheaf of differential 1-forms $\Omega^1_X$ is $\Gm$-equivariant giving a grading on $H^0(X,\Omega^1_X)$, which turns it into a graded module over the ring of functions $Y$. Interesting question is: what can we say about this grading? For example: is it true that the grading is positive as well? Assuming that $R^i\pi_*\O_X=0$ for $i=1,2$ this is almost true. Here is one part of the result (\corref{weights in char 0}):
\begin{thm}
Let $\pi:X\ra Y$ be a conical resolution of singularities over an algebraically closed field of characteristic 0 with $\O_Y\isoto\pi_*\O_X$ and $R^1\pi_*\O_X=R^2\pi_*\O_X=0$. Then all $\Gm$-weights of $H^0(X,\Omega^1_X)$ are strictly positive.
\end{thm}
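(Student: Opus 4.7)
The plan is to reduce to characteristic $p$ by spreading out and invoke \thmref{weights}, whose positive-characteristic content comes from the Cartier-operator surjectivity of \lemref{surjective}; the strict positivity of weights (as opposed to mere non-negativity) will need an additional argument specific to characteristic $0$. First I spread $\pi:X\to Y$ out to a family $\pi_R:X_R\to Y_R$ over a finitely generated $\mb Z$-subalgebra $R\subset\k$, preserving smoothness of $X_R$, properness of $\pi_R$, the conical $\Gm$-action, $\O_{Y_R}\iso(\pi_R)_*\O_{X_R}$, and $R^i(\pi_R)_*\O_{X_R}=0$ for $i=1,2$. For a dense set of closed points $s\in\Spec R$ of residue characteristic $p$, the reduction $\pi_s:X_s\to Y_s$ is a conical resolution over $\ol{\mb F_p}$ satisfying the same hypotheses, and the $\Gm$-weights of $H^0(X,\Omega^1_X)$ agree with those of $H^0(X_s,\Omega^1_{X_s})$, so it is enough to bound the weights in characteristic $p$.

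In characteristic $p$, \lemref{surjective} makes the pushforward $\pi_*\sC:\pi_*\Omega^1_{X_s,cl}\twoheadrightarrow\pi_*\Omega^1_{X_s\fr}$ surjective. Because $\sC$ is Frobenius-semilinear with respect to the $\Gm$-action, a weight-$m$ form on $X_s\fr$ lifts through $\sC$ to a weight-$pm$ closed form on $X_s$, so the set of $\Gm$-weights of $H^0(X_s,\Omega^1_{X_s})$ is closed under $m\mapsto pm$. But $H^0(X_s,\Omega^1_{X_s})$ is a finitely generated graded module over the positively graded ring $H^0(Y_s,\O_{Y_s})$ (whose degree-$0$ part is just $\k_s$), so its weights are bounded below; iterating $m\mapsto pm$ then forces all weights to be non-negative, and spreading back to characteristic $0$ transfers the conclusion.

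To exclude weight $0$ in characteristic $0$, let $\omega\in H^0(X,\Omega^1_X)^{\Gm}$. Then $i_\xi\omega$ (with $\xi$ the Euler vector field generating the $\Gm$-action) is a weight-$0$ global function on $X$, hence a constant by $\pi_*\O_X=\O_Y$ together with conicality of $Y$. Borel's fixed-point theorem applied to the proper fibre $\pi^{-1}(y_0)$ produces a $\Gm$-fixed point $x_0$, at which $\xi(x_0)=0$; so the constant vanishes and $i_\xi\omega\equiv 0$, and Cartan's formula then yields $i_\xi d\omega=0$ as well. Thus $\omega$ is simultaneously $\Gm$-invariant and $\xi$-horizontal, descending to a $1$-form $\bar\omega$ on the proper quotient $(X\setminus X^{\Gm})/\Gm$. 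Combining this with the Hodge-to-de Rham spectral sequence---whose relevant piece degenerates because $H^1(X,\O_X)=H^0(Y,R^1\pi_*\O_X)=0$, identifying closed weight-$0$ forms modulo exact ones with $H^1$ of the topological retract $\pi^{-1}(y_0)$---and with the triviality of the $\Gm$-action on de Rham cohomology (coming from the flow retracting $X$ onto the central fibre), pins down $\bar\omega$ and forces $\omega=0$.

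The main obstacle is precisely this final step. Non-negativity follows cleanly from iterating $m\mapsto pm$ via Cartier, but weight $0$ is a fixed point of that map and cannot be eliminated in characteristic $p$ alone. Upgrading to strict positivity genuinely uses characteristic-$0$ input---the Hodge-theoretic consequences of $R^1\pi_*\O_X=0$ together with the Euler-field/descent argument---and the careful bookkeeping of how a $\Gm$-basic horizontal form descends to, and is forced to vanish on, the potentially singular quotient $(X\setminus X^{\Gm})/\Gm$ is the technically delicate part.
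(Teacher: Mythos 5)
Your reduction to characteristic $p$ and the Cartier-operator argument for non-negativity of the weights is exactly the paper's route (\lemref{surjective} together with the iteration $m\mapsto pm$, as in \lemref{weights in char p}), and that part is fine modulo standard spreading-out. The proof breaks down at the weight-zero step, and the breakdown is not just ``delicate bookkeeping'': two essential inputs are missing. First, your Hodge-to-de Rham discussion only ever applies to \emph{closed} forms, and you never show that a $\Gm$-invariant $1$-form is closed. Cartan's formula gives you $i_\xi d\omega=0$, i.e.\ that $d\omega$ is horizontal, not that it is zero. The paper obtains closedness of weight-zero forms from characteristic $p$ as well: since $\sC$ divides weights by $p$, the surjection $\pi_*\sC$ restricted to weight zero gives $(V^0)_{cl}\twoheadrightarrow V^0$, forcing $(V^0)_{cl}=V^0$ by a dimension count, and this transfers back to characteristic $0$ by semicontinuity. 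Second, even for a closed invariant form you need an actual vanishing statement: ``identifying closed weight-$0$ forms modulo exact ones with $H^1$ of the retract $\pi^{-1}(y_0)$'' proves nothing unless that $H^1$ vanishes, and the central fibre is a projective variety which could a priori carry nontrivial $H^1$. The paper derives $H^1(X^{\mr{an}},\CC)=0$ from $R^1\pi_*\O_X=0$ via the exponential sequence and the contractibility of $Y^{\mr{an}}$, and $H^1(X^{\mr{an}},\O_{X^{\mr{an}}})=0$ from the Leray spectral sequence and Stein-ness of $Y^{\mr{an}}$; neither step appears in your proposal, and the ``triviality of the $\Gm$-action on de Rham cohomology'' is no substitute (a connected group always acts trivially on cohomology, which says nothing about vanishing).

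The descent to $(X\setminus X^{\Gm})/\Gm$ is a dead end: that quotient need not be proper, separated, or even a scheme, and there is no vanishing theorem for $1$-forms on it that you could invoke, so nothing in your last paragraph actually produces $\omega=0$ --- as you concede. For comparison, the paper's endgame is analytic: for a closed invariant $\a$, the Dolbeault class of $\ol\a$ dies in $H^1(X^{\mr{an}},\O_{X^{\mr{an}}})=0$, so $\ol\a=\ol\partial f$ with $f$ invariant; then $\partial\ol\partial f=0$ and $H^1(X^{\mr{an}},\CC)=0$ show $f$ is holomorphic plus antiholomorphic, whence $\ol\a$ is $\ol\partial$ of an invariant antiholomorphic function, which is constant because it descends to $Y$ and $Y$ is contracted to a point. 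If you want to avoid the Dolbeault step, the ingredients you gesture at can in fact be assembled into a correct argument: import closedness of invariant forms from characteristic $p$, prove $H^1_{\mr{dR}}(X)\cong H^1(X^{\mr{an}},\CC)=0$, write $\omega=df$, and take the weight-zero component $f_0$ of $f$, so that $\omega=df_0=0$ since $f_0$ is a constant. But those two inputs are precisely what your write-up omits.
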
 
The proof of \thmref{weights} is quite interesting and uses both reductions to characteristic $p$ and analytification of $X$ over $\mb C$. We first prove, using the surjectivity of Cartier operator (\lemref{surjective}), that for big enough $p$ all weights of differential 1-forms on the reduction are non-negative, and that all $\Gm$-invariant 1-forms are closed (\lemref{weights in char p}). Then we use the comparison of de Rham and singular cohomology to show that there are no non-zero closed $\Gm$-invariant 1-forms over $\CC$. In \secref{char 2} we give an example of a conical resolution in characteristic 2 which has a non-zero $\Gm$-invariant 1-form. We also state a conjecture, which (if true) gives a bound from below on the dimension of the space of invariant forms similar to the specialization result of \cite{Bh}. In \secref{totally positive forms} we play around a little bit and try to cope with the existence of $\Gm$-invariant forms in characteristic $p$. Namely we give a somewhat complicated definition of totally positive forms (\defref{totally positive}), which morally are differential 1-forms on $X$ that have positive weight at a conical neighbourhood of each point. However, there is a choice of the conical neighbourhood involved, and this makes the actual definition more complicated. The space $H^0(X,\Omega^1_X)^{\gg 0}$ of totally positive forms is invariant under multiplication by functions and we define the sheaf $\mc M_{\pi,\ul{\text{inv}}}$ as the coherent sheaf on $Y$, corresponding to the quotient $H^0(X,\Omega^1_X)/H^0(X,\Omega^1_X)^{\gg 0}$. In \defref{stricty positive resolution} we define a strictly positive resolution with conical slices:  namely, it is a resolution with conical slices for which $\mc M_{\pi,\ul{\text{inv}}}=0$. Finally, we end the section with \propref{strictlypositivereduction}, which  states that a general reduction of a resolution with conical slices is strictly positive.

 In \secref{desc} we prove our main theorem (\thmref{maintheorem}):

\begin{thm}

Let $\pi:\mc X\ra \mc Y$ be a map of schemes over $S=\Spec R$ finite type and flat over $\mb Z$, such that the generic fiber $\pi_\eta:X_\eta\ra Y_\eta$ is a resolution with conical slices. Then there exists an \'etale open $S'\ra S$, such that for any geometric point  
$s:\Spec \k_s\ra S'$
\begin{itemize}
\item the corresponding fiber $\pi_s:X_s\ra Y_s$ is a strictly positive resolution with conical slices,
\item the classes~$[\D_{X_s,\a}]$ descend to $Y\fr_s$ for all $\a\in H^0(X_s\fr,\Omega^1_{X_s\fr})$.
\end{itemize}

\end{thm}

The theorem follows directly from the \propref{strictlypositivereduction} and \propref{descent}, which states that for a strictly positive resolution with conical slices, the sheaf $Q_{\pi,N}$ is 0. The proof of \propref{descent} proceeds by induction on $\dim X$, using \remref{slice-cover}, while for the central point it uses the same trick with surjectivity of Cartier operator as in \thmref{weights}. 

In \secref{examples} we prove that some symplectic resolutions are resolutions with conical slices. The existence of formal slices for symplectic resolutions was proved by Kaledin in \cite{Kal2}. However the existence of a contracting $\Gm$-action on the slice is still a conjecture (Conjecture 1.8 in \cite{Kal1}) and here we would need \'etale slices anyways. For quiver varieties $N_Q(\lambda,\a)$ in \cite{CrB1} Crawley-Boevey showed that at any point the base $N_Q(\lambda,\a)$ looks \'etale locally like the central point of $N_{Q'}(0,\a')$ which then has a natural  contracting $\Gm$-action. It is remarkable that Crawley-Boevey's result fits perfectly into the framework of resolutions with conical slices: he does not prove the existence of a slice, but provides an \'etale equivalence with some other variety which has a contracting $\Gm$-action. Given a quiver $Q$ with the set of vertices $I$, the dimension vector $\a\in \mb N^I$ and a vector $\lambda\in \mb K^I$,  affine variety $N_Q(\lambda,\a)$ is defined as the Hamiltonian reduction of the space of representations of the double quiver $\ol Q$ with dimension vector $\a$ and with the level of the moment map equal to $\lambda$. Geometric invariant theory provides a projective map $\pi_\lambda^\theta:N_Q(\lambda,\a)^\theta\ra N_Q(\lambda,\a)$ from the GIT quotient $N_Q(\lambda,\a)^\theta$. The sufficient conditions for the map $\pi_\lambda^\theta$ to be a resolution of singularities were given by Crawley-Boevey as well and we summarise them in \propref{Marsden-Weinstein is a resolution}. Unfortunately, \'etale equivalence in \cite{CrB1} is proved only for the bases of resolutions and not for the resolutions themselves. We do some work to extend the equivalence to this level (see \propref{localcentral}). Since for $\lambda=0$ the  resolution $\pi_0^\theta:N_Q(0,\a)^\theta\ra N_Q(0,\a)$ is conical, we get that the resolution $\pi_0^\theta$ is a resolution with conical slices (\thmref{quiver rep conic slices}). From \thmref{maintheorem} we then get that for a general reduction to characteristic $p$ all classes $[\D_{N_Q(0,\a)^\theta,\a}]$ descend to $N_Q(0,\a)$. In particular, following \secref{qv} this is true for Nakajima quiver varieties, so this answers the question
raised in \cite{BL}, which originally motivated this paper. 

In \secref{Hamiltonian reduction} we generalise the result of Crawley-Boevey to an arbitrary Hamiltonian reduction of a cotangent bundle to vector space. Namely given a symplectic vector representation of a reductive group $G$ in a vector space $(V,\omega)$ we can take its GIT Hamiltonian reduction $\mf M(G,V)^\theta_\lambda=\mu^{-1}(\lambda)/\!\!/_\theta G$ and consider a natural map $\pi_\lambda^\theta:\mf M(G,V)^\theta_\lambda\ra \mf M(G,V)^0_\lambda=\mu^{-1}(\lambda)/\!\!/G$. We restrict to the case of $V\oplus V^*$ and a symplectic representation of $G$ induced by some representation $G\ra \mr{GL}(V)$. Closely following the argument of Crawley-Boevey, we prove that if $\pi^\theta_0$ is a resolution of singularities and the base is normal, it is a resolution with conical slices (see \thmref{generalexample}).  
We then apply this to obtain the result for hypertoric vrieties. A hypertoric variety is a Hamiltonian reduction of the cotangent bundle of a representation of an algebraic torus $\mb G_m^k$. Hypertoric varieties are encoded by the combinatorial data of a weighted, cooriented, affine hyperplane arrangement $\mc A$ in some lattice that depends on the representation. The case $\theta=0$ corresponds to arrangements where all hyperplanes pass through 0. If such an arrangement is unimodular, it possesses a simplification $\widetilde\A$, which differs from $\mc A$ only by the choice of a non-trivial character $\theta$. Hypertoric variety $\mf M(\widetilde\A)$ is smooth, and the natural map $\pi_{\A}:\mf M(\widetilde\A)\ra \mf M(\A)$ is known to be a resolution of singularities. Applying \thmref{generalexample}, we get that resolutions of singularities of hypertoric varieties are resolutions with conical slices. 

Finally, in \secref{slodowy} we prove that the Slodowy slices (or rather their resolutions of singularities) are resolutions with conical slices. Here the proof is different and in a way simplier, it relies mostly on \lemref{slice-slice}. In all these cases we obtain that for general reduction to characteristic $p$ the classes $[\D_{X,\a}]$ descend to the base of the resolution. 

\subsection{Remaining questions} There are several natural questions which one can ask and which we did not cover in this paper. They remain to be the subject of a future research.

\medskip
\noindent {\bf Question 1.} Is it true that any conical symplectic resolution is a resolution with conical slices?

This question is similar to the conjecture due to Kaledin (Conjecture 1.8 in \cite{Kal1}) which claims that the transversal slice to a Poisson leaf has a contracting $\Gm$-action. Though Kaledin's conjecture is about the formal slice, may be using some finiteness results recently proved by Namikawa (see \cite{Na2}) one can obtain {\'etale} slices as well. Anyways, it is not yet clear at all how to attack any of these two statements.

\medskip
\noindent {\bf Question 2} For a conical symplectic resolution $\pi:X\ra Y$ is it true that the class $[\mc A_\lambda]$ in $\Br(X\fr)$ comes from a differential 1-form (meaning $[\mc A_\lambda]=[\D_{X,\a}]$ for some $\a$)?

One even has a candidate for $\a$ --- it should be the contraction of the symplectic form $\omega$ with the Euler vector field corresponding to the contracting $\Gm$-action. This is a computation, which still needs to be done in the case of general conical symplectic resolution $\pi:X\ra Y$. 

\medskip
\noindent {\bf Question 3} Is \conjref{conj} true?

This could be a nice generalization of the recent specialization result presented in \cite{Bh}, and in more general form in \cite{BMS}. For the conjecture and its relation to mentioned works see \remref{bhatt}.

 \medskip

\subsection{Acknowledgements}

 We are grateful to Roman Bezrukavnikov for introducing this problem to us, as well as to Michael Finkelberg for his incessant interest in our work. We also would like to thank Bhargav Bhatt, Brian Conrad,  Ofer Gabber, Dennis Gaitsgory, Victor Ginzburg, Borys Kadets, Dmitry Kaledin, Ivan Losev, Michael McBreen, Alexey Pakharev, Grigory Papayanov, Nicholas Proudfoot, Sam Raskin, Nick Rosenblum, Konstantin Tolmachov, Misha Verbitsky, Jacqueline Wang and Ben Webster, for all discussions and e-mails which in different ways helped us to finish this paper.

\newpage

\section{Frobenius twist and differential operators}\label{Twist and differential operators}
\subsection{Frobenius twist} \label{Frobenius twist} Let $S$ be a scheme over $\mb F_p$. The \textit{absolute Frobenius} ${F}_S:S\ra S$ is given by
\begin{itemize}
\item $\id_S$ on underlying topological space,
\item $f\mapsto f^p$ for $f\in \mc \O_S$ on the level of structure sheaves. 
\end{itemize}
Let $\k$ be an algebraically closed field of characteristic $p$. For any $\k$-scheme $X\xra{\xi} \Spec \k$ the \textit{relative Frobenius twist} $X^{(1)}\xra{\xi\fr} \Spec \k$ is defined as the pull-back of $X$ with respect to ${F}_\k$. :
$$
\xymatrix{
X \ar[r]_{\mr{Fr_{X}}} \ar@/^1.2pc/[rr]^{{F}_X} \ar[rd]_{\xi} & X^{(1)} \ar[r]_{W_{X}} \ar[d]^{\xi\fr} & X \ar[d] ^{\xi} \\
& \Spec k \ar[r]^{{F}_\k} & \Spec k
}
$$

Let $W_{X}:X^{(1)}\ra X$ be the natural morphism in the corresponding Cartesian square. Absolute Frobenius is functorial on $X$ and from the universal property of the pull-back we obtain a unique decomposition ${F}_X=\mr{Fr}_{X}\circ W_{X}$.

Morphism $\mr{Fr}_{X}: X\ra X^{(1)}$ is called the \textit{relative Frobenius morphism}. We denote by $\bullet^{(1)}$ the pull-back of any object $\bullet$ on $X$ to $X^{(1)}$ under $W_{X}$. One can inductively define $\bullet^{(k)}$ by $(\bullet^{(k-1)})\fr$. Also note that $W_{X}$ is an isomorphism of abstract schemes (since $F_\k$ is). 
 
 For a smooth $\k$-scheme $X$ we denote $\Omega^i_{X/\Spec \k}$ simply by $\Omega_X^i$. By the flat base change, the natural maps 
 $$
 \T_{X\fr}\isoto W_{X}^*\T_X=\T_X\fr \text{ and }(\Omega^i_{X})\fr=W_{X}^* \Omega^i_{X}\isoto \Omega^i_{X\fr}
 $$ are isomorphisms. For a vector space $V$ over $\k$ we denote by $V\fr$ the vector space $V$ with the twisted $\k$-structure: 
 $$
 V\fr:= V\underset{\k,F_\k}{\otimes} \k \text{ or equivalently } \lambda\overset{(1)}{\cdot} v= F_\k^{-1}(\lambda)\cdot v\text{ for } a\in \k\text{ and }v\in V.
 $$
where $\overset{(1)}{\cdot}$ denotes the new linear structure. Note that $V\fr$ is exactly ${F}_{\k}^*V$, if we consider $V$ as a quasi-coherent sheaf on $\Spec\k$. 
For every vector space $V$ there is the corresponding affine space $\mb A(V)$ considered as a scheme that represents a functor with values in $\k$-vector spaces ($\k$-vector space scheme). Then on the level of $\k$-points the two twists coinside:
$$
(\mb A(V))\fr(\k)=V\fr.
$$

 Returning to the commutative diagram above, from the base change and the isomorphism above we obtain natural isomorphisms
 $$
 H^0(X,\Omega_X^i)\fr\simeq H^0\!\left(X\fr,(\Omega_X^i)\fr\right)\simeq H^0(X\fr,\Omega_{X\fr}^i).
 $$ This way to any differential $i$-form $\omega \in H^0(X,\Omega^i_X)$ on $X$ one can associate a differential form $\omega\fr\in H^0(X\fr, \Omega^i_{X\fr})$ on $X\fr$. The map $\omega\mapsto \omega\fr$ is by definition $F_{\k}$-linear.

Note that, following the definitions, the subsheaf $\mc O_{X\fr} \subset (\mr{Fr}_{X})_*\mc O_{X}$ can be identified with the subsheaf $(\mr{Fr}_{X})_*(\mc O_{X}^p)\subset (\mr{Fr}_{X})_*\mc O_{X}$ of $p$-th powers of functions on $X$.

\subsection{Cartier operator} \label{Cartier operator}
Let $\Omega_{X}^\bullet$ be the algebraic de Rham complex of $X$. The differential of  ${({\mr{Fr}_X})}_*\Omega_{X}^\bullet$ is $\O_{X^{(1)}}$-linear: $d(f^p\omega)=f^pd\omega-pf^{p-1}\cdot df\wedge\omega=f^pd\omega$, so ${({\mr{Fr}_X})}_*\Omega_{X}^\bullet$ defines a coherent sheaf of DG-algebras on $X\fr$. In particular, cocycle, coboundary and cohomology sheaves of ${({\mr{Fr}_X})}_*\Omega_{X}^\bullet$ have natural structures of coherent sheaves on $X\fr$. By definition the $i$-th cocycle sheaf $Z^i({({\mr{Fr}_X})}_*\Omega_{X}^\bullet)$ is identified with ${({\mr{Fr}_X})}_*\Omega_{X,cl}^i$, where $\Omega_{X,cl}^i$ is the sheaf of closed differential forms of degree $i$ on $X$. The direct sum of the cohomology sheaves $\bigoplus_i\mc H^i({({\mr{Fr}_X})}_*\Omega_{X}^\bullet)$ then forms a coherent sheaf of graded algebras on $X\fr$.

\begin{thm}[Cartier isomorphism, \cite{Katz}, Theorem 7.2]\label{Cartier iso}
Let $X$ be a smooth scheme over an algebraically closed field $\k$ of characteristic $p$. Then there is a unique isomorphism of coherent sheaves of graded algebras on $X\fr$:
$$
\bigoplus_i C_i^{-1}:\bigoplus_i \Omega^i_{X\fr}[-i] \xrightarrow{\sim} \bigoplus_i \mc H^i\left({({\mr{Fr}_X})}_*\Omega_{X}^\bullet\right)[-i],
$$
such that $C_0^{-1}(1)=1$ and $C_1^{-1}(d{g\fr})=[g^{p-1}dg]$ for $g\in\O_{X}$. 
\end{thm}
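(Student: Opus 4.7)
I will establish uniqueness, construct $C_1^{-1}$ via a derivation argument, extend multiplicatively, and verify the isomorphism property in \'etale coordinates. For \emph{uniqueness}, note that since the absolute Frobenius of $\Spec\k$ is an isomorphism, so is $W_X\colon X\fr\to X$ on abstract schemes; every local section of $\O_{X\fr}$ therefore has the form $g\fr$ for some $g\in\O_X$, and the $d(g\fr)$ generate $\Omega^1_{X\fr}$ over $\O_{X\fr}$. Since $\Omega^\bullet_{X\fr}=\bigwedge^\bullet_{\O_{X\fr}}\Omega^1_{X\fr}$, any graded $\O_{X\fr}$-algebra map out of it is pinned down by the specified normalization.

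For \emph{construction of $C_1^{-1}$}, I will define $\phi\colon\O_{X\fr}\to\mc H^1((\mr{Fr}_X)_*\Omega^\bullet_X)$ by $\phi(g\fr):=[g^{p-1}\,dg]$, with the target regarded as an $\O_{X\fr}$-module via $\mr{Fr}_X^\sharp(g\fr)=g^p$, and show $\phi$ is a $\k$-linear derivation. Closedness of $g^{p-1}\,dg$ is automatic from $dg\wedge dg=0$. The main input is additivity, established by a Witt-polynomial lift: over any torsion-free $\mb Z$-algebra,
$$
(f+g)^p=f^p+g^p+p\cdot W(f,g),\qquad W(f,g):=\sum_{i=1}^{p-1}\tfrac{1}{p}\tbinom{p}{i}f^ig^{p-i},
$$
and differentiating and dividing by $p$ yields $(f+g)^{p-1}\,d(f+g)=f^{p-1}\,df+g^{p-1}\,dg+dW(f,g)$, which modulo $p$ exhibits the difference as exact. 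The Leibniz identity $(fg)^{p-1}\,d(fg)=f^pg^{p-1}\,dg+g^pf^{p-1}\,df$ matches $(\mr{Fr}_X^\sharp f\fr)\phi(g\fr)+(\mr{Fr}_X^\sharp g\fr)\phi(f\fr)$ directly, and $\mr{Frob}_\k$-linearity of $\bullet\fr$ combined with perfectness of $\k$ yields $\k$-linearity of $\phi$. The universal property of $\Omega^1_{X\fr}$ then produces a unique $\O_{X\fr}$-linear $C_1^{-1}\colon\Omega^1_{X\fr}\to\mc H^1$ with $\phi=C_1^{-1}\circ d$.

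For \emph{multiplicative extension and isomorphism}, note that $\bigoplus_i\mc H^i$ is a graded-commutative $\O_{X\fr}$-algebra inherited from the DG structure on $(\mr{Fr}_X)_*\Omega^\bullet_X$, and $\Omega^\bullet_{X\fr}$ is freely generated as such in degree $1$ by $\Omega^1_{X\fr}$, so $C_1^{-1}$ extends uniquely to a graded algebra map $\bigoplus_i C_i^{-1}$. To prove it is an isomorphism, I work \'etale locally and choose \'etale coordinates $x_1,\ldots,x_n\colon X\to\AA^n$. By Kunz's theorem $(\mr{Fr}_X)_*\O_X$ is free over $\O_{X\fr}$ of rank $p^n$ with basis the monomials $x^I$, $0\le i_j\le p-1$, and the de Rham complex decomposes as the K\"unneth tensor product of one-variable subcomplexes. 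This reduces the check to $X=\AA^1_x$, where a direct monomial computation gives $\mc H^0=\O_{X\fr}\cdot 1$ and $\mc H^1=\O_{X\fr}\cdot[x^{p-1}\,dx]$, precisely matching the basis $1,dx\fr$ of $\Omega^\bullet_{X\fr}$ under $C^{-1}$. The main obstacle is the additivity of $\phi$ via the Witt-polynomial lift; the remaining steps are either formal or direct local computation.
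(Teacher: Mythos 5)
Your proof is correct, and it is essentially the classical argument: the paper does not prove this theorem but cites it to Katz, and your construction of the inverse Cartier map as the unique $\O_{X\fr}$-linear extension of the derivation $g\fr\mapsto[g^{p-1}dg]$ (with additivity via the integral Witt polynomial $W(f,g)$, Leibniz by direct expansion, and the isomorphism check reduced by \'etale coordinates and K\"unneth to $\AA^1$) is exactly the standard route taken there and in most modern treatments. All the individual steps check out: the $\O_{X\fr}$-linearity of the differential of $(\mr{Fr}_X)_*\Omega^\bullet_X$, the freeness of $(\mr{Fr}_X)_*\O_X$ over $\O_{X\fr}$ with monomial basis coming from the Cartesian square $X\cong X\fr\times_{(\AA^n)\fr}\AA^n$ for an \'etale map to $\AA^n$, and the one-variable computation $\mc H^1=\O_{(\AA^1)\fr}\cdot[x^{p-1}dx]$.

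One small point deserves a sentence when $p=2$: the exterior algebra $\bigwedge^\bullet\Omega^1_{X\fr}$ is \emph{not} the free graded-commutative algebra on $\Omega^1_{X\fr}$ in characteristic $2$ (graded-commutativity there imposes no relation on squares), so to extend $C_1^{-1}$ multiplicatively you must check that $C_1^{-1}(\omega)\cup C_1^{-1}(\omega)=0$ in $\mc H^2$ for every local section $\omega$. This holds because every class $C_1^{-1}\bigl(\sum f_i\fr\, d(g_i\fr)\bigr)$ is represented by the honest closed $1$-form $\sum f_i^p g_i^{p-1}dg_i$, whose wedge square vanishes already in $\Omega^2_X$. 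With that remark added, the argument is complete.
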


Cartier isomorphism provides an isomorphism $C_1:\mc H^1({(\mr{Fr}_X)}_*\Omega_{X}^\bullet)\xrightarrow{\sim} \Omega^1_{X\fr}$ in the other direction. We define the Cartier operator $\sC:{(\mr{Fr}_X)}_*\Omega_{X,cl}^1\ra \Omega^1_{X\fr}$ as the composition of the natural projection ${(\mr{Fr}_X)}_*\Omega_{X,cl}^1\ra \mc H^1({(\mr{Fr}_X)}_*\Omega_{X}^\bullet)$  and $C_1$. It is easy to see that $\Ker \sC$ is exactly the subsheaf of exact 1-forms, and so it is isomorphic to the image of $d:{(\mr{Fr}_X)}_*\O_X\ra {(\mr{Fr}_X)}_*\Omega_{X,cl}^1$. Finally, $\Ker d=\O_{X\fr}$, and as a result we obtain the following short exact sequence:
\begin{equation*}
\label{shortcartier}
0\ra {(\mr{Fr}_X)}_*\O_X/\O_{X\fr} \xra{d} {(\mr{Fr}_X)}_*\Omega_{X,cl}^1 \xra{\sC} \Omega^1_{X\fr} \ra 0.
\end{equation*}
\begin{rem}
Further we will sometimes omit $(\Fr_X)_*$ before $\Omega_{X}^1$, $\Omega_{X,cl}^1$, $\O_X$ and other quasi-coherent sheaves on $X$ and will automatically consider them as sheaves on $X^{(1)}$. For example, in this convention, the short exact sequence above becomes even shorter, and looks like this:
$$
0\ra\O_X/\O_{X\fr} \xra{d} \Omega_{X,cl}^1\xra{\sC} \Omega^1_{X\fr} \ra 0.
$$
We tried our best to avoid a possible confusion specifying whether the sheaf is on $X$ or $X\fr$ where necessary.
\end{rem}
\subsection{Crystalline differential operators}\label{cdo}
Let $X$ be a smooth scheme over an algebraically closed field $\k$ of characteristic $p$. The sheaf of ``crystalline differential operators'' on~$X$ is defined as the universal enveloping algebra $\D_X$ of the Lie algebroid $\mc T_X$ of vector fields on~$X$.  In other words, $\D_X$ is a sheaf of associative algebras containing $\O_X$, such that for a quasi-coherent $\O_X$-module $\cF$, an extension of the $\O_X$-action on~$\cF$ to a $\D_X$-action is equivalent to endowing $\cF$ with a flat connection. More explicitly, $\D_X$ is a sheaf of algebras generated by the algebra of functions $\O_X$
and the $\O_X$-module of vector fields $\mc T_X$
subject to the module and commutator relations
$f\cdot\partial= f\partial$, \ $\partial\cdot f-f\cdot\partial= \partial(f)$ for $\partial\in\mc T_X, f\in\O_X$,
and the Lie algebroid relations $\partial'\cdot \partial''-\partial''\cdot \partial'=
[\partial',\partial'']$ for $\partial',\partial''\in\mc T_X$. If we fix a local frame
$\{\partial_1,\ldots,\partial_n\}$ of vector fields on $X$, we obtain a decomposition $\D_X = \oplus_{I\in \mb N^n}\ \O_X\cdot\partial^I$ as a left $\O_X$-module. For $X=\mb A^n$ the global sections of $\D_X$ are given by the Weyl algebra 
$$
W_n:=\k[x_1,\ldots, x_n,\partial_1,\ldots \partial_n]/\langle [\partial_i,x_i]=1, [\partial_i,x_j]_{i\neq j}=[x_i,x_j]=[\partial_i,\partial_j]=0 \rangle .
$$

Sheaf $\O_X$ is naturally a sheaf of left modules over $\D_X$. The corresponding action by functions is defined by the multiplication and by differentiation for vector fields: $g\circ f=gf$ and $\partial\circ f= \partial(f)$. It is easy to see that this defines a homomorphism $\theta:\D_X\ra \ul{\mr{End}}_{\k}(\mc O_X)$ to the sheaf of $\k$-linear endomorphisms of $\mc O_X$. While in the case of $\mr{char} \ \!\k =0$ this map is an embedding, it turns out that for $\mr{char} \ \!\k >0$ this map has a huge kernel and consequently has a relatively small image.

 By the binomial formula, given 
a vector field $\partial\in\mc T_X$, the differential operator $\partial^p\in \D_X $ acts on
functions as a derivation:
$$
\partial^p(f\cdot g)=\sum_{i=0}^p {{p}\choose{i}}\partial^i(f)\cdot\partial^{p-i}(g)=\partial^p(f)\cdot g+f\cdot\partial^p(g).
$$
 This way it defines another vector field $\partial^{[p]}\in\mc T_X$, which is called the \textit{$p$-th restricted power of $\partial$}. 
For $\partial\in\mc T_X$ we set
 $ \iota(\partial):=\partial^p-\partial^{[p]}\in \D_X $. By the definition of $\bullet^{[p]}$, the map $\iota$  lands
in the kernel of the $\k$-linear action $\theta:\D_X\ra \ul{\mr{End}}_{\k}(\mc O_X)$.

It is not difficult to show that $\iota$ is additive and that $\iota(f\partial)=f^p\iota(\partial)$ (e.g. see \cite{Katz}, Proposition 5.3 or a simpler argument in \cite{BMR}, Lemma 1.3.1). So it defines a map $\iota:\mc T_{X\fr}\ra{(\mr{Fr}_X)}_*\D_X$ of quasi-coherent sheaves on $X^{(1)}$ (\cite{BMR}, Lemma 1.3.1): namely $\iota(\partial\fr)=\partial^p-\partial^{[p]}$. Moreover, one can show that $\Ker \theta$ is given exactly by the two-sided ideal $\D_X\cdot \iota(\mc T_X)\cdot \D_X$.

Let now $T^*{X\fr}$ be the total space of the cotangent bundle of $X\fr$, and let $q:T^*{X\fr}\ra X\fr$ be the natural projection. Then $q_*\O_{T^*{X\fr}}$ is naturally a quasi-coherent sheaf of commutative algebras on $X\fr$.  Note, that since the map $q$ is affine, the functor $q_*$ induces an equivalence between the category of quasi-coherent sheaves of $q_*\O_{T^*{X\fr}}$-modules on $X\fr$ and the category of quasi-coherent sheaves on $T^*{X\fr}$. The natural pairing $\langle\cdot,\cdot\rangle:\mc T_{X\fr}\otimes \Omega^1_{X\fr}\ra \O_{X\fr}$ defines an embedding $\mc T_{X\fr}\hookrightarrow q_*\O_{T^*{X\fr}}$, which extends to a natural isomorphism $\mr{Sym}^\bullet_{X\fr} \mc T_{X\fr}\simeq q_*\O_{T^*{X\fr}} $. 

\begin{prop}[\cite{BMR}, Lemma 1.3.2, Proposition 2.3.3]
\label{Azumaya}
 The map $\iota:\mc T_{X\fr}\ra\D_X$ lands in the center $Z(\D_X)\subset \D_X$. Moreover, its extension to $\mr{Sym}^\bullet_{X\fr}\mc T_{X\fr}$ provides an isomorphism of the center $Z(\D_X)$ with $q_*\O_{T^*{X\fr}}$. So $\D_X$ is a $q_*\O_{T^*{X\fr}}$-central quasi-coherent sheaf of algebras on $X\fr$ and in this way it defines a quasi-coherent sheaf of algebras on $T^*{X\fr}$. As a sheaf of algebras on $T^*{X\fr}$, $\D_X$ is an Azumaya algebra. It has rank $p^{2\dim X}$ and is non-trivial if $\dim X>0$.
\end{prop}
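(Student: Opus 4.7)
The plan is, first, to verify that $\iota(\mc T_X)\subseteq Z(\D_X)$ using Jacobson's identity $[a^p,b]=(\ad a)^p(b)$, valid in any associative algebra over $\mb F_p$. In $\D_X$ this gives $[\partial^p,f]=\partial^p(f)$ for $\partial\in\mc T_X$ and $f\in\O_X$, where the right-hand side is the iterated derivation applied to $f$ (viewed inside $\D_X$). On the other hand, by the defining axiom of the restricted $p$-th power one has $[\partial^{[p]},f]=\partial^{[p]}(f)=\partial^p(f)$, so $[\iota(\partial),f]=0$. The same identity applied to $\partial'\in\mc T_X$, together with the restricted Lie algebra axiom $(\ad\partial)^p=\ad\,\partial^{[p]}$ on $\mc T_X$, gives $[\iota(\partial),\partial']=0$. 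Combined with the already-stated $p$-semi-linearity $\iota(f\partial)=f^p\iota(\partial)$, this packages $\iota$ into an $\O_{X\fr}$-algebra homomorphism $\tilde\iota:\Sym_{\O_{X\fr}}^\bullet\mc T_{X\fr}\to Z(\D_X)$.

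Second, to analyze $\tilde\iota$, I will pass to an \'etale-local setting with coordinates $x_1,\ldots,x_n$ and dual derivations $\partial_1,\ldots,\partial_n$. A direct check on monomials shows $\partial_i^{[p]}=0$, so $\iota(\partial_i)=\partial_i^p$. By PBW $\D_X$ is a free $\O_X$-module on $\{\partial^I\}_{I\in\mb N^n}$. Writing $i_k=pq_k+r_k$ with $0\le r_k<p$ and using that $\O_X$ is free over $\O_{X\fr}=\O_X^p$ on $\{x^J:0\le j_k<p\}$, I deduce that $\D_X$ is free of rank $p^{2n}$ over the polynomial subalgebra $B:=\O_{X\fr}[\partial_1^p,\ldots,\partial_n^p]$, with basis $\{x^J\partial^I:0\le i_k,j_k<p\}$. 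This identifies $B$ both with the image of $\tilde\iota$ and with $q_*\O_{T^*X\fr}\cong\Sym_{\O_{X\fr}}^\bullet\mc T_{X\fr}$, giving the injectivity of $\tilde\iota$. To conclude $Z(\D_X)=B$, I will expand any central element $D=\sum_{I,J}b_{IJ}x^J\partial^I$ in this basis and exploit the relations $[D,x_k]=0$ and $[D,\partial_k]=0$; these commutators lower the respective multi-index by one coordinate, and a descending induction on the leading index forces every $b_{IJ}$ with $(I,J)\ne(0,0)$ to vanish.

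Third, for the Azumaya property I will consider the natural multiplication $\mu:\D_X\otimes_B\D_X^{op}\to\ul{\mr{End}}_B(\D_X)$ of sheaves of $B$-algebras on $T^*X\fr$. Both sides are locally free $B$-modules of rank $p^{4n}$, so it suffices to check surjectivity, and this reduces to a fiberwise check at geometric points. Each fiber is a $p^{2n}$-dimensional $\k$-algebra with center $\k$; a direct calculation in the PBW basis shows it has no nontrivial two-sided ideals, so it is central simple and, since $\k$ is algebraically closed, isomorphic to $\mr{Mat}_{p^n}(\k)$. This establishes the Azumaya property of degree $p^n$, so the rank is $p^{2n}$. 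Non-triviality of $[\D_X]\in\Br(T^*X\fr)$ for $\dim X>0$ is a separate and more delicate matter; the cleanest route is the argument of \cite{BMR}, where one exhibits an explicit splitting over the Frobenius cover $T^*X\fr\times_{X\fr}X\to T^*X\fr$ (so $\D_X$ becomes a matrix algebra there) and then observes that the resulting obstruction to descending the splitting along the finite flat degree-$p^n$ cover is nonzero for $\dim X>0$. The main technical obstacles of the proof are the fiberwise simplicity check for $\mu$, which is elementary but combinatorially delicate in the PBW basis, and the non-triviality argument, which is best handled by following \cite{BMR}.
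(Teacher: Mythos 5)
The paper itself offers no proof of this proposition --- it is imported verbatim from \cite{BMR} (Lemma 1.3.2 and Proposition 2.3.3) --- so your sketch can only be measured against the argument there, which it reconstructs faithfully. The centrality of $\iota(\partial)=\partial^p-\partial^{[p]}$ via Jacobson's identity $(\ad a)^p=\ad(a^p)$ combined with the restricted-Lie-algebra axioms, the local computation $\partial_i^{[p]}=0$ in \'etale coordinates, the PBW identification of $\D_X$ as a free module of rank $p^{2\dim X}$ over $B=\O_{X\fr}[\partial_1^p,\ldots,\partial_n^p]\cong q_*\O_{T^*X\fr}$, the descending induction computing the center, and the reduction of the Azumaya property to fiberwise central simplicity of a $p^{2\dim X}$-dimensional algebra are all correct and are exactly the steps of the cited proof. (One small point: for the fiberwise step you must rerun the commutator induction in the fiber, where $\iota(\partial_k)$ and $x_k^p$ have been specialized to scalars, to see that the fiber's center is $\k$; the same computation works verbatim.) The only place where your text is not actually an argument is the non-triviality for $\dim X>0$: saying that ``the obstruction to descending the splitting along the degree-$p^{\dim X}$ cover is nonzero'' merely restates what is to be proved. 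The clean way to finish is to note that $\D_X$ is a domain (its associated graded for the order filtration is $\Sym_{\O_X}\mc T_X$), so its localization at the generic point of $T^*X\fr$ is a finite-dimensional domain over the function field, hence a division algebra of dimension $p^{2\dim X}>1$ over its center, hence not a matrix algebra; non-triviality of $[\D_X]\in\Br(T^*X\fr)$ follows by restriction to the generic point.
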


\subsection{Azumaya algebras $\D_{X,\a}$}
\label{Azalg}
According to the \propref{Azumaya} there is a sheaf $\D_X$ of Azumaya algebras on $T^*X\fr$.  Let $\a\in H^0(X\fr, \Omega^1_{X\fr})$ be a 1-form on $X\fr$. Its graph $i_\alpha:\Gamma_{\a}\hookrightarrow T^*X\fr$ is by definition the section given by $\alpha$ and considered as a subvariety of $T^*X\fr$. Projection to $X\fr$ provides an isomorphism $q:\Gamma_{\a}\simeq X\fr$.

\begin{defn}
\label{DXA}
The Azumaya algebra $\D_{X,\a}$ on $X\fr$ is defined as the restriction $i_\alpha^*\D_X$ of~$\D_X$ to $\Gamma_{\a}\simeq X\fr$.
\end{defn}

This restriction can be described more explicitly in terms of $\a$. The graph $\Gamma_{\a}\subset T^*X\fr$ is defined by the system of equations $\partial\fr -\langle \partial\fr,\a\rangle=0$ for all $\partial\fr\in \mc T_{X\fr}$, where the brackets denote the natural pairing $\langle\cdot,\cdot\rangle:\mc T_{X\fr}\otimes \Omega^1_{X\fr}\ra \O_{X\fr}$. Recall that we have an isomorphism $\iota:q_*\O_{T^*{X\fr}}\simeq Z(\D_X)$. The restriction of $\D_X$ to $\Gamma_{\a}$ as a sheaf of algebras is obtained by taking the quotient of $\D_X$ by the ideal generated by $\iota(\partial-\langle \partial\fr,\a\rangle)$. In other words, $\D_{X,\a}=\D_X/\langle{\partial^p-\partial^{[p]}-\langle \partial\fr,\a\rangle}\rangle_{\partial\in \mc T_{X\fr}}$, where $\langle \partial\fr,\a\rangle$ lies in $\O_{X\fr}=(\O_X)^p\subset \O_X \subset \D_X$.

Note that the association $\alpha\mapsto \D_{X,\alpha}$ defines a map $c_X: H^0(X\fr,\Omega^1_{X\fr})\ra \Br(X\fr)$. For now we only know that it is a well-defined map of sets.

\subsection{Azumaya algebras $\D_{X,\a}$ and $p$-curvature} \label{p-curvature} As we have seen, the algebras $\D_{X,\a}$ are some objects of differential-geometric nature which exist exclusively in characteristic $p$. There is another object of this sort, namely the \emph{$p$-curvature} of a flat connection on $X$. In this subsection we are going to relate them to each other.

Let $\mc E$ be a quasi-coherent sheaf on $X$ and let $\na:\mc E\ra \mc E\otimes \Omega^1_X$ be a connection on $\mc E$, which by deinition means that locally for any $s\in \mc E(U)$ and $f\in \O_X(U)$ one has $\na(fs)=s\otimes df + f\na(s)$. Note that $\na$ is not $\O_{X}$-linear, but the difference of two connections $\na_1-\na_2$ is, so connections on $\E$ form a torsor over the sheaf of $\O_X$-linear maps from $\E$ to $\E\otimes \Omega^1_X$. Dually a connection $\na$ defines an $\O_X$-linear map $\na: \mc T_X \ra \ul{\mr{End}}_{\k}(\mc E)$, and $\na_1-\na_2$ lands in $\ul{\mr{End}}_{\O_X}(\mc E)$ for any $\na_1$, $\na_2$. 

Sheaves $\mc T_X$ and $\ul{\mr{End}}_{\k}(\mc E)$ have a natural structure of a sheaf of Lie algebras of $\k$-vector spaces. This structure is given by the commutator of vector fields for $\mc T_X$ and the commutator $[a,b]=ab-ba$ for $\ul{\mr{End}}_{\k}(\mc E)$.
The classical curvature $\curv(\na):\bigwedge^2_{\O_X} \!\mc T_X \ra \ul{\mr{End}}_{\k}(\mc E)$ measures how far $\na$ is from being a homomorphism of Lie algebras, namely 
$$
\curv(\na)(\partial_1,\partial_2)=[\na(\partial_1),\na(\partial_2)]-\na([\partial_1,\partial_2]).
$$ 
A connection $\na$ is called \textit{integrable} or \textit{flat} if the map $\mr{curv}(\na)$ is identically equal to 0. In this case $\na:\T_X\ra \ul{\mr{End}}_{\k}(\mc E)$ is a Lie algebra homomorphism.

In characteristic $p$ both $\mc T_X$ and $\ul{\mr{End}}_{\k}(\mc E)$ possess a richer structure of $p$-th restricted Lie algebra (see Sect. 3.1 in \cite{Bor}), given by $\bullet^{[p]}$ and $\bullet^p$ correspondingly. Analogously to the classical case, the \textit{$p$-curvature} map $\mr{curv}_p(\na):\mc T_{X\fr}\ra (\mr{Fr}_X)_*\ul{\mr{End}}_{\k}(\mc E)$ measures how much $\na$ fails to preserve this operation. It is defined by the following formula:

$$
\mr{curv}_p(\na)(\partial\fr)=\na(\partial)^p-\na(\partial^{[p]}).
$$
By the same computation as before (\cite{Katz}, Proposition 5.3), $\mr{curv}_p(\na)$ is $p$-linear: $\mr{curv}_p(\na)(f\fr\partial\fr)=f^p\mr{curv}_p(\na)(\partial\fr)$. 

Another feature of characteristic $p$ is that any connection $\na:\mc E\ra \mc E\otimes \Omega^1_X$ is in fact $\O_{X\fr}$-linear since $d(f^p)=0$. So, $\na: \mc T_X \ra \ul{\mr{End}}_{\k}(\mc E)$ actually lands in $\O_{X\fr}$-linear endomorphisms $\ul{\mr{End}}_{\O_{X\fr}}\!((\mr{Fr}_X)_*\mc E)$. It follows that $p$-curvature lands in $\ul{\mr{End}}_{\O_{X\fr}}\!((\mr{Fr}_X)_*\mc E)$ too. However, it is true that $\mr{curv}_p(\na)(\partial\fr)$ is even $\O_X$-linear:
\begin{lem}
$p$-curvature $\mr{curv}_p(\na)$ lands in $\O_X$-linear endomorphisms of $\mc E$. This way it defines a map $\mr{curv}_p(\na):\T_X\fr\ra (\mr{Fr}_X)_*\ul{\mr{End}}_{\O_{X}}\!(\mc E)$.
\end{lem}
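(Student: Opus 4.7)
The plan is to verify $\O_X$-linearity of $\mr{curv}_p(\na)(\partial\fr)$ pointwise: I need to show that for every local function $f\in\O_X$, the endomorphism $\mr{curv}_p(\na)(\partial\fr)=\na(\partial)^p-\na(\partial^{[p]})$ commutes with multiplication by $f$. The whole computation will happen inside $\ul{\mr{End}}_{\k}(\mc E)$, where the defining property of a connection gives the key commutation relation
\[
[\na(\partial),\, f\cdot] \;=\; \partial(f)\cdot,
\]
(viewing $f$ and $\partial(f)$ as multiplication operators). I will first use this to compute $[\na(\partial)^p, f\cdot]$, then compare it with $[\na(\partial^{[p]}), f\cdot]$.

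For the first bracket I proceed by induction, just as one derives the Leibniz rule for iterated derivations. Once $[\na(\partial),f\cdot]=\partial(f)\cdot$ is established, induction on $n$ (or equivalently the fact that $\na(\partial)$ acts on multiplication operators by the derivation $\partial$) yields
\[
[\na(\partial)^n,\, f\cdot]\;=\;\sum_{k=1}^{n}\binom{n}{k}\,\partial^{k}(f)\cdot\na(\partial)^{n-k}.
\]
Now I specialize to $n=p$. Since we are in characteristic $p$, the binomial coefficients $\binom{p}{k}$ vanish for $1\leq k\leq p-1$ and equal $1$ for $k=p$, so only the top term survives and
\[
[\na(\partial)^{p},\, f\cdot]\;=\;\partial^{p}(f)\cdot.
\]

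On the other hand, the restricted power $\partial^{[p]}\in\mc T_X$ was defined in \secref{cdo} precisely so that the differential operator $\partial^p\in\D_X$ acts on $\O_X$ as the derivation $\partial^{[p]}$; equivalently, $\partial^p(f)=\partial^{[p]}(f)$ for every $f\in\O_X$. Since $\na$ is a homomorphism of $\O_X$-modules $\mc T_X\to\ul{\mr{End}}_{\k}(\mc E)$ satisfying $[\na(\partial'),f\cdot]=\partial'(f)\cdot$ for any vector field $\partial'$, applying this to $\partial'=\partial^{[p]}$ gives $[\na(\partial^{[p]}),f\cdot]=\partial^{[p]}(f)\cdot$. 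Subtracting, we obtain
\[
[\mr{curv}_p(\na)(\partial\fr),\, f\cdot]\;=\;\partial^{p}(f)\cdot-\partial^{[p]}(f)\cdot\;=\;0,
\]
so $\mr{curv}_p(\na)(\partial\fr)\in\ul{\mr{End}}_{\O_X}(\mc E)$, as required. Combined with the $\O_{X\fr}$-linearity of any connection noted above, this gives the factorization $\mr{curv}_p(\na):\mc T_{X\fr}\ra (\mr{Fr}_X)_*\ul{\mr{End}}_{\O_{X}}\!(\mc E)$.

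There is no real obstacle here: the only slightly delicate point is the binomial identity for $[\na(\partial)^n,f\cdot]$ in a non-commutative setting, but this is formal once one observes that the operator "$\mr{ad}(\na(\partial))$" acts on multiplication-by-$g$ operators as multiplication by $\partial(g)$, so iterating and expanding $\na(\partial)^p\cdot f\cdot =(f\cdot+\mr{ad}(\na(\partial)))^{p}(\,\cdot\,)$ via the standard commutative-looking binomial expansion (valid since the two summands $f\cdot$ and $\mr{ad}(\na(\partial))$ need not commute, but the Leibniz/Newton identity used here only invokes the derivation property) produces exactly the formula above. The characteristic-$p$ vanishing of the middle binomial coefficients then does all the work.
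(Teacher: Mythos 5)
Your argument is correct and is essentially the paper's own proof: the paper invokes ``the Leibniz rule'' to write $\na(\partial)^p(f\cdot e)=\partial^p(f)\cdot e+f\cdot\na(\partial)^p(e)$, which is exactly the identity $[\na(\partial)^p,f\cdot]=\partial^p(f)\cdot$ that you derive by spelling out the iterated Leibniz formula and the vanishing of $\binom{p}{k}$ for $0<k<p$, and then both proofs subtract the corresponding identity for $\na(\partial^{[p]})$ using $\partial^p(f)=\partial^{[p]}(f)$. Your write-up just makes explicit the binomial induction that the paper leaves implicit.
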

\begin{proof}
By the Leibniz rule,
$$\na(\partial)^p(f\cdot e)=\partial^p(f)\cdot e+ f\cdot \na(\partial)^p(e).$$
 On the other hand,
$$\na(\partial^{[p]})(f\cdot e)=\partial^{[p]}(f)\cdot e+ f\cdot \na(\partial^{[p]})(e).
$$ Now, since $\partial^p(f)= \partial^{[p]}(f)$, subtracting second equality from the first one, we get that
$$
\na(\partial)^p(f\cdot e)-\na(\partial^{[p]})(f\cdot e)= f\cdot (\na(\partial)^p(e)- \na(\partial^{[p]})(e))
$$
or, in other words,
$$
\mr{curv}_p(\na)(\partial\fr)(f\cdot e)=f\cdot \mr{curv}_p(\na)(\partial\fr)(e).
$$
\end{proof}

\begin{rem}
In particular, if $\mc E$ is a line bundle, $\ul{\mr{End}}_{\O_{X}}(\mc E)=\O_X$, and $\mr{curv}_p(\na)$ defines a map $\mr{curv}_p(\na):\T_{X\fr}\ra(\mr{Fr}_X)_*\O_X$. Moreover, one can show (see Proposition 5.2, \cite{Katz}) that $p$-curvature is not only $\O_X$-linear, but is also flat with respect to the induced connection $\na_{\mr{End}}$ on $\ul{\mr{End}}_{\O_{X}}(\mc E)$. In the case when $\mc E$ is a line bundle, this connection is trivial, namely $\na_{\mr{End}}(\partial)= \theta(\partial)$, where $\theta(\partial):\O_X\ra \O_X$ is the derivation corresponding to $\partial$. The flat sections are given by $\O_{X\fr}\subset \O_X$, and so $p$-curvature defines a map $\mr{curv}_p(\na):\T_{X\fr}\ra\O_{X\fr}$. Such a map is the same as a differential 1-form on $X\fr$, which in this case we will call $\mr{curv}_p(\na)$ as well.
\end{rem}

Let $MIC(X)$ be the category of pairs $(\mc E,\na)$, where $\mc E$ is a quasi-coherent sheaf on $X$ and $\na$ is an integrable connection on $\mc E$. $\na$ defines a natural $\D_X$-action on $\mc E$, namely functions act by multiplication and vector fields act through $\na$: $\partial\mapsto \na(\partial)\in \mr{End}_{\O_{X\fr}}(\mc E)\subset \mr{End}_{\k}(\mc E)$. The integrability of the connection guarantees that this action is well-defined. As was mentioned before, the converse is also true: an action of $\D_X$ which extends the $\O_X$-module structure on $\E$ endows $\E$ with a flat connection ($\na(\partial)$ acts as $\partial$). In other words we have an equivalence of categories between $MIC(X)$ and the category of $\O_X$-quasi-coherent $\D_X$-modules. It is easy to see from this point of view that $\mr{curv}_p(\na)(\partial\fr)$ is just equal to $\na(\iota(\partial))=\na(\partial^p-\partial^{[p]})$, where $\iota(\partial)=\partial^p-\partial^{[p]}\in \D_X$.

The $p$-curvature is a local invariant of the integrable connection. One has the following description of the situation when $\curv_p(\na)=0$:

\begin{thm}[Cartier, \cite{Katz}, Theorem 5.1]
\label{Frobeniusdescent}
Let $X$ be a smooth scheme of finite type over $\k$. Let $MIC(X)$ denote the category of quasi-coherent sheaves with integrable connection $(\mc E,\na)$. Remember also that we have the relative Frobenius morphism $\mr{Fr}_X:X\ra X\fr$ to the Frobenius twist of $X$.  There is an equivalence of categories between the category of quasi-coherent sheaves on $X\fr$ and the subcategory of $MIC(X)$ consisting of pairs $(\mc E,\na)$, such that $\mr{curv}_p(\na)$ is equal to 0. 
\end{thm}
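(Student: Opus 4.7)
The plan is to deduce the Cartier theorem from the Azumaya property of $\D_X$ established in \propref{Azumaya}, viewing $p$-curvature zero modules as modules over the restriction of $\D_X$ to the zero section of $T^*X\fr$, and then exhibiting an explicit splitting.

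\textbf{Step 1: setting up the two functors.} Going from $\mr{QCoh}(X\fr)$ to $MIC(X)$ I would use the pull-back functor $\mc F\mapsto (\mr{Fr}_X^*\mc F,\na_{\mr{can}})$, where $\na_{\mr{can}}$ is the unique connection for which sections of $\mr{Fr}_X^{-1}\mc F\subset \mr{Fr}_X^*\mc F$ are flat. For such local flat sections $s$ one has $\na(\partial)(s)=0$ for every vector field, so both $\na(\partial)^p(s)$ and $\na(\partial^{[p]})(s)$ vanish; since $p$-curvature is $\O_X$-linear (by the previous lemma), it is then identically zero, so the functor does land in the $p$-curvature zero subcategory. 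In the opposite direction I would send $(\mc E,\na)$ to its sheaf of flat sections $\mc E^\na$, viewed as an $\O_{X\fr}$-module via $\O_{X\fr}\subset \O_X$.

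\textbf{Step 2: translation into modules over an Azumaya algebra.} Recall that a pair $(\mc E,\na)$ in $MIC(X)$ is the same as an $\O_X$-quasi-coherent $\D_X$-module, and that $\mr{curv}_p(\na)(\partial\fr)=\na(\iota(\partial))$. Thus the condition $\mr{curv}_p(\na)=0$ is exactly the condition that the $\D_X$-action annihilates the ideal generated by $\iota(\mc T_X)$, i.e.\ that the action factors through the central reduction $\D_{X,0}:=\D_X/\langle\iota(\partial)\rangle_{\partial\in\mc T_{X\fr}}$, which is precisely the Azumaya algebra on $X\fr$ obtained by restricting $\D_X$ to the zero section $\Gamma_0\subset T^*X\fr$ in the sense of \defref{DXA}. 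So we must show that the category of $\O_X$-quasi-coherent $\D_{X,0}$-modules is equivalent to $\mr{QCoh}(X\fr)$.

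\textbf{Step 3: exhibiting a splitting bundle.} This is the main point. I claim the $\O_{X\fr}$-module $(\mr{Fr}_X)_*\O_X$, with its natural $\D_X$-action (functions act by multiplication, vector fields by differentiation), is a splitting module for $\D_{X,0}$. Equivalently, the map
\[
\D_{X,0}\longrightarrow \ul{\mr{End}}_{\O_{X\fr}}\!\bigl((\mr{Fr}_X)_*\O_X\bigr)
\]
induced by the action is an isomorphism of Azumaya algebras on $X\fr$. Both sides are locally free $\O_{X\fr}$-modules of rank $p^{2\dim X}$ (the left by \propref{Azumaya}, the right because $(\mr{Fr}_X)_*\O_X$ is locally free of rank $p^{\dim X}$), so it suffices to check the map is surjective \'etale locally. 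In local \'etale coordinates $x_1,\dots,x_n$ with dual vector fields $\partial_1,\dots,\partial_n$, the usual computation gives $\partial_i^{[p]}=0$, so $\iota(\partial_i)=\partial_i^p$ acts as zero on $\O_X$ (since $\partial_i^p(x^I)$ vanishes whenever every $i_j<p$), while $(\mr{Fr}_X)_*\O_X$ is freely generated over $\O_{X\fr}$ by the monomials $x^I$ with $0\le i_j<p$; the operators $x_i$ and $\partial_i$ then act as the expected ``position'' and ``momentum'' matrices, which are well-known to generate the full matrix algebra $\mr{End}_{\O_{X\fr}}((\mr{Fr}_X)_*\O_X)$. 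This is the step I expect to do the real work, and it is a direct local calculation with the Weyl algebra in characteristic $p$.

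\textbf{Step 4: concluding via Morita equivalence.} Given the splitting from Step 3, the standard Morita theory for Azumaya algebras produces an equivalence between $\O_X$-quasi-coherent $\D_{X,0}$-modules and $\O_{X\fr}$-quasi-coherent modules, under which an $\O_{X\fr}$-module $\mc F$ corresponds to $\mc F\otimes_{\O_{X\fr}}(\mr{Fr}_X)_*\O_X=(\mr{Fr}_X)_*\mr{Fr}_X^*\mc F$ with its canonical $\D_X$-action, and a $\D_{X,0}$-module $\mc E$ corresponds to $\ul{\mr{Hom}}_{\D_{X,0}}((\mr{Fr}_X)_*\O_X,\mc E)=\mc E^\na$. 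Unwinding, this is exactly the pair of functors from Step 1, so they form the desired equivalence of categories.

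The main obstacle is Step 3, the splitting statement; the rest is either formal (Step 4), a direct unwinding of definitions (Step 1), or a restatement of material already established in the paper (Step 2). It is worth noting that as the author remarks in \remref{CartierAzumaya} (forward-referenced here), this is precisely the sense in which the Azumaya property of $\D_X$ encodes Cartier's theorem.
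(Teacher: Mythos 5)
Your proposal is correct, and it is essentially the route the paper itself takes: although the theorem is formally attributed to \cite{Katz2}, the paper reproves it via \propref{curvpa} (zero $p$-curvature $=$ $\D_{X,0}$-module structure) together with the canonical splitting $\D_{X,0}\isoto \ul{\mr{End}}_{\O_{X\fr}}((\mr{Fr}_X)_*\O_X)$ of \remref{CartierAzumaya}, which is exactly your Steps 2--4. The only cosmetic difference is in Step 3: where you verify surjectivity by noting that the position and momentum operators generate the matrix algebra, the paper argues that a nonzero homomorphism between Azumaya algebras whose fibers are matrix algebras of equal size is automatically an isomorphism by simplicity; both arguments are valid and neither creates a circularity, since the splitting is established independently of the Cartier theorem.
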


Then the natural question is: what do non-zero $p$-curvatures correspond to? We can formulate an answer in terms of algebras $\D_{X,\a}$. 

A differential 1-form $\a\in \Omega^1_{X\fr}$ gives a map $\tilde\a:\mc T_X\fr\ra \O_{X\fr}$. For any quasi-coherent sheaf $\mc E$ on $X$ we have a natural map $\O_{X\fr}\ra (\mr{Fr}_X)_*\O_X \ra (\mr{Fr}_X)_*\ul{\mr{End}}_{\O_X}\!(\mc E)$. Taking the composition, we obtain a well-defined map $\tilde{\alpha}:\mc T_X\fr\ra (\mr{Fr}_X)_*\ul{\mr{End}}_{\O_X}\!(\mc E)$ for any quasi-coherent sheaf $\mc E$. Let's denote by $MIC_{\a}(X)$ the full subcategory of $MIC(X)$ consisting of pairs $(\mc E,\na)$, such that $\curv_p(\na)=\tilde\a$. 
\begin{prop}
\label{curvpa}
There is an equivalence of categories between the category $MIC_{\tilde\a}(X)$ and the category of $\D_{X,\a}$-modules that are $\O_X$-quasi-coherent.
\end{prop}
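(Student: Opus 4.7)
The plan is to reduce the statement immediately to the already-established equivalence between $MIC(X)$ and the category of $\O_X$-quasi-coherent $\D_X$-modules, and then to translate the $p$-curvature condition into a relation in the $\D_X$-action. Explicitly, I would first recall that under this equivalence a pair $(\E,\na)\in MIC(X)$ corresponds to the $\D_X$-module structure on $\E$ where $\O_X$ acts by multiplication and a vector field $\partial\in\T_X$ acts as $\na(\partial)$. Under this dictionary, the observation made just before the proposition gives
\[
\mr{curv}_p(\na)(\partial\fr)\;=\;\na(\partial)^p-\na(\partial^{[p]})\;=\;\na(\iota(\partial)),
\]
i.e. the $p$-curvature is exactly the action of the central element $\iota(\partial)=\partial^p-\partial^{[p]}\in Z(\D_X)$.

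Next I would rewrite the condition $(\E,\na)\in MIC_{\tilde\a}(X)$ in these terms. By the definition of $\tilde\a:\T_{X\fr}\to (\Fr_X)_*\ul{\mr{End}}_{\O_X}\!(\E)$ as the composition of $\a:\T_{X\fr}\to\O_{X\fr}$ with the structure map $\O_{X\fr}\to(\Fr_X)_*\ul{\mr{End}}_{\O_X}\!(\E)$, the condition $\mr{curv}_p(\na)=\tilde\a$ is equivalent to saying that for every local vector field $\partial\in\T_X$, the operator $\iota(\partial)\in\D_X$ acts on $\E$ as multiplication by $\langle\partial\fr,\a\rangle\in\O_{X\fr}\subset\O_X$. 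Equivalently, the two-sided ideal $\langle\partial^p-\partial^{[p]}-\langle\partial\fr,\a\rangle\rangle_{\partial\in\T_X}\subset\D_X$ annihilates $\E$.

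By \defref{DXA} and the explicit description of $\D_{X,\a}$ given right after it, this ideal is precisely the kernel of the quotient map $\D_X\twoheadrightarrow\D_{X,\a}$. Hence a $\D_X$-module structure on $\E$ whose induced $p$-curvature equals $\tilde\a$ is the same datum as a $\D_{X,\a}$-module structure on $\E$ extending the $\O_X$-action, and this correspondence is clearly functorial (morphisms on either side are $\O_X$-linear maps commuting with all of $\D_X$, equivalently with all of $\D_{X,\a}$). Combining the two equivalences yields the desired equivalence of categories.

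Since every step is essentially a reinterpretation, there is no real technical obstacle; the only point that requires a little care is checking that the relations defining $\D_{X,\a}$ (given in terms of $\partial\fr$ and a pairing with $\a$) match, tautologically, the $\O_X$-linear endomorphism $\tilde\a(\partial\fr)$, which is just the assertion that the natural inclusion $\O_{X\fr}\hookrightarrow\O_X\subset\D_X$ is the same inclusion used both to define the center via $\iota$ and to define $\tilde\a$.
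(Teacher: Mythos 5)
Your argument is correct and is essentially the paper's own proof: both reduce to the presentation of $\D_{X,\a}$ as the quotient of $\D_X$ by the ideal generated by $\partial^p-\partial^{[p]}-\langle\partial\fr,\a\rangle$ and observe that this ideal acts by zero precisely when $\mr{curv}_p(\na)=\tilde\a$. No gaps.
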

\begin{proof}
Recall the presentation of $\D_{X,\a}$ as a quotient of $\D_X$. Thus $\D_{X,\a}$-modules are  $\D_X$-modules for which the action map factorises through $\D_{X,\a}$. So it is enough to check that $(\mc E,\na)\in MIC_{\tilde\a}(X)$ if and only if  ${\partial^p-\partial^{[p]}-\langle \partial\fr,\a\rangle}$ acts as 0 (under the  $\D_X$-action on $\mc E$ defined by $\na$) for any $\partial\in \T_{X}$. But 
$$
\na(\partial^p-\partial^{[p]}-\langle \partial\fr,\a\rangle)= \mr{curv}_p(\na)(\partial\fr) - \langle \partial\fr,\a\rangle.
$$
This expression is equal to 0 for any $\partial$ if and only if $\mr{curv}_p(\na)=\tilde{\a}$. Note that here again we have $\langle \partial\fr,\a\rangle\in \O_{X\fr}=(\O_X)^p\subset \O_X$.
\end{proof}

\begin{rem}
In fact this proposition can be considered as a direct generalization of Cartier's theorem (see \remref{CartierAzumaya}). 
\end{rem}

\begin{rem}
Let $(\mc E,\na)$ be a line bundle with a flat connection. In local coordinates $\mc E\simeq \O_X$, and a connection $\na$ is given by a closed 1-form $\omega$, namely $\na=d+\omega$. Then the explicit formula for the $p$-curvature is given by $\mr{curv}_p(d+\omega)=\omega\fr-\sC(\omega)$. 
\end{rem}

Let's fix $\a\in \Omega^1_{X\fr}$ and assume that $\mr{curv}_p(\na)$ is equal to $\a$. By \propref{curvpa} we have a $\D_{X,\a}$-action on $\mc E$, or, in other words, a homomorphism $\theta_\na:\D_{X,\a}\ra \mr{End}_{\O_{X\fr}}((\mr{Fr}_X)_*\mc E)$. By \propref{Azumaya}, $\D_{X,\a}$ has rank $p^{2\dim X}$ over $\O_{X\fr}$. Moreover, since $\D_{X,\a}$ is an Azumaya algebra its fiber at each point is a matrix algebra of size $p^{\dim X}$. Bundle $(\mr{Fr}_X)_*\mc E$ has rank $p^{\dim X}$, so $\mr{End}_{\O_{X\fr}}((\mr{Fr}_X)_*\mc E)$ has rank $p^{2\dim X}$, and the fiber at each point is a matrix algebra of the same size. Map $\theta_\na$ is an isomorphism if and only if it is an isomorphism on each fiber, which is necessarily the case, since the matrix algebra is simple. This gives an isomorphism $\theta_\na:\D_{X,\a}\ra \mr{End}_{\O_{X\fr}}((\mr{Fr}_X)_*\mc E)$, or in other words $(\mr{Fr}_X)_*\mc E$ is a splitting bundle for the Azumaya algebra $\D_{X,\a}$. On the other hand, if we have a splitting $\theta:\D_{X,\a}\ra \mr{End}_{\O_{X\fr}}(\mc E')$ we get an embedding $(\mr{Fr}_X)_*\O_X\hookrightarrow \mr{End}_{\O_{X\fr}}(\mc E')$ which shows that $\mc E'$ is in fact of the form $(\mr{Fr}_X)_*\mc E$ for some line bundle $\mc E$ on $X$ (namely $\mc E$ is just $\mc E'$ considered as a module over $\O_X$). The restriction of $\theta$ to $(\mr{Fr}_X)_*\mc T_X$  provides $\mc E$ with an integrable connection with the $p$-curvature $\a$. 

To summarize the discussion, let us introduce two categories. Let $\mr{Spl}(X,\a)$ be the category of splittings of $\D_{X,\a}$, namely by definition
\begin{itemize}
\item objects of    
$\mr{Spl}(X,\a)$ are pairs $(\mc E',\theta)$, consisting of a vector bundle $\mc E'$ on $X\fr$ and an isomorphism $\theta:\D_{X,\a}\ra \mr{End}_{\O_{X\fr}}(\mc E')$, 
\item morphisms in $\mr{Spl}(X,\a)$ are usual morphisms between vector bundles that respect the splittings. Note that such a morphism is necessarily an isomorphism.
\end{itemize} 
The second category $LIC(X,\a)$ is the category of line bundles with an integrable connection with $p$-curvature $\a$, namely 
\begin{itemize}
\item objects of    
$LIC(X,\a)$ are pairs $(\mc E,\na)$, consisting of a line bundle $\mc E$ on $X$ and an integrable connection $\na$, such that $\mr{curv}_p(\na)=\a$, 
\item morphisms in ${LIC}(X,\a)$ are isomorphisms of line bundles which respect the connections.
\end{itemize}

We get the following 

\begin{prop}
\label{Cartiergen}
The category $\mr{Spl}(X,\a)$ is equivalent to $LIC(X,\a)$ for any smooth variety $X$ and any differential 1-form $\alpha$ on $X\fr$.
\end{prop}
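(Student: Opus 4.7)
The plan is to construct functors $F\colon LIC(X,\alpha)\to\mathrm{Spl}(X,\alpha)$ and $G\colon\mathrm{Spl}(X,\alpha)\to LIC(X,\alpha)$ and to verify that they are mutually quasi-inverse. The forward functor $F$ is essentially provided by \propref{curvpa}: given $(\mathcal{E},\nabla)\in LIC(X,\alpha)$, that proposition endows $\mathcal{E}$ with a $\D_{X,\alpha}$-module structure, and equivalently produces an $\O_{X^{(1)}}$-algebra map $\theta_\nabla\colon\D_{X,\alpha}\to\ul{\mr{End}}_{\O_{X^{(1)}}}((\mr{Fr}_X)_*\mathcal{E})$. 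The key claim is that $\theta_\nabla$ is an isomorphism. Since both source and target are coherent $\O_{X^{(1)}}$-modules of generic rank $p^{2\dim X}$ whose fibers at each closed point are matrix algebras of size $p^{\dim X}$ (by \propref{Azumaya} and its proof, cf.~\cite{BMR}, Lemma 2.2.1), it suffices to check that $\theta_\nabla$ is nonzero fiberwise. This is automatic because the subalgebra $\O_X\subset\D_{X,\alpha}$ always acts on $(\mr{Fr}_X)_*\mathcal{E}$ by multiplication, hence nontrivially at each point. Thus $F$ sends $(\mathcal{E},\nabla)$ to the splitting $((\mr{Fr}_X)_*\mathcal{E},\theta_\nabla)$.

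The reverse functor $G$ is constructed as follows. Given a splitting $(\mathcal{E}',\theta)$, restrict $\theta$ along the inclusion $\O_X\hookrightarrow\D_{X,\alpha}$ to get an $\O_X$-module structure on $\mathcal{E}'$; let $\mathcal{E}$ denote $\mathcal{E}'$ regarded as a quasi-coherent $\O_X$-module, so that $(\mr{Fr}_X)_*\mathcal{E}=\mathcal{E}'$ as $\O_{X^{(1)}}$-modules. Then $\mathcal{E}$ is a line bundle on $X$: locally $(\mr{Fr}_X)_*\O_X$ is free of rank $p^{\dim X}$ over $\O_{X^{(1)}}$ and $\mathcal{E}'$ is locally free of rank $p^{\dim X}$ over $\O_{X^{(1)}}$, so $\mathcal{E}$ is locally free of rank one over $\O_X$ (using that the centralizer of $\O_X$ in the matrix algebra $\mathrm{End}_{\O_{X^{(1)}}}(\mathcal{E}'_x)$ is $\O_X\otimes_{\O_{X^{(1)}}}\O_{X^{(1)}}$ acting by multiplication, so $\mathcal{E}'$ is a module of minimal rank over this maximal commutative subalgebra). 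The composition $\mathcal{T}_X\hookrightarrow\D_X\twoheadrightarrow\D_{X,\alpha}\stackrel{\theta}{\to}\ul{\mr{End}}_{\O_{X^{(1)}}}(\mathcal{E}')$ satisfies the Leibniz rule with respect to the $\O_X$-action (because the commutator relations in $\D_X$ survive in $\D_{X,\alpha}$), and so defines an integrable connection $\nabla$ on $\mathcal{E}$; by construction $\nabla(\iota(\partial))=\theta(\langle\partial^{(1)},\alpha\rangle)=\langle\partial^{(1)},\alpha\rangle$, which by the identity in the proof of \propref{curvpa} means $\mr{curv}_p(\nabla)=\tilde{\alpha}$. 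Thus $G$ sends $(\mathcal{E}',\theta)$ to $(\mathcal{E},\nabla)\in LIC(X,\alpha)$.

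That $F$ and $G$ are quasi-inverse is essentially tautological once the constructions are set up, since both reduce back to the equivalence between $\D_X$-module structures extending the $\O_X$-module structure and flat connections; morphisms in both categories correspond under $F$ and $G$ to the same $\O_{X^{(1)}}$-linear, $\D_{X,\alpha}$-equivariant maps. The main technical obstacle that I would be careful with is the rank argument in the construction of $G$: showing that an Azumaya splitting module $\mathcal{E}'$ for $\D_{X,\alpha}$ is automatically of the form $(\mr{Fr}_X)_*\mathcal{E}$ for a \emph{line} bundle $\mathcal{E}$, rather than merely a locally free $\O_X$-module of some higher rank. This comes down to the local computation that the fiber of $\D_{X,\alpha}$ is a matrix algebra of size exactly $p^{\dim X}$ (so splitting modules have minimal rank $p^{\dim X}$ over $\O_{X^{(1)}}$), combined with $(\mr{Fr}_X)_*\O_X$ being locally free of rank $p^{\dim X}$ over $\O_{X^{(1)}}$; the two ranks match, forcing $\mathcal{E}$ to have rank one.
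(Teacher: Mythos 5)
Your proposal is correct and follows essentially the same route as the paper's own argument: the forward direction is the identical rank-count plus fiberwise "nonzero map between matrix algebras of equal size is an isomorphism" argument (with nonvanishing guaranteed by the action of $\O_X\subset\D_{X,\a}$), and the reverse direction likewise recovers the line bundle by restricting the splitting to $\O_X$ and the connection by restricting to $\mc T_X$. Your added justification that $\mc E'$ viewed as an $\O_X$-module has rank one is a point the paper passes over silently, but it does not change the approach.
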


\begin{rem}\label{CartierAzumaya}
In particular taking $\alpha=0$ we get that $(\O_X,d)$ produces a splitting bundle for $\D_{X,0}$, namely we have an isomorphism $\theta_d:\D_{X,0}\isoto \mr{End}_{\O_{X\fr}}((\mr{Fr}_X)_*\O_X)$. So we see that $\D_{X,0}$ is canonically split for any $X$ and is Morita-equivalent to $\O_{X\fr}$. This canonical Morita-equivalence is given by tensor product of a quasi-coherent sheaf on $X\fr$ with $(\mr{Fr}_X)_*\O_X$, where the $\D_{X,0}$-module structure is given by $d$. 
\end{rem}

\subsection{The gerbes ${\mc S}(X,\a)$ and ${\mc L}(X,\a)$}

\label{gerbes}

Gerbes are 2-categorical analogues of principal $G$-bundles. They provide a geometrization of certain cohomology classes and, similar to the situation with principal bundles, we can glue them. $\Gm$-gerbes play a rather important role in classical differential geometry, they serve as geometrizations (categorifications) of certain interesting characteristic classes (lying in $H^3(X,\mb Z$)). $\mb G_m$-gerbes show up naturally in our story as well, namely the data of $\mr{Spl}(U,\a)$ and $LIC(U,\alpha)$ for \'etale open $U\ra X$ glues exactly into a $\mb G_m$-gerbe.  

 Let's fix a sheaf of abelian groups $\mu$ on $X_\et$. We look at stacks as categories fibered over $X_\et$ in groupoids (satisfying the descent properties). In particular if we have an object $S\in\mc G(U)$ for a stack $\mc G$ then we have a well-defined sheaf of groups $\ul{\mr{Aut}}_S$ on $U_\et$. Let $p:\mc G\ra X_{\et}$ denote the natural projection functor, explicitely for any object $S\in \mc G(U)$ we have $p(S)=U$.
\begin{defn}
A $\mu$-\textit{gerbe} is a data of a stack in groupoids $\mc G$ over $X_\et$ and  an isomorphism $\iota_S: {\mu|}_{p(S)} \xra{\sim}\ul{\mr{Aut}}_S$ for each $S\in \mc G$ such that
\begin{itemize}
\item $\mc G$ is locally non-empty;
\item Any two objects in $\mc G$ are locally isomorphic;
\item For a morphism $f:T\ra S$ in $\mc G$ let $p(f):p(T)\ra p(S)$ be the image of $f$ under $p$. Then for any $f:T\ra S$ the following diagram should commute
$$
\xymatrix{ p(f)^*\!\left({\mu|}_{p(S)}\right) \ar[d]_{\iota_s}\ar[r]^(0.6){p(f)^*}&{\mu|}_{p(T)}\ar[d]^{\iota_t}\\
f^*{\ul{\mr{Aut}}}_S\ar[r]^{f^*}&{\ul{\mr{Aut}}}_T
}
$$
\end{itemize}
\end{defn}

Given a 1-form $\a$ on $X\fr$, to each $U\in X_{\et}$ we can associate two groupoids: $\mr{Spl}(U,\a|_U)$ and $LIC(U,\a|_U)$.  We will show that this data defines two fibered categories ${\mc S}(X,\a)$ and ${\mc L}(X,\a)$ which are in fact $\mb G_m$-gerbes. 

Let $f:U\ra X$ be an \'etale open in $X_\et$. Since $f$ is {\'e}tale, it induces an isomorphism $f^*\Omega^1_X\isoto\Omega^1_U$ and this way provides a map $f: T^*U\ra T^*X$ between total spaces of cotangent bundles. Abusing notation let $f: T^*{U\fr}\ra T^*{X\fr}$ also denote the corresponding map between the Frobenius twists.

\begin{lem}
Let $f:U\ra X$ be an {\'e}tale morphism. Then $f^*\D_{X}$ is canonically isomorphic to $\D_{U}$.
\end{lem}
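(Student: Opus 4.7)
The plan is to build the canonical isomorphism from the universal property of $\D_X$ as the enveloping algebra of the Lie algebroid $\mc T_X$ (with its $p$-th restricted power), pulled back along the étale map $f$.

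First I would set up the cotangent picture. Since $f$ is étale, the canonical map $f^*\Omega^1_X \to \Omega^1_U$ is an isomorphism, and dually $f^*\mc T_X \isoto \mc T_U$. Consequently $T^*U \cong U \times_X T^*X$, so after Frobenius twist $T^*U\fr \to T^*X\fr$ is the étale base change of $f\fr:U\fr \to X\fr$; in particular "pullback" of a sheaf on $T^*X\fr$ to $T^*U\fr$ is well defined. Next, I would check that the identification $f^*\mc T_X \isoto \mc T_U$ is compatible with the full structure needed to recover $\D$: it respects the Lie bracket of vector fields (since derivations of $\O_X$ pull back to derivations of $\O_U$ along any morphism), and it respects the $p$-th restricted power $\bullet^{[p]}$ (the formula $\partial^{[p]}$ is characterised intrinsically by the equation $(\partial^{[p]})(f) = \partial^{p}(f)$ for $f \in \O$, which is preserved by the étale extension $f^{-1}\O_X \hookrightarrow \O_U$).

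Now I would construct the morphism. The natural maps $f^{-1}\O_X \to \O_U$ and $f^{-1}\mc T_X \to \mc T_U$ land in $\O_U$ and $\mc T_U \subset \D_U$ and are compatible with the $\O$-module, Lie bracket, and commutator relations that define $\D_X$ as universal enveloping algebra. By that universal property they extend uniquely to a homomorphism of sheaves of $\k$-algebras $\phi: f^{-1}\D_X \to \D_U$. Since $\phi$ is $f^{-1}\O_X$-linear from the left, it extends to an $\O_U$-linear map
\[
\tilde\phi: \O_U \otimes_{f^{-1}\O_X} f^{-1}\D_X \longrightarrow \D_U.
\]
To see $\tilde\phi$ is an isomorphism, I would use the PBW filtration. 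Both sides carry the order-of-operator filtration, and $\tilde\phi$ respects it. On associated graded it becomes
\[
\mr{Sym}^\bullet_{\O_U}\bigl(\O_U \otimes_{f^{-1}\O_X} f^{-1}\mc T_X\bigr) \longrightarrow \mr{Sym}^\bullet_{\O_U} \mc T_U,
\]
which is an isomorphism by the identification $f^*\mc T_X \isoto \mc T_U$. A standard filtered-graded argument then yields that $\tilde\phi$ itself is an isomorphism.

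Finally I would pass to the cotangent bundles. By \propref{Azumaya}, $\D_X$ is identified with a sheaf of algebras on $T^*X\fr$ via $\iota: q_*\O_{T^*X\fr} \isoto Z(\D_X)$, and similarly for $\D_U$. Since $\phi$ is an algebra map compatible with the $p$-th power, it sends $\iota(\partial\fr) = \partial^p - \partial^{[p]}$ on $X$ to its counterpart on $U$; thus $\tilde\phi$ carries centers to centers, and on centers it reproduces exactly the pullback map $\mr{Sym}^\bullet \mc T_{X\fr} \to \mr{Sym}^\bullet \mc T_{U\fr}$ corresponding to $f: T^*U\fr \to T^*X\fr$. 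Therefore $\tilde\phi$, read as a morphism of quasi-coherent sheaves on $T^*U\fr$, is precisely a canonical isomorphism $f^*\D_X \isoto \D_U$ of Azumaya algebras. The only subtle point is ensuring that the one-sided module pullback on the non-commutative sheaf $\D_X$ matches the pullback on cotangent bundles, which is handled by the center computation above.
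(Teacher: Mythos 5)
Your proof is correct and follows essentially the same route as the paper: the paper's (one-sentence) argument also invokes the universal enveloping algebra presentation of $\D_U$, the isomorphism $\mc T_U\isoto f^*\mc T_X$ for \'etale $f$, and functoriality of Frobenius. You have simply spelled out the details (compatibility of brackets and restricted $p$-th powers, the PBW/filtered-graded argument, and the matching of centers) that the paper leaves implicit.
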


\begin{proof}
We use the presentation of $\D_{U}$ as universal enveloping algebras of the Lie algebroid $\mc T_U$ of vector fields. The statement of the lemma then follows immediately from the isomorphism $\mc T_U\isoto f^*\mc T_V$ and functoriality of Frobenius.
\end{proof}

\begin{cor}
\label{Dsheaf}
Let $f:U\ra X$ be an {\'e}tale open and let $\a$ be a differential 1-form on $X\fr$. Then $f^*\D_{X,\a}$ is canonically isomorphic to $\D_{U,f^*\a}$.
\end{cor}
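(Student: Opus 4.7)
The plan is to deduce this corollary from the previous lemma by a base-change argument on the graph subschemes. The key observation is that for an étale map $f:U\to X$, the pullback of $\a$ is set up precisely so that the graphs fit into a Cartesian square.

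First I would make explicit the geometry of the cotangent-bundle map. Since $f$ is étale, the canonical map $f^*\Omega^1_X\isoto \Omega^1_U$ is an isomorphism, which after taking Frobenius twists identifies $T^*U\fr$ with $U\fr\times_{X\fr}T^*X\fr$. Under this identification, the induced map $f:T^*U\fr\to T^*X\fr$ sends $(u,\beta)$ (with $\beta\in T^*_uU\fr$ identified with $T^*_{f(u)}X\fr$) to $(f(u),\beta)$. A pointwise check then shows that the preimage of the graph $\Gamma_\a\subset T^*X\fr$ under this map is exactly the graph $\Gamma_{f^*\a}\subset T^*U\fr$, because $\beta=\a_{f(u)}$ if and only if $\beta=(f^*\a)_u$ under the identification. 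In other words, the square
$$
\xymatrix{
\Gamma_{f^*\a}\ar[r]^{g}\ar[d]_{i_{f^*\a}} & \Gamma_\a \ar[d]^{i_\a}\\
T^*U\fr\ar[r]^{f} & T^*X\fr
}
$$
is Cartesian, and under the canonical isomorphisms $\Gamma_\a\simeq X\fr$, $\Gamma_{f^*\a}\simeq U\fr$ given by projection $q$, the top horizontal map $g$ is identified with the original $f:U\fr\to X\fr$.

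Once the Cartesian square is established, the corollary is formal. Applying the previous lemma gives a canonical isomorphism $f^*\D_X\isoto \D_U$ of Azumaya algebras on $T^*U\fr$. Pulling back to $\Gamma_{f^*\a}$ via $i_{f^*\a}$ yields
$$
\D_{U,f^*\a}\;=\;i_{f^*\a}^*\D_U\;\cong\;i_{f^*\a}^*f^*\D_X\;=\;(f\circ i_{f^*\a})^*\D_X,
$$
and by the Cartesian square this equals $(i_\a\circ g)^*\D_X=g^*(i_\a^*\D_X)=g^*\D_{X,\a}$. Identifying $g$ with $f:U\fr\to X\fr$ as above gives the desired canonical isomorphism $f^*\D_{X,\a}\isoto \D_{U,f^*\a}$.

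I do not expect any real obstacle here; the only point that requires a bit of care is checking that the preimage of $\Gamma_\a$ really is $\Gamma_{f^*\a}$, i.e., that the pullback of 1-forms on the level of sections of the cotangent bundle is compatible with the map of total spaces $T^*U\fr\to T^*X\fr$ constructed from $f^*\Omega^1_X\isoto \Omega^1_U$. This is essentially a tautological unwinding of the definition of $f^*\a$ in the étale setting, and once it is in hand the rest is purely functorial.
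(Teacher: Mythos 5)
Your argument is correct and is essentially the paper's own proof, which simply notes that $f^{-1}(\Gamma_\a)=\Gamma_{f^*\a}$ and invokes the preceding lemma $f^*\D_X\cong\D_U$; you have just spelled out the Cartesian-square bookkeeping in more detail. No issues.
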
 
\begin{proof}
This follows from a simple observation that $f^{-1}(\Gamma_\a)=\Gamma_{f^*\a}$. 
\end{proof}

\begin{rem}
In what follows we identify the small \'etale sites $X_\et\simeq X\fr_\et$ via a map sending $(f:U\ra X)\in X_\et$ to $(f:U\fr\ra X\fr)\in X\fr_\et$
\end{rem}
Let now $g:V\ra X$ and $f:U\ra X$ be two objects in $X_{et}$, and $h:V\ra U$ be a morphism between them. Such a morphism produces natural functors $\mc S(h):\mr{Spl}(U,f^*\a) \ra \mr{Spl}(V,g^*\a)$ and $\mc L(h): LIC(U,f^*\a)\ra LIC(V,g^*\a)$. Indeed, if $(\E,\theta)$ is a splitting bundle for $\D_{U, f^*\a}$, then $(h^*\E,h^*\theta)$ is a splitting bundle for $h^*\D_{V, h^*(f^*\a)}$, which by \corref{Dsheaf} is canonically isomorphic to $\D_{V,h^*(f^*\a)}=\D_{V, g^*\a}$. If now we have an object $(\mc E,\na)$ of $LIC(U,f^*\a)$, then 
\begin{multline*}
\mr{curv}_p(h^*\na)(\partial)=(h^*\na(\partial))^p-h^*\na(\partial^{[p]})= \na(h_*\partial)^p-\na(h_*(\partial^{[p]}))=\\=\na(h_*\partial)^p-\na((h_*\partial)^{[p]})
=\mr{curv}_p(\na)(h_*\partial)=  f^*\a(h_*\partial) = h^*(f^*\a)(\partial)= g^*\a(\partial),
\end{multline*}
so $\mr{curv}_p(h^*\na)=h^*\a$, and $(h^*(\mc E), h^*\na)$ defines an object of $LIC(V,g^*\a)$. It is easy to see that these maps on objects extend to honest functors, and that the data $\{U\mapsto\mr{Spl}(U,f^*\a)\}$, $\{U\mapsto LIC(U,f^*\a)\}$ defines two presheaves of groupoids (or equivalently categories fibered in groupoids over $X_\et$), which we call $\mc S(X,\a)$ and $\mc L(X,\a)$ correspondingly. The fact that $\mc S(X,\a)$ and $\mc L(X,\a)$ are in fact sheaves of groupoids follows from faithfully flat descent. 

It remains to construct isomorphisms $\iota_S: \mb G_m\ra \ul{\mr{Aut}}_S$. Let $(\mc E,\theta)\in \mc S(X,\a)(U)$ be a splitting bundle for $f^*\D_{X,\a}\simeq \D_{U,f^*\a}$ on $f:U\ra X$, namely we have $\theta: \D_{U,f^*\a}\simeq \mr{End}_{\mc O_{U\fr}}\!({(\mr{Fr}_X)}_*\mc E)$ for some {\'e}tale cover $f:U\ra X$. The automorphisms of such splitting bundle are given by maps ${(\mr{Fr}_X)}_*\mc E\ra {(\mr{Fr}_X)}_*\mc E$ which are invertible and induce a trivial map on $\mr{End}_{\mc O_{U\fr}}\!({(\mr{Fr}_X)}_*\mc E)$. Restricting further to some cover $V\ra U$ on which $\mc E$ trivializes we see that locally these automorphisms are given by the  kernel of the map $\mr{GL}_{p^{\dim U}}(\O_{U\fr})\ra\mr{PGL}_{p^{\dim X}}(\O_{U\fr})$ which is equal to $\Gm (\O_{U\fr})$ identified with the scalar matrices inside $\mr{GL}_{p^{\dim U}}(\O_{U\fr})$. The natural map $\mb G_m\ra \mr{End}_{\mc O_{U\fr}}\!({(\mr{Fr}_X)}_*\mc E)$ then gives the desired isomorphism $\iota_{(\mc E,\theta)}:\mb G_m \ra \ul{\mr{Aut}}_{(\mc E,\theta)}$.

On the line bundles' side, given any $(\mc E,\na)\in \mc S(X,\a)(U)$ for $f:U\ra X$ we can restrict further to some cover $V\ra U$ such that $(\mc E,\na)|_V$ is isomorphic to $(\O_U,d+\omega)$ for some closed 1-form $\omega$. Since $\mr{curv}_p(\na)=f^*\a$, $\omega$ also should satisfy a condition $\omega\fr - \sC(\omega)=f^*\a$. An automorphism of $(\mc E,\na)$ is given by an automorphism of $\mc E$, preserving $\na$. If $f:\O_U\isoto \O_U$ is an automorphism of $\O_U$ given by an invertible function $f\in \O_X^\times$, then $f^*(d+\omega)= d+d\log f +\omega$. The kernel of $d\log:\O_U^\times \ra \Omega^1_U$ is given by $(\O_{U}^{\times})^p=\O_{U\fr}^\times\subset \O_U^\times$ which can be identified with $\mb G_m(\mc O_{U\fr})\subset \mr{End}_{\O_{U\fr}}({(\mr{Fr}_U)}_*\mc O_U)$. This shows that a map $\mb G_m\ra \ul{\mr{Aut}}_{(\mc E,\na)}$ given by action of $\mb G_m\subset  \mr{End}({(\mr{Fr}_U)}_*\mc E)$ is an isomorphism as well.

The functoriality of maps $\iota$ under pull-backs can be easily checked as well and is left as an exercise to the reader

In this way, to a differential 1-form $\a$ on $X\fr$, we can associate two $\mb G_m$-gerbes: $\mc S(X,\a)$ and $\mc L(X,\a)$. It follows from \propref{Cartiergen} that they are actually isomorphic:

\begin{prop}
\label{gerbeq}
There is an equivalence of gerbes $\mc S(X,\a)\simeq\mc L(X,\a)$ for any smooth $X$ and any differential 1-form $\a$ on $X\fr$.
\end{prop}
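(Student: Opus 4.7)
The plan is to upgrade the pointwise equivalence $\mr{Spl}(U,f^*\a)\simeq LIC(U,f^*\a)$ supplied by \propref{Cartiergen} to a morphism of stacks of groupoids on $X_{et}$, and then invoke the fact that a morphism of stacks which is a sectionwise equivalence is automatically an equivalence of stacks (hence of the underlying gerbes). Explicitly, for each $f:U\to X$ in $X_{et}$ I would take $\Phi_U:\mc L(X,\a)(U)\to \mc S(X,\a)(U)$ to be the functor $(\mc E,\na)\mapsto((\mr{Fr}_U)_*\mc E,\theta_\na)$ constructed in \secref{p-curvature}, with quasi-inverse sending a splitting $(\mc E',\theta)$ to $\mc E'$ viewed as an $\O_U$-module via $\O_U\hookrightarrow\D_{U,f^*\a}\xrightarrow{\theta}\ul{\mr{End}}_{\O_{U\fr}}\!(\mc E')$, equipped with the connection obtained by restricting $\theta$ to $\mc T_U\subset\D_{U,f^*\a}$.

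The substance of the argument is to check naturality of $\Phi$ with respect to restriction along an \'etale morphism $h:V\to U$. The square
$$
\xymatrix{
V \ar[r]^{h} \ar[d]_{\mr{Fr}_V} & U \ar[d]^{\mr{Fr}_U} \\
V\fr \ar[r]^{h\fr} & U\fr
}
$$
is Cartesian, since the relative Frobenius is compatible with \'etale base change, so flat base change furnishes a canonical isomorphism $(h\fr)^*(\mr{Fr}_U)_*\mc E\iso(\mr{Fr}_V)_* h^*\mc E$. Combined with \corref{Dsheaf} and the computation $\mr{curv}_p(h^*\na)=h^*\mr{curv}_p(\na)$ already carried out in the discussion preceding the proposition, this isomorphism intertwines $(h\fr)^*\theta_\na$ with $\theta_{h^*\na}$ and gives the required natural isomorphism $\Phi_V\circ\mc L(h)\iso\mc S(h)\circ\Phi_U$. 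The cocycle condition for compositions $W\xrightarrow{g} V\xrightarrow{h} U$ follows immediately from the corresponding compatibility of the base-change isomorphisms, so $\Phi=\{\Phi_U\}$ assembles into a morphism of stacks on $X_{et}$.

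The main (mild) obstacle is purely bookkeeping: verifying that the canonical base-change isomorphism intertwines the full $\D$-module structures, so that $\theta_{h^*\na}$ matches $(h\fr)^*\theta_\na$ on the nose rather than merely up to some automorphism. This reduces to separately checking compatibility of \'etale pullback with (i) the action of $\O_X$, (ii) the action of $\mc T_X$, and (iii) the $p$-restricted operation $\bullet^{[p]}$, each of which is straightforward from functoriality of differentials and of Frobenius. Once this is done, every $\Phi_U$ is an equivalence of groupoids by \propref{Cartiergen}, and a sectionwise equivalence of stacks of groupoids on the same site is an equivalence, yielding the desired equivalence of gerbes $\mc L(X,\a)\simeq \mc S(X,\a)$.
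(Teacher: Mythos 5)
Your proposal is correct and follows essentially the same route as the paper, which simply asserts that \propref{gerbeq} follows from the sectionwise equivalence of \propref{Cartiergen} once the two stacks have been set up via \corref{Dsheaf} and the $p$-curvature pullback computation. The naturality and base-change bookkeeping you spell out (the Cartesian Frobenius square for \'etale $h$ and the resulting identification $(h\fr)^*(\mr{Fr}_U)_*\mc E\iso(\mr{Fr}_V)_*h^*\mc E$) is exactly the content the paper leaves implicit.
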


 \subsection{Additivity of the map $c_X$: cohomology}\label{additivity:cohomology}
 Here we present a more classical point of view on the map $c_X$. 
Namely we will obtain the homomorphism $c_X:H^0(X_{\text{\'et}}\fr, \Omega^1_{X\fr}) \to  H^2(X_{\text{\'et}}\fr, \O^\x_{X\fr})$ from a map $\Omega^1_{X\fr}\to\O_{X\fr}^\x [2]$ in the derived category of {\'e}tale sheaves on $X$. Such a map is the same as an element of $\mr{Ext}^2(\Omega^1_{X\fr}, \O^\x_{X\fr})$. 

\begin{prop}
\label{exact}
There is a 4-term exact sequence
\[
    0 \to \O^\x_{X\fr} \xra{\Fr_X^*} \O^\x_X \xra{d\log} \Omega^1_{X,\on{cl}} \xra{\blt\fr-\sC} \Omega^1_{X\fr} \to0.
\]
\end{prop}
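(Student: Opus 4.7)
The plan is to verify the sequence is exact at each of its four positions, working \'etale-locally on $X\fr$ and leveraging the earlier results on the Cartier isomorphism, Cartier descent, and the Azumaya property of $\D_X$.

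First I would dispose of the two easy checks. At $\O^\x_{X\fr}$, injectivity of $\Fr_X^*$ is immediate from the identification of $\O_{X\fr}$ with the subsheaf of $p$-th powers in $(\Fr_X)_*\O_X$. At $\O^\x_X$, the equivalence $d\log f = 0 \iff df = 0$ combined with smoothness of $X$ gives $\ker(d\log) = \O^\x_{X\fr}$: in local \'etale coordinates $x_1, \ldots, x_n$, $df = \sum \partial_i f\cdot dx_i$ vanishes iff all partials do, iff $f$ is \'etale-locally in the image of Frobenius.

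Next I would establish the two inclusions at $\Omega^1_{X,\on{cl}}$. The inclusion $\on{image}(d\log) \subseteq \ker(\blt\fr - \sC)$ is a direct computation: writing $d\log f = f^{-p}\cdot f^{p-1}df$, using $\O_{X\fr}$-linearity of $\sC$ (so that $f^{-p}$ pulls out as $(f\fr)^{-1}$), and the defining Cartier identity $\sC(f^{p-1}df) = d(f\fr)$, one obtains $\sC(d\log f) = (f\fr)^{-1}d(f\fr) = d\log(f\fr) = (d\log f)\fr$, whence $(\blt\fr - \sC)(d\log f) = 0$. For the reverse inclusion, a closed $\omega$ satisfying $\omega\fr = \sC(\omega)$ is exactly one for which the flat connection $(\O_X, d+\omega)$ has vanishing $p$-curvature; by \thmref{Frobeniusdescent} it descends to a line bundle on $X\fr$, which is trivial on a sufficiently small Zariski open, and unpacking a resulting isomorphism $(\O_X, d+\omega) \simeq (\O_X, d)$ as multiplication by a unit $f$ recovers $\omega = d\log f$.

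Finally, surjectivity at $\Omega^1_{X\fr}$ is where the Azumaya property enters essentially. Given a local section $\eta$ of $\Omega^1_{X\fr}$, the algebra $\D_{X,\eta}$ of \defref{DXA} is Azumaya by \propref{Azumaya}, hence splits \'etale-locally on $X\fr$. By \propref{Cartiergen}, such a splitting corresponds to a line bundle on the matching cover of $X$ carrying an integrable connection of $p$-curvature $\eta$; Zariski-trivializing the line bundle converts the connection into $d+\omega$ for some closed 1-form $\omega$, and one computes $(\blt\fr - \sC)(\omega) = \mr{curv}_p(d+\omega) = \eta$. Hence $\eta$ is \'etale-locally in the image of $\blt\fr - \sC$. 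The main conceptual step---and the chief obstacle to a hands-on attack---is precisely this surjectivity: the route above reduces it cleanly to the \'etale-local splitting of $\D_{X,\eta}$, whereas a direct local-coordinate proof would require an iterative construction $\sC(\omega_{k+1}) = \omega_k\fr$ that converges only in the formal completion, forcing an Artin-approximation step to descend to an \'etale-local algebraic solution. Bypassing this via the Azumaya property is the observation underlying the claim that exactness ``essentially follows from the Azumaya property of $\D_X$''.
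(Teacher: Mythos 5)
Your proof is correct and follows essentially the same route as the paper: the first two terms are handled by the elementary observations about $p$-th powers and $d\log$, exactness at $\Omega^1_{X,\on{cl}}$ is reduced to the statement that a line bundle with flat connection of vanishing $p$-curvature is \'etale-locally $(\O_U,d)$, and surjectivity of $\blt\fr-\sC$ is deduced from the \'etale-local splitting of the Azumaya algebra $\D_{X,\alpha}$ together with \propref{Cartiergen} and the formula $\mr{curv}_p(d+\omega)=\omega\fr-\sC(\omega)$. No gaps.
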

\begin{proof}
For the usual proof not using gerbes or differential operators see (\cite{Mi}, Proposition 4.14). We will deduce the exactness from \propref{Azumaya} and \propref{Cartiergen}. The exactness in the second term is obvious: {\'e}tale locally (as well as globally) $d\log f=0 \Leftrightarrow df=0 \Leftrightarrow f\in \O^\x_{X\fr}$. For the third term we need to prove that locally $\omega\fr-\sC(\omega)=0\Leftrightarrow \omega=d\log f$. As we have seen, the condition $\omega\fr-\sC(\omega)=0$ for some cover $U$, means that $(\O_U, d+\omega)$ has $p$-curvature 0 and is a splitting bundle for $\D_{U,0}$. So, after refining the cover, it should be isomorphic to $(\O_U, d)$, and this means exactly that $\omega=d\log f$ for some function $f$ on $U$. It remains only to prove that the map $\blt\fr-\sC$ is surjective. Let $\alpha$ be a differential 1-form on a cover $U\fr\ra X\fr$. Then, by the Azumaya property, there is a refinement $V\fr\ra U\fr$, such that the pull-back of $\D_{X,\alpha}$ is split. So there is a line bundle with a flat connection $(\mc E,\na)$ on $V$, such that $\mr{curv}_p(\na)=\alpha$. Refining the cover, we can assume that $(\mc E,\na)=(\O_U,d+\omega)$, and so $\omega\fr-\sC(\omega)=\a$.
\end{proof}

By Yoneda's presentation for $\mr{Ext}$-groups, such a sequence provides a class in $\mr{Ext}^2(\Omega^1_{X\fr}, \O^\x_{X\fr})$ which produces a map $\ol c_X:H^0(X_{et}\fr, \Omega^1_{X\fr}) \to  H^2(X_{et}\fr, \O^\x_{X\fr})$. The image is contained in $p$-torsion, so $c_X$ lands in $\Br(X\fr)[p]$. We would like to know that $\ol c_X$ is the same as the map $c_X$ that we defined before.

\subsection{Additivity of the map $c_X$: Picard groupoids}\label{additivity:groupoids}
  Let $f:U\ra X$ be an {\'e}tale open and let $(\mc E_1,\na_1)$, $(\mc E_2,\na_2)$ be line bundles with integrable connections on $U$. Their tensor product $\mc E_1\otimes \mc E_2$ is endowed with a natural flat connection $\na:\mc E_1\otimes \mc E_2\ra \mc E_1\otimes \mc E_2\otimes \Omega^1_X $ given by the formula $\na=\na_1\otimes \id + \id\otimes\na_2$ or in dual presentation the map $\na:\mc T_{X\fr} \ra \mr{End}_{\O_{U\fr}}(\mc E_1\otimes \mc E_2)$ is given by $\partial \mapsto \na_1(\partial)\otimes\id + \id\otimes\na_2(\partial)$.  The $p$-curvature of the tensor product is easy to compute, namely:
\begin{multline*}
\mr{curv}_p(\na)(\partial)= \na(\partial)^p - \na(\partial^{[p]})= (\na_1(\partial)\otimes\id + \id\otimes\na_2(\partial))^p-\na_1(\partial^{[p]})\otimes\id - \id\otimes\na_2(\partial^{[p]})=\\= \na_1(\partial)^p\otimes\id -\na_1(\partial^{[p]})\otimes\id + \id\otimes\na_2(\partial)^p - \id\otimes\na_2(\partial^{[p]}) = \mr{curv}_p(\na_1)(\partial) + \mr{curv}_p(\na_2)(\partial).
\end{multline*} 
In particular if $\E_1$, $\E_2$ were line bundles and $\mr{curv}_p(\na_1)=\a_1$, $\mr{curv}_p(\na_2)=\a_2$, then $\mr{curv}_p(\na)=\a_1+\a_2$. Note that this way $(\mc E_1,\na_1)\otimes(\mc E_2,\na_2):=(\E_1\otimes \E_2, \na)$ is a splitting bundle for $\D_{X,\a_1+\a_2}$. In local coordinates, $(\O_U,d+\omega_1)\otimes (\O_U,d+\omega_2)=(\O_U,d+\omega_1+\omega_2)$. 

The operation of tensor product induces a map of fibered categories $\otimes:\L(X,\a_1)\times\L(X,\a_2) \ra \L(X, \a_1+\a_2)$. 

 We see that the natural operation of the tensor product adds $p$-curvatures, so to have such an operation available we need to consider $\L(X,\a)$ for all $\a$ at the same time. Namely we define the stack of groupoids $\L(X)$ by $\L(X)(U)= \sqcup_{\a} \L(X,\a)(U)$. It is then endowed with a symmetric monoidal structure $\otimes:\L(X)\times \L(X)\ra \L(X)$, which turns it into a \textit{stack of Picard groupoids} or a \textit{Picard stack}. In our treatment of Picard stacks we follow the classical source (\cite{SGA}, 1.4). In short a \textit{Picard groupoid} is a groupoid $\P$ endowed with a functor $+:\P\times\P\ra \P$, together with functorial isomorphisms $\tau:X+Y\isoto Y+X$, $\sigma:(X+Y)+Z\isoto X+(Y+Z)$ which turn "$+$'' into an {\em associative and strictly commutative functor} (following the terminology of {\em op.cit.}), and such that the functor $X\mapsto X+Y$ is an isomorphism for all $Y\in Ob(\P)$. 
 The typical example of a Picard groupoid is given by $\mr{Pic}(X)$: the groupoid of line bundles on $X$ with the "$+$''-operation given by the tensor product of line bundles. 
  
 Every Picard groupoid $\P$ has a unique unit object $e\in Ob(\P)$, with an isomorphism $e+e\isoto e$ and isomorphisms $e+X \mapsto X$, functorial in $X$ and satisfying some natural axioms. For $\mr{Pic}(X)$ unit object is given by the trivial line bundle $\O_X$, in our case of $\L(X)(U)$ by $(\O_U, d)$. The group $\mr{Aut}_{\P}(e)$ is abelian and $\mr{Aut}_\P(X)$ is isomorphic to $\mr{Aut}_{\P}(e)$ for any $X\in Ob(\P)$, via the functor $+_X:Y\mapsto Y+X$ which sends $e$ to $X$. 
 The set of isomorphism classes of objects of $\P$ under operation $+$ also forms a group, which we call $\mr{Iso}(\P)$. 
 
\begin{defn}
 A \textit{Picard stack} is a stack of groupoids $\P$ on $X_{\et}$, endowed with an operation $+:\P\times\P \ra \P$ together with functorial isomorphisms $\tau:X+Y\isoto Y+X$, $\sigma:(X+Y)+Z\isoto X+(Y+Z)$ which turn $\P(U)$ into a Picard groupoid for any $U\in X_{\et}$. 
\end{defn}

 Since $\P$ is a stack, the presheaf $U\mapsto \mr{Aut}_{\mc P}(e)(U)$ which associates to $U$ the automorphism group of the unit object in $\P(U)$ is actually a sheaf, which we call $\ul{\mr{Aut}}_{\mc P}(e)$. On the other hand the presheaf of isomorphism classes of objects $U\mapsto \mr{Iso}(\P(U))$ does not form a sheaf, and we denote by $\ul{\mr{Iso}}(\P)$ its sheafification.  
 
We denote by $C^{[-1,0]}(X)$ the category of complexes $\mc K^\blt$ of sheaves of abelian groups on $X_{et}$, such that $\mc K^i=0$ for $i\notin [-1,0]$. To every such complex $d:\mc K^{-1} \ra \mc K^0$ we can associate a prestack of Picard groupoids $\mr{pch}(\mc K^\blt)$, such that 
\begin{itemize}
\item for any $U$ in $X_{et}$, we have $Ob(\mr{pch}(\mc K^\blt)(U))=\mc K^0(U)$,
\item for any two $x,y\in \mc K^0(U)$, a morphism from $x$ to $y$ is given by an element $f\in \mc K^{-1}(U)$, such that $df=y-x$,
\item composition of morphisms is given by the addition in $\mc K^{-1}(U)$,
\item functor $+$ is given by the addition on $\mc K^{-1}$ and $\mc K^{0}$. 
\end{itemize}

The Picard stack $\mr{ch}(\mc K^\blt)$ is defined as the sheafification of the prestack $\mr{pch}(\mc K^\blt)$. The sheaf $\mc H^0(\mc K^\blt)$ then can be interpreted as $\ul{\mr{Iso}}(\mr{ch}(\mc K^\blt))$, while $\mc H^{-1}(\mc K^\blt)$ is nothing but the sheaf $\ul{\mr{Aut}}_{\mc P}(e)$. We have a 4-term exact sequence 
$$
0\ra \mc H^{-1}(\mc K^\blt)\ra \mc K^{-1}\xra{d} \mc K^0 \ra \mc H^0(\mc K^\blt)\ra 0,
$$
which can be presented by some class $\gamma\in \mr{Ext}^2(\mc H^0(\mc K^\blt), \mc H^{-1}(\mc K^\blt))$. Note that $\gamma$ produces a natural map $\ol\gamma: H^0(X_{\et},\mc H^0(\mc K^\blt))\ra H^2(X_{\et},\mc H^{-1}(\mc K^\blt))$. 

\begin{prop}[\cite{SGA}, Proposition 1.4.15]
The association $\mc K^\blt \mapsto \mr{ch}(\mc K^\blt)$ defines an equivalence of categories 
$$
\mr{ch}:D^{[-1,0]}(X_\et)\ra \mathsf{PicGpd}(X_\et)
$$
where $D^{[-1,0]}(X_\et)$ is the subcategory of the derived category of sheaves on $X_\et$ with non-trivial cohomology only in degree $-1$ and $0$ and $\mathsf{PicGpd}(X_\et)$ is the 1-category underlying the 2-category of Picard stacks on $X_\et$.
\end{prop}

Let's now fix a global section $\alpha\in H^0(X_{\et}, \mc H^{0}(\mc K^\blt))= \ul{\mr{Iso}}(\mr{ch}(\mc K^\blt))(X)$ and consider the full substack $\mr{ch}_{\a}(\mc K^\blt)\subset \mr{ch}(\mc K^\blt)$, given by the property that objects $\mr{ch}_{\a}(\mc K^\blt)(U)$ are objects of $\mr{ch}(\mc K^\blt)(U)$ that are isomorphic to $\a|_U$. Substacks $\mr{ch}_{\a}(\mc K^\blt)$ for all $\a$ are the connected components of the Picard stack $\mr{ch}(\mc K^\blt)$. It is easy to see that by definition each substack $\mr{ch}_{\a}(\mc K^\blt)$ is an $\mc H^{-1}(\mc K^\blt)$-gerbe. We can associate to it an element in $H^2(X_{\et},\mc H^{-1}(\mc K^\blt))$ by taking $\ol\gamma(\a)$.

\begin{prop}[\cite{Ol}, Theorem 12.2.8]\label{prop:gerbes_vs_cohomology}
Let $\mu$ be a sheaf of groups on $X_\et$. Then there is a natural bijection 
$$
\{\text{Isomorphism classes of $\mu$-gerbes}\}\longleftrightarrow H^2(X_\et,\mu)
$$
Under this bijection given $\mc K^\blt$ the $\mc H^{-1}(\mc K^\blt)$-gerbe $\mr{ch}_{\a}(\mc K^\blt)$ goes to $\ol\gamma(\a)\in H^2(X_{\et},\mc H^{-1}(\mc K^\blt))$.
\end{prop}
We denote the cohomology class corresponding to a gerbe $\mc G$ by $[\mc G]$.

The complex that we should consider is given by the middle arrow in the 4-term exact sequence from \propref{exact}:
 $$
 \O^\blt:= \O_X^\x\xra{d\log}\Omega^1_{X,cl}.
 $$
We claim that $\mr{ch}(\O^\blt)= \L(X)$. Indeed, for any $U\in X_{\et}$ we have a map from $\Omega^1_{X,cl}(U)$ to the set of objects of $\L(X)(U)$ given by $\omega\mapsto(\O_U, d+\omega)$ and, since any line bundle is locally trivial, this map is locally an isomorphism. Isomorphisms between bundles with connection are given by invertible functions $f$, which change the connection by $d\log f$. Finally, $(\O_U,d+\omega_1)\otimes(\O_U, d+\omega_2)= (\O_U, d+\omega_1+\omega_2)$, as well as $f_1\otimes f_2$ acts on the tensor product by $f_1f_2$, so the tensor product of line bundles corresponds to the addition on the complex $\O^\blt$. Note that from the 4-term exact sequence we have $\ul{\mr{Aut}}(e)=\O_{X\fr}^\x$ and $\ul{\mr{Iso}}(\L(X)) = \Omega^1_{X\fr}$, where the last equality is given by the $p$-curvature. From this we also get that $\mr{ch}_{\a}(\O^\blt)= \L(X,\a)$ for any $\a\in H^0(X_{et}\fr,\Omega^1_{X\fr})$ and using Proposition \ref{prop:gerbes_vs_cohomology} that $[\L(X,\a)]=\ol c_X(\alpha)$.  

\begin{prop}[\cite{Ol}, Lemma 12.3.9]
Let $\mc A$ be an Azumaya algebra over $X$. Then the corresponding class $[\mc A]\in H^2(X_\et,\mb G_m)$ coincides with the class corresponding to the $\mb G_m$-gerbe of splittings $\mc S(\mc A)$ in $H^2(X_\et,\mb G_m)$ via Proposition\ref{prop:gerbes_vs_cohomology}.
\end{prop}

Using that $\mc S(\D_{X,\a})\simeq \L(X,\a)$ we get that
$$
c_X(\a)=[\mc S(\D_{X,\a})]=[\L(X,\a)]=\ol c_X(\alpha)
$$

 This proves the following proposition:
\begin{prop}\label{additive}
Let $X$ be a smooth scheme over an algebraically closed field $\k$ of characteristic $p$, and let $c_X:H^0(X\fr, \Omega^1_{X\fr})\ra \Br(X\fr)$ be the map given by $\a\mapsto [\D_{X,\a}]$. Let also $\ol c_X:H^0(X\fr, \Omega^1_{X\fr})\ra \Br(X\fr)$ be the map obtained from the 4-term exact sequence 
\[
    0 \to \O^\x_{X\fr} \xra{\Fr_X^*} \O^\x_X \xra{d\log} \Omega^1_{X,\on{cl}} \xra{\blt\fr-\sC} \Omega^1_{X\fr} \to0.
\]
Then $c_X$ is equal to $\ol c_X$ and, consequently, $c_X$ is a homomorphism of abelian groups.

\end{prop}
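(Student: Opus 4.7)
The plan is to identify the map $c_X$ with $\ol c_X$ via the Picard-stack dictionary (\lemref{characteristic}) applied to the 4-term sequence of \propref{exact}. Concretely, set $\O^\bullet := [\O_X^\times \xra{d\log} \Omega^1_{X,\mr{cl}}]$ placed in degrees $-1,0$. First I would construct an explicit equivalence of Picard prestacks $\mr{pch}(\O^\bullet)\ra \L(X)$ sending a section $\omega\in \Omega^1_{X,\mr{cl}}(U)$ to the object $(\O_U,d+\omega)$, a morphism given by $f\in\O_U^\times$ to the $\O_U$-automorphism ``multiplication by $f$'' (which shifts the connection by $d\log f$), and matching the monoidal structures via the local identity $(\O_U,d+\omega_1)\otimes(\O_U,d+\omega_2) = (\O_U,d+\omega_1+\omega_2)$. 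After sheafification this yields an equivalence of Picard stacks $\mr{ch}(\O^\bullet)\simeq \L(X)$, compatible with the identifications $\mc H^{-1}(\O^\bullet)=\O_{X\fr}^\times$ and $\mc H^0(\O^\bullet)=\Omega^1_{X\fr}$ supplied by \propref{exact} (for the latter the local formula $\mr{curv}_p(d+\omega) = \omega\fr - \sC(\omega)$ is what translates the third arrow of the sequence into the $p$-curvature map).

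Restricting to the $\a$-component, the equivalence identifies $\mr{ch}_\a(\O^\bullet)$ with $\L(X,\a)$. Composing with the equivalence of gerbes $\L(X,\a)\simeq \mc S(X,\a)$ from \propref{gerbeq} and the identification of Brauer classes with the corresponding gerbes of splittings, one obtains
\[
  [\mr{ch}_\a(\O^\bullet)] \;=\; [\mc S(X,\a)] \;=\; [\D_{X,\a}] \;=\; c_X(\a)
\]
in $H^2(X\fr_{et},\Gm)$. On the other hand, \lemref{characteristic} applied to $\mc K^\bullet = \O^\bullet$ asserts $[\mr{ch}_\a(\O^\bullet)] = \ol\gamma(\a)$, where $\ol\gamma$ is the composition of connecting homomorphisms attached to the 4-term sequence of \propref{exact}, which by definition is $\ol c_X(\a)$. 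Combining the two equalities gives $c_X(\a) = \ol c_X(\a)$, and since $\ol c_X$ is a homomorphism of abelian groups (it is induced by a Yoneda class in $\mr{Ext}^2(\Omega^1_{X\fr},\O^\times_{X\fr})$), so is $c_X$.

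The substantive content is concentrated in \lemref{characteristic} and in the identification $\mr{ch}(\O^\bullet)\simeq \L(X)$; the latter is essentially definitional but requires some bookkeeping to check that the prestack-level assignment respects the monoidal structure and that the induced maps on $\mc H^{-1}$ and $\mc H^0$ are the ``expected'' ones, namely the obvious inclusion $\O_{X\fr}^\times\hookrightarrow\O_X^\times$ and the $p$-curvature map. The only genuine subtlety I anticipate is tracking sign and normalization conventions in the gerbe-to-Brauer-class translation, so that the class $[\mc S(X,\a)]$ computed from $\L(X,\a)$ matches the class computed directly from $\D_{X,\a}$ via \propref{Azumaya}; once both are expressed in the same \v{C}ech cocycle language this compatibility is automatic from \propref{gerbeq}.
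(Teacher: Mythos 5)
Your proposal is correct and follows essentially the same route as the paper: both identify $\L(X)$ with $\mr{ch}(\O^\blt)$ for $\O^\blt=[\O_X^\x\xra{d\log}\Omega^1_{X,cl}]$, use the $p$-curvature formula to match $\mc H^0$ with $\Omega^1_{X\fr}$, and then invoke \lemref{characteristic} together with the gerbe equivalence $\L(X,\a)\simeq\S(X,\a)$ to conclude $c_X(\a)=[\mr{ch}(\a)]=\ol\gamma(\a)=\ol c_X(\a)$. The bookkeeping steps you flag (monoidal compatibility and the gerbe-to-Brauer-class normalization) are exactly the ones the paper also treats as routine.
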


\begin{cor}
\label{azumayasplit}
Let $\a=\omega\fr-\sC(\omega)$. Then $c_X(\a)=0$ and Azumaya algebra $\D_{X,\a}$ is split.
\end{cor}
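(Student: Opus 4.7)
The plan is to invoke the equivalence $\mr{Spl}(X,\a)\simeq LIC(X,\a)$ from \propref{Cartiergen} by producing an explicit object on the line bundle side. Since $\omega$ is closed (otherwise $\sC(\omega)$ would not make sense), the rank one connection $\nabla=d+\omega$ on the trivial bundle $\O_X$ is flat: its classical curvature is $d\omega=0$. This is the candidate splitting datum.

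Next I would compute the $p$-curvature of $(\O_X,\nabla)$. The discussion right before \propref{curvpa} records the local formula $\mr{curv}_p(d+\omega)=\omega\fr-\sC(\omega)$, so by our choice of $\a$ the pair $(\O_X,d+\omega)$ is an object of $LIC(X,\a)$. Applying the equivalence of \propref{Cartiergen} then yields a splitting of $\D_{X,\a}$, which is exactly the statement that $c_X(\a)=[\D_{X,\a}]=0$ in $\Br(X\fr)$.

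A second, purely cohomological, route proceeds via \propref{additive}: the map $c_X=\ol c_X$ is the Yoneda connecting homomorphism associated to the 4-term exact sequence
\[
0\to\O^\x_{X\fr}\xra{\Fr_X^*}\O^\x_X\xra{d\log}\Omega^1_{X,\on{cl}}\xra{\blt\fr-\sC}\Omega^1_{X\fr}\to 0,
\]
and any global section of $\Omega^1_{X\fr}$ lying in the image of the penultimate arrow on global sections is annihilated by this connecting map (splitting the 4-term sequence into two short exact sequences, it is killed already by the first boundary map $\delta_1$). By hypothesis $\a=\omega\fr-\sC(\omega)$ is manifestly such an image, so $c_X(\a)=0$.

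There is no genuine obstacle here; the corollary is a formal consequence of \propref{Cartiergen} and \propref{additive}. The only point that deserves verification is the local $p$-curvature computation $\mr{curv}_p(d+\omega)=\omega\fr-\sC(\omega)$, and this was already settled in \secref{p-curvature}.
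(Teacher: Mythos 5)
Your proof is correct and coincides with the paper's own argument, which likewise gives both routes: the direct one producing the splitting bundle $((\mr{Fr}_X)_*\O_X,\,d+\omega)$ via \propref{Cartiergen}, and the cohomological one observing that $\Im(\blt\fr-\sC)\subset\Ker(\ol c_X)$ together with $c_X=\ol c_X$ from \propref{additive}. Nothing further is needed.
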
  
\begin{proof}
By definition of the map $\ol c_X:H^0(X\fr, \Omega^1_{X\fr})\ra \Br(X\fr)$, coming from the 4-term exact sequence, we have $\Im(\blt\fr-\sC)\subset \Ker(\ol c_X)$. The statement of the corollary then follows from $c_X=\ol c_X$. It can also be seen directly, since, if $\a=\omega\fr-\sC(\omega)$, by \propref{Cartiergen} $((\mr{Fr}_X)_*\O_X,d+\omega)$ is a splitting bundle for $\D_{X,\a}$.
\end{proof}

\section{Descent {\'e}tale locally}
\label{local descent}

\subsection{Setup}
\label{setup}
Now we will concentrate on a very particular question about algebras $\D_{X,\a}$, which shows up naturally in quantizations of symplectic resolutions in characteristic $p$. 

Let $\pi\colon X\to Y$ be a resolution of singularities over an algebraically closed field $\k$ of characteristic $p>0$, such that $\O_Y\isoto\pi_*\O_X$ and $R^i\pi_*\O_X=0$, for $i=1,2$.   Let $\a$ be a 1-form on $X\fr$ and let $\D_{X,\a}$ be the associated Azumaya algebra (see \defref{DXA}).

We would like to investigate the following question: can the class $c_X(\a)=[\D_{X,\a}]$ in the Brauer group $\Br(X\fr)$ of~$X\fr$ be descended to a class in~$\Br(Y\fr)$? In other words, does the class $c_X(\a)$ lie in the image of the natural map~$\Br(Y\fr)\ra\Br(X\fr)$?

\subsection{Splitting {\'e}tale locally on $Y$}
\label{formal}

In this subsection we prove that the classes $c_X(\a)$ descend to $Y$ {\'e}tale locally for any $\a$, provided $\O_Y\isoto\pi_*\O_X$ and $R^i\pi_*\O_X=0$, for $i=1,2$. By ``descends {\'e}tale locally" we mean that there exists an {\'e}tale cover $f:U\ra Y$, such that the pull-back $f^*c_X(\a)$ in $ \Br((X\times_Y U)\fr)$ descends to $U$, namely $f^*c_X(\a)=\pi^*[\mc C]$ for some Azumaya algebra $\mc C$ on $U$. Shrinking $U$, we can assume that $\mc C$ is split, so equivalently we can ask if there exists $f:U\ra Y$, such that the Azumaya algebra $f^* \D_{X,\a}=\D_{X\times_Y U,f^*\a}$ is split. The idea is to use \corref{azumayasplit} and to prove that the operator $\blt\fr -\sC$ is surjective locally on $Y$ (in the same sense).

We start with the following linear algebraic lemma:

\begin{lem}\label{surj}
Let $A,B: V\ra W$ be two maps between finite-dimensional vector spaces over an algebraically closed field $\k$ of characteristic $p>0$, such that $A$ is Frobenius-linear and $B$ is linear and surjective. Then $A-B:V\ra W$ is surjective as a map of abelian groups.

\end{lem}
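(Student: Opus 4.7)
The plan is to recognize $\phi := A - B$ as a morphism of algebraic $\k$-varieties (in fact of algebraic groups), verify that it is smooth, and conclude via irreducibility of $W$ that its image must exhaust $W$. The initial observation is that $A$ is Frobenius-linear (hence in particular additive) and $B$ is $\k$-linear, so $\phi$ is a homomorphism of additive groups; thus $\phi(V) \subseteq W$ is an additive subgroup, and it suffices to produce a nonempty Zariski-open subset of $W$ lying in this subgroup. We may also reduce to $\dim V < \infty$: lifting a basis of $W$ via $B$ produces a finite-dimensional subspace $V_0 \subseteq V$ on which $B$ is still surjective, and it is enough to treat $(A|_{V_0}, B|_{V_0})$.

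Fix coordinates $x_1, \ldots, x_n$ on $V$ and $y_1, \ldots, y_m$ on $W$. Frobenius-linearity of $A$ forces $A^* y_j = \sum_i a_{ij} x_i^p$, while $B^* y_j = \sum_i b_{ij} x_i$. Hence $\phi$ is a morphism $\mb{A}^n_\k \to \mb{A}^m_\k$ of affine varieties (in fact a homomorphism of algebraic groups $\mb{G}_a^n \to \mb{G}_a^m$). Compute its differential $d\phi = dA - dB$: since every component of $A$ is a polynomial in the $x_i^p$, all partial derivatives vanish in characteristic $p$, so $dA \equiv 0$ identically. Meanwhile $dB$ coincides with $B$ and is therefore surjective. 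We conclude that $d\phi = -B$ is surjective at every point of $V$, so $\phi$ is a \emph{smooth} morphism between smooth $\k$-varieties.

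Smooth morphisms are open, so the scheme-theoretic image of $\phi$ is a nonempty open $U \subseteq W$. Over algebraically closed $\k$, this upgrades to $\phi(V(\k)) = U(\k)$: every fiber of $\phi$ above a $\k$-point of $U$ is nonempty and smooth, hence carries a $\k$-point. Finally, $W \cong \mb{A}^m_\k$ is irreducible, so for any $w \in W$ the opens $U$ and $U - w$ both meet, and thus intersect in a nonempty open subvariety containing a $\k$-point. Writing that point as $u_1 = u_2 - w$ with $u_1, u_2 \in U(\k)$ yields $w = u_2 - u_1 \in U(\k) - U(\k) \subseteq \phi(V(\k))$, and the image exhausts $W$. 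The only mildly subtle step I anticipate is the translation of openness of $\phi$ into honest $\k$-point surjectivity onto $U(\k)$, but this comes for free from smoothness together with $\k$ being algebraically closed.
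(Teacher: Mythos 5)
Your proof is correct and follows essentially the same route as the paper's: interpret $A-B$ as a morphism of affine spaces whose differential is $-B$ (since the Frobenius-linear part has vanishing differential), deduce that the image is open, and combine openness with additivity of the image and irreducibility of $W$ to conclude surjectivity. The only cosmetic differences are your explicit reduction to finite dimensions and your use of $U-U$ where the paper uses $U+U$.
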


\begin{proof}
We consider $V$ and $W$ as the sets of $\k$-points of the corresponding affine spaces $\mb A(V)$ and $\mb A(W)$. Any linear operator $T$ from $V$ to $W$ induces a natural map $\tilde T$ between corresponding affine spaces. There also exists a linear map $A':V\ra W$, such that the map $\mr{Fr}_W\circ \tilde A':\mb A(V)\ra\mb A(W)$ coincides with $A$ on the level of $\k$-points. Since the differential of the Frobenius map at any point is equal to $0$, we get that so does the differential of $\mr{Fr}_W\circ \tilde A'$. From this we see that $d(\mr{Fr}_W\circ \tilde A'-\tilde B)=-d\tilde B$. After identifying the tangent spaces at any point with $V$ or $W$ correspondingly, the differential $d\tilde B$ is given by $B$ on the tangent space of any point. We get that $\mr{Fr}_W\circ \tilde A'-\tilde B$ is a submersion and so is an open map. It follows that $\Im (\mr{Fr}_W\circ \tilde A'-\tilde B)$ is open in $\mb A(W)$. 

Let $U\subset \mb A(W)$ be an open set. Then for any closed point $w\in W(\k)$ the subset $w-U\subset \mb A(W)$ is also open and so the intersection $U \cap (w-U)$ is non-empty. In other words $w$ lies in $U(\k)+U(\k)\subset W(\k)$. Taking $U=\Im(\tilde A-\tilde B)$, which is closed under addition, we get that $w\in U(\k)$, and that $A-B$ is surjective.
\end{proof}

Next lemma proves that the Cartier operator is surjective:
 
\begin{lem}
\label{surjective}
Let $\pi\colon X\to Y$ be a proper morphism of algebraic varieties over~$\k$, such that $X$ is smooth and $R^i\pi_*\O_X=0$ for $i=1,2$.
Then the Cartier operator $\pi_* \sC:\pi_*\Omega_{X,cl}^1\ra \pi_*\Omega_{X^{(1)}}^1$ is surjective.
\end{lem}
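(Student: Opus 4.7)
The plan is to deduce surjectivity of $\pi_*\sC$ from the short exact sequence provided by the Cartier isomorphism together with the hypothesis $R^i\pi_*\O_X=0$ for $i=1,2$, via a standard long exact sequence argument.

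First I would recall the short exact sequence on $X\fr$ established in \secref{Cartier operator}:
\[
0\to (\Fr_X)_*\O_X/\O_{X\fr}\xra{d}(\Fr_X)_*\Omega^1_{X,cl}\xra{\sC}\Omega^1_{X\fr}\to 0.
\]
Applying $(\pi\fr)_*$ gives a long exact sequence whose right-hand portion is
\[
\cdots\to(\pi\fr)_*(\Fr_X)_*\Omega^1_{X,cl}\xra{(\pi\fr)_*\sC}(\pi\fr)_*\Omega^1_{X\fr}\to R^1(\pi\fr)_*\bigl((\Fr_X)_*\O_X/\O_{X\fr}\bigr)\to\cdots,
\]
so it suffices to prove the vanishing
\[
R^1(\pi\fr)_*\bigl((\Fr_X)_*\O_X/\O_{X\fr}\bigr)=0.
\]

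Next I would exploit the other short exact sequence on $X\fr$, namely
\[
0\to\O_{X\fr}\to(\Fr_X)_*\O_X\to(\Fr_X)_*\O_X/\O_{X\fr}\to 0,
\]
and push it forward by $(\pi\fr)_*$. Since $\Fr_X$ is affine, higher direct images under $\Fr_X$ of quasi-coherent sheaves vanish, so by the Leray spectral sequence (or directly from $\pi\fr\circ\Fr_X=\Fr_Y\circ\pi$) we have
\[
R^i(\pi\fr)_*(\Fr_X)_*\O_X=(\Fr_Y)_*R^i\pi_*\O_X,
\]
and base change along Frobenius gives $R^i(\pi\fr)_*\O_{X\fr}\simeq(R^i\pi_*\O_X)\fr$. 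The hypothesis $R^i\pi_*\O_X=0$ for $i=1,2$ therefore kills both neighbors of $R^1(\pi\fr)_*((\Fr_X)_*\O_X/\O_{X\fr})$ in the long exact sequence
\[
\cdots\to R^1(\pi\fr)_*(\Fr_X)_*\O_X\to R^1(\pi\fr)_*\bigl((\Fr_X)_*\O_X/\O_{X\fr}\bigr)\to R^2(\pi\fr)_*\O_{X\fr}\to\cdots,
\]
yielding the desired vanishing and hence the surjectivity of $\pi_*\sC$.

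There is no serious obstacle here; the main thing to be careful about is bookkeeping of the various Frobenius twists and the identification of $(\pi\fr)_*(\Fr_X)_*$ with $(\Fr_Y)_*\pi_*$ (and of the corresponding higher direct images). Once this is sorted out the argument is a purely formal diagram chase. This lemma is then exactly what one needs to feed into \lemref{surj} with $A=\bullet\fr$ and $B=\sC$ in order to produce the \'etale local splittings used in \thmref{split etale}.
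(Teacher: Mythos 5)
Your proof is correct and follows essentially the same route as the paper: the paper first deduces $R^1\pi_*(\O_X/\O_{X\fr})=0$ from the sequence $0\to\O_{X\fr}\to\O_X\to\O_X/\O_{X\fr}\to 0$ and the hypothesis on $R^{1,2}\pi_*\O_X$, then applies $R^\blt\pi_*$ to the Cartier sequence $0\to\O_X/\O_{X\fr}\xra{d}\Omega^1_{X,cl}\xra{\sC}\Omega^1_{X\fr}\to 0$. The only difference is that you make explicit the $(\Fr_X)_*$ and base-change bookkeeping that the paper suppresses by convention (see the remark following the short exact sequence in \secref{Cartier operator}), and that bookkeeping is carried out correctly.
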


\begin{proof}
From $R^1 \pi_*\O_X=R^2 \pi_*\O_X=0$ and the short exact sequence $0\ra \mc O_{X\fr} \ra \mc O_X \ra \mc O_X/\mc O_{X\fr} \ra 0$, applying $R^\bullet \pi_*$ we see that $R^1 \pi_* (\mc O_X/\mc O_{X\fr}) =0$.

By \remref{shortcartier} we also have the following short exact sequence
  $$
  0\ra \mc O_X/\mc O_{X\fr} \xra{d} \Omega_{X,\mr{cl}}^1 \xra{\sC}\Omega_{X^{(1)}}^1 \ra 0
  $$
and so applying $R^\bullet \pi_*$ we get that the map $\pi_* \sC:\pi_*\Omega_{X,cl}^1\ra \pi_*\Omega^1_{X^{(1)}}$ is a surjection.
\end{proof}

Note that if $Y$ is affine, the induced map on  global sections $\sC:H^0(Y\fr,\pi_*\Omega_{X,cl}^1)\ra H^0(Y\fr,\pi_*\Omega^1_{X^{(1)}})$ is also a surjection. Note also that the spaces of global sections $H^0(Y\fr,\pi_*\Omega_{X,cl}^1)$, $H^0(Y\fr,\pi_*\Omega^1_{X^{(1)}})$ and the map $\blt\fr -\sC$ fit well in the conditions of \lemref{surj}. Indeed, $\blt\fr$ is Frobenius-linear, while $\sC$ is linear and surjective. The only problem is that if $Y$ is affine $X$ is typically not proper, and both spaces $H^0(Y\fr,\pi_*\Omega_{X,cl}^1)$ and  $H^0(Y\fr,\pi_*\Omega_{X^{(1)}}^1)$ are infinite dimensional. 

Nevertheless if $Y=\Spec \k$, then $X$ is proper and we obtain the following:
\begin{prop}\label{one point}
Let $X$ be a proper smooth variety over $\k$, such that $H^1(X,\O_X)=H^2(X,\O_X)=0$. Then for any differential 1-form $\a\in H^0(X,\Omega^1_X)$ the Azumaya algebra $\D_{X,\a}$ is split.
\end{prop}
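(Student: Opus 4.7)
My plan is to exhibit each $\a\in H^0(X\fr,\Omega^1_{X\fr})$ in the form $\omega\fr-\sC(\omega)$ for some closed 1-form $\omega\in H^0(X,\Omega^1_{X,cl})$, and then invoke \corref{azumayasplit} to conclude that $\D_{X,\a}$ is split. Equivalently, I want to show that the map
\[
    \bullet\fr-\sC\colon H^0(X,\Omega^1_{X,cl})\ra H^0(X\fr,\Omega^1_{X\fr})
\]
is surjective as a map of abelian groups.

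First, I would record that since $X$ is proper over $\k$, the space $H^0(X,\Omega^1_X)$ is finite-dimensional, hence so is its subspace $H^0(X,\Omega^1_{X,cl})=\Ker\bigl(d\colon H^0(X,\Omega^1_X)\ra H^0(X,\Omega^2_X)\bigr)$, and so is $H^0(X\fr,\Omega^1_{X\fr})\cong H^0(X,\Omega^1_X)\fr$. Second, I would apply \lemref{surjective} to the structure morphism $\pi\colon X\ra\Spec\k$: its hypotheses $R^i\pi_*\O_X=H^i(X,\O_X)=0$ for $i=1,2$ are precisely the assumptions on $X$, so the global Cartier operator $\sC\colon H^0(X,\Omega^1_{X,cl})\ra H^0(X\fr,\Omega^1_{X\fr})$ is surjective and $\k$-linear.

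With these two inputs in hand, the finishing move is \lemref{surj} applied to $V=H^0(X,\Omega^1_{X,cl})$, $W=H^0(X\fr,\Omega^1_{X\fr})$, $A=\bullet\fr$ (Frobenius-linear, since by Section \ref{Frobenius twist} the assignment $\omega\mapsto\omega\fr$ is $\mr{Frob}_\k$-linear and sends closed forms to closed forms) and $B=\sC$ (surjective $\k$-linear by the previous step). The lemma immediately yields surjectivity of $A-B=\bullet\fr-\sC$ as a map of abelian groups, which is exactly what is needed.

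The only delicate point will be the appeal to \lemref{surj}: its proof uses an open-map/submersion argument on affine spaces that genuinely requires the source and target to be finite-dimensional. Properness of $X$ is what supplies this. In the relative setting $\pi\colon X\ra Y$ with non-proper affine $Y$ the analogous global section spaces are infinite-dimensional, so one cannot simply globalise this argument — which is presumably why \thmref{split etale} has to proceed via formal neighbourhoods of fibres and Popescu's approximation theorem rather than by a direct appeal to \lemref{surj}.
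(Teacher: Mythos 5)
Your proof is correct and follows essentially the same route as the paper: apply \lemref{surjective} to $\pi\colon X\ra\Spec\k$ to get surjectivity of $\sC$ on the finite-dimensional spaces of global sections, then invoke \lemref{surj} with $A=\bullet\fr$ and $B=\sC$ to conclude that $\bullet\fr-\sC$ is surjective, so every $\a$ lies in $\Ker(c_X)$ by \corref{azumayasplit}. Your closing remark about why properness (hence finite-dimensionality) is essential for \lemref{surj}, and why the relative case requires the formal-neighbourhood argument instead, is accurate.
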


\begin{proof}
We apply \lemref{surj} to the map $\bullet\fr-\sC:H^0(X,\Omega_{X,cl}^1)\ra H^0(X^{(1)}, \Omega^1_{X^{(1)}})$. The image of this map lies in the kernel of $c_X:H^0(X^{(1)}, \Omega^1_{X^{(1)}})\ra \Br(X\fr)$, so it is enough to prove that $\bullet\fr-\sC$ is surjective. Map $\sC$ is $\k$-linear, $\bullet\fr$ is Frobenius-linear and, by \lemref{surj}, $\sC$ is surjective. So we are in the context of \lemref{surj}, $\bullet\fr-\sC$ is surjective, and we are done.
\end{proof}

If $Y$ is non-trivial, we slightly generalise the argument to prove that $\D_{X,\a}$ splits on the formal neighbourhood of any fiber of $\pi$.

\begin{prop}\label{split formal}
  Let $\pi\colon X\to Y$ be a proper morphism of algebraic varieties over~$\k$, such that $X$ is smooth and $R^i\pi_*\O_X=0$ for $i=1,2$.  Then for any $\a\in H^0(X,\Omega^1_X)$ the algebra $\D_{X,\a}$ splits on the formal neighbourhood of any fiber of~$\pi$.
\end{prop}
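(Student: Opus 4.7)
The plan is to reduce the splitting of $\D_{X,\a}$ on the formal neighbourhood to the surjectivity of the map $\bullet\fr - \sC$ there, using \corref{azumayasplit}, and then to establish this surjectivity by applying \lemref{surj} level-by-level in the $\mf m_y$-adic topology. Fix $y\in Y$ and let $\hat X_y$ denote the formal neighbourhood of $\pi^{-1}(y)$. If we can show that every $\a\in H^0(\hat X_y\fr, \Omega^1_{X\fr})$ can be written as $\omega\fr - \sC(\omega)$ for some closed $\omega$ on $\hat X_y$, then by \corref{azumayasplit} the bundle $((\mr{Fr}_X)_*\O_{\hat X_y}, d+\omega)$ is a splitting bundle for $\D_{X,\a}$ on $\hat X_y$. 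Hence it suffices to prove this surjectivity.

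First I would extend the Cartier surjectivity to $\hat X_y$. By the theorem on formal functions, the hypotheses $R^i\pi_*\O_X=0$ for $i=1,2$ give $H^i(\hat X_y,\O_{\hat X_y})=0$ for $i=1,2$, so the argument of \lemref{surjective} applies verbatim on $\hat X_y$: the Cartier operator
\[
\sC : H^0(\hat X_y, \Omega^1_{X,cl}) \to H^0(\hat X_y\fr, \Omega^1_{X\fr})
\]
is surjective.

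Next I would set up the $\mf m_y$-adic filtrations. By properness of $\pi$, the sheaves $\pi^{(1)}_*\Omega^1_{X,cl}$ and $\pi^{(1)}_*\Omega^1_{X\fr}$ are coherent on $Y\fr$, so their $y$-completions $V$ and $W$ are inverse limits of finite-dimensional $\k$-vector spaces $V_n,W_n$ along the $\mf m_y$-adic filtrations. Being $\O_{Y\fr}$-linear, $\sC$ descends to surjective $\k$-linear maps $\sC:V_n\to W_n$. Under the abstract identification of the complete local rings at $y$ and $y\fr$ (which differ only by the twist of $\k$-structure), the Frobenius-linear map $\bullet\fr$ sends $\mf m_y^n$ into itself, hence descends to Frobenius-linear maps $\bullet\fr:V_n\to W_n$. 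Invoking \lemref{surj} at each level $n$ yields surjectivity of $\bullet\fr - \sC:V_n\to W_n$ for every $n$.

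Finally, since the $V_n$ are finite-dimensional, the inverse system of kernels $\ker(\bullet\fr-\sC:V_n\to W_n)$ automatically satisfies Mittag--Leffler, so the induced map $\bullet\fr-\sC:V\to W$ on the inverse limits is surjective, which is what we needed. The main obstacle is the bookkeeping around the Frobenius-twisted module structures: one must carefully verify that the Frobenius-linear map $\bullet\fr$ respects the $\mf m_y$-adic filtration in the correct sense so that \lemref{surj} can be invoked uniformly at each finite level. Once this is pinned down, the argument is a parameterised version of \propref{one point}.
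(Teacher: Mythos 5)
Your overall strategy coincides with the paper's: reduce splitting on the formal neighbourhood to surjectivity of $\blt\fr-\sC$ there via \corref{azumayasplit}, establish surjectivity of the Cartier operator, and then combine \lemref{surj} with an $\mf m_y$-adic approximation. The first two steps are fine (the paper passes to $Z=X\times_Y\Spec\widehat{\O}_y$ by Grothendieck's existence theorem and quotes \lemref{surjective} at the sheaf level rather than rerunning it on the formal scheme, but that is a cosmetic difference). The gap is in your last step, where you pass from surjectivity of $\blt\fr-\sC\colon V_n\to W_n$ at each finite level to surjectivity on the inverse limits. The map $\blt\fr-\sC$ is additive but not $\k$-linear, so its kernels $K_n\subset V_n$ are merely $\mathbb F_p$-subspaces (cf.\ Artin--Schreier: the kernel of $x\mapsto x^p-x$ on $\k$ is $\mathbb F_p$). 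Finite-dimensionality of $V_n$ over $\k$ therefore imposes no descending chain condition on the images $\mathrm{im}(K_m\to K_n)$ --- a finite-dimensional $\k$-vector space contains infinite strictly decreasing chains of additive subgroups --- so Mittag--Leffler is not ``automatic'', and you do not justify it by any other means.

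The observation that repairs this, and which is the actual heart of the paper's proof, is that $\blt\fr$ does not merely preserve the filtration: it maps $F_iM=\mf m_y^i M$ into $F_{pi}N$. Hence on the associated graded pieces of positive degree the induced map $\mr{gr}_i(\blt\fr-\sC)$ equals $\mr{gr}_i(-\sC)$, which is $\k$-linear and surjective by \lemref{surjective}; \lemref{surj} is needed only in degree $0$, where the relevant spaces are finite-dimensional over $\k$. Surjectivity on every graded piece, together with completeness and separatedness of $M$ and $N$ for the $\mf m_y$-adic topology, then yields surjectivity of $\blt\fr-\sC$ by successive approximation: at each stage one corrects by some $\lambda\in F_iM$ with $-\sC(\lambda)$ killing the error modulo $F_{i+1}N$, and the extra term $\lambda\fr$ lands in $F_{pi}N$, so the errors tend to $0$. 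I recommend replacing your Mittag--Leffler step with this graded argument; the rest of your write-up then goes through.
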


\begin{proof}

Let's take any closed point $y\in Y(\k)$. The question is Zariski local on $Y$, so we can assume that $Y$ is affine. Let $\widehat{\O}_y$ be the completion of the ring $\Gamma(Y,\O_Y)$ at the point $y$ and let $\widehat{X_y}$ denote the formal neighbourhood of the fiber $\pi^{-1}(y)$. Let also $Z$ denote the formal fiber: $Z=X\times_Y \Spec \widehat{\O}_y$. By the Grothendieck's existence theorem (\cite{EGA}, III, Theorem 5.1.4) one has an equivalence of categories $\Coh(\widehat {X_y})\simeq \Coh(Z)$. In particular, $\D_{X,\a}$ is split on $\widehat{X_y}\fr$ if and only if it is split on $Z\fr$. The Frobenius twist $Z\fr$ is naturally identified with $X\fr\times_{Y\fr} \Spec \widehat{\O}\fr_{\mr{Fr}_Y(y)}$. We have a natural map of schemes $\mr{Fr}_Z:Z\ra Z\fr$ and the maps $\pi_*(\blt\fr):\pi_*(\Fr_Z)_*\Omega^1_Z\ra \pi_*\Omega^1_{Z\fr}$ and $\pi_*\sC:\pi_*{(\mr{Fr}_Z)}_*\Omega^1_{Z,\mr{cl}}\ra \pi_*\Omega^1_{Z\fr}$. As before, to split $\D_{X,\a}$ on $Z\fr$ it is enough to find a closed 1-form $\omega$ on $Z$ such that $\omega^{(1)}-\sC(\omega)= \a|_{Z\fr}$.

Since $\pi$ is proper, globally defined closed 1-forms on $Z$ form a finitely generated module
$
M=\pi_*{(\mr{Fr}_Z)}_*\Omega^1_{Z,\mr{cl}}
$
over $\widehat{\mc O}_{\mr{Fr}_Y(y)}\fr$. So does the module
$
N=\pi_*\Omega^1_{Z\fr}
$
 of all globally defined 1-forms on $Z\fr$. Let $\mf{m}_{\mr{Fr}_Y(y)}\subset \widehat{\mc O}_{\mr{Fr}_Y(y)}\fr$ be the maximal ideal of the point $\mr{Fr}_Y(y)\in Y\fr(\k)$. We endow $M$ and $N$ with natural descending filtrations: $F_iM:=\mf{m}_{\mr{Fr}_Y(y)}^i M,\ F_iN:=\mf{m}_{\mr{Fr}_Y(y)}^i N$. Modules $M$ and $N$ are complete with respect to the topologies induced by $F_\bullet$.

 The map $\omega\mapsto \omega^{(1)}-\sC(\omega)$ is continuous with respect to these topologies. Namely $\sC(f\fr\cdot \omega)=f\fr\sC(\omega)$ and $(f\fr\cdot \omega)\fr=(f\fr)^p\omega\fr$, so $(\blt\fr-\sC)(F_iM)\subset F_iN$. We get that it is enough to solve the equation $\omega^{(1)}-\sC(\omega)= \a|_{Z\fr}$ on the level of the associated graded modules $\mr{gr}_{F^\blt}(M)$ and $\mr{gr}_{F^\blt}(N)$. 
 
 Note that the twist $\blt\fr$ maps $F_iM$ not just to $F_iN$, but to $F_{pi}N$. It follows that for $i>0$ the map $\mr{gr}_i(\blt\fr):\mr{gr}_i(M)\ra \mr{gr}_i(N)$ is 0, and so $\mr{gr}_i(\blt\fr-\sC)=\mr{gr}_i(-\sC)$ is surjective by \lemref{surjective}. It remains only to prove that $\mr{gr}_0(\blt\fr-\sC)$ is surjective. But the spaces $\mr{gr}_0M=M/F_1M$ and $\mr{gr}_0N=M/F_1N$ are finite-dimensional, $\mr{gr}_0(\blt\fr)$ is Frobenius-linear and $\mr{gr}_0(\sC)$ is $\k$-linear and surjective. \lemref{surj} can be applied and so $\mr{gr}_0(\blt\fr-\sC)$ is surjective too. We are done.

\end{proof}

\begin{rem}
As we will see soon, in fact it is more important that $\D_{X,\a}$ splits on $Z\fr$, than that it splits on the formal neighbourhood of the fiber.  
\end{rem}

Finally, using Popescu's theorem (a fancy version of Artin approximation, which was introduced to the first author by Brian Conrad), we prove that $\D_{X,\a}$ also splits on some {\'etale} neighbourhood of each fiber. 

\begin{thm}\label{split etale}
  Under the assumptions of \lemref{split formal}, there exists an {\'e}tale cover $U\to Y$, such that the pullback of~$\D_{X,\a}$ to $U\x_Y X$ splits.
\end{thm}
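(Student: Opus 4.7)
The statement is étale-local on $Y$, so it suffices, for each closed point $y\in Y$, to exhibit an étale neighbourhood $(V,v)\to(Y,y)$ such that $\D_{X,\a}$ pulls back to a split Azumaya algebra on $(X\x_Y V)\fr$; gluing these neighbourhoods over all closed points of $Y$ then yields the required étale cover. The problem therefore reduces to spreading out the formal splitting provided by \propref{split formal} from the formal fiber to an honest étale neighbourhood of $y$, and the natural tool for this is Popescu's approximation theorem applied to $\O_{Y,y}\to\wh{\O}_{Y,y}$.

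Fix a closed point $y\in Y$ and set $Z:=X\x_Y\Spec\wh{\O}_{Y,y}$. \propref{split formal} (together with the equivalence $\on{Coh}(\wh{X_y})\simeq\on{Coh}(Z)$ invoked in its proof) furnishes a splitting bundle $\mc E$ on $Z\fr$ and an isomorphism $\theta\colon\D_{X,\a}|_{Z\fr}\isoto\ul{\mr{End}}_{\O_{Z\fr}}(\mc E)$. Since $\O_{Y,y}$ is excellent, the completion $\O_{Y,y}\to\wh{\O}_{Y,y}$ is regular, and Popescu's theorem presents $\wh{\O}_{Y,y}=\varinjlim_\lambda A_\lambda$ as a filtered colimit of smooth $\O_{Y,y}$-algebras of finite type. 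Setting $X_\lambda:=X\x_Y\Spec A_\lambda$, each $X_\lambda\to\Spec A_\lambda$ is proper, the transitions in $\{X_\lambda\}$ are affine, and $Z=\varprojlim_\lambda X_\lambda$. The datum $(\mc E,\theta)$ is of finite presentation, so by the standard limit formalism (EGA~IV, \S8) there exists an index $\lambda$ and a pair $(\mc E_\lambda,\theta_\lambda)$ on $X_\lambda\fr$ whose pullback to $Z\fr$ is $(\mc E,\theta)$; in particular $\theta_\lambda$ already splits $\D_{X,\a}$ over the smooth $Y$-scheme $X_\lambda$.

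The last step is to convert the smooth $\O_{Y,y}$-algebra $A_\lambda$ into an étale one. The structure map $\Spec\wh{\O}_{Y,y}\to\Spec A_\lambda$ sends the closed point to a $\k$-rational point $p_\lambda$ lying over $y$, and smoothness of $A_\lambda/\O_{Y,y}$ at $p_\lambda$ allows us, after Zariski shrinking near $p_\lambda$, to find an étale $\O_{Y,y}$-morphism $\Spec A_\lambda\to\mb A^n_{\O_{Y,y}}$. Lifting the coordinates in $\k$ of the image of $p_\lambda$ to elements of $\O_{Y,y}$ produces a section $\sigma\colon\Spec\O_{Y,y}\to\mb A^n_{\O_{Y,y}}$, and the pullback $\Spec B:=\Spec A_\lambda\x_{\mb A^n_{\O_{Y,y}}}\Spec\O_{Y,y}$ is étale over $\Spec\O_{Y,y}$ and equipped with a canonical $\O_{Y,y}$-morphism $\Spec B\to\Spec A_\lambda$. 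Pulling $(\mc E_\lambda,\theta_\lambda)$ back through $\Spec B\to\Spec A_\lambda$ gives a splitting of $\D_{X,\a}$ on $(X\x_Y\Spec B)\fr$, and spreading $\Spec B\to\Spec\O_{Y,y}$ out to an étale morphism $V\to Y$ defined on a Zariski-open neighbourhood of $y$ completes the proof. The main obstacle is the bookkeeping in the finite-presentation descent of $(\mc E,\theta)$ along the filtered system, together with the fact that Popescu delivers only a smooth, not étale, base change—this is precisely what the elementary local-section argument in the last step is designed to repair.
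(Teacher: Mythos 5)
Your argument is correct and follows essentially the same route as the paper: split on the formal fiber via \propref{split formal}, write $\wh{\O}_{Y,y}$ as a filtered colimit of smooth $\O_{Y,y}$-algebras by Popescu, descend the splitting datum to a finite stage by finite presentation, and then cut the smooth algebra down to an étale one via a quasi-section. The only difference is cosmetic — you spell out the local-section construction of the étale quasi-section that the paper merely asserts.
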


\begin{proof}
  Since $Y\fr$ is quasi-compact, it is enough to prove that for each $y\in Y\fr(\k)$ Azumaya algebra~$\D_{X,\a}$ splits on some {\'e}tale neighbourhood of $y$. In particular, we can assume that $Y$ is affine, $Y=\Spec \Gamma(Y,\O_Y)$. Let $\mc O_{y}$ be the usual Zariski localisation of $\mc O_Y$ at the point $y$ and let $\widehat{\mc {O}_y}$ be its formal completion. The field $\k$ is excellent like any other field, $\Gamma(Y,\O_Y)$ is a finitely generated algebra over $\k$ and is excellent, finally $\mc O_{y}$ is a localisation of $\O_Y$ and so is excellent too. 
  Any excellent ring is a G-ring, namely if $A$ is an excellent local ring, then the natural map $A\ra \widehat A$ to its formal completion is regular. We get that the map $\mc {O}_y\ra \widehat{\mc {O}_y}$ is regular. So does the natural map $\Gamma(Y,\O_Y)\ra \O_y$ and consequently the composition $\Gamma(Y,\O_Y)\ra \widehat{\mc {O}_y}$.
  
  Popescu's theorem (\cite{P1},\cite{P2} and \cite{P3}) states that a map between two Noetherian rings $A\ra B$ is regular if and only if $B$ is a filtered colimit of finitely generated smooth $A$-algebras. Since the map $\mc {O}_Y\ra \widehat{\mc {O}_y}$ is regular, we get that  $\widehat{\mc {O}_y}$ can be obtained as a filtered colimit $\varinjlim A_i$ of smooth finitely generated $\O_Y$-algebras. Functor $\Spec$ sends colimits to limits, so we get that $\Spec \widehat{\mc {O}_y}=\varprojlim \Spec A_i$.
  
   Let's now assume that the pull-back of~$\D_{X,\a}$ to $\Spec \widehat{\mc {O}_y}\fr\!\!\x_{Y\fr} \!X\fr$ splits. We have
   $$
   \Spec \widehat{\mc {O}_y}\fr\!\!\x_{Y\fr}\!X\fr\simeq (\varprojlim \Spec A_i\fr)\x_{Y\fr} \!X\fr\simeq \varprojlim (\Spec A_i\fr\!\x_{Y\fr}\! X\fr),
   $$
 and, since $\D_{X,\a}$ is a coherent sheaf of algebras, it splits over some $\Spec A_i\fr \x_{Y\fr} X\fr$ as well. Smooth map $\Spec A_i \ra Y$  admits an {\'e}tale quasi-section: there exists a surjection $A_i[f^{-1}]\twoheadrightarrow B_i$ for some $\O_Y$-algebra $B_i$ and a function $f\in \O_Y$, such that the corresponding map $\Spec B_i \ra Y$ is {\'e}tale. This is the {\'e}tale neighbourhood we are looking for: $\Spec B_i\hookrightarrow \Spec A_i$ and $\D_{X,\a}$ splits on $\Spec B_i\fr \x_{Y\fr} X\fr$.
   
   So it is enough to prove that the pull-back of~$\D_{X,\a}$ to $\Spec \widehat{\mc {O}_y}\fr\!\!\x_{Y\fr}\! X\fr$ splits. But this was already done in \propref{split formal}.
\end{proof}

\newpage

\section{Obstructions to global descent}\label{global descent}

As we saw in \thmref{split etale}, if $\O_Y\isoto \pi_*\O_X$ and $R^1\pi_*\O_X=R^2\pi_*\O_X=0$, the classes $[\D_{X,\a}]$ descend to $Y\fr$ {\'e}tale locally for all $\alpha$. The way to prove it was to split them {\'e}tale locally on $Y\fr$, namely, we proved that the pull-back of $\D_{X,\a}$ to $(U\x_Y X)\fr$ splits for some {\'e}tale cover $U\ra Y$. Obviously, this kind of argument will not help in the global situation, unless the algebra $\D_{X,\a}$ is globally split. Nevertheless, one could try to study obstructions to the global descent. In this section we define two different types of obstructions.

\subsection{Picard obstruction} \label{picard}
From the local-to-global principle it is natural to expect that obstructions to global descent should lie in some cohomology group of some natural sheaf on $Y\fr$. It is true indeed and the corresponding sheaf is easy to describe, but on the other hand it seems to be almost impossible to work with. This approach does not take into account that our classes come from differential forms.

Let $\A$ be an Azumaya algebra on $X\fr$ and let $[\A]$ be its class in $\Br(X\fr)=H^2(X\fr_{\text{\'et}},\Gm)$. We have a natural map $\varphi:\Br(X\fr)=H^2(X\fr_{\text{\'et}}, \Gm)\rightarrow H^0(Y\fr_{\text{\'et}}, R^2\pi_*\Gm)$. It is easy to see that $\varphi([\A])=0$ if and only if $\A$ splits on $(U\x_Y X)\fr$ for some {\'e}tale cover $U\ra Y$. In particular \thmref{split etale} says exactly that $\varphi([\D_{X,a}])=0$. 

Note that $R^0 \pi_* \Gm=\Gm$ and $R^1 \pi_* \Gm= \ul{\mr{Pic}}_{X/Y}\fr$. From Leray spectral sequence $H_{\text{\'et}}^p(Y\fr,R^q \pi_* \Gm) \Rightarrow H_{\text{\'et}}^{p+q}(X\fr,\Gm)$ we get a short exact sequence 
$$
0\ra \Br(Y\fr)\ra \Ker(\varphi)\ra H_{\text{\'et}}^1(Y\fr,\ul{\mr{Pic}}_{X/Y}\fr)\ra 0,
$$
and, if $\varphi([\A])=0$, this way we get a well-defined class $\widetilde{[\A]}$ in $H_{\text{\'et}}^1(Y\fr,\ul{\mr{Pic}}_{X/Y}\fr)$, which is zero if and only if $[\A]$ lies in the image of $\Br(Y\fr)$. We call this class the \textit{Picard obstruction} corresponding to $\mc A$. 

Unfortunately, the only thing we can say about this class in general, is that $\widetilde{[\A]}$  lies in the $p$-torsion subgroup $H_{\text{\'et}}^1(Y\fr,\ul{\mr{Pic}}_{X/Y}\fr)[p]$. In particular, we do not know when this subgroup is zero, unless $Y=\Spec \k $, in that case we obtain the statement of \propref{one point}.

\subsection{Sheaves $\mc Q_{\pi,N}$} \label{sheaves Q}

This class of obstructions works only for affine $Y=\Spec A$ and a proper map $\pi:X\to Y$ from a smooth variety $X$, that satisfies $\O_Y\isoto \pi_*\O_X$ and $R^1\pi_*\O_X=R^2\pi_*\O_X=0$. We will construct a sequence of  sheaves $\mc Q_{\pi,N}$ on $Y^{(N)}$ (or equivalently a sequence of $A^{p^N}$-modules) endowed with natural maps $\psi_N:(\Fr_Y^N)_*\pi_*\Omega^1_{X}\ra \mc Q_{\pi,N}$, such that if $\psi_N(\a^{(N-1)})=0$ for some $N$, then the class~$[\D_{X,\a}]$ descends to $Y\fr$.

The first idea is to functorially extend the map $c_X:H^0(X\fr,\Omega^1_{X\fr})\ra \Br(X\fr)$ to the case of singular varieties, in particular $Y$. This can be done indeed, but only with the help of some homotopy algebra, which we decided not to use (at least in this paper). Interested reader will find more details about that in \remref{cotangent}. 

We use the following simple, but completely satisfactory for our purposes, lemma, instead.

\begin{lem}
\label{pullback}
Let $\pi:X\ra Y$ be a map from a smooth variety $X$ to an affine variety $Y$. Let $\alpha \in H^0(X\fr,\Omega^1_{X\fr})$ be a 1-form, and suppose that there exists a K{\"a}hler differential $\theta$ on $Y\fr$, such that $\alpha=\pi^*(\theta)$. Then the class $[\D_{X,\a}]\in \mr{Br}(X\fr)$ descends to $Y\fr$.
\end{lem}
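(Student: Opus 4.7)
My proof plan has three stages: reduction to a monomial, a Cartier-operator rewriting, and the descent itself.

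Since $Y = \Spec A$ is affine, the $A^{(1)}$-module $\Omega^1_{A^{(1)}/k}$ is generated by symbols $db$ with $b \in A^{(1)}$, so $\theta$ admits a finite presentation $\theta = \sum_i a_i\,db_i$ with $a_i, b_i \in A^{(1)}$. Pulling back gives $\alpha = \pi^*\theta = \sum_i F_i\,dG_i$ where $F_i := \pi^{(1),*}a_i$ and $G_i := \pi^{(1),*}b_i$ are globally defined functions on $X^{(1)}$. The additivity of $c_X$ established in \propref{additive} reduces the lemma to the single-monomial case $\alpha = F\,dG$ with $F, G$ globally pulled back from $Y^{(1)}$.

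Next, because $Y$ is affine and $k$ perfect, the elements $a, b \in A^{(1)}$ lift globally to $\tilde a, \tilde b \in A$ with $\tilde a^{(1)} = a$ and $\tilde b^{(1)} = b$; setting $f := \pi^*\tilde a$, $g := \pi^*\tilde b \in \mc O_X(X)$ we have $f^{(1)} = F$ and $g^{(1)} = G$. A direct computation in characteristic $p$ (using $d(f^p) = 0$ and $dg \wedge dg = 0$) shows that the $1$-form $\omega := f^p g^{p-1}\,dg$ on $X$ is closed, and the $\mc O_{X^{(1)}}$-linearity of $\sC$ together with the Cartier formula $\sC(g^{p-1}\,dg) = dG$ gives $\sC(\omega) = F\,dG = \alpha$. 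Crucially $\omega$ is itself globally pulled back: $\omega = \pi^*\tilde\omega$ where $\tilde\omega := \tilde a^p \tilde b^{p-1}\,d\tilde b \in \Omega^1_Y$. Since Frobenius twists commute with pullbacks, $\omega^{(1)} = \pi^{(1),*}\tilde\omega^{(1)}$ with $\tilde\omega^{(1)} \in \Omega^1_{Y^{(1)}}$.

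Applying the four-term exact sequence of \propref{exact} then yields $c_X(\alpha) = c_X(\sC(\omega)) = c_X(\omega^{(1)}) = c_X(\pi^{(1),*}\tilde\omega^{(1)})$, and \propref{Cartiergen} identifies $(\mc O_X,\, d - \omega)$ as a splitting bundle for the Azumaya algebra $\D_{X,\,\alpha - \omega^{(1)}}$, so that by additivity $[\D_{X,\alpha}] = [\D_{X,\omega^{(1)}}]$ in $\Br(X^{(1)})$. The remaining task --- and the technical heart of the proof --- is to assemble from these data a genuine Azumaya algebra $\mc C$ on $Y^{(1)}$ whose pullback along $\pi^{(1)}$ represents this class. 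The natural construction uses the global lifts $\tilde a, \tilde b \in A$ to produce a cyclic-type algebra over $\mc O_{Y^{(1)}}$ (Artin--Schreier in the $\tilde a$-variable and $\mu_p$-Kummer in the $\tilde b$-variable); combined with the étale-local descent already guaranteed by \thmref{split etale} and the explicit pullback formula for the splitting bundle $(\mc O_X,d-\omega)$, the resulting \v{C}ech cocycle on an étale cover of $Y^{(1)}$ glues into a well-defined Azumaya algebra globally. The main obstacle is that $Y^{(1)}$ is singular, so one must perform the descent directly at the level of the Brauer group rather than via the smooth-variety formalism of crystalline differential operators --- this is exactly the subtlety alluded to in the remarks preceding the lemma, where the authors explain that a full homotopy-algebraic extension of $c_Y$ to singular bases exists but is sidestepped here in favour of the explicit argument above.
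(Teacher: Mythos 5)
There is a genuine gap. Your stages 1--2 are correct as computations but they make no progress: after the Cartier rewriting you arrive at $c_X(\alpha)=c_X(\omega\fr)$ with $\omega\fr=(\pi\fr)^*(\tilde\omega\fr)$ and $\tilde\omega\fr\in\Omega^1_{Y\fr}$ a K\"ahler differential --- which is word-for-word the hypothesis you started from, with $\tilde\omega\fr$ in place of $\theta$. The entire content of the lemma is therefore deferred to your stage 3, and stage 3 is asserted rather than proved: the ``cyclic-type algebra'' over $\O_{Y\fr}$ is never constructed, the claim that the resulting \v{C}ech cocycle ``glues into a well-defined Azumaya algebra globally'' is exactly the statement to be established, and the appeal to \thmref{split etale} is not available here --- that theorem assumes $\pi$ proper with $R^1\pi_*\O_X=R^2\pi_*\O_X=0$, none of which is hypothesized in \lemref{pullback} (the lemma is stated for an arbitrary map from a smooth variety to an affine one). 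You have also correctly diagnosed the obstacle (that $Y\fr$ is singular, so $c_{Y}$ is not defined by the $\D$-module formalism), but the way around it is not to build an explicit algebra.

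The paper's proof is a two-line functoriality argument that sidesteps all of this. Write $\theta=\sum_i f_i\,dg_i$ and use the functions $f_i,g_i$ to define a morphism $\iota:Y\fr\ra Z\fr$ to a smooth affine space $Z\fr$ carrying a $1$-form $\theta_Z$ (e.g.\ $\sum_i x_i\,dy_i$) with $\iota^*\theta_Z=\theta$. Since the Brauer group is contravariantly functorial for \emph{arbitrary} morphisms, the class $\iota^*c_Z(\theta_Z)\in\Br(Y\fr)$ makes sense even though $Y\fr$ is singular, and functoriality of $c$ for the map of smooth varieties $\iota\circ\pi:X\ra Z$ gives
$$
\pi^*\bigl(\iota^*c_Z(\theta_Z)\bigr)=(\iota\circ\pi)^*c_Z(\theta_Z)=c_X\bigl((\iota\circ\pi)^*\theta_Z\bigr)=c_X(\pi^*\theta)=c_X(\alpha),
$$
so $\iota^*c_Z(\theta_Z)$ is the desired class in $\Br(Y\fr)$. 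The idea you are missing is that one can manufacture the descended class on the singular base by \emph{restriction from a smooth ambient space} chosen adapted to the presentation of $\theta$, rather than by constructing an Azumaya algebra on $Y\fr$ by hand.
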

\begin{proof}
Let $\iota$ be an embedding of $Y\fr$ to a smooth variety $Z\fr$, such that $\theta=\iota^*\theta_Z$ for some 1-form $\theta_Z$ on $Z\fr$. If $\theta$ is represented in the form $\sum_{i=1}^n f_idg_i$ then for $\iota$ one can take the map into an affine space, constructed in the following way. Namely, let $j:Y\rightarrow \mb A^m$ be any closed embedding to an affine space, and let $\iota:Y\ra \mb A^{m}\times \mb A^n \times\mb A^n$ be the map, sending $y$ to the triple $(j(y),f_i(y),g_k(y))$. If we put coordinates $(x_s,y_i,z_k)$ on $Z=\mb A^{m}\times \mb A^n \times\mb A^n$, taking $\theta_Z=\sum_{i=1}^n y_idz_i$, we get that $\theta=\iota^*\theta_Z$. 

We define an element $c_Y(\theta)\in \Br(Y\fr)$ (this is just a notation, the map $c_Y$ is not a priori defined, since $Y$ is not necessarily smooth; in particular in our definition it could depends on $\iota$) by $c_Y=\iota^*c_Z(\theta_Z)$. Then
$$
\pi^*c_Y(\theta)=\pi^*\iota^*c_Z(\theta_Z)= (\iota\circ\pi)^*c_Z(\theta_Z)= c_X((\iota\circ\pi)^*\theta_Z)= c_X(\pi^*\theta).
$$
So the class $[\D_{X,\a}]$ descends to $Y$ and we are done.
\end{proof}

\begin{rem}
In particular, since  $\O_Y\isoto \pi_*\O_X$ the class $[\D_{X,df}]$ for $f\in H^0(X\fr,\O_{X\fr})$ always descends to $Y\fr$.
\end{rem}
\begin{rem}
\label{cotangent}
One can show that the class $c_Y(\theta)\in \Br(Y)$ does not depend on the choice of the embedding $\iota$ and that this construction gives a well-defined map from the space of K{\"a}hler differentials $H^0(Y\fr, \Omega^1_{Y\fr})$ to the Brauer group $\mr{Br}(Y\fr)$. With some modifications this kind of definition can also be extended to the case of arbitrary non-smooth $X$. Namely, one can construct a well-defined map $c_X:\mb H^0(X\fr,\mb L^\blt_{X\fr})\ra \Br(X\fr)$ from the 0-th hyper-cohomology group of the cotangent complex of $X$. Moreover, this map is functorial in $X$. Note that $\mb L^\blt_X$ has cohomology sheaves in negative degrees and a priori $\mb H^0(X,\mb L^\blt_X)$ is not isomorphic to the space of global K{\"ahler} differentials $H^0(X,\Omega^1_X)$ (unless $X$ is smooth or affine).
\end{rem}

Now we return to our setting of affine $Y=\Spec A$ and a proper map $\pi:X\to Y$ from a smooth variety $X$ with $\O_Y\isoto \pi_*\O_X$ and $R^1\pi_*\O_X=R^2\pi_*\O_X=0$. We have a natural $A$-module structure on $H^0(X,\Omega^1_{X})$ given by multiplication of a differential form by a function, and, since $\pi$ is proper, the module $H^0(X,\Omega^1_{X})=H^0(Y,\pi_*\Omega_X^1)$ is finitely generated over $A$. Recall that we have a map $\pi^*:\Br(Y\fr)\ra\Br(X\fr)$. Let $\Br(X\fr/Y\fr)$ denote the quotient $\Br(X\fr)/\pi^*(\Br(Y\fr))$ and let $c_{X/Y}: H^0(X\fr,\Omega^1_{X\fr})\ra \Br(X\fr/Y\fr)$ be the map obtained from $c_X$ by composition with the natural projection $\Br(X\fr)\ra \Br(X\fr/Y\fr)$. It is clear that the class $[\D_{X,\a}]$ descends to $Y$ if and only if $c_{X/Y}(\a)=0$. Note that by \corref{azumayasplit} and \lemref{pullback}

\begin{itemize}
\item $\mathbb F_p$-vector subspace spanned by $\omega\fr-\sC(\omega)$ for closed 1-forms $\omega$ on $X$,
\item $A$-submodule generated by $\pi^*(\theta)$ for K{\"a}hler differentials $\theta$ on $Y\fr$,
\end{itemize} 
lie in the kernel of $c_{X/Y}$. Let $S_\pi=\langle \pi^*(\theta),\omega\fr-\sC(\omega)\rangle_{\theta,\omega}$ be the $\mb F_p$-subspace of $H^0(X\fr,\Omega^1_{X\fr})$ generated by these two types of 1-forms and let $Q_\pi$ denote quotient $H^0(X\fr,\Omega^1_{X\fr})/S_\pi$. If we show that $Q_\pi=0$, we will get that $[\D_{X,\a}]$ descends to $Y\fr$ for any $\a$. If $Q_\pi$ had a structure of $A$-module it would define a coherent sheaf $\mc Q_\pi$ on $Y\fr$ and to prove that $Q_\pi=0$ it would be enough to show that all stalks of $\mc Q_\pi$ are zero. However, there is no structure on $Q_\pi$ except that of an $\mb F_p$-vector space. 

So we try to be more clever and filter $S_\pi$ by a system of subspaces $S_{\pi,0}\subset S_{\pi,1} \subset \ldots \subset S_\pi$, such that $S_{\pi,N}$ has a module structure over $(A)^{p^N}$, or, in other words, $S_{\pi,N}$ defines a subsheaf $\mc S_{\pi,N}\subset (\Fr_X^N)_*\pi_*\Omega^1_{X}$. The strategy then is to prove that $S_{\pi,N} = H^0(X\fr,\Omega^1_{X\fr})$ for $N$ big enough, and that consequently $Q_\pi=0$. Let's now define the sheaves $\mc S_{\pi,N}$. 

The starting point is the following observation: let $\Omega^1_{Y,cl}\subset \pi_*\Omega^1_{X,cl}$ be the natural embedding of closed K{\"a}hler differentials to closed 1-forms on $X$. Let $\pi_*\sC:\pi_*\Omega^1_{X,cl}\ra \pi_*\Omega^1_{X\fr}$ be the Cartier operator. Does $\pi_*\sC$ map $\Omega^1_{Y,cl}\subset \pi_*\Omega^1_{X,cl}$ to $\Omega^1_Y\subset \pi_*\Omega^1_{X}$? If the Cartier operator were defined for singular varieties, this would be true by functoriality. Surprisingly, usually this is not true at all, moreover in some cases (of the most interest for us) we can prove that all differential 1-forms on $X$ can be obtained from K{\"a}hler differentials on $Y$ by applying Cartier operator sufficiently many times.

Recall that by \lemref{surjective} Cartier operator $\pi_*\sC:\pi_*\Omega^1_{X,cl}\ra \pi_*\Omega^1_{X\fr}$ is surjective. We inductively define sheaves $\mc C_{\pi,n}\subset \pi_*\Omega^1_{X}$ by $\mc C_{\pi,0}=\Omega^1_X$ and $\mc C_{\pi,n}=\sC^{-1}(\mc C_{\pi, n-1}\fr)$, where $\mc C_{\pi, n-1}\fr$ is the analogous subsheaf of $\pi_*\Omega^1_{X\fr}$ for $\pi:X\fr\ra Y\fr$. Sheaf $\mc C_{\pi,n}$ is the subsheaf of differential 1-forms in $\pi_*\Omega^1_{X}$ on which $\sC^n$ is well-defined. Moreover, from the $p$-linearity of Cartier operator, it is easy to see that $(\mathrm{Fr}^n_Y)_*\mc C_{\pi,n}$ defines a coherent subsheaf of $(\mathrm{Fr}^n_Y)_*\pi_*\Omega^1_{X}$. So, in particular, $\mc C_{\pi,0}=\pi_*\Omega^1_X$, $\mc C_{\pi,1}=\pi_*\Omega^1_{X,cl}$, and in general we have a sequence of surjections 
$$
(\mathrm{Fr}^n_Y)_*\mc C_{\pi,n}\xra{\sC}(\mathrm{Fr}^{n-1}_Y)_*\mc C_{\pi,n-1}\fr\xra{\sC}\ldots\xra{\sC}(\mathrm{Fr}_Y)_*\pi_*\Omega^1_{X^{(n-1)},cl}\xra{\sC} \pi_*\Omega^1_{X^{(n)}}
$$ 
of coherent sheaves on $Y^{(n)}$. 

Let's now consider $\Omega^1_Y\subset \pi_*\Omega^1_{X}$ and the intersection $\Omega^1_Y\cap \mc C_{\pi,n}$. We have a natural map $\sC^n:\Omega^1_Y\cap \mc C_{\pi,n} \ra  \pi_*\Omega^1_{X^{(n)}}$  and we define $\mc S_{\pi,n}$ as the image of this map. Note that $\mc S_{\pi,n-1}\fr\subset\pi_*\Omega^1_{X^{(n)}}$ is embedded into $\mc S_{\pi,n}$, because $\sC(f^p g^{p-1}dg)= f\fr dg\fr$ and so for any $\theta \in \Omega^1_{Y\fr}$ there exists $\theta'\in \Omega^1_{Y}$, such that $\sC(\theta')=\theta$. Using $\blt^{(n-1)}$ we can identify $H^0(Y^{(n)},\pi_*\Omega^1_{X^{(n)}})=H^0(X^{(n)},\Omega^1_{X^{(n)}})$ with $H^0(X^{(1)},\pi_*\Omega^1_{X^{(1)}})$ (as $\mb F_p$-vector spaces) and images of $H^0(Y^{(n)},\mc S_{\pi,n})\subset H^0(X^{(n)},\Omega^1_{X^{(n)}})$ for different $n$ give the desired inductive system $S_{\pi,0}\subset S_{\pi, 1}\subset \ldots $.

Let's now suppose that we have $\a \in S_{\pi, N}$ for some $N$. Unraveling the definitions, this means that we can find the set of 1-forms $\omega_i\in H^0(X,\Omega^1_{X})$ and a K{\"a}hler differential $\theta\in H^0(Y,\Omega^1_{Y\fr})$ that satisfy the following system of equations:

$$\label{om_i}
\begin{cases}
\begin{cases}
d\omega_i=0,  \ \forall i=1,\ldots,N\\
\sC\omega_1=\a\\
\sC\omega_2=\omega_1\fr\\
\ldots\\
\sC\omega_i=\omega_{i-1}\fr\\
\ldots\\
\sC\omega_N=\omega_{N-1}\fr
\end{cases}\\
\ \ \ \pi^*\theta=\omega_N\fr,\ \theta \in H^0(Y,\Omega^1_{Y\fr})

\end{cases}
$$
Taking the sum of all equalities we get that $\pi^*\theta -\a = (\blt\fr-\sC)(\sum_{i=1}^N \omega_i)$. This means that $\alpha\in S_{\pi}$ and the class $[\D_{X,\a}]$ descends to $Y\fr$.
 
\begin{defn}
\label{Q-n}
For $N\ge 0$ we define the obstruction sheaf $\mc Q_{\pi,N}$ as the quotient $\pi_*\Omega^1_{X^{(N)}}/\mc S_{\pi,N}$. It is a coherent sheaf on $Y^{(N)}$ and since $Y$ is affine we have a short exact sequence 
$$ 0\ra S_{\pi,N}\ra H^0(X^{(1)},\pi_*\Omega^1_{X^{(1)}}) \xra{\psi_N} Q_{\pi,N}\ra 0,$$ where $Q_{\pi,N}$ is the twist $H^0(Y^{(N)},\mc Q_{\pi,N})^{(1-N)}$ of the global sections of $\mc Q_{\pi,N}$. From the discussion above: if $\psi_N(\a)=0$ for some $N$, the class $[\D_{X,\a}]$ descends to $Y\fr$ .
\end{defn} 

The following lemma is very important and gives a motivation for the upcoming definition of a resolution with conical slices (in \secref{resolutions w conic}):

\begin{lem}
\label{smprod}
Let $\pi:X\ra Y$ be a resolution of singularities such that $\mc Q_{\pi,N}=0$ for some $N$. Let $S$ be an arbitrary smooth affine variety, then for $\pi\times \id_S:X\times S\ra Y\times S$ we have $\mc Q_{\pi\times \id_S,N}=0$. 
\end{lem}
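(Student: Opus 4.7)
The idea is that the sheaf $\mc Q_{\pi',N}$ for $\pi'=\pi\times\id_S\colon X'=X\times S\to Y'=Y\times S$ respects the natural K{\"u}nneth decomposition, so the vanishing reduces to the two cases $\pi$ and $\id_S$. Let $p_X,q_X$ and $p_Y,q_Y$ denote the obvious projections. First, $\pi'$ inherits the running hypotheses $\pi'_*\O_{X'}=\O_{Y'}$ and $R^i\pi'_*\O_{X'}=0$ ($i=1,2$) by flat base change along the smooth morphism $S\to\Spec\k$, so $\mc Q_{\pi',N}$ is defined. Since $Y'$ is affine it vanishes iff $S_{\pi',N}=H^0(X'^{(N)},\Omega^1_{X'^{(N)}})$, i.e.\ iff every global $1$-form on $X'^{(N)}$ lies in the image of $\sC^N$ applied to $(\pi')^*$ of a K{\"a}hler differential on $Y'$ lying in $\mc C_{\pi',N}$.

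My plan is to use the standard splitting $\Omega^1_{X'}=p_X^*\Omega^1_X\oplus q_X^*\Omega^1_S$. Combining it with $\pi_*\O_X=\O_Y$, the projection formula, and flat base change yields
\[
\pi'_*\Omega^1_{X'^{(N)}}=p_Y^*\pi_*\Omega^1_{X^{(N)}}\oplus q_Y^*\Omega^1_{S^{(N)}},
\]
and similarly $\Omega^1_{Y'}=p_Y^*\Omega^1_Y\oplus q_Y^*\Omega^1_S$ for K{\"a}hler differentials (using smoothness of $S$). Taking global sections on affine $Y'^{(N)}$ and applying K{\"u}nneth gives
\[
H^0\bigl(X'^{(N)},\Omega^1_{X'^{(N)}}\bigr)=M^{(N)}\otimes_\k B^{(N)}\ \oplus\ A^{(N)}\otimes_\k L^{(N)},
\]
with $A=H^0(Y,\O_Y)$, $B=H^0(S,\O_S)$, $M=H^0(X,\Omega^1_X)$, $L=H^0(S,\Omega^1_S)$. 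The key technical ingredient is that $\sC$ is $\O^{(1)}$-linear and commutes with pullback along $p_X$: for $\omega\in\mc C_{\pi,N}$ and $f\in\O_S$, a short induction exploiting the vanishing $d(f^p)=0$ shows that $f^{p^N}\cdot p_X^*\omega$ lies in $\mc C_{\pi',N}$ and
\[
\sC^k\bigl(f^{p^N}\cdot p_X^*\omega\bigr)=f^{p^{N-k}}\cdot p_X^*\sC^k(\omega)\qquad(0\le k\le N),
\]
with each intermediate form closed for $k<N$ (the Leibniz term vanishes because $f^{p^{N-k}}$ is a $p$-th power in $\O_{X'}$). The symmetric statement holds in the $S$-direction.

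To finish, take a simple tensor $\omega'\otimes g$ in the first summand of the K{\"u}nneth decomposition: the hypothesis $\mc Q_{\pi,N}=0$ produces $\theta\in H^0(Y,\Omega^1_Y)\cap\mc C_{\pi,N}$ with $\sC^N(\pi^*\theta)=\omega'$, and the K{\"a}hler differential $\widetilde\theta=g^{p^N}\cdot p_Y^*\theta$ on $Y'$ satisfies $(\pi')^*\widetilde\theta=g^{p^N}\cdot p_X^*\pi^*\theta\in\mc C_{\pi',N}$ and $\sC^N\bigl((\pi')^*\widetilde\theta\bigr)=g\cdot p_X^*\omega'=\omega'\otimes g$. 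For a tensor $h\otimes\tau'$ in the second summand I use the easy fact $\mc Q_{\id_S,N}=0$, immediate from iterated sheaf-surjectivity of $\sC$ on the smooth affine $S$ passing to global sections, to pick $\tau\in\mc C_{\id_S,N}$ with $\sC^N(\tau)=\tau'$ and set $\widetilde\tau=h^{p^N}\cdot q_Y^*\tau$ on $Y'$. Additivity of $S_{\pi',N}$ then forces both summands into $S_{\pi',N}$, giving $\mc Q_{\pi',N}=0$. The only genuine step is the Leibniz-style induction verifying the displayed identity for $\sC^k$; everything else is formal manipulation of Frobenius twists and base change.
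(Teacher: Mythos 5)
Your proof is correct and follows essentially the same route as the paper: the K\"unneth splitting $\Omega^1_{X\times S}=\Omega^1_X\boxtimes\O_S\oplus\O_X\boxtimes\Omega^1_S$, the identity $\sC(f^p\omega)=f\fr\sC(\omega)$ to propagate powers $f^{p^i}$ through the iterated Cartier operator, surjectivity of $\sC$ on the smooth affine $S$ for the second summand, and $\O_Y\isoto\pi_*\O_X$ to descend the function factor to the base. The only cosmetic difference is that you verify the single identity $\sC^k(f^{p^N}p_X^*\omega)=f^{p^{N-k}}p_X^*\sC^k(\omega)$ where the paper writes out the intermediate chain $\omega_i=\omega_{i,X}\boxtimes f_S^{p^i}+f_X^{p^i}\boxtimes\omega_{i,S}$ explicitly; these are the same computation.
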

\begin{proof}
The statement of the lemma amounts to saying that for any differential $1$-form $\a\in\Omega^1(X\x S)$, there is a solution to the system~\eqref{om_i}.  We now use the formula
 $$\Omega^1_{X\x S}=\Omega^1_X\bx\O_S\oplus\O_X\bx\Omega^1_S$$
 to write $\a$ as a sum of 1-forms of the following kind: $\a_X\bx f_S +f_X\bx\a_S$. Moreover, we can assume that there is only one summand, so that
 $$\label{a YxS}\a=\a_X\bx f_S +f_X\bx\a_S.$$

 By assumption, there is a solution $(\omega_{1,X},\dots,\omega_{N,X},\theta_Y)$ to the system \eqref{om_i} with $\a$ replaced by $\a_X$.  Since $S$ is smooth and affine, $\sC\colon \Omega^1_{S,cl}\to\Omega^1_{S\fr}$ is surjective and induces a surjective map on global sections.  So \eqref{om_i} must also have a solution for $\a_S$ (and $\id_S$ instead of $\pi$).  We denote the corresponding sequence of $1$-forms on~$S$ by $(\omega_{1,S},\dots,\omega_{N,S},\theta_S=\omega_{N,S})$.  Also, using the assumption $\O_Y\isoto\pi_*\O_X$, we can descend $f_X$ to $Y$, namely find $f_Y\in\Gamma(Y,\O_Y)$ such that $f_X=\pi^*f_Y$. We now take
  $$\omega_i=\omega_{i,X}\bx f_{S}^{p^i} + f_{X}^{p^i}\bx\omega_{i,S},
\quad\theta=\theta_Y\bx f_{S}^{p^N} + f_{Y}^{p^N}\bx\theta_S.$$
Using $\sC(f^p\omega)=f\fr\sC(\omega)$, we see that the resulting sequence $(\omega_1,\dots,\omega_N,\theta)$ solves the system~\eqref{om_i} for the form $\a$.

\end{proof}




\section{Resolutions with conical slices}\label{resolutions w conic}
\subsection{Setup} 

We would like to have a class of resolutions for which we could efficiently apply \lemref{smprod}. Namely, it would be great if a neighbourhood of any point $y\in Y$ could be decomposed as a product of something smooth and some other resolution $\pi':X'\ra Y'$ for which we already knew that $\mc Q_{\pi',N}=0$ for some $N$. A good candidate for this is a conical resolution: if there is a contracting $\Gm$-action on $\pi:X\ra Y$, then, by Luna's slice theorem, \'etale locally at any non-central point, $\pi$ can be decomposed as a product of the $\Gm$-orbit and some slice $\pi':X'\ra Y'$. However we will not know much about $\pi'$, in particular there is no need for $\pi'$ to be again conical. It turns out though that if we add by force the condition for the existense of a conical slice satisfying the same property (plus some other mild technical conditions), everything  will work out relatively well: see \secref{desc} and \thmref{maintheorem}. This condition can be elegantly reformulated as the existence of another contracting $\Gm$-action in an {\'e}tale neighbourhood of each point: see \defref{withconicslice} and \propref{conic slice} for the relation to slices. We call a resolution satisfying this a \textit{resolution with conical slices}.


\subsection{Category of {\'e}tale germs of resolutions}
\label{category etale}

In this subsection we construct what we call {\em the category of {\'e}tale germs of resolutions}. The idea is to identify resolutions which look the same in the neighbourhood of some point of the base (where the point is a part of the data). We could probably consider something like the  category of resolutions with a henselian base, but instead we just localise the category of all resolutions by the class of morphisms which are {\'e}tale. We follow probably the most classical approach to localisation of categories, namely we prove that the class of {\'e}tale maps satisfies the right Ore condition. In particular, as a result we get a well defined and quite natural notion of an {\'e}tale equivalence between resolutions of singularities with a chosen point on the base. 

\begin{defn}
By a \textit{resolution (of singularities)}  $\pi:X\ra Y$ we mean a proper birational map from a smooth variety $X$ to a possibly singular variety $Y$.
\end{defn}

\begin{rem}
Please note that $Y$ does not actually need to be singular in the definition, so in this sense our definition of resolution can differ from the more standard ones.
\end{rem}

\begin{defn}\label{pointed resolution}
\textit A \textit{pointed resolution} $(\pi,X,Y, y)$ over an algebraically closed field $\k$ is the data of a resolution $\pi:X\ra Y$ over $\k$ and a distinguished point $y\in Y(\k)$.
\end{defn}

Fixing field $\k$, we define $\mathsf{Res}_*$ to be the category with objects being pointed resolutions $(\pi,X,Y, y)$ and morphisms given by pairs of maps $p:Y'\ra Y$ and $q:X'\ra X$, such that $p(y')=y$ and the following square is commutative:

$$
  \xymatrix{
  X'  \ar[d]_{\pi'} \ar[r]^q & X \ar[d]_{\pi} \\
      Y' \ar[r]^p  &  Y
      }
$$

We would like to identify two pointed resolutions, which become isomorphic after taking some {\'e}tale neighbourhoods of distinguished points. For this we can try to localise the category $\mathsf{Res}_*$ by the set of morphisms $f$ for which the corresponding $p$ and $q$ are {\'e}tale. This simple lemma about {\'e}tale morphisms  will appear to be very useful in this context.

\begin{lem}
\label{etalecartesian}
Let $f:(\pi,X,Y, y)\ra (\pi',X',Y', y')$ be a morphism, such that the corresponding morphisms $p$ and $q$ are {\'e}tale. Then  $X'=Y'\times_{Y}X$ and the corresponding square is Cartesian.
\end{lem}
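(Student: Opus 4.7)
The plan is to build the canonical comparison morphism $h\colon X' \to Y' \times_Y X$ from the universal property of the fibre product (applied to the pair $(\pi', q)$, which is compatible by commutativity of the square), and then to show that $h$ is an isomorphism.

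First I would verify that $h$ is {\'e}tale. The second projection $\mathrm{pr}_2\colon Y' \times_Y X \to X$ is the base change of the {\'e}tale morphism $p$, hence is itself {\'e}tale. Since the composition $\mathrm{pr}_2 \circ h = q$ is {\'e}tale and $\mathrm{pr}_2$ is {\'e}tale, the standard cancellation property for {\'e}tale morphisms yields that $h$ is {\'e}tale.

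Next I would show that $h$ is proper, and hence finite {\'e}tale. The first projection $\mathrm{pr}_1\colon Y' \times_Y X \to Y'$ is the base change of the proper morphism $\pi$, so it is itself proper and in particular separated. Since $\mathrm{pr}_1 \circ h = \pi'$ is proper and $\mathrm{pr}_1$ is separated, the usual criterion (properness is detected after composition with a separated morphism) implies that $h$ is proper; combined with the previous step, $h$ is finite {\'e}tale, and therefore of locally constant degree over $Y' \times_Y X$.

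Finally, to conclude $h$ is an isomorphism, I would show it has degree one. Both $\pi'$ and $\mathrm{pr}_1$ are proper birational morphisms to $Y'$: $\pi'$ by the definition of a resolution, and $\mathrm{pr}_1$ because {\'e}tale base change preserves properness and birationality (the locus on $Y$ over which $\pi$ is an isomorphism pulls back to a dense open of $Y'$, examined one irreducible component of $Y'$ at a time). Over a common dense open $U \subseteq Y'$ on which both $\pi'$ and $\mathrm{pr}_1$ are isomorphisms, the map $h$ is forced to restrict to the identity of $U$, so $h$ has degree one there. A finite {\'e}tale morphism of locally constant degree that is of degree one on a dense open of each connected component must be of degree one everywhere, and hence is an isomorphism. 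The main technical nuisance I anticipate is the handling of birationality and density when $Y'$ is disconnected or $p$ fails to be surjective; one should reduce to irreducible components of $Y'$ to extract the dense open on which the comparison map trivialises.
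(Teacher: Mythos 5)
Your argument is correct, and its first half coincides exactly with the paper's: you form the canonical comparison map $h\colon X'\to Y'\times_Y X$ and prove it is {\'e}tale by cancellation against the {\'e}tale projection to $X$. The endgame is genuinely different. The paper observes that the common dense open $Y_0'\times_Y Y_0$ (coming from birationality of $\pi$ and $\pi'$) sits densely in both $X'$ and $Y'\times_Y X$ with $\iota$ restricting to the identity there, deduces surjectivity by the closure computation $\iota(X')=\overline{\iota(Y_0'\times_Y Y_0)}=Y'\times_Y X$, and concludes from \emph{{\'e}tale, surjective, and an isomorphism over a dense open}. You instead get properness of $h$ by cancellation against the separated projection $\mathrm{pr}_1$, so that $h$ is finite {\'e}tale, and finish with local constancy of the degree. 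Your route buys rigor at exactly the two places where the paper is terse: the closure computation silently uses that $\iota$ is a closed map (i.e.\ the properness you prove explicitly), and the final implication is most cleanly justified by your degree-one argument in any case. The one point you flag but should pin down is that the locus where $h$ has degree one meets every connected component of $Y'\times_Y X$: this holds because $\mathrm{pr}_2\colon Y'\times_Y X\to X$ is {\'e}tale, hence open, so the generic point of each component maps to a generic point of $X$, which lies over the dense open of $Y$ where $\pi$ is an isomorphism; intersecting with the pullback of the dense open of $Y'$ where $\pi'$ is an isomorphism then gives a dense open of $Y'\times_Y X$ over which $h$ is the identity of $U$.
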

\begin{proof}
 By definition, we have a commutative diagram

$$
  \xymatrix{
  X'  \ar[d]_{\pi'} \ar[r]^q & X \ar[d]_{\pi} \\
      Y' \ar[r]^p  &  Y
      }
$$
which gives a natural map $\iota:X'\ra Y'\times_{Y}X$.
$$
\xymatrix{ X' \ar@/^1pc/[drr]^q \ar@/_1pc/[ddr]_\pi\ar[dr]^\iota\\
   & Y'\times_Y\! X\ar[d]^{\tilde\pi} \ar[r]^<<<{\tilde p} & X \ar[d]^\pi\\
     & Y' \ar[r]^p  &  Y
     }
$$
We have a decomposition $q=\tilde p\circ\iota$, where $\tilde p:Y'\times_{Y}X\ra X$ is the pull-back of $p$ to $X$.
The map $\tilde p$ is {\'e}tale as a pull-back of $p$, $q$ is {\'e}tale by assumption, therefore we get that $\iota$ is {\'e}tale too.

Let $Y_0\subset Y$ and $Y'_0\subset Y'$ be the smooth loci where $\pi$ and $\pi'$ are 1-to-1, and let $j:Y_0\ra X$, $j':Y'_0\ra X'$ be the corresponding embeddings. We have two open embeddings $\tilde j:Y_0'\times_Y\! Y_0 \to Y'\times_Y\! X$  and $j:Y_0'\times_Y\! Y_0 \to X'$. Both $Y_0$ and $Y'_0$ are open and dense in $X$ and $X'$, so $Y_0'\times_Y\! Y_0$ is open and dense in both $X'$ and $Y'\times_Y\! X$. Note that the restriction of $\iota$ to $Y_0'\times_Y\! Y_0$ is just the identity map. We also have that $\iota$ should necessarily be surjective, since $\iota(X')=\ol{\iota(Y_0'\times_YY_0)}=\ol{Y_0'\times_YY_0}=Y'\times_Y\! X$. Summarising the above, we get that $\iota$ is {\'e}tale, surjective, and is an isomorphism on some dense open set, therefore it is an isomorphism on the whole and the corresponding square is Cartesian.
\end{proof}

\begin{defn}
$\mr{Et}((\pi,X,Y,y),(\pi',X',Y',y'))\subset \mr{Mor}_{\mathsf{Res}_*}((\pi,X,Y,y),(\pi',X',Y',y'))$ is defined as the subset of morphisms for which $p$ and $q$ are {\'e}tale.
\end{defn}

Let $\mathsf{Sch}$ denote the category of schemes over $\k$ and let $\mathsf{Ar_*({Sch})}$ be the category of pointed arrows in $\mathsf{Sch}$. Namely objects of $\mathsf{Ar_*({Sch})}$ are maps $f:X\ra Y$ with a distinguished point $y\in Y(\k)$ and morphisms between two such arrows $(f,X,Y,y)$ and $(g,X',Y',y')$ are given by commutative squares 

$$
  \xymatrix{
  X  \ar[d]_{f} \ar[r]^q & X' \ar[d]_{g} \\
      Y \ar[r]^p  &  Y'
      }
$$
such that $p(y)=y'$. It is easy to see that $\mathsf{Res}_*$ is a full subcategory of $\mathsf{Ar_*({Sch})}$.

The category $\mathsf{Ar_*({Sch})}$ has fiber products: if $A=(f,X,Y,y)$, $B_1=(g_1,Z_1,W_1,w_1)$, $B_2=(g_2,Z_2,W_2,w_2)$ and $f_1: B_1\ra A$, $f_2: B_2\ra A$ are two morphisms, then the corresponging fiber product $B_1\times_A B_2$ is given by the map $g_1\times g_2: Z_1\times_XZ_2\ra W_1\times_Y W_2$ with a distinguished point $w_1\times w_2\in (W_1\times_Y W_2)(\k)$. However $\mathsf{Res}_*$ does not have fiber products in general, the problem is that $\mathsf{Res}_*\subset \mathsf{Ar_*({Sch})}$ is not closed under the fiber product in $\mathsf{Ar_*({Sch})}$.  To illustrate this we can take some smooth $Y$, and let $A=(\mr{id}, Y, Y, y)$ be the identity map on $Y$, $B_1=(\pi,\mr{Bl}_y (Y), Y,y)$ the blow-up at the point $y$ and $B_2=(\mr{id}, \Spec \k,\Spec \k, \Spec \k)$ the identity map for a point, with the maps $f_1$ and $f_2$ given by the projection $\mr{Bl}_y (Y)\ra Y$ and the point $y:\Spec\k \ra Y$ correspondingly. Then the fiber product $B_1\times_A B_2$ is given by $\pi:\pi^{-1}(\mr{Bl}_y (Y))\ra \Spec \k$, which is not birational if $\dim Y\ge 2$. But for some pairs $f_1$, $f_2$ fiber product still exists, in particular when one of the morphisms lies in $\mr{Et}$:
\begin{lem}
\label{fiberprod}
Let $A=(\pi,X,Y,y),B_1=(\eta_1,Z_1,W_1,w_1), B_2=(\eta_2,Z_2,W_2,w_2)$
and $f_1, f_2$ be objects and a pair of morphisms in $\mathsf{Res}_*$:
$$
\xymatrix{
B_1 \ar[dr]_{f_1} && B_2 \ar[dl]_{f_2}\\ &A
}
$$

Assume that $f_2$ lies in $\mr{Et}(B_2,A)$. Then the fiber product $B_1\times_A B_2$ exists and is given by $(\eta_1\times\eta_2,Z_1\times_X Z_2, W_1\times_Y W_2, w_1\times w_2)$.
\end{lem}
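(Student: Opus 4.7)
The plan is straightforward: verify that the tautological fiber product in the ambient category $\mathsf{Ar_*({Sch})}$ actually lies in the full subcategory $\mathsf{Res}_*$. Because $\mathsf{Res}_*$ is a full subcategory of $\mathsf{Ar_*({Sch})}$, its fiber products, when they exist, are computed in the ambient category; so once the candidate object is shown to be a pointed resolution, the universal property in $\mathsf{Res}_*$ follows automatically from the universal property in $\mathsf{Ar_*({Sch})}$. Hence the real content is to check that the morphism $\eta_1\times\eta_2\colon Z_1\times_X Z_2\to W_1\times_Y W_2$ is a proper birational morphism from a smooth variety, with distinguished point $w_1\times w_2$.

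The key tool is \lemref{etalecartesian}: applied to $f_2\in\mr{Et}(B_2,A)$, it identifies $Z_2$ with the base change $W_2\times_Y X$, with $\eta_2$ becoming the projection to $W_2$ and $q_2$ the projection to $X$. Associativity of fiber products then gives
\[
Z_1\times_X Z_2\;\cong\;Z_1\times_X(W_2\times_Y X)\;\cong\;Z_1\times_Y W_2,
\]
where $Z_1$ is viewed as lying over $Y$ via $\pi\circ q_1=p_1\circ\eta_1$. Under this identification, $\eta_1\times\eta_2$ becomes the base change $\eta_1\times_Y\mr{id}_{W_2}$ of $\eta_1$ along the étale morphism $p_2\colon W_2\to Y$. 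The required properties then follow from the usual stability of smoothness, properness and birationality under étale base change: $Z_1\times_X Z_2\to Z_1$ is étale (it is the base change of $q_2$), so the total space is smooth; $\eta_1\times_Y\mr{id}_{W_2}$ is proper as a base change of a proper map; and if $U\subset W_1$ is the dense open over which $\eta_1$ is an isomorphism, then $U\times_Y W_2$ is open in $W_1\times_Y W_2$ and dense (since the étale, hence flat, projection $W_1\times_Y W_2\to W_1$ pulls back dense opens to dense opens), while the base change of an isomorphism remains an isomorphism.

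The one technical point to address is irreducibility: even when the factors are irreducible varieties, $W_1\times_Y W_2$ and $Z_1\times_X Z_2$ need not be. Because the construction is pointed, one replaces the fiber products by the unique irreducible components containing $(w_1,w_2)$ and its canonical preimage; since the étale projections $W_1\times_Y W_2\to W_1$ and $Z_1\times_X Z_2\to Z_1$ are local isomorphisms near these points, these components are smooth varieties and the restriction of $\eta_1\times\eta_2$ to them is still proper and birational. The universal property in the pointed category is unaffected, as any morphism from a pointed resolution into $B_1\times_A B_2$ automatically lands in the component of the basepoint. This is the only place where any mild care is required; everything else is a formal consequence of \lemref{etalecartesian} and étale descent of the defining properties of a resolution.
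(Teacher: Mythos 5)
Your argument is essentially the paper's: reduce to checking that the tautological fiber product in $\mathsf{Ar_*({Sch})}$ lands in the full subcategory $\mathsf{Res}_*$, use \lemref{etalecartesian} to rewrite $Z_1\times_X Z_2 = Z_1\times_{W_1}(W_1\times_Y W_2)$, and conclude that $\eta_1\times\eta_2$ is the \'etale base change of $\eta_1$ along $\tilde{p_2}$, hence again a proper birational map from a smooth source. Your closing digression on irreducibility is extra care the paper does not take — and as written it would replace the asserted object $(Z_1\times_X Z_2,\,W_1\times_Y W_2)$ by a component of it, which is not what the lemma claims; the paper implicitly allows reducible sources and targets, for which birationality still makes sense via isomorphism over dense opens, so no modification of the object is needed.
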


\begin{proof} We have the following diagram:
$$
\xymatrix{
& Z_1\times_X Z_2\ar[rr]^{\tilde{q_1}}
\ar[dl]_{\tilde{q_2}} \ar[dd]|\hole^<<<<<<{\eta_1\times\eta_2} && Z_2\ar[dl]_{q_2}\ar[dd]^{\eta_2}\\
Z_1 \ar[rr]^<<<<<<<<<<<<<<<{q_1} \ar[dd]^{ \eta_1} && X\ar[dd]|\hole^<<<<<<<{\pi}\\
& W_1\times_Y W_2 \ar[rr]^<<<<{\tilde{p_1}} \ar[ld]_{\tilde{p_2}}&& W_2\ar[dl]_{p_2}\\
W_1\ar[rr]^{p_1}&&Y
}
$$

The quadruple $(\eta_1\times\eta_2,Z_1\times_X Z_2, W_1\times_Y W_2, w_1\times w_2)$ is the fiber product of $(\eta_1,Z_1,W_1,w_1)$ and $(\eta_2,Z_2,W_2,w_2)$ over $(\pi,X,Y,y)$ in the category of arrows in the category of schemes $\ul{Sch}$. So the only thing we need to check is that
$$
\eta_1\times\eta_2:Z_1\times_X Z_2\ra W_1\times_Y W_2
$$
is a resolution of singularities. By \lemref{etalecartesian} $Z_2$ is equal to $X\times_Y W_2$, so
$$
Z_1\times_X Z_2 = Z_1\times_X X\times_Y W_2= Z_1\times_Y W_2 = Z_1\times_{W_1} (W_1\times_Y W_2),
$$

Moreover the bottom, right and top faces of the cube are Cartesian squares and it is easy to see that the left face is a Cartesian square too. Therefore, $\eta_1\times\eta_2$ is just the pull-back of $\eta_1$ under $\tilde{p_2}$, which is {\'e}tale (as a pull-back of {\'e}tale map). We see that $\eta_1\times\eta_2$ is also a resolution.
\end{proof}

\begin{rem} Note that $\tilde{f_2}: B_1\times_A B_2 \ra B_1$ lies in $\mr{Et}(B_1\times_A B_2, B_1)$. So {\'e}tale maps are closed under pull-backs in $\mathsf{Res}_*$ as well.
\end{rem}

\begin{rem}
There is another important example when the fiber product exists, namely when $A=\ul{\Spec \k}=(\mr{id}, \Spec \k, \Spec \k, \Spec \k)$. It is easy to see that $\ul{\Spec \k}$  is a final object of ${\mathsf{Res}_*}$, and so the corresponding fiber product is just the product. It is given by the quadruple $B_1\times B_2=(\pi_1\times\pi_2, Z_1\times Z_2, W_1\times W_2, w_1\times w_2)$ which is a resolution of singularities if $\pi_1$, $\pi_2$ are. 
\end{rem}

\begin{lem} Collection of subsets $\mr{Et}((\pi,X,Y,y),(\pi',X',Y',y'))$ inside the sets of morphisms $\mr{Mor}_{\mathsf{Res}_*}((\pi,X,Y,y),(\pi',X',Y',y'))$ satisfies the right Ore condition.
\end{lem}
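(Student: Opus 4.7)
The right Ore condition on a class of morphisms $\mc S$ in a category consists of three parts: (i) $\mc S$ contains all identity morphisms; (ii) $\mc S$ is closed under composition; and (iii) for every cospan $B_1 \xrightarrow{f_1} A \xleftarrow{s} B_2$ with $s \in \mc S$, there exist $\widetilde s \in \mc S$ and $\widetilde f_1$ fitting into a commutative square
$$
\xymatrix{D \ar[r]^{\widetilde f_1} \ar[d]_{\widetilde s} & B_2 \ar[d]^s \\ B_1 \ar[r]_{f_1} & A.}
$$

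The plan is to dispatch (i) and (ii) by appealing directly to the corresponding properties of étale morphisms of schemes. The identity of a pointed resolution $(\pi,X,Y,y)$ is built from the identity maps on $X$ and $Y$, which are étale, so it lies in $\mr{Et}$. For composition, if $f$ and $g$ are morphisms in $\mathsf{Res}_*$ whose underlying pairs $(p,q)$ and $(p',q')$ both consist of étale maps, then $(p'\circ p,\, q'\circ q)$ consists of étale maps as well, since étale morphisms of schemes are closed under composition. Both verifications are purely formal.

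The only substantive point is (iii), and this is where Lemma \ref{fiberprod} does all the work. Given a cospan $B_1 \xrightarrow{f_1} A \xleftarrow{f_2} B_2$ with $f_2 \in \mr{Et}$, I would form the fiber product $D := B_1 \times_A B_2$, which exists in $\mathsf{Res}_*$ by that lemma. By the remark immediately following Lemma \ref{fiberprod}, the projection $\widetilde{f_2}\colon D\to B_1$ is étale --- it is the pullback of $f_2$ along $f_1$ in the underlying category of schemes, and étale morphisms are stable under pullback --- so $\widetilde{f_2} \in \mr{Et}$ and the resulting square is the required completion. There is no real obstacle here: all the geometric content has already been absorbed into the construction of the fiber product in Lemma \ref{fiberprod}, and the present lemma is little more than a repackaging of that construction together with the stability of étale morphisms under composition and base change.
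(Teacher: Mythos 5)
Your proposal is correct and follows the same route as the paper: the Ore square is completed by the fiber product of Lemma~\ref{fiberprod}, whose projection to $B_1$ is étale by the remark following that lemma. The paper only writes out this step explicitly, leaving the closure of $\mr{Et}$ under identities and composition implicit, which you rightly note is formal.
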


\begin{proof}
This follows essentially from \lemref{fiberprod}: having the diagram

$$
\xymatrix{
B_1 \ar[dr]_{f_1} && B_2 \ar[dl]^{f_2}\\ &A
}
$$
such that $f_2$ lies in $\mr{Et}(B_2, A)$, we can form another diagram

$$
\xymatrix{
 &B_1\times_A\!B_2  \ar[dl]_{\tilde{f_2}}\ar[dr]^{\tilde{f_1}}\\
 B_1&& B_2
}
$$
such that $\tilde{f_2}$ lies in $\mr{Et}(B_1\times_A B_2, B_1)$.

\end{proof}
Last lemma allows us to localise $\mathsf{Res}_*$ at the class of morphisms $\mr{Et}$. We call the result of localisation $\mathsf{Res_*^{et}}$ {\em the category of {\'e}tale germs of resolutions}. We remind (\cite{GM}, III.2, Lemma 8) that the set of morphisms in $\mathsf{Res_*^{et}}$ between objects $B_1$ and $B_2$ is given by classes of equivalences of diagrams

$$
\xymatrix{
&A\ar[dr]^{f_1}\ar[dl]_{f_2}\\ B_1 && B_2
}
$$
where $f_2$ is {\'e}tale. An equivalence between pairs $(f_1,f_2)$ and $(g_1,g_2)$ is given by an object $A''$ and a pair of morphisms $h_1\in\mr{Et}(A'',A')$, $h_2\in\mr{Et}(A'', A)$, such that the following diagram commutes:

$$
\xymatrix{
&& A''\ar[dl]_{h_2}\ar[dr]^{h_1}\\
&A\ar[drrr]_>>>>>>>>>>>>>>>{f_1}\ar[dl]_{f_2}&& A'\ar[dr]^{g_1}\ar[dlll]|<<<<<<<<<<<<\hole^>>>>>>>>>>>>>>>{g_2}\\
B_1 &&&& B_2
}$$
The composition $(f_1,f_2)\circ(g_1,g_2)$ is given by the fiber product:
$$
\xymatrix{
&& A_1\times_{B_2} A_2\ar[dl]_{\tilde{g_2}}\ar[dr]^{\tilde{f_1}}\\
&A\ar[dr]^{f_1}\ar[dl]_{f_2}&& A'\ar[dr]^{g_1}\ar[dl]_{g_2}\\
B_1 && B_2 && B_3.
}$$

Morphisms in $\mathsf{Res_*^{et}}$ that we are interested in the most, are isomorphisms. We call them \textit{{\'e}tale equivalences}.
\begin{defn}
\label{etale equivalence}

Let $(\pi,X,Y,y)$ and $(\pi',X',Y',y')$ be two pointed resolutions. We say that $(\pi,X,Y,y)$ is \textit{{\'e}tale equivalent} to $(\pi',X',Y',y')$ if they are isomorphic as  objects of $\mathsf{Res_*^{et}}$. Unwinding the definitions, it means that there exists a pointed resolution $(\pi'',X'',Y'',y'')$ and a correspondence $(\pi,X,Y,y)\xleftarrow{f_1}(\pi'',X'',Y'',y'')\xra{f_2}(\pi',X',Y',y')$ (given by correspondences $Y\xleftarrow{p_1}Y''\xra{p_2}Y'$, and $X\xleftarrow{q_1}X''\xra{q_2}X'$, $p_1(y'')=y$, $p_2(y'')=y'$, such that the diagram
$$
  \xymatrix{
  X  \ar[d]_{\pi}  & X'' \ar[d]_{\pi''} \ar[l]_{q_1} \ar[r]^{q_2}& X' \ar[d]_{\pi'} \\
      Y  &  Y'' \ar[l]_{p_1} \ar[r]^{p_2}& Y'
      }
$$
is commutative) with maps $p_1$, $p_2$, $q_1$, $q_2$ being {\'e}tale. We denote the equivalence by $(\pi,X,Y,y)\sim_{et}(\pi',X',Y',y')$.
\end{defn}

\begin{rem}\label{fiber equal}
Note that by \lemref{fiberprod} we necessarily have $X\times_Y Y'' = X'' = Y'' \times_Y X'$.

\end{rem}

\subsection{Separable $\Gm$-action} \label{separable action} Let $\k$ be an algebraically closed field (not necessarily of finite characteristic), let $X$ be a variety over $\k$ (possibly singular), endowed with a $\Gm$-action. We have the natural non-zero tangent vector $\partial_t\in T_{\{1\}}\Gm$. For any point $x\in X(\k)$ we have the differential of the action map $a_x:T_{\{1\}}\Gm\ra T_x X$, which extends to the map of sheaves $a:\mc O_X\ra \mc \mr{Der}(\O_X)$, where $\mr{Der}(\O_X)$ is the sheaf of $\k$-linear derivations of $\O_X$. Consider the dual map $a^*: \Omega^1_X\ra \O_X$, where $\Omega^1_X$ is the sheaf of K{\"a}hler differentials on $X$. The image $\mc I_{a^*}=\Im(a^*)\subset \mc O_X$ is some sheaf of ideals on $X$.

\begin{defn}
\label{pseudo}
The subscheme $X^{d\Gm}\subset X$ of \textit{pseudo-fixed} points is the subscheme defined by the sheaf of ideals $\mc I_{a^*}$. This is a $\Gm$-invariant closed subscheme of $X$.
\end{defn}

\begin{rem}
The subscheme $X^{d\Gm}\subset X$ can also be viewed in a more geometric way. The vector $\partial_t\in T_{\{1\}}\Gm$ defines a section $a(\partial_t)$ of the tangent bundle $TX\to X$, together with its graph $\Gamma_{a(\partial_t)}\subset TX$. The subscheme of pseudo-fixed points $X^{d\Gm}\subset X$ is then just the (scheme-theoretic) intersection of $\Gamma_{a(\partial_t)}$ and the zero-section $\Gamma_0\subset TX$.
\end{rem}

$X^{d\Gm}$ is the subscheme of points where the differential of the $\Gm$-action is zero. If $\on{char}\k=0$ it is true that a point is fixed if and only if the differential of the action is zero (e.g. see \cite{Dr}), so $X^{d\Gm}(\k)=X^{\Gm}(\k)$. But if $\on{char}\k\neq 0$ these two subsets can easily happen to  be different.

\begin{defn}
\label{separable}
The $\Gm$-action on $X$ is called \textit{separable}, if all pseudo-fixed points are fixed:
$$
X^{d\Gm}(\k) = X^{\Gm}(\k).
$$
\end{defn}

The simplest example when this property can fail is given by the composition of any non-trivial $\Gm$-action and $\mr{Fr}_{\Gm}:\Gm\ra\Gm$. Then the differential of the action is 0 at any point and $X^{d\Gm}= X$.

Another simple example to consider is the affine line $\mb A^1=\Spec \k[z]$ with the action of $\Gm$, given by some character $n\in \mb Z$: $t\circ z= t^n z$. We claim that this action is separable if and only if $(n,p)=1$. Indeed $a_0(\partial_t)=\partial_t(t^n)|_1 \cdot \partial_z=n\partial_z$. More generally a linear diagonalised $\Gm$-action on an affine space $\mb A^n$ is separable if and only if $p$ does not divide the weights of the coordinates.

\begin{rem}
If we have a contracting $\Gm$-action on an affine variety $Y=\Spec A$, then there exists a $\Gm$-equivariant embedding $Y\hookrightarrow \mb A^n$ (with some contracting action on $\mb A^n$) mapping $y_0$ to 0. Then, if the weights of the coordinates for the action on $\mb A^n$ are not divisible by $p$, $(\mb A^n)^{d\mb G_m}=(\mb A^n)^{\mb G_m}$. We have an embedding $Y^{d\mb G_m}\hookrightarrow (\mb A^n)^{d\mb G_m}=(\mb A^n)^{\mb G_m}$, and so $Y^{d\mb G_m}=Y^{\mb G_m}$. In other words, if there exists a set of homogeneous generators of $A$ with degrees not divisible by $p$, the action is separable.
\end{rem}

\subsection{Definition of a resolution with conical slices}
\label{conically stratified}

Let $\k$ be an algebraically closed field. We first define what we mean by a conical resolution. Note that in our definition there is a separability assumption on the $\Gm$-action, which makes the definition different from the usual one in the case when the characteristic of $\k$ is positive.

\begin{defn} \label{conic resolution} A resolution $\pi:X\ra Y$ is called \textit{conical}, if it can be endowed with a separable $\mb G_m$-action, contracting $Y$ to a single fixed point $y_0\in Y(\k)$ which is then called the {\em central point} of the resolution.
\end{defn}

Here we say that $\pi$ can be endowed with a separable $\Gm$-action if there are separable $\Gm$-actions on $X$ and $Y$ intertwined by $\pi$.

\begin{defn}
\label{conic neighbourhood}
Let $\pi:X\ra Y$ be a resolution. We say that $y\in Y(\k)$ has a \textit{conical {\'e}tale neighbourhood} if there exists $(\pi',X',Y',y')$, such that $\pi':X'\ra Y'$ is a conical resolution, $y'$ is its central point and $(\pi,X,Y,y)\sim_{et}(\pi',X',Y',y')$.

\end{defn}
\begin{defn}
\label{withconicslice}
A \textit{resolution with conical slices} is a conical resolution $\pi:X\ra Y$ such that $\O_Y\isoto\pi_*\O_X$, $R^1\pi_*\O_X=R^2\pi_*\O_X=0$ and every point $y\in Y(\k)$ has an {\'e}tale conical neighbourhood.  
\end{defn}

\begin{rem}
Note that from the condition $\O_Y\isoto\pi_*\O_X$ it follows that $Y$ is normal.
\end{rem}

\begin{lem}\label{direct image for neighbourhood}
Let $\pi:X\ra Y$ be a resolution with conical slices. Let $(\pi,X,Y,y)\sim_{et}(\pi',X',Y',y')$ be an {\'etale} conical neighbourhood of a point $y\in Y(\k)$. Then $\pi'\O_{X'}=\O_{Y'}$ and $R^1\pi'_*\O_{X'}=R^2\pi'_*\O_{X'}=0$.
\end{lem}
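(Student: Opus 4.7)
The plan is to combine flat base change with the contracting $\Gm$-action on $Y'$ to reduce the statement for $\pi'$ to the known statement for $\pi$.

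First, I will unpack \defref{etale equivalence}: there is a pointed resolution $(\pi'',X'',Y'',y'')$ together with étale morphisms $p_1 \colon Y''\to Y$, $p_2 \colon Y''\to Y'$, $q_1 \colon X''\to X$, $q_2 \colon X''\to X'$ satisfying $p_1(y'')=y$, $p_2(y'')=y'$ and fitting into a commutative diagram with $\pi, \pi'$. By \remref{fiber equal}, both squares are cartesian, $X''=Y''\times_Y X = Y''\times_{Y'} X'$. Since étale morphisms are flat, flat base change (valid because $\pi$ and $\pi'$ are proper) yields, for every $i\ge 0$,
\[
p_2^*\,R^i\pi'_*\O_{X'} \;=\; R^i\pi''_*\O_{X''} \;=\; p_1^*\,R^i\pi_*\O_X.
\]

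Next, the hypotheses on $\pi$ give $p_1^*\pi_*\O_X = p_1^*\O_Y = \O_{Y''}$ and $p_1^*R^i\pi_*\O_X=0$ for $i=1,2$, hence the same is true for $p_2^*\pi'_*\O_{X'}$ and $p_2^*R^i\pi'_*\O_{X'}$. The image $U:=p_2(Y'')\subset Y'$ is open and contains $y'$, and $p_2\colon Y''\to U$ is faithfully flat; by descent, the unit $\O_{Y'}\to\pi'_*\O_{X'}$ restricts to an isomorphism over $U$ and $R^i\pi'_*\O_{X'}|_U=0$ for $i=1,2$. Equivalently, the kernel $\K$ and cokernel $\F$ of $\O_{Y'}\to\pi'_*\O_{X'}$, as well as the sheaves $R^i\pi'_*\O_{X'}$ for $i=1,2$, all vanish on $U$, and in particular their supports do not contain~$y'$.

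Finally, I propagate the vanishing from $U$ to all of $Y'$ by exploiting the conical structure. Since $\pi'$ is $\Gm$-equivariant, each of the coherent sheaves $\K$, $\F$, and $R^i\pi'_*\O_{X'}$ is $\Gm$-equivariant on $Y'$, so its support is a closed $\Gm$-invariant subset of $Y'$. The $\Gm$-action on $Y'$ contracts $Y'$ to $y'$, which means that for every $y\in Y'(\k)$ one has $\lim_{t\to 0} t\cdot y = y'$; consequently any non-empty closed $\Gm$-invariant subset of $Y'$ must contain $y'$. Combined with the previous step, this forces all four sheaves to be zero, proving $\pi'_*\O_{X'}=\O_{Y'}$ and $R^i\pi'_*\O_{X'}=0$ for $i=1,2$.

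No serious obstacle is anticipated: the $\Gm$-equivariance of the higher direct images is automatic from the equivariance of $\pi'$, and the only subtle input is the general fact that a contracting $\Gm$-action forces every non-empty closed invariant subset to contain the sink — a fact that holds regardless of the characteristic and does not require the separability assumption built into \defref{conic resolution}.
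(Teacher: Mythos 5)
Your proposal is correct and follows essentially the same route as the paper: flat base change across the two cartesian squares of the étale equivalence to get the vanishing on the open image $U=p_2(Y'')\ni y'$, followed by the contracting $\Gm$-action (every orbit closure meets any neighbourhood of $y'$) to spread the vanishing of the $\Gm$-equivariant sheaves from $U$ to all of $Y'$. The paper phrases the last step via surjectivity of the action map $\Gm\times U\to Y'$ rather than via supports of equivariant sheaves, but the mechanism is identical.
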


\begin{proof}
The {\'e}tale equivalence $(\pi,X,Y,y)\sim_{et}(\pi',X',Y',y')$ is given by a commutative diagram 
$$
  \xymatrix{
  X  \ar[d]_{\pi}  & X'' \ar[d]_{\pi''} \ar[l]_{q_1} \ar[r]^{q_2}& X' \ar[d]_{\pi'} \\
      Y  &  Y'' \ar[l]_{p_1} \ar[r]^{p_2}& Y'
      }
$$
where all horizontal maps are {\'e}tale. From the flat base change we get that $\pi''\O_{X''}=\O_{Y''}$ and $R^1\pi''_*\O_{X''}=R^2\pi''_*\O_{X''}=0$. Applying the flat base change one more time, we get that $\pi'\O_{X'}=\O_{Y'}$ and $R^1\pi'_*\O_{X'}=R^2\pi'_*\O_{X'}=0$ on some open neighbourhood $U'\subset Y'$ of the point $y'$, such that $U'$ is contained in $p_2(Y'')$. Since $U'$ is open and contains $y'$, the map $\Gm\times U'\ra Y'$, given by the $\Gm$-action, is surjective (since the orbit of any point contains $y'$ in its closure and so intersects $U'$). From this, using the natural $\Gm$-equivariant structure on $\O_{X'}$ we get that $\pi'\O_{X'}=\O_{Y'}$ and $R^1\pi'_*\O_{X'}=R^2\pi'_*\O_{X'}=0$ on the whole $Y'$.

\end{proof}

\subsection{Construction of a slice}
\label{sliceconstr}

This subsection is devoted to the mentioned relation between \defref{withconicslice} and the existence of conical slices.

Let $X$ be a variety over $\k$ and $x\in X(\k)$. We denote by $\ul{X}_{\{x\}}$ the trivial pointed resolution $(\mr{id}_X, X,X,x)$. We also define $\ul{\mb A}^1$ to be $\ul{\mb A}^1_{\{0\}}$ in the previous notation. Note that $\ul{\mb A}^1\sim_{et} \ul{\Gm}_{\{1\}}$, where equivalence is given by the composition of the embedding $\Gm\ra \mb A^1$ and the translation by $-1$. More generally, if $X$ is of pure dimension $n$ and $x$ is a smooth point of $X$, we have $\ul{X}_{\{x\}}\sim_{et}(\ul{\mb A}^1)^n$. We also remind that if $(\xi,Z,W,w)$ is a pointed resolution, then $(\xi,Z,W,w)\times\ul{\mb A}^1=(\xi\times \mr{id},Z\times\mb A^1, W\times\mb A^1, w\times\{0\})$.

\begin{lem}
\label{slice-slice}
If there exists an \'etale equivalence $(\xi_1,Z_1, W_1, w)\!\times\!\ul{\mb A}^1\!\!\sim_{et}\!\! (\xi_2,Z_2, W_2, w_2)\!\times\!\ul{\mb A}^1$, then there also exists an \'etale equivalence $(\xi_1,Z_1,W_1,w_1)\!\sim_{et}\!(\xi_2,Z_2,W_2,w_2)$.

\end{lem}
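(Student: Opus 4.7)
The plan is to realize the desired \'etale equivalence by cutting the intermediate resolution by a carefully chosen hypersurface. Unwinding \defref{etale equivalence}, the given equivalence on products is witnessed by a pointed resolution $(\pi'',X'',Y'',y'')$ together with \'etale morphisms $p_i\colon Y''\to W_i\x\mb A^1$ and $q_i\colon X''\to Z_i\x\mb A^1$ satisfying $p_i(y'')=(w_i,0)$ for $i=1,2$; by \remref{fiber equal} one has $X''\cong Y''\x_{W_i}Z_i$ (using the further projection $W_i\x\mb A^1\to W_i$). My strategy is to find a Cartier divisor $Y'''=\{F=0\}\subset Y''$ passing through $y''$ which is \'etale over $W_i$ via $\pi_i:=\mathrm{pr}_{W_i}\circ p_i$ simultaneously for both $i=1,2$. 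Once such an $F$ is produced, setting $X''':=X''\x_{Y''}Y'''\cong Z_i\x_{W_i}Y'''$ and $\pi''':=\pi''|_{X'''}$ makes $(\pi''',X''',Y''',y'')$ into a pointed resolution (as $X'''\to Y'''$ is an \'etale base change of the resolution $\xi_i$, and $X'''\to Z_i$ is an \'etale base change of $Y'''\to W_i$), and the induced morphisms to the $(\xi_i,Z_i,W_i,w_i)$ are \'etale on both $Y$- and $X$-sides, giving exactly the \'etale equivalence claimed.

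The construction of $F$ reduces to a transversality calculation at $y''$. Let $\tau_i\in\mf m_{Y'',y''}$ be the pullback by $p_i$ of the $\mb A^1$-coordinate. Since $p_i$ is \'etale, the completed local ring satisfies $\wh\O_{Y'',y''}\cong\wh\O_{W_i,w_i}[[\tau_i]]$, so the hypersurface $\{F=0\}$ is \'etale over $W_i$ at $y''$ exactly when the image of $F$ in $\wh\O_{Y'',y''}/\mf m_{W_i,w_i}\wh\O_{Y'',y''}\cong \k[[\tau_i]]$ has nonzero linear term. Try the ansatz $F=\tau_1+\lambda\tau_2$ with $\lambda\in \k$. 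Writing $\tau_2\equiv c\,\tau_1+(\text{higher})\pmod{\mf m_{W_1,w_1}\wh\O_{Y'',y''}}$ (and symmetrically modulo $\mf m_{W_2,w_2}$), the two \'etaleness conditions each fail for at most one value of $\lambda$, so since $\k$ is infinite one can choose $\lambda$ making both conditions hold. Finally one shrinks $Y''$ to an affine \'etale neighborhood of $y''$ on which $F$ is defined and $\{F=0\}\to W_i$ is actually \'etale (not merely at $y''$); such a shrinking is invisible in $\mathsf{Res}_*^{et}$.

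The main point requiring care is that $W_i$, hence $Y''$, need not be smooth at $w_i$ and $y''$, so the argument must be phrased in terms of completed local rings rather than Zariski tangent spaces; but the formal presentation $\wh\O_{Y'',y''}\cong\wh\O_{W_i,w_i}[[\tau_i]]$ available from \'etaleness already encodes exactly the structure needed, so this is a presentational issue rather than a genuine difficulty. The remaining verifications (properness, birationality and smoothness of $\pi'''$ under \'etale base change, and the commutativity of the resulting correspondence) are straightforward since \'etale morphisms are flat and open.
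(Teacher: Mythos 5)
Your proposal is correct and is essentially the paper's own argument: the proof in the text also cuts the intermediate base $U$ (your $Y''$) by a hypersurface $\{f=0\}$ through the marked point whose derivative is nonzero along both $\mb A^1$-directions $V_{u,1},V_{u,2}$, and then base-changes the resolution along the resulting simultaneous \'etale maps to $W_1$ and $W_2$. Your explicit ansatz $F=\tau_1+\lambda\tau_2$ with generic $\lambda$, together with the completed-local-ring (Weierstrass) formulation of \'etaleness, just makes precise the existence and transversality of the function $f$ that the paper chooses more tersely, and is if anything slightly cleaner on the point that unramifiedness alone would still require a flatness check.
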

\begin{proof}
Let $\theta:T\ra U$ with $u\in U(\k)$ be a resolution which realizes the {\'e}tale equivalence:
$$
  \xymatrix{
 Z_1\times\mb{A}^1  \ar[d]^{\xi_1\times\id}  & T \ar[d]_{\theta} \ar[l]_<<<<{q_1} \ar[r]^<<<<{q_2}& Z_2\times\mb{A}^1 \ar[d]_{\xi_2\times \id} \\
      W_1\times\mb{A}^1  &  U \ar[l]^<<<<{p_1} \ar[r]_<<<<{p_2}& W_2\times\mb{A}^1
      }
$$
 By definition, we have {\'e}tale maps $W_1\times\mb{A}^1\xleftarrow{p_1}U\xra{p_2}W_2\times\mb{A}^1$ and a point $u\in U(\k)$ that maps to $w_1\times \{0\}$ and $w_2\times\{0\}$ respectively. For $i=1,2$ consider the natural isomorphisms $T_{w_i\times\{0\}}(W_i\times \mb{A}^1)\isoto T_{w_i}W_i\oplus T_{\{0\}}\mb{A}^1$. The maps $p_1$ and $p_2$ are {\'e}tale, so for any point $z\in U(\k)$ they give isomorphisms of Zariski tangent spaces $T_{p_1(z)}W_1\oplus T_{\{0\}}\mb{A}^1\cong T_zU\cong T_{p_2(z)}W_2\oplus T_{\{0\}}\mb{A}^1$. We denote by $V_{z,1}$ and $V_{z,2}$ the two 1-dimensional subspaces of $T_zU$, which come as preimages of $T_{\{0\}}\mb{A}^1$ under these two isomorphisms. For any function $f:U\ra \mb A^1$ and any $z\in U(\k)$ we have two maps $g_{z,1}, g_{z,2}$, given by the restriction of $df_z:T_zU\ra T_{f(z)}\mb A^1$ to subspaces $V_{z,1}$ and $V_{z,2}$. Consider the open subvariety $W_f\subset U$ of points $z$, such that $f(z)=0$ and $g_{z,1}$ and $g_{z,2}$ are isomorphisms. By definition, both maps $\mr{pr}_i\circ p_i|_{W_f}: W_f\ra W_i$ (where $\mr{pr}_i$ is the projection $W\times\mb A^1\ra W_i$) are unramified at any point $z$ of $W_f$, therefore are {\'e}tale. Choosing $f$ such that $f(u)=0$ and $g_{u,1}$, $g_{u,2}$ are isomorphisms, we get that $u$ is contained in $W_f(\k)$. We get an {\'e}tale correspondence $W_1\xleftarrow{p_1}W_f\xra{p_2} W_2$, such that $p_i(u)=w_i$.

Now, by \remref{fiber equal}, the fiber products $Z_1\times_{W_1} W_f$ and $Z_2\times_{W_2} W_f$ are equal and give a resolution $\xi_f:Z_f\ra W_f$. The natural maps $Z_1\xleftarrow{q_1} Z_f\xra{q_2} Z_2 $ are \'etale, since they are base changes of $p_1$ and $p_2$. We obtain an {\'e}tale equivalence $(\xi_1,Z_1,W_1,w_1)\leftarrow (\xi_f,Z_f,W_f,u)\ra (\xi_2,Z_2,W_2,w_2)$.
\end{proof}

\begin{lem}
\label{slice-conic}
Let $(\xi,Z,W,w)$ be a pointed resolution with an \'etale equivalence $(\xi,Z, W, w)\times \ul{\mb A}^1\sim_{et}(\pi',X',Y',y')$ where $(\pi',X',Y',y')$ is conical. Then there exists another conical resolution $(\xi',Z',W',w')$ with an \'etale equivalence $(\xi',Z',W',w')\sim_{et}(\xi,Z,W,w)$.
\end{lem}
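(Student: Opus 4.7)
The plan is to construct the conical resolution $(\xi',Z',W',w')$ as a $\Gm$-invariant hypersurface slice of the given $(\pi',X',Y',y')$, cut out by a $\Gm$-semi-invariant function $f$ on $Y'$ whose differential at $y'$ has a non-zero component along the ``$\mb A^1$-direction'' dictated by the \'etale equivalence, and then to produce the new \'etale equivalence by transporting this slice through a common intermediate pointed resolution.

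First I will spell out the hypothesis: pick $(\theta,T,U,u)$ together with \'etale morphisms $p_1\colon U\to W\times\mb A^1$, $p_2\colon U\to Y'$, $q_1\colon T\to Z\times\mb A^1$, $q_2\colon T\to X'$, all sending $u$ to the distinguished base-points, with both squares Cartesian by \lemref{etalecartesian}. The conical structure makes $Y'$ affine with positive grading $\O(Y')=\bigoplus_{n\ge 0}\O(Y')_n$, $\O(Y')_0=\k$, so that $\mf m_{y'}/\mf m_{y'}^2\cong T_{y'}^*Y'$ carries a weight decomposition with only positive weights. Composing $(dp_1)^{-1}$ with $dp_2$ gives an identification $T_wW\oplus T_0\mb A^1\iso T_{y'}Y'$; let $L\subset T_{y'}Y'$ denote the image of $T_0\mb A^1$. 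Decomposing $L$ by weights one finds some nonzero component $L_k$ in a weight $k>0$, and by duality one picks a homogeneous $f\in\O(Y')_k\cap\mf m_{y'}$ with $(df|_{y'})(L)\ne 0$.

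Next I set $W':=V(f)\subset Y'$, $Z':=\pi'^{-1}(W')$, $\xi':=\pi'|_{Z'}$. Semi-invariance of $f$ makes $W'$ $\Gm$-invariant; the restricted action is contracting to $y'$, and separability is inherited since $W'^{d\Gm}=W'\cap Y'^{d\Gm}=\{y'\}$. Using the Cartesian identification $T\cong U\times_{W\times\mb A^1}(Z\times\mb A^1)$, under which $\pi'$ corresponds to $\xi\times\id$, a direct tangent-space computation shows that $d(f\circ\pi')|_x$ has a non-zero $T_0\mb A^1$-component for every $x\in\pi'^{-1}(y')$; hence $Z'$ is smooth along the central fiber. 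The singular locus $Z'_{\mr{sing}}$ is $\Gm$-invariant, and its image under the proper map $\xi'$ is a closed $\Gm$-invariant subset of $W'$ that avoids $y'$; since every nonempty closed $\Gm$-invariant subset of $W'$ contains $y'$ in its closure (each orbit limits to $y'$), this image is empty, forcing $Z'$ to be smooth everywhere. Properness of $\xi'$ comes from base change, and birationality is arranged by choosing $f$ generically in the weight-$k$ piece so that $V(f)$ is not an irreducible component of the image of the exceptional locus. Hence $(\xi',Z',W',y')$ is a conical pointed resolution.

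Finally, for the \'etale equivalence with $(\xi,Z,W,w)$, set $W_0:=V(p_2^*f)\subset U$ and $Z_0:=\theta^{-1}(W_0)=q_2^{-1}(Z')\subset T$; base change of $p_2$ and $q_2$ gives \'etale maps $W_0\to W'$ and $Z_0\to Z'$. The subspace $T_uW_0=\ker d(p_2^*f)|_u$, viewed via $dp_1$ as a hyperplane in $T_wW\oplus T_0\mb A^1$, does not contain $T_0\mb A^1$ by the defining property of $f$, so $\mr{pr}_W\circ p_1|_{W_0}\colon W_0\to W$ is \'etale near $u$; an analogous tangent-space check at points of $Z_0$ lying over $u$, using the Cartesian description of $T$, shows that $\mr{pr}_Z\circ q_1|_{Z_0}\colon Z_0\to Z$ is \'etale as well. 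Shrinking $(W_0,Z_0)$ to the common open locus where all four maps are \'etale yields the required equivalence $(\xi',Z',W',y')\sim_{et}(\xi,Z,W,w)$. The main technical obstacle is the global smoothness of $Z'$: the tangent-space calculation only furnishes smoothness at the central fiber, and upgrading this to smoothness everywhere genuinely uses the contraction property of the $\Gm$-action on $W'$.
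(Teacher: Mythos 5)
Your proposal is correct and follows essentially the same route as the paper: pick a homogeneous $f\in\mf m_{y'}$ whose differential is non-zero on the line in $T_{y'}Y'$ corresponding to the $\mb A^1$-factor (the paper realizes this line as the tangent to the curve $S=p_2(p_1^{-1}(w\times\Gm))$), slice $Y'$ and $X'$ by $f$, and transport the slice back through $U$ via $f\circ p_2$ exactly as in \lemref{slice-slice}. The only difference is that you spell out the verification that the slice is again a resolution (smoothness along the central fiber plus the $\Gm$-contraction argument, properness, separability), which the paper asserts without proof; that extra care is welcome and the arguments you give for it are sound.
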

\begin{proof}
Let $\theta:T\ra U$ with $u\in U(\k)$ be a resolution which realises the {\'e}tale equivalence between $(\xi,Z, W, w)\times \ul{\mb A}^1$ and $(\pi', X', Y',y')$:
$$
  \xymatrix{
 Z\times\mb{A}^1  \ar[d]^{\xi\times\id}  & T \ar[d]_{\theta} \ar[l]_<<<<{q_1} \ar[r]^{q_2}& X' \ar[d]_{\pi'} \\
      W\times\mb{A}^1  &  U \ar[l]^<<<<{p_1} \ar[r]_{p_2}& Y'
      }
$$
  We have a $\Gm$-action on $W\times\mb A^1$, given by the trivial action on $W$ and the dilation on $\mb A^1$. We want to find a slice to the orbit of $w$, but in $Y'$, not in $W\times\mb A^1$. For this we consider the orbit $w\times\Gm\hookrightarrow W\times\mb A^1$. We look at its preimage $p_1^{-1}(w\times\Gm)\hookrightarrow U$ and take a connected component which contains $y'$ in its image under $p_2$. We denote by $S\subset Y'$ the image under $p_2$. $S$ is a subvariety of $Y'$ of dimension $1$, passing through $y'$.  For any function $f:Y'\ra\mb A^1$, such that $f(y')=0$, we have a map $df|_{T_{y'}S}:T_{y'}S\ra T_0\mb A^1$. 

Recall that  we have a $\Gm$-action on $Y'$ with a fixed point $y'$. So we also have a $\Gm$-action on the ring $\mc O_{Y'}$ and, as a representation of $\Gm$, it decomposes as a direct sum over all possible characters: $\mc O_{Y'}=\osum_{i\in \mb{Z}} V_i$, where $t(v)= t^i v$ for $v\in V_i$. Remember that $Y'$ is
conical, so we have $V_i=0$ for $i<0$ and $V_0=\k$. Let $\mf{m}_{y'}\subset \mc O_{Y'}$ be the maximal ideal corresponding to the point $y'$, then we also have an isomorphism of $\Gm$-representations  $\mf m_{y'}\cong\osum_{i>0} V_i$. The natural projection $\mf m_{y'}\twoheadrightarrow \mf m_{y'}/\mf{m}_{y'}^2$ to the Zariski cotangent space is $\Gm$-equivariant. Now choose a homogeneous element in $\mf m_{y'}/\mf{m}_{y'}^2$, which is nonzero on $T_{y'}S$ (considered as a linear function on $T_{y'}Y'$). For that, take any possibly non-homogeneous element which is nonzero, then one of its homogeneous components is nonzero too.
If we take any equivariant lift $f\in \mf m_{y'}$ under the projection  $\mf m_{y'}\twoheadrightarrow\mf{m}_{y'}/\mf{m}_{y'}^2$, by construction we will have that $df|_{T_{y'}S}:T_{y'}S\ra T_0\mb A^1$ is an isomorphism. Fiber $Z'=f^{-1}(0)$ contains $y'$ and since $f$ was homogeneous it is also endowed with a contracting $\Gm$-action which lifts to $W'=\pi'^{-1}(Z')$. Putting $w'=y'$ and $\xi'=\pi'|_{W'}$ we get a conical pointed resolution $(\xi',Z',W',w')$. 

To construct an {\'e}tale equivalence between $(\xi',Z',W',w')$ and $(\xi,Z,W,w)$, we proceed as in \lemref{slice-slice}, using the function $f\circ p_2:U \ra \mb A^1$ on $U$. Namely take $W''=f\circ p_2^{-1}(0)$ and $w''=u$. Shrinking $W''$ to $W_{f\circ p_2}$ as in \lemref{slice-slice} and repeating the argument we get the desired equivalence.
\end{proof}
\begin{lem}
\label{a1}
Let $\pi:X\ra Y$ be a resolution with conical slices and let $y\neq y_0$ be a point different from the central one. Then there exists a pointed resolution $(\xi,Z,W,w)$ and an \'etale equivalence $(\pi,X,Y,y)\sim_{et}(\xi,Z, W, w)\times \ul{\mb A}^1$.

\end{lem}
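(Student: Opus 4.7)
The plan is to build a transversal slice to the $\Gm$-orbit through $y$ in $Y$, and use the action map to exhibit a product decomposition \'etale-locally.

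First, I observe that $y_0$ is the \emph{unique} $\Gm$-fixed point of $Y$: any fixed point $y'$ satisfies $y' = \lim_{t\to 0} t\cdot y' = y_0$ by the conical assumption. Hence $y\ne y_0$ is not fixed, and by the separability built into \defref{conic resolution}, $y$ is not pseudo-fixed either, so $a_y(\partial_t) \ne 0$ in $T_y Y$. Pick a function $f \in \mf m_y \subset \mc O_{Y,y}$ whose class in $\mf m_y/\mf m_y^2$ pairs nontrivially against $a_y(\partial_t)$; such an $f$ exists because pairing with a nonzero tangent vector is a nonzero linear functional on the cotangent space. Shrink $Y$ to an affine open on which $f$ is regular, set $W := f^{-1}(0)$, and let $Z := \pi^{-1}(W)$.

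The key step is that the action map $\mu\colon \Gm\times W \to Y$, $(t,w)\mapsto t\cdot w$, is \'etale at $(1,y)$. Its differential there, a linear map $\k\oplus T_yW \to T_yY$, sends $(c,v)$ to $c\cdot a_y(\partial_t)+v$; by the choice of $f$, $T_yW = \ker(df_y)$ is complementary to $\k\cdot a_y(\partial_t)$, so the differential is an isomorphism. Openness of the \'etale locus together with the product topology on $\Gm\times W$ then allows us to shrink to $\Gm' \times W$ (for smaller $\Gm' \ni 1$ and a smaller affine open $W \ni y$) on which $\mu$ is \'etale. Base-changing along $\pi$, the lifted action $\mu_X\colon \Gm'\times Z\to X$, $(t,z)\mapsto t\cdot z$, is \'etale as well; since $X$ is smooth, $\Gm'\times Z$, and hence $Z$, is smooth near $\pi^{-1}(y)$. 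After further shrinking, $\xi := \pi|_Z \colon Z\to W$ is proper (base change of $\pi$) and birational ($W$ is a codimension-one subscheme of the normal variety $Y$, so it meets the smooth locus of $Y$, where $\pi$ is an isomorphism); thus $(\xi, Z, W, y)$ is a pointed resolution.

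Finally, I assemble the \'etale equivalence. The intermediate pointed resolution $(\xi\times\id,\, Z\times\Gm',\, W\times\Gm',\, (y,1))$ carries an \'etale morphism $(\mu,\mu_X)$ down to $(\pi,X,Y,y)$, and an \'etale morphism down to $(\xi,Z,W,y)\times\ul{\mb A}^1 = (\xi\times\id,\, Z\times\mb A^1,\, W\times\mb A^1,\, (y,0))$ given by the identity on $Z, W$ combined with the \'etale shift $\Gm' \hookrightarrow \Gm \xrightarrow{t\mapsto t-1} \mb A^1$ sending $1\mapsto 0$. This realises the desired $\sim_{et}$ equivalence in $\mathsf{Res}_*^{et}$. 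The main obstacle is upgrading the single-point tangent-space isomorphism to \'etaleness on a full product neighborhood, and ensuring smoothness of the slice even when $Y$ is singular at $y$; both are resolved by replacing the naive slice in $Y$ with its preimage $Z \subset X$ (which is smooth by the \'etaleness of $\mu_X$) and shrinking.
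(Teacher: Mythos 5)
You take a genuinely different route from the paper: the paper notes that by separability the stabilizer of the non-fixed point $y$ is a finite \'etale group scheme and invokes Luna's \'etale slice theorem to produce $W$ with $\Gm\times W\ra Y$ \'etale, whereas you build the slice by hand as $W=V(f)$ for an $f$ transversal to the orbit direction. Your identification of the orbit direction via separability, the base change producing $\mu_X:\Gm'\times Z\ra X$, and the final assembly through $(\xi\times\id,\,Z\times\Gm',\,W\times\Gm',\,(y,1))$ all match the paper's outline. But there is a genuine gap at the central step.

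You conclude that $\mu:\Gm\times W\ra Y$ is \'etale at $(1,y)$ because $d\mu_{(1,y)}$ is an isomorphism of Zariski tangent spaces. For morphisms between possibly singular schemes this inference is not valid: an isomorphism on tangent (equivalently cotangent) spaces at a $\k$-point only yields surjectivity of $\widehat{\mc O}_{Y,y}\ra\widehat{\mc O}_{\Gm\times W,(1,y)}$, i.e.\ that $\mu$ is unramified there; it does not give flatness (the closed immersion $\Spec\k[x]/(x^2)\hookrightarrow\mb A^1$ induces an isomorphism on tangent spaces at the origin but is not \'etale). The lemma is needed precisely at singular points of $Y$, so this must be repaired. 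It can be: since $\O_Y\isoto\pi_*\O_X$, $Y$ is normal, hence $\widehat{\mc O}_{Y,y}$ is a domain of dimension $n=\dim Y$; the target $\widehat{\mc O}_{\Gm\times W,(1,y)}\cong(\widehat{\mc O}_{Y,y}/(f))[[s]]$ also has dimension $n$ by Krull's principal ideal theorem, and a surjection from an $n$-dimensional complete local domain onto an $n$-dimensional local ring has zero kernel. So your conclusion holds, but only after using normality of $Y$ and a dimension count (or, as in the paper, after citing Luna). A smaller issue of the same flavour: $W=V(f)$ need not be irreducible, so birationality of $\xi$ requires $\pi$ to be an isomorphism over a dense open of \emph{each} component of $W$, not merely that $W$ meets the smooth locus; this also follows from the completion argument ($W$ is unibranch at $y$, so it may be shrunk to an irreducible neighbourhood) together with openness of the \'etale map $\mu$.
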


\begin{proof}
We consider the contracting $\Gm$-action on $Y$. The point $y$ is non-central, so the stabilizer of $y$ is a finite group scheme over $\k$. Since the action is separable, it is \'etale. Hence Luna's slice theorem \cite{Lu} can be applied: there is a subvariety $W$ in $Y$ containing $y$ and an {\'e}tale map $\mb G_m\times W\ra Y$. Resolution $\pi:X\ra Y$ is $\Gm$-equivariant, so this decomposition also lifts to an {\'e}tale map $\mb G_m\times Z\ra X$, where $Z=\pi^{-1}(W)$. This gives an \'etale equivalence $(\pi,X,Y,y)\sim_{et}(\xi,Z, W, w)\times \ul{\Gm}_{\{1\}}\sim_{et}(\xi,Z, W, w)\times \ul{\mb A}^1$.
\end{proof}

\begin{prop}[Conical slice]
\label{conic slice}
Let $\pi:X\ra Y$ be a resolution with conical slices, and let $y\neq y_0$ be a non-central point of $Y$. Then there exists a resolution with conical slices $\xi:Z\ra W$ with the central point $w_0$ and an \'etale equivalence $(\pi,X,Y,y)\sim_{et}(\xi,Z,W,w_0)\times \ul{\mb A}^1$.

\begin{proof}
By \lemref{a1} $(\pi, X,Y, y)$ is {\'e}tale equivalent to $(\xi,Z,W,w_0)\times \ul{\mb A}^1$ for some $(\xi,Z,W,w_0)$, and by \lemref{slice-conic} it can be chosen to be conical. We now need to prove that any point $w\neq w_0$ has a conical {\'e}tale neighborhood. Let $t\in \Gm(\k)$, then $w$ has a conical \'etale neighbourhood if and only if the point $tw$ has. Taking a suitable $t$ we can assume that $w$ is in the image of the correspondence between $(\pi, X,Y, y)$ and $(\xi,Z,W,w_0)$, and that this correspondence restricts to an \'etale equivalence between $(\xi,Z,W,w)\times \ul{\mb A}^1$ and $(\pi, X,Y, \tilde y)$ for some point $\tilde y\in Y(\k)$. Now again, by \lemref{a1} and \lemref{slice-conic}, $(\pi, X,Y, \tilde y)$ is \'etale equivalent to $(\xi',Z',W',w'_0)\times \ul{\mb A}^1$ for some conical resolution $(\xi',Z',W',w'_0)$ with the central point $w'_0$. We get that 
$$
(\xi,Z,W,w)\times \ul{\mb A}^1\sim_{et}(\xi',Z',W',w'_0)\times \ul{\mb A}^1
$$
and from \lemref{slice-slice} it follows that $(\xi,Z,W,w)\sim_{et}(\xi',Z',W',w'_0)$, and so any point $\xi:Z\ra W$ admits a conical \'etale neighbourhood. The equalities $\xi_*\O_Z=\O_W$ and $R^1\xi_*\O_Z=R^2\xi_*\O_Z=0$ follow from \lemref{direct image for neighbourhood} and the flat base change (e.g. see \propref{basechange} and apply it to $S=\mb A^1$) for the trivial family of resolutions $\pi\times \id:X\times\mb A^1\ra Y\times \mb A^1$ over $\mb A^1$.

\end{proof}

\end{prop}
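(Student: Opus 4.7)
The strategy combines the three preceding lemmas in two stages: first construct a conical $(\xi,Z,W,w_0)$ realizing the required étale equivalence, then verify it inherits the conical-slice property from $\pi$.

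For the construction, I would apply \lemref{a1} to produce some pointed resolution $(\xi,Z,W,w)$ with $(\pi,X,Y,y) \sim_{et} (\xi,Z,W,w) \times \ul{\mb A}^1$. Since $\pi$ has conical slices, $y$ admits a conical étale neighborhood $(\pi',X',Y',y')$; chaining equivalences yields $(\xi,Z,W,w) \times \ul{\mb A}^1 \sim_{et} (\pi',X',Y',y')$ with a conical right-hand side, so \lemref{slice-conic} replaces $(\xi,Z,W,w)$ by an étale-equivalent conical pointed resolution, which I rename $(\xi,Z,W,w_0)$.

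It remains to check that $(\xi,Z,W,w_0)$ has conical slices. The cohomological conditions $\xi_*\O_Z=\O_W$ and $R^i\xi_*\O_Z = 0$ for $i=1,2$ hold for the product family $\xi \times \id_{\mb A^1}$ because they hold for $\pi$ (via \lemref{direct image for neighbourhood} applied to the conical étale neighborhood of $y$), and they descend to $\xi$ by flat base change along the smooth projection $W \times \mb A^1 \to W$. For the slice condition at a non-central point $w \in W(\k)$: since $(\xi,Z,W,w_0)$ is conical, $\Gm$ acts on $W$ contracting to $w_0$, and having a conical étale neighborhood is invariant under this $\Gm$-action. Rescaling $w$ by a suitable $t \in \Gm(\k)$ brings $(tw,0)$ into the open image of the étale correspondence realizing $(\pi,X,Y,y) \sim_{et} (\xi,Z,W,w_0) \times \ul{\mb A}^1$, so restricting above that point yields an étale equivalence $(\xi,Z,W,w) \times \ul{\mb A}^1 \sim_{et} (\pi,X,Y,\tilde y)$ for some $\tilde y \in Y(\k)$. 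Applying \lemref{a1} and \lemref{slice-conic} to $\tilde y$ produces a conical $(\xi',Z',W',w'_0)$ with $(\pi,X,Y,\tilde y) \sim_{et} (\xi',Z',W',w'_0) \times \ul{\mb A}^1$, and \lemref{slice-slice} then cancels the common $\ul{\mb A}^1$-factor to give $(\xi,Z,W,w) \sim_{et} (\xi',Z',W',w'_0)$, the desired conical neighborhood.

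The main obstacle I anticipate is the $\Gm$-translation step: one must justify that the open image of the étale map in $W \times \mb A^1$ meets the orbit $\Gm \cdot w \times \{0\}$, and that translating by $\Gm$ genuinely preserves the property of admitting a conical étale neighborhood. Both facts should follow from openness of étale morphisms together with the fact that the closure of the $\Gm$-orbit of any non-central $w$ contains $w_0$, so every $\Gm$-invariant open set meeting a neighborhood of $w_0$ meets the orbit; still, the argument must be phrased carefully so as not to tacitly use the very property being verified.
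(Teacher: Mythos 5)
Your proposal is correct and follows essentially the same route as the paper: Lemma~\ref{a1} plus Lemma~\ref{slice-conic} to produce the conical $(\xi,Z,W,w_0)$, the $\Gm$-translation trick to bring a non-central $w$ into the image of the correspondence, a second application of Lemmas~\ref{a1} and~\ref{slice-conic} at $\tilde y$, cancellation of the $\ul{\mb A}^1$-factor via Lemma~\ref{slice-slice}, and flat base change together with Lemma~\ref{direct image for neighbourhood} for the cohomological conditions. The obstacle you flag at the end (openness of the étale image plus the fact that $w_0$ lies in the closure of every non-central $\Gm$-orbit) is exactly the justification the paper leaves implicit in the phrase ``taking a suitable $t$,'' so your treatment is, if anything, slightly more careful.
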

We end this subsection with the following important remark.
\begin{rem}
\label{slice-cover}
Let $\pi:X\ra Y$ be a resolution with conical slices and let $y_0\in Y(\k)$ be the central point. Then by \propref{conic slice} we have an {\'e}tale correspondence between $Y$ and $\bigsqcup_{y} W_y\times \mb A^1$, where we take the union over all points $y\in Y(\k)\setminus y_0$ and $W_y$ is the transversal slice to the $\Gm$-orbit of $y$, constructed in \propref{conic slice}. Note that it covers $Y\setminus y_0$.
 Moreover, it extends to an {\'e}tale correspondence between resolution $\pi:X\setminus {\pi^{-1}(y_0)\ra Y\setminus y_0}$ and $\bigsqcup_y \xi_y\times \id:Z_y\times\mb A^1\ra W_y\times\mb A^1$. Note that since $Y$ is quasi-compact, the union $\bigsqcup_y \xi_y\times \id:Z_y\times\mb A^1\ra W_y\times\mb A^1$ can be replaced by a finite subunion. 
\end{rem}

\subsection{Reductions to characteristic $p$}
\label{reductions}
Let $R$ be a domain which is finite type and flat over $\mb Z$. Let $p\!:\!\mc X\ra S$ be a scheme of finite type over $S=\Spec R$. Let $K\!=\!\on{Frac} R$ be the field of fractions and let  $\mb K=\ol{K}$ be its algebraic closure. We have the geometric generic point map $\eta:\Spec \mb K\ra S$ given by the embedding $R\subset \mb K$. We have structure maps $p_S:S\ra \Spec \mb Z$, $p_{\mc X}:\mc X\ra \Spec \mb Z$. Let $\k_s$ be an algebraically closed field and $s:\Spec \k_s\ra S$ be a geometric point with the function field $\k_s$. Let $X_s$ be the corresponding fiber: $X_s=\mc X\times_{S} \Spec \k_s$. Taking $s=\eta$ we obtain the geometric generic fiber $X_{\eta}$, whose image is dense in $\mc X$. 

Consider a proper map $\pi:\mc X\ra \mc Y$ such that $\pi_\eta:X_\eta\ra Y_\eta$ is a resolution over $\Spec \mb K$ and $\mc Y$ is affine over $S$. Then for every $s:\Spec \k_s\ra S$ such that $X_s$ is smooth and $\pi_s$ is birational, we have a resolution $\pi_s:X_s\ra Y_s$ over $\Spec \k_s$ .

\begin{prop}[\cite{EGA} $IV_4$, 17.7.8(ii)]\label{densesmooth}
Let $\mc X$, $X_{\eta}$, $X_s$ be as before and  assume that $X_\eta$ is smooth. Then there is a Zariski open $S'\subset S$, such that for any geometric point $s:\Spec \k_s\ra S' $ the corresponding fiber $X_s$ is smooth.
\end{prop}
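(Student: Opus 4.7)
The plan is to deduce the openness of the smooth locus on the base $S$ from the well-known openness of the smooth locus on the source $\mc X$, combined with Chevalley's constructibility theorem and irreducibility of $S$. The key observation is that smoothness of a fiber can be detected as the condition that the fiber avoids a certain closed subset of $\mc X$.

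First I would invoke the classical result (EGA IV) that, for a morphism $p:\mc X\to S$ locally of finite presentation, the locus $V\subset \mc X$ on which $p$ is smooth is open in $\mc X$. Consequently the complement $Z:=\mc X\setminus V$ is closed in $\mc X$. By the definition of smoothness of a morphism, the fiber $X_s$ over $s:\Spec \k_s\to S$ is smooth exactly when $Z\cap X_s=\emptyset$, equivalently when $s$ lies outside the image $p(Z)\subset S$.

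Next I would observe that smoothness of the geometric generic fiber $X_\eta$ implies smoothness of the fiber $X_\xi$ over the scheme-theoretic generic point $\xi\in S$, since smoothness descends along faithfully flat field extensions (in particular along $\kappa(\xi)\hookrightarrow \mb K$). Hence $Z\cap X_\xi=\emptyset$, so the generic point $\xi\in S$ is not contained in $p(Z)$.

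Finally, since $\mc X$ is of finite type over $S$ and $R$ is Noetherian, Chevalley's theorem guarantees that $p(Z)$ is a constructible subset of the irreducible scheme $S$; any constructible subset missing the generic point is contained in a proper closed subscheme $T\subsetneq S$. Setting $S':=S\setminus T$ yields a nonempty Zariski open subset of $S$, and for any geometric point $s:\Spec \k_s\to S'$ the fiber $X_s$ is disjoint from $Z$, and therefore smooth. There is no serious obstacle here---all the ingredients, namely openness of the smooth locus of a morphism, Chevalley's constructibility, and descent of smoothness along field extensions, are entirely classical, and the statement is essentially a direct citation of EGA $IV_4$, 17.7.8(ii); the ``proof'' merely assembles these standard facts.
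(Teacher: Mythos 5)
Your argument is correct, and it is essentially the argument behind the cited result: the paper itself offers no proof of \propref{densesmooth}, merely citing EGA $IV_4$, 17.7.8(ii), so assembling openness of the smooth locus of $p$, Chevalley's theorem, and descent of smoothness along field extensions is exactly the right thing to do. One small imprecision: your ``exactly when'' conflates smoothness of the fiber $X_s$ over $\kappa(s)$ with smoothness of the morphism $p$ along $X_s$; the latter also requires flatness of $p$ at the points of the fiber, so in general only the implication $Z\cap X_s=\emptyset\Rightarrow X_s$ smooth holds. This does not damage the proof, since the converse direction is used only at the generic point $\xi$ of the integral scheme $S$, where $\O_{S,\xi}$ is a field and flatness is automatic.
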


Let $Y^\circ_\eta\subset Y_\eta$ be the dense open subset on which $\pi_\eta$ is 1-to-1. The map $\pi_\eta^{-1}:Y^\circ_\eta\ra X_\eta$ is defined over some finite extension $K\subset L$ and can be extended to some \'etale open ${S'}\ra S$. Since the image of $Y^\circ_\eta$ is dense in $\mc Y\times_S S'$, shrinking $S'$ we can also assume that the image of $Y^\circ_s$ in $X_s$ for any $s:\Spec \k_s\ra S'$ is dense and open as well.
 This proves that there exists an \'etale open  $S'\ra S$ such that for any geometric point $s:\Spec \k_s\ra S'$ the map $\pi_s:X_s\ra Y_s$ is a resolution with singularities.

Let's now assume that for $\pi_\eta:X_\eta\ra Y_\eta$ we have $\mc O_{Y_\eta}\isoto ({\pi_\eta})_* \O_{X_\eta}$ and $R^1(\pi_{\eta})_*\O_{X_\eta}={R^2(\pi_\eta)}_*\O_{X_\eta}=0$.
Replacing $S$ with some Zariski open $S'\subset S$, we can assume that $X$ and $Y$ are both flat over $S$. In this case we have the base change theorem:

\begin{prop}[\cite{EGA} $III$, 6.9.10]\label{basechange}
Let $\pi:\mc X\ra \mc Y$ be a morphism of schemes over $S$ and $\mc F$ be a quasicoherent sheaf of $\mc O_{\mc X}$-modules, flat over $S$. Assume that $\mc O_{\mc Y}$-modules $R^i\pi_*\mc F$ are also flat over $S$. Let $s:\Spec\k_s\ra S$ be a geometric point and $s_Y:Y_s\ra \mc Y$, $s_X:X_s\ra \mc X$ be the corresponding morphisms from fibers. Let $\pi_s: X_s\ra Y_s$ be the pull-back of $\pi$. Then

$$
{R^i(\pi_s)}_*  \left(s_X^*\mc F\right)\simeq s_Y^*\left(R^i\pi_* \mc F\right)
$$

\end{prop}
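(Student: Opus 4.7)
The statement is the classical cohomology-and-base-change theorem, so my plan is to reduce it to the standard semicontinuity/base-change package from EGA III, \S 6.9, or equivalently the treatment in Mumford's \emph{Abelian Varieties}, \S 5. I would proceed as follows.

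First I would localize on $\mc Y$ and on $S$ to reduce to the affine situation $S = \Spec R$ and $\mc Y = \Spec B$, where $B$ is an $R$-algebra and $\mc F$ corresponds to a $B$-module $M$ which is flat over $R$. Under this reduction the sheaves $R^i\pi_*\mc F$ correspond to $B$-modules $N^i := H^i(\mc X, \mc F)$, and the claim becomes that for each geometric point $s : \Spec \k_s \to S$, the base change $N^i \otimes_R \k_s$ is naturally isomorphic to $H^i(X_s, s_X^*\mc F)$.

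The key input is the \emph{Grothendieck complex}. Using properness of $\pi$ together with the coherence and $R$-flatness of $\mc F$, one produces a bounded complex $K^\bullet$ of finitely generated flat (equivalently projective) $R$-modules such that, functorially in any $R$-algebra $R'$,
\[
H^i\!\left(\mc X \otimes_R R',\, \mc F \otimes_R R'\right) \;\simeq\; H^i\!\left(K^\bullet \otimes_R R'\right).
\]
Concretely, $K^\bullet$ arises by replacing the \v{C}ech complex of a finite affine cover of $\mc X$ by a quasi-isomorphic bounded complex of finitely generated projectives; the flatness of $\mc F$ over $R$ is what makes this replacement possible.

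Given the Grothendieck complex, the rest of the argument is a purely module-theoretic exchange lemma. Under the hypothesis that $N^i = H^i(K^\bullet)$ is $R$-flat for every $i$, one shows by descending induction on $i$ that the natural map
\[
H^i(K^\bullet) \otimes_R R' \;\longrightarrow\; H^i\!\left(K^\bullet \otimes_R R'\right)
\]
is an isomorphism for every $R$-algebra $R'$; applying this with $R' = \k_s$ yields the statement. The main obstacle is the construction of the Grothendieck complex itself — this is the technical heart of the theorem and I would simply cite it from EGA III, \S 6.7--6.9, rather than reproduce it, since the paper only needs the conclusion.
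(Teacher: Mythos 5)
The paper gives no proof of this proposition --- it is quoted directly from EGA III, 6.9.10 --- and your sketch is the standard argument behind that citation: reduce to affine $\mc Y$ and $S=\Spec R$, compute cohomology by a bounded complex of flat $R$-modules functorially in the base ring, and run the descending-induction exchange lemma using the assumed flatness of the cohomology modules. This is correct; the one caveat is that your construction of the Grothendieck complex of finite projectives invokes properness of $\pi$, which is not among the stated hypotheses (though it does hold in every application in the paper), whereas for the statement as written one can instead take the \v{C}ech complex of a finite affine cover, whose terms are already $R$-flat because $\mc F$ is flat over $S$, and the same exchange argument (flatness of all $H^i$ forces flatness of cocycles and coboundaries by descending induction, hence commutation with $\otimes_R \k_s$) applies verbatim.
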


\begin{rem}
\label{densedirectimage}
Returning to our situation and applying the proposition to $\mc F=\mc O_{\mc X}$, we get that $ {R^i(\pi_s)}_* \mc O_{X_s}\simeq s_Y^*\!\left(R^i\pi_* \mc O_{\mc X}\right)$, in particular $({\pi_s})_*\mc O_{X_s}\simeq s_Y^*\mc O_{\mc X}$. We have a natural morphism $\mc O_{\mc Y}\ra \pi_* \mc O_{\mc X}$ which is an isomorphism on a dense open subscheme of $\mc Y$ (containing the image of $Y_\eta$). The set of points where this morphism is not an isomorphism is closed, so is contained in some proper closed subscheme of $\mc Y$. Its image under projection $p_{Y/S}:Y\ra S$ is constructible and since it does not contain the generic point, it is contained in some proper closed subscheme of $S$. Let $S'\subset S$ be the complement, then for any $s:\Spec \k_s \ra S'$ we have $\mc O_{Y_s}\isoto ({\pi_s})_* \O_{X_s}$. Similarly, shrinking $S'$ further, we can also make ${R^1(\pi_{s})}_*\O_{X_s}={R^2(\pi_s)}_*\O_{X_s}=0$ for all $s:\Spec \k_s\ra S$.
\end{rem}

Let's now deal with the $\Gm$-actions, namely with the separability property (see \defref{separable}). Analogously to \defref{pseudo} we define the subscheme $\mc X^{d\Gm}\subset \mc X$ of pseudo-fixed points. Namely, let $\Gm=\Spec R[t,t^{-1}]$ be the multiplicative group scheme over $S$, let $e: S\ra \Gm$ be the embedding of the identity, and let $\mc T_e\Gm=e^*\mc T_{\Gm}$ be the tangent space at $e$. We have a nowhere-zero section $\partial_t\in \Gamma(S, \mc T_e\Gm)$ which gives the trivialisation $\mc T_e^*\Gm\iso \mc O_S$. Consider $p^*\Gm$ as a group scheme over $\mc X$ (namely take $\mc X\times_S \Gm$) and let $Lie(p^*\Gm)$ be the corresponding Lie algebroid (considered just as a coherent sheaf). Since $p^*\Gm$ is just the fiber product, it is clear that $Lie(p^*\Gm)=p^*(Lie(\Gm))=p^*(\mc O_S)=\mc O_{\mc X}$. The derivative of the $\Gm$-action gives a map $a_S:\mc O_{\mc X}\iso Lie(p^*\Gm) \ra \mr{Der}_R(\mc O_{\mc X})$ and the dual map $a_S^*:\Omega^1_{\mc X/S}\ra\mc O_{\mc X}$, where $\mr{Der}(\mc O_{\mc X})$ and $\Omega^1_{\mc X/S}$ are the sheaves of $R$-linear derivations of $\mc O_{\mc X}$ and relative K{\"a}hler differentials on $\mc X$ respectively. The image of $a_S^*$ is some sheaf of ideals on $\mc X$, which we denote by $\mc I_{\mc X,a_R}$. Similarly, for any geometric point $s$, we have a map $a_s^*:\Omega^1_{\mc X/\k_s}\ra \mc O_{X_s}$ and a sheaf of ideals $\mc I_{X_s,a_s}\subset \mc O_{X_s}$. We define the  subschemes of pseudo-fixed points $\mc X^{d\Gm}\subset \mc X$ and $X_s^{d\Gm}\subset X_s$ as subschemes determined by $\mc I_{\mc X,a_R}$ and $\mc I_{X_s,a_s}$ correspondingly.

Let $i_s:X_s\rightarrow\mc X$ be the natural morphism from the fiber, then the Cartesian square

$$
\xymatrix{
X_s\ar[r]^{i_s} \ar[d] & \mc X\ar[d] \\
\Spec \k_s \ar[r]^s & S
}
$$
induces an isomorphism $i^*_s\Omega^1_{\mc X/S}\iso \Omega^1_{\mc X/\k_s}$ which intertwines $a_R^*$ and $a_s^*$. This gives the equality $i_s^*\mc I_{\mc X,a_R}=\mc I_{X_s,a_s}$ and the identification of the corresponding subschemes:
$$
X_s^{d\Gm}=X_s\times_{\mc X}\mc X^{d\Gm}.
$$

 Recall that a $\Gm$-action on a scheme $X$ over an algebraically closed field $\k$ is called separable if $X^{d\Gm}(\k)=X^{\Gm}(\k)$. Recall also that in the case $\on{char} \k =0$ the action is always separable.

\begin{lem}
\label{denseseparable}
Let $\mc X$ be a scheme over $S=\Spec R$ endowed with a $\Gm$-action. Let $S^{non-sep}\subset S$ be the subset, consisting of the images of geometric points $s:\Spec \k_s\ra S$ such that the induced action on $X_s$ fails to be separable. Let $p_S: S\ra \Spec \mb Z$ be the natural projection, then $p_S(S^{non-sep})\subset \Spec \mb Z$ is a finite set.
\end{lem}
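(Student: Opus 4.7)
The plan is to identify $S^{\mathrm{non-sep}}$ with the image of a constructible subset of $\mc X$ that has empty fiber over every characteristic-$0$ point of $S$, and then invoke the fact that any constructible subset of $\Spec \mb Z$ missing the generic point is finite.

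First, I would set up the analogue of Definition \ref{pseudo} relative to $S$ together with the actual fixed subscheme. The pseudo-fixed locus $\mc X^{d\Gm}\subset \mc X$ is the closed subscheme cut out by $\mc I_{\mc X,a_R}$, and as the excerpt already verifies, its formation commutes with base change: $X_s^{d\Gm}=X_s\times_{\mc X}\mc X^{d\Gm}$ for every geometric point $s$. Dually, let $\mc X^{\Gm}\subset \mc X$ denote the fixed-point subscheme: it is the equalizer of the action map $a\colon \Gm\times_S\mc X\to\mc X$ and the second projection $pr_2$, and since equalizers are a fiber-product construction, $X_s^{\Gm}=\mc X^{\Gm}\times_S\Spec\k_s$ as well. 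Set $T:=|\mc X^{d\Gm}|\setminus |\mc X^{\Gm}|\subset |\mc X|$; this is a locally closed (in particular constructible) subset, and by the two base change compatibilities together with the fact that $\k_s$ is algebraically closed so every nonempty $\k_s$-scheme of finite type has a $\k_s$-point, the fiber $T_{\bar s}$ over a geometric point is nonempty precisely when $X_{\bar s}^{d\Gm}(\k_s)\neq X_{\bar s}^{\Gm}(\k_s)$. Writing $p\colon \mc X\to S$ for the structure map, this shows $S^{\mathrm{non-sep}}=p(T)$ as subsets of $S$.

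Next, I would apply the remark recorded just after Definition \ref{pseudo}: in characteristic $0$ the differential of the action vanishing at a closed point forces the point to be genuinely fixed, so $T_{\bar s}=\emptyset$ for every geometric point $\bar s$ whose residue field has characteristic $0$. Consequently $p(T)$ is disjoint from the preimage $S_\mb Q:=p_S^{-1}(\eta_\mb Z)\subset S$ of the generic point of $\Spec \mb Z$.

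Finally, Chevalley's theorem will finish things off. Since $T$ is constructible in $\mc X$ and $p$ is of finite type (as $\mc X$ is of finite type over $S$), $p(T)\subset S$ is constructible; applying Chevalley again to $p_S\colon S\to \Spec \mb Z$, which is of finite type because $R$ is a finitely generated $\mb Z$-algebra, the image $p_S(p(T))\subset \Spec \mb Z$ is constructible. Any constructible subset of $\Spec \mb Z$ is either cofinite (if it contains the generic point) or finite (if it does not); having shown $p_S(p(T))\not\ni \eta_\mb Z$, we conclude that $p_S(S^{\mathrm{non-sep}})=p_S(p(T))$ is finite. The only subtle point here is the base-change compatibility of $\mc X^{\Gm}$, but this is entirely formal from its definition as an equalizer, so no genuine obstacle stands in the way.
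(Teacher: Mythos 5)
Your proof is correct and follows essentially the same route as the paper: the paper also forms $\mc Z=\mc X^{d\Gm}\setminus\mc X^{\Gm}$, identifies $S^{\mathrm{non\text{-}sep}}$ with its image, uses the characteristic-$0$ fact that pseudo-fixed points are fixed to see the image in $\Spec\mb Z$ misses the generic point, and concludes by constructibility. Your write-up merely makes explicit the base-change compatibility of the fixed-point scheme and the two applications of Chevalley, which the paper leaves implicit.
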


\begin{proof}
Let $\mc Z=\mc X^{d\Gm}\setminus \mc X^{\Gm}$ be the open subscheme of $\mc X^{\Gm}$. $S^{non-sep}$ is by definition the image of $\mc Z$ under $p:\mc X\ra S$. Let  $s:\Spec \k_s\ra S$ be a geometric point, where $\k_s$ is of characteristic 0. We know that in this case $\mc X^{d\Gm}(\k_s)=X_s^{d\Gm}(\k_s)=X_s^{\Gm}(\k_s)=\mc X^{\Gm}(\k_s)$. This means that $\mc Z(\k_s)=\varnothing$ for any such $s$ and as a conclusion the image of $\mc Z$ under the natural projection $p_{\mc Z}:\mc Z\ra \Spec \mb Z$ does not contain $\Spec \mb Q$. On the other hand the image of $\mc Z$ is constructible subset of $\Spec\mb Z$, so is a finite set.
\end{proof}
In particular there exist an \'etale (as well as Zariski) open $S'\ra S$ such that for any geometric point $s:\Spec \k_s\ra S'$ the action on $X_s$ and $Y_s$ is separable.

\begin{prop}
\label{reduction}
Let $S=\Spec R$, where $R$ is a domain which is finite type and flat over $\mb Z$. Let $\eta:\Spec  \mb K\ra S$ be the geometric generic point. Let $\pi:\mc X\ra \mc Y$ be a map of schemes over $S$, such that $\pi_\eta:X_\eta\ra Y_\eta$ is a resolution of singularities with conical slices.  Then there exists an \'etale open ${S'}\ra S$ such that for any geometric point $s:\Spec \k_s\ra  {S'}$ the base change $\pi_s:X_s\ra Y_s$ is a resolution with conical slices.
\end{prop}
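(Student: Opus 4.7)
The plan is to proceed by induction on $n=\dim Y_\eta$, dispatching the bulk of the conditions defining a resolution with conical slices (smoothness of fibers, vanishing cohomology, existence of a separable contracting $\Gm$-action with central point) by standard spreading-out techniques already recalled in the text, and then using \remref{slice-cover} to reduce the conical-slice condition at non-central points to the inductive hypothesis.

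First I would shrink $S$ étale-locally to ensure several conditions simultaneously on every geometric fiber. By \propref{densesmooth} one achieves smoothness of $X_s$. The inverse of $\pi_\eta$ on its generic isomorphism locus spreads to an étale open, and together with properness forces $\pi_s$ to be birational. \remref{densedirectimage} and \propref{basechange} yield $\mathcal O_{Y_s}\isoto(\pi_s)_*\mathcal O_{X_s}$ and $R^i(\pi_s)_*\mathcal O_{X_s}=0$ for $i=1,2$. The $\Gm$-action on $\pi_\eta$ is defined by a morphism over $\mathbb K$ and hence descends to an étale neighbourhood of $\eta$; the central point $y_{0,\eta}$ similarly extends to a section $\sigma_0\colon S\to\mathcal Y$ after further shrinkage. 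The contracting property on the affine $\mathcal Y$ amounts to non-negativity of the $\mathbb Z$-grading on $\Gamma(\mathcal Y,\mathcal O_{\mathcal Y})$ together with the degree-$0$ part being equal to $R$, both conditions detectable on a finite set of graded generators and persisting under specialisation. Finally \lemref{denseseparable} ensures separability of the action on every fiber after one more shrinkage.

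For the conical slice condition at non-central points, apply \remref{slice-cover} to $\pi_\eta$: there is a \emph{finite} family of points $y_{i,\eta}\in Y_\eta(\mathbb K)\setminus\{y_{0,\eta}\}$, resolutions with conical slices $\xi_{i,\eta}\colon Z_{i,\eta}\to W_{i,\eta}$ with $\dim W_{i,\eta}<n$, and an étale correspondence covering $Y_\eta\setminus\{y_{0,\eta}\}$ which lifts to an étale correspondence between $X_\eta\setminus\pi_\eta^{-1}(y_{0,\eta})$ and $\bigsqcup_i Z_{i,\eta}\times\mathbb A^1$. Shrinking $S$ étale-locally once more, one extends all of this data over $S$: the $y_{i,\eta}$ become sections $\sigma_i$, each $\xi_{i,\eta}$ extends to $\xi_i\colon\mathcal Z_i\to\mathcal W_i$ over $S$ with all required structure (contracting $\Gm$-action, central section), and the covering étale correspondence extends. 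By the inductive hypothesis applied to each $\xi_i$ (finitely many, so the resulting étale opens may be intersected), we may shrink $S$ further so that every geometric fiber $\xi_{i,s}$ is a resolution with conical slices. On the resulting $S'$, any non-central point $y\in Y_s(\k_s)\setminus\{y_{0,s}\}$ lies in the image of the specialised correspondence for some $i$, giving an étale equivalence
\[
(\pi_s,X_s,Y_s,y)\sim_{et}(\xi_{i,s},Z_{i,s},W_{i,s},w)\times\ul{\mathbb A}^1;
\]
\lemref{a1} combined with \lemref{slice-conic} applied to $\xi_{i,s}$ then deliver the required conical étale neighbourhood of $y$.

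The main obstacle will be the bookkeeping around \remref{slice-cover}: one must ensure the covering property of the specialised correspondence extends to \emph{all} geometric fibers over $S'$, i.e.\ that the image of $\mathcal U$ meets each geometric fiber of $\mathcal Y\setminus\sigma_0(S)\to S$ in the whole fiber. Since this image is constructible and covers the generic fiber by construction, it covers over a Zariski open of $S$ to which one restricts. Beyond this, one must verify compatibility between the various spread-out structures (central section of $\xi_i$ matching the one appearing generically, equivariance of the correspondence with respect to both $\Gm$-actions, etc.), but these compatibilities are automatic from functoriality of fiber products and the finiteness of data involved.
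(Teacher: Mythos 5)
Your proposal is correct and follows essentially the same route as the paper: spreading out smoothness, birationality, the direct-image vanishing, the $\Gm$-action and its separability via \propref{densesmooth}, \remref{densedirectimage} and \lemref{denseseparable}, then inducting on dimension using the finite étale correspondence of \remref{slice-cover} to reduce the conical-slice condition at non-central points to the slices $\xi_i$ of smaller dimension. Your constructibility argument for why the spread-out correspondence still covers every geometric fiber is a clean way to handle the one point the paper treats somewhat informally.
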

\begin{proof}
The $\Gm$-action on $\pi_\eta$ is defined over some finite extension $\mb K\subset L$ and consequently can be defined over some \'etale open $ {S'}\ra S$ finite type over $S$. Moreover, we can assume that the action on $\Y\times_S  {S'}$ is contracting. Let $y_0:S'\ra \mc X$ be the central point. 

By \propref{densesmooth} and the comment about birationality below it, we can find \'etale open ${S'}\ra S$, such that $\pi_s:X_s\ra Y_s$ is a resolution of singularities for any $s:\Spec \k_s\ra  {S'}$. By  \remref{densedirectimage} we can shrink it further, so that $\O_{Y_s}\isoto (\pi_s)_*\O_{X_s}$ and  ${R^1(\pi_s)}_*\O_{X_s}={R^2(\pi_s)}_*\O_{X_s}=0$. Finally, \lemref{denseseparable} states that the $\Gm$-action on $\pi_s$ can also be assumed to be separable. It only remains to find \'etale conic neighbourhoods.

 For that we proceed by induction on the dimension of $\mc X$.  By \remref{slice-conic} we can find a finite collection of resolutions with conical slices $\theta_{i,\eta}:Z_{i,\eta}\ra W_{i,\eta}$ and an {\'e}tale correspondence between $\pi_\eta:X_\eta\setminus {\pi^{-1}((y_0)}\ra Y_\eta\setminus {(y_0)}$ and $\bigsqcup\theta_{i,\eta}\times \id:Z_{i,\eta}\times\mb A^1\ra W_{i,\eta}\times\mb A^1$. Moreover, we can assume that all these correspondences are defined over some \'etale open ${S'}\ra S$, are {\'e}tale, and moreover that each $\theta_i:\mc Z_i\ra \mc W_i$ is a conical resolution (over ${S'}$). By induction, we can assume that each $\theta_{i,s}:Z_{i,s}\ra W_{i,s}$ is a resolution with conical slices, and in particular that $w_s\in W_{i,s}$ has an {\'e}tale conical neighbourhood.  Since $\Y\times_S {S'}$ is affine over ${S'}$, shrinking ${S'}$, we can also assume that any $y_s:\k_s\ra \Y\times_S {S'}$ can be factored through some global point $y:{S'} \ra \mc Y$. Let's choose $\mc W_i\times \mb A^1$ which covers $y$ under the correspondence and a point $(w,t):{S'}\ra \mc W_i\times \mb A^1$ that is in the correspondence with $y$. Taking its product with $\mb A^1$ and the base change to $s:\Spec\k \ra {S'}$, we obtain an {\'e}tale conical neighbourhood of $y_s\in Y_s$.
\end{proof}

\section{$\Gm$-weights of 1-forms on a conical resolution}\label{weights of 1-forms}

\subsection{$\Gm$-weights in characteristic 0}
Let $R$ be a domain which is finite type and flat over $\mb Z$, let $S=\Spec R$ and let $\eta:\Spec \mb K\ra S$ be its geometric generic point. Let $\pi:\mc X\ra \mc Y$ be a proper map of finite type schemes over $S=\Spec R$, such that $\O_{\Y}\isoto \pi_*\O_{\X}$ and $R^1\pi_*\O_{\mc X}=R^2\pi_*\O_{\mc X}=0$. Assume that $\pi_{\eta}:X_\eta\ra Y_\eta$ is a conical resolution. Assume moreover, that the corresponding $\mb G_m$-action on $\pi_{\eta}$ extends to $\pi$, and that $\mc Y$ is contracted to a single point $y_0:\Spec R\ra \mc Y$ under this action. Consequently, $\mc Y$ is affine and is equal to $\Spec A$ for some finitely generated $R$-algebra $A$. $A$ comes with a natural positive grading $A=\osum_{i\ge 0}A^i$, such that $A^0\simeq R$ (given by $y_0$) and each $A^i$ is of finite rank over $R$. 

Replacing $S$ with some \'etale open, following the proof of \propref{reduction} we can assume that for any geometric point $s:\Spec \k_s\ra S$ we have a conical resolution $\pi_s:X_s\ra Y_s$. The corresponding $\Gm$-action induces an action on the space $V=H^0(X_s,\Omega^1_{X_s})$, which decomposes into a direct sum over weights $k\in \mb Z$: $H^0(X_s,\Omega^1_{X_s})=\osum_k H^0(X_s,\Omega^1_{X_s})^k$, where $t\circ \omega=t^k\omega$ for $\omega \in H^0(X_s,\Omega^1_{X_s})^k$. Let also $H^0(X_s,\Omega^1_{X_s,cl})^{k}\subset H^0(X_s,\Omega^1_{X_s})^k$ denote the subspace of closed forms of weight $k$.

\begin{thm}
\label{weights}
Let $\pi:\mc X\ra \mc Y$ be as above. Then $H^0(X_\eta,\Omega^1_{X_\eta})^k=0$ for $k\le 0$, or in other words all weights of the space of 1-forms on $X_\eta$ are strictly positive:
$$
H^0(X_\eta,\Omega^1_{X_\eta})=\osum_{k>0} {H^0(X_\eta,\Omega^1_{X_\eta})}^k.
$$ Moreover, there exists an \'etale open $S'\ra S$, such that for any $s:\Spec \k_s\ra S'$ the same is true for $X_s$:
$$
H^0(X_s,\Omega^1_{X_s})=\osum_{k>0} {H^0(X_s,\Omega^1_{X_s})}^k.
$$
\end{thm}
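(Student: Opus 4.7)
The plan is to reduce to characteristic $p$, where the Cartier operator forces sharp constraints on $\Gm$-weights of $1$-forms, and then to dispose of the residual weight-zero case by analytification over $\mb C$. After spreading out and shrinking $S$, we may assume that $\pi$ is flat over $S$, that the $\Gm$-action extends to $\pi$, that every geometric fibre $\pi_s\colon X_s\to Y_s$ is a conical resolution with $\O_{Y_s}\isoto(\pi_s)_*\O_{X_s}$ and $R^i(\pi_s)_*\O_{X_s}=0$ for $i=1,2$ (via \propref{basechange} and the argument of \remref{densedirectimage}), and that the graded pieces $(\pi_*\Omega^1_{\X/S})^k$ and the kernels of $d$ in each weight are coherent $R$-modules of finite presentation. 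Upper semicontinuity of fibre ranks will then propagate vanishing of specific weight pieces both towards and away from $\eta$, so it suffices to establish, first, in characteristic $p$: (a) weights of $H^0(X_s,\Omega^1_{X_s})$ are nonnegative and (b) every weight-$0$ form is closed; and then, over $\mb C$: (c) closed weight-$0$ forms on $X_\eta\otimes\mb C$ vanish.

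For (a) and (b), the key input is \lemref{surjective}, which in characteristic $p$ yields surjectivity of $\pi_*\sC\colon \pi_*\Omega^1_{X_s,cl}\to \pi_*\Omega^1_{X_s^{(1)}}$; since $Y_s$ is affine this passes to global sections. The operator $\sC$ is $\Gm$-equivariant in the sense that it sends a closed weight-$pk$ form on $X_s$ to a weight-$k$ form on $X_s^{(1)}$, while $\bullet\fr$ preserves weights. Since $A_s = H^0(Y_s,\O_{Y_s})$ is positively graded with $A_s^0 = \k_s$, the finitely generated module $H^0(X_s,\Omega^1_{X_s})$ has weights bounded below by some $m\in\mb Z$. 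Applying Cartier surjectivity to a nonzero weight-$m$ form $\alpha$ produces a closed preimage whose weight-$pm$ component is nonzero, forcing $pm\ge m$ and hence $m\ge 0$; this proves (a). Restricting the surjection in weight $0$ yields a $p^{-1}$-linear surjection $H^0(X_s,\Omega^1_{X_s,cl})^0\twoheadrightarrow H^0(X_s^{(1)},\Omega^1_{X_s^{(1)}})^0$; identifying the target with $H^0(X_s,\Omega^1_{X_s})^0$ via $\bullet\fr$, a dimension count on finite-dimensional $\k_s$-vector spaces collapses the inclusion $H^0(X_s,\Omega^1_{X_s,cl})^0\subset H^0(X_s,\Omega^1_{X_s})^0$ to an equality, proving (b).

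For (c), base-change to $\mb C$ and analytify. Analytic GAGA along the proper $\pi^{an}$ gives $R^i\pi^{an}_*\O_{X^{an}}=(R^i\pi_*\O_X)^{an}=0$ for $i=1,2$; combined with Steinness of $Y^{an}$ (as the analytification of an affine variety), Cartan's Theorem~B and the analytic Leray spectral sequence yield $H^i(X^{an},\O_{X^{an}})=0$ for $i=1,2$. The short exact sequence $0\to\mb C\to\O_{X^{an}}\xra{d}\Omega^1_{X^{an},cl}\to 0$ then embeds $H^0(\Omega^1_{X^{an},cl})/dH^0(\O_{X^{an}})$ into $H^1(X^{an},\mb C)$. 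For a closed $\Gm$-invariant form $\alpha$, the contracting $\mb C^\times$-action provides an equivariant deformation retract of $X^{an}$ onto the compact central fibre, and combined with the above vanishings this forces $[\alpha]=0$ in $H^1(X^{an},\mb C)$; hence $\alpha=dF$ for a holomorphic $F$, which by $\Gm$-invariance has weight $0$, so $F\in\O(Y)^0=\mb C$ and $\alpha=0$. Combining (a)--(c) establishes the vanishing at $\eta$, and semicontinuity produces the étale open $S'\to S$ on which the same vanishing holds at every geometric point.

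The main technical obstacle lies in step (c): handling closed $\Gm$-invariant $1$-forms on the non-compact analytic $X^{an}$ requires a careful combination of the equivariant retraction onto the central fibre with the cohomology vanishing $H^1(X^{an},\O_{X^{an}})=0$, and delicacy in extracting $[\alpha]=0$ from mixed-Hodge-theoretic considerations. The characteristic-$p$ portion, while mechanical, rests crucially on the fact that $\sC$ divides weights by $p$, which is precisely what drives the contradiction ruling out negative weights and the dimension count forcing invariant forms to be closed.
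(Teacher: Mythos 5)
Your overall strategy coincides with the paper's: spread out over $S$, use flatness and semicontinuity of the weight pieces of $\pi_*\Omega^1_{\mc X/S}$ to move between $\eta$ and closed points, prove in characteristic $p$ via surjectivity of the Cartier operator (\lemref{surjective}) that all weights are nonnegative and that weight-zero forms are closed, and then kill closed $\Gm$-invariant forms by analytifying over $\CC$. Your characteristic-$p$ steps (a) and (b) are exactly \lemref{weights in char p} (your ``minimal weight $m$ forces $pm\ge m$'' is the same mechanism as the paper's iteration using $V^{p^nk}=0$ for $n\gg0$), and the semicontinuity bookkeeping matches the paper's.

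The gap is in step (c), precisely where you flag delicacy. You need $[\alpha]=0$ in $H^1(X^{\mr{an}},\CC)$ (equivalently, via $0\to\ul{\CC}\to\O_{X^{\mr{an}}}\xra{d}\Omega^1_{X^{\mr{an}},cl}\to 0$, that $\alpha=dF$ for a holomorphic $F$), but your justification --- the contracting action gives an equivariant deformation retract onto the compact central fibre, ``combined with the above vanishings'' --- does not prove this. The retraction only identifies $H^1(X^{\mr{an}},\CC)$ with $H^1(\pi^{-1}(y_0)^{\mr{an}},\CC)$, and the vanishing of $H^i(X^{\mr{an}},\O_{X^{\mr{an}}})$ for $i=1,2$ does not control the topological $H^1$ of the possibly singular compact central fibre without a genuine mixed-Hodge input that you do not supply. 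The paper closes this point differently: it deduces $R^1\pi^{\mr{an}}_*\ul{\CC}_{X^{\mr{an}}}=0$ from $R^1\pi^{\mr{an}}_*\O_{X^{\mr{an}}}=0$ via the exponential sequence, and then gets $H^1(X^{\mr{an}},\CC)=0$ from the Leray spectral sequence because $Y^{\mr{an}}$ is contractible. With that vanishing supplied, your de Rham route does close, and is in fact somewhat more direct than the paper's, which instead takes the Dolbeault class of $\ol\alpha$ in $H^1(X^{\mr{an}},\O_{X^{\mr{an}}})=0$, writes $\ol\alpha=\ol\partial f$, and splits $f$ into holomorphic and antiholomorphic parts by a global $\partial\ol\partial$-argument (\lemref{no closed invariant}, which again consumes $H^1(X^{\mr{an}},\CC)=0$). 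One further small point at the end: before concluding $F\in\O(Y)^0=\CC$ you must justify that $F$ may be taken $\Gm$-invariant (for instance, $t^*F-F$ is closed hence constant, and $t\mapsto t^*F-F$ is a holomorphic homomorphism $\CC^\times\to\CC$, hence zero).
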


\begin{proof}
Let $p_{\mc X/S}:\mc X\ra S$ and $p_{\mc Y/S}:\mc Y\ra S$ denote the structure morphisms. For every $i$ we have a quasi-coherent sheaf $(p_{\mc X/S})_*\Omega^i_{\mc X/S}$ on $S=\Spec R$ and a well-defined map $d:(p_{\mc X/S})_*\Omega^i_{\mc X/S}\ra (p_{\mc X/S})_*\Omega^{i+1}_{\mc X/S}$. Its kernel is a subsheaf $(p_{\mc X/S})_*\Omega^i_{\mc X/S,cl}\subset (p_{\mc X/S})_*\Omega^i_{\mc X}$ of closed relative 1-forms. Recall that we had quasi-coherent sheaves $\pi_*\Omega^i_{\mc X/S}$ with subsheaves $\pi_*\Omega^i_{\mc X/S,cl}\subset \pi_*\Omega^i_{\mc X/S}$ of $R$-modules on $\mc Y$. The functor $(p_{\mc Y/S})_*$ induces isomorphisms:
$$
(p_{\mc Y/S})_*(\pi_*\Omega^i_{\mc X/S})\simeq (p_{\mc X/S})_*\Omega^i_{\mc X/S} \ \mbox{ and } \ (p_{\mc Y/S})_*(\pi_*\Omega^i_{\mc X/S,cl})\simeq (p_{\mc X/S})_*\Omega^i_{\mc X/S,cl}.
$$

We will denote $(p_{\mc X/S})_*\Omega^1_{\mc X/S}$ by $\mc V$ and $(p_{\mc X/S})_*\Omega^1_{\mc X/S,cl}$ by $\mc V_{cl}$. The corresponding $R$-modules of global sections will be denoted by $V$ and $V_{cl}$. $V$ has a natural $A$-module structure given by the multiplication of differential 1-forms on functions. Shrinking $S$, we can assume that $V$ and $V_{cl}$ are flat over $R$, and that the base-change holds for any $s:\Spec\k_s\ra S$. Namely for any $s:\Spec\k_s\ra S$ we have isomorphisms of $\k_s$-vector spaces (sheaves on $\Spec\k_s$):
$$
(p_{X_s})_*  \Omega^1_{X_s}\simeq s^*\mc V \ \mbox{ and } \ (p_{X_s})_* \Omega^1_{X_s, cl}\simeq s^*\mc V_{cl},
$$
where $p_{X_s}: X_s\ra \Spec \k_s$ is the structure map.

Now consider the $\mb G_m$-action on $\mc X$. The sheaf $\Omega^1_{\mc X/S}$ has a natural $\mb G_m$-equivariant structure, which gives $\Gm$-equivariant structures on $\mc V$ and $\mc V_{cl}$ (with respect to trivial $\Gm$-action on $S$). In other words we have a grading $V=\osum_{k\in \mb Z} V^k$ which descends to the grading on $V_{cl}=\osum_{k\in \mb Z} V^k_{cl}$. The grading on $V$ agrees with the grading on $A$ and turns $V$ into a graded $A$-module. Since $\pi$ is proper, $V$ is finitely generated over $A$, so each $V^k$ is of finite rank over $R$. $V^k_{cl}$ is an $R$-submodule of $V^k$, so is also finitely generated. Moreover, since $A$ is positively graded, we have that $V^k=0$ for all but finitely many $k<0$. Consider the corresponding coherent sheaves $\mc V^k$ and $\mc V^k_{cl}$ on $\Spec R$. The function $d_k(s)= \dim_{\k_s} s^* \mc V^k$ on the set of geometric points $s:\Spec \k_s\ra S$ is semi-continuous. In particular, if $d_k$ is equal to $n$ on generic point, then we can shrink $S$, such that the function $s\mapsto d_{k}(s)$ is constant (and equals to $n$). Note that due to the flat base change $V_s^k:=s^* \mc V^k$ is exactly the space ${H^0(X_s,\Omega^1_{X_s})}^k $. Analogously, we define $d_{k,cl}(s):=\dim_{\k_s} s^* \mc V^k_{cl}$. Let $(V_s^k)_{cl}\subset V_s^k$ denote the corresponding subspace. Note that from the flat base change $(V_s^k)_{cl}={H^0(X_s,\Omega^1_{X_s,cl})}^k $. 

Let's assume that for some $k<0$ we have $V_\eta^k={H^0(X_\eta,\Omega^1_{X_\eta})}^k\neq 0$. Then we can also find some $s:\Spec \k_s\ra S$ of characteristic $p>0$ such that $d_k(s)>0$. Since $V^k=0$ for all but finitely many $k<0$, there exists $N\gg 0$, such that $V_s^k=0$ for all $k<-N$ and all $s$.
\begin{lem} \label{weights in char p}
Let $\pi:X\ra Y$ be a conical resolution over an algebraically closed field $\k$ of characteristic $p$, such that $R^1\pi_*\O_X=R^2\pi_*\O_X=0$. Then all weights of $H^0(X,\Omega_{X}^1)$ are nonnegative and all $\Gm$-invariant forms are closed.
\end{lem}
\begin{proof}

 The action of $\Gm$ on $\pi:X\ra Y$ gives an action of $\mb G_m^{(1)}$ on $\pi:X^{(1)}\ra Y^{(1)}$. This produces an action of $\mb G_m^{(1)}$ on $H^0(X^{(1)},\Omega^1_{X^{(1)}})$. Note that $H^0(X^{(1)},(\Fr_X)_*\Omega^1_{X})$ is also endowed with a natural action of $\mb G_m$ (without a twist), coming from the action on $X$. These actions differ by $\mr{Fr}_{\Gm}$, which agrees with the $\k$-structures on both spaces. Cartier operator $\sC$ is $\k$-linear and commutes with the action, so it maps a component of weight $k$ to a component of weight $\frac{k}{p}$. In particular all components with  weights not divisible by $p$ are killed by $\sC$.

Let $V=H^0(X^{(1)},\Omega^1_{X^{(1)}})$ and let $V^k=H^0(X^{(1)},\Omega^1_{X^{(1)}})^k$. Let $\a\in V^k$ be some homogeneous 1-form of weight $k<0$.  By \lemref{surj} the map $\pi_*\sC :\pi_*\Omega^1_{X,cl}\ra \pi_*\Omega^1_{X\fr}$ is surjective and so does the induced map $\sC: (V^{kp})_{cl}\ra (V^{k})\fr$ for any $k$. We can identify $(V^{k})\fr$ with $V^{k}$ using the Frobenius twist $\bullet\fr$.  But $V^{p^nk}=0$ for $n$ big enough, so we get that $V^k=0$ for all $k<0$. 

It remains to treat the case $k=0$. By \lemref{split formal}, the map $(V^0)_{cl}\ra V^0$ induced by Cartier morphism is a surjection. We get $\dim  V^0\le \dim (V^0)_{cl}$, but since $(V^0)_{cl}\subset V^0$ the equality $(V^0)_{cl}= V^0$ follows.
\end{proof}

After shrinking $S$, \propref{weights in char p} is true for all reductions $\pi_s:X_s\ra Y_s$. From this we get that $V_\eta^k=0$ for all $k<0$ and that $V_\eta^0=(V_\eta^0)_{cl}$ as well. 

The field $\mb K$ of functions on the geometric generic point is an algebraic closure of finitely generated field over $\mb Q$ and admits an embedding $\mb K \ra \CC$ to complex numbers. Let $\xi: \Spec \CC\ra \Spec \mb K$ denote the corresponding morphism and let $\pi_\xi:X_\xi\ra Y_\xi$ be the corresponding base change of $\pi_\eta: X_\eta\ra Y_\eta$. We have $d_i(\eta)=d_i(\xi)$, and to prove that $V_\eta^0=0$ it is enough to prove that $H^0(X_\xi,\Omega^1_{X_\xi})^\Gm=0$ (where $V^\Gm\subset V$ denotes the invariants of $\Gm$-action). 

Let $\pi^{an}:X^{\mr{an}}\ra Y^{\mr{an}}$ be the analytification of $\pi_\xi$.  By assumption, we have $R^1{(\pi_\xi)}_*\O_{X_\xi}=0$ and  
 Serre's GAGA provides isomorphisms
$$
({R^i(\pi_\xi)}_*\O_{X_\xi})^{\mr{an}}\simeq R^i\pi^{\mr{an}}_*\O_{X^{\mr{an}}}.
$$
of coherent sheaves on the complex analytic space $Y^{\mr{an}}$. In particular, we have $R^1\pi^{an}_*\O_{X^{\mr{an}}}=0$. Note that since $Y^{\mr{an}}$ is Stein it also follows that $H^1(X^{\mr{an}},\O_{X^{\mr{an}}})=0$.

\begin{lem}
Let $\pi: X\ra Y$ be a proper morphism of complex analytic spaces with $R^1\pi_*\O_X=0$. Then $R^1\pi_*\ul{\mb C}_X=0$. 

\end{lem}

\begin{proof}
$R^1\pi_*\ul{\mb C}_X=R^1\pi_*\ul{\mb Z}_X\otimes \ul{\mb C}_X$ and, applying $R^\bullet\pi_*$ to the exponential sheaf sequence $0\ra \ul{\mb Z}_X\ra \O_X\xra{exp}\O_X^\x\ra 0$, we get that $R^1\pi_*\ul{\mb Z}_X=0$ and, consequently, $R^1\pi_*\ul{\mb C}_X=0$.
\end{proof}

In our case $Y^{\mr{an}}$ is contractible ($\Gm$-action contracts it to the central point). By Zariski's connectedness theorem all fibers of $\pi_\xi$ (and consequently $\pi^{\mr{an}})$ are connected, so $R^0\pi_*\ul{\mb C}_{X^{\mr{an}}}=\ul{\mb{C}}_{Y^{\mr{an}}}$. In particular $H^1(Y^{\mr{an}},R^0\pi_*\ul{\mb C}_{X^{\mr{an}}})=0$ and since by lemma $R^0\pi_*\ul{\mb C}_{X^{\mr{an}}}=0$ it follows from the Serre-Leray spectral sequence that $H^1(X^{\mr{an}},\mb C)=0$.

Note that $H^1(X^{\mr{an}},\mb C)$ has an algebraic description as the de Rham cohomology $H^1_{\mr{dR}}(X_\xi)$. Thus $H^1_{\mr{dR}}(X_\xi)=0$. Now from the Hodge-de Rham spectral sequence, using $E_2^{0,1}\simeq H^1(X_\xi,\O_{X_\xi})=0$, the de Rham cohomology $H^1_{\mr{dR}}(X_\xi)$ are just equal to $E_2^{1,0}$ which can be identified with the group $H^0(X_\xi, \Omega^1_{X_\xi,cl})/d(H^0(X_\xi,\O_{X_\xi}))$. We get that $H^0(X_\xi, \Omega^1_{X_\xi,cl})\simeq d(H^0(X_\xi,\O_{X_\xi}))$: all closed differential 1-forms are exact. 

Let's now take $\a\in H^0(X_\xi,\Omega^1_{X_\xi})^\Gm$. It is closed, thus $\a=df$ and $f$ should be $\Gm$-invariant. But $H^0(X_\xi,\O_{X_\xi})^{\mb G_m}=H^0(Y_\xi,\O_{Y_\xi})^{\mb G_m}\simeq\mb C$. So $\a=0$. We are done.



 

\end{proof}

In particular we get the following corollary:

\begin{prop}\label{weights in char 0}
Let $\pi:X\ra Y$ be a conical resolution of singularities over a field of characteristic 0 with $\O_Y\isoto\pi_*\O_X$ and $R^1\pi_*\O_X=R^2\pi_*\O_X=0$. Then  all $\Gm$-weights of $H^0(X,\Omega_X^1)$ are strictly positive. 
\end{prop}




\subsection{$\Gm$-invariant 1-forms in characteristic 2 (and others)}\label{char 2}
In this section we construct a counterexample to \propref{weights in char 0} in characteristic 2, namely we use another famous pathological example given by Enriques surfaces. Following Proposition 7.3.8 in \cite{Il}, for a classical Enriques surface $S$ over an algebraically closed field $\k$ of characteristic 2 we have 
\begin{itemize}
\item $H^1(S,\O_S)=H^2(S,\O_S)=0$;
\item $H^1_{dR}(S/\k)=\k$.
\end{itemize}

Let $\mc L$ be some very ample line bundle on $S$ and let $X$ be the total space of $\mc L^{\otimes -1}$. Let $Y$ be the cone over $S$ corresponding to $\mc L$, namely 
$$
Y=\Spec \bigoplus_{n=0}^\infty H^0(S,\mc L^{\otimes n}).
$$
Let $y_0$ be the point corresponding to the maximal ideal $\mf m=\bigoplus_{n>0}^\infty H^0(S,\mc L^{\otimes n})$, this is the origin of the cone.

Algebra $A =\bigoplus_{n=0}^\infty H^0(S,\mc L^{\otimes n})$ can be identified with the ring $\Gamma(X,\O_X)$ of global functions on $X$. We have two natural maps: the projection $p:X\ra S$ and the affinization $\pi: X\ra Y$ which identifies $X$ with the blow-up of $Y$ at $y_0$. We have a contracting $\Gm$-action on $Y$ which is given by the natural grading on $A$. Together with the natural $\Gm$-action on $X$ along the fibers of $\mc L^{\otimes -1}$ it turns $\pi$ into a conical resolution of singularities. 

Let's now compute $R^\blt \pi_* \O_X$. We have an obvious commutative square
$$
\xymatrix{
X\ar[d]_{\pi}\ar[r]^{p} & S\ar[d]\\
Y\ar[r]&\Spec \k
}
$$
$Y$ is affine, and so to compute $R^\blt \pi_* \O_X$ as vector spaces, we can go the other way and compute $\mb H^\blt(S,R^\blt p_* \O_X)$ instead. The map $p$ is affine, so $R^\blt p_* \O_X=p_*\O_X$ and the latter is equal to the direct sum $\oplus_{n=0}^\infty \L^{\otimes n}$. $\mc L$ is very ample, so $H^i(S,\mc L^{\otimes k})=0$ for $k>0$ and $i>0$. Since $S$ is ordinary, the same is also true for $k=0$. We get that $\mb H^\blt(S,R^\blt p_* \O_X)=\mb H^0(S,p_* \O_X)$, and so $R^\blt \pi_* \O_X=\O_Y$. 

Now, since $H^1(S, \O_S)=0$ the space $H^1_{\mr dR}(S/\k)$ is identified with $H^0(S,\Omega^1_S)$. Given that $H^1_{\mr dR}(S/\k)=\k$ we obtain a non-zero differential 1-form $\omega$ on $S$. The pull-back $p^*\omega$ is then a $\Gm$-invariant form on $X$. This shows that \corref{weights in char 0} is not generally true in finite characteristic.

\begin{rem}\label{other} Note that given any projective variety $S$ over an algebraically closed field $\k$ of characteristic $p$, satisfying $H^1(S,\O_S)=H^2(S,\O_S)=0$ and $H^1_{dR}(S/\k)\neq 0$ the same construction provides a counterexample for \corref{weights in char 0} over $\k$. Note also that this never happens in characteristic 0 due to the Hodge symmetry in cohomology.
\end{rem}

\begin{rem}\label{bhatt}
Following recent results by Bhatt, Morrow and Sholze \cite{BMS}, for a proper smooth scheme $X$ over $\Spec \mb Z[1/N]$ (in fact the result is stated for proper smooth formal schemes over $\mr{Spf} \ \O_{\mb C_p}$, but this case follows) one has $\dim_{\ol{\mb F}_p} H^1_{\mr{dR}}(X_{\ol{\mb F}_p}/\ol{\mb F}_p)\ge \dim_{\mb F_p} H^1_{\mr{sing}}(X(\mb C),\mb F_p)$ for $(p,N)=1$. In our case (of a conical resolution of singularities $\pi:X\ra Y$, let's say over over $\Spec \mb Z[1/N]$ as well) the scheme $X$ is not proper over $\Spec \mb Z[1/N]$, but it still has some sort of ``properness" if we take $\Gm$-action into account. Also, the action of $\Gm$ on the singular cohomology of $X(\mb C)$ is trivial, since $\Gm$ is connected. So one could probably expect the same inequality to be true if we restrict to $\Gm$-invariants (more precisely to the de Rham cohomology (see \cite{To}) of the quotient stack $[X/\mb G_m]$) on the left. This motivates the following conjecture:

\begin{conj}\label{conj}
Let $\pi:X\ra Y$ be a conical resolution of singularities over $\mb Z[1/N]$ (or, more generally, one could consider the situation where $X$ and $Y$ are formal schemes over $\mr{Spf} \ \O_{\mb C_p}$ or, even more generally, rigid analytic spaces). Let $p$ be a prime not dividing $N$. Then 

$$
\dim_{\ol{\mb F}_p} H^1_{\mr{dR}}([X_{\ol{\mb F}_p}/\mb G_m]/\ol{\mb F}_p)\ge \dim_{\mb F_p} H^1_{\mr{sing}}(X(\mb C),\mb F_p).
$$
\end{conj}
\end{rem}

\subsection{Totally positive forms}\label{totally positive forms} Let $\pi:X\ra Y$ be a resolution with conical slices over an algebraically closed field $\k$ of characteristic $p$, let $y\in Y(\k)$ be a point and $(\pi,X,Y,y)\sim_{et} (\pi',X',Y',y')$ be an {\'e}tale conic neighbourhood. We would like to have a good notion of a positive weight for 1-forms on $X$, even though there is no $\Gm$-action and grading on the space of 1-forms on $X$ itself.
 
 By definition the {\'e}tale equivalence is given by a commutative diagram 
$$
  \xymatrix{
  X  \ar[d]_{\pi}  & X'' \ar[d]_{\pi''} \ar[l]_{q_1} \ar[r]^{q_2}& X' \ar[d]_{\pi'} \\
      Y  &  Y'' \ar[l]_{p_1} \ar[r]^{p_2}& Y'
      }
$$
where all horizontal maps are {\'e}tale. We have isomorphisms $q_1^*\Omega^1_{X}\simeq \Omega^1_{X''} \simeq q_2^*\Omega^1_{X'}$ since $q_1$, $q_2$ are {\'e}tale, and by the flat base change we also obtain isomorphisms $p_1^*\pi_*\Omega^1_{X} \simeq \pi''_*\Omega^1_{X''} \simeq p_2^* \pi''_*\Omega^1_{X'} $. By \lemref{direct image for neighbourhood}, we also know that $\O_{Y''}\isoto\pi''_*\O_{X''}$ and $R^1\pi''_*\O_{X''}=R^2\pi''_*\O_{X''}=0$, as well as $\O_{Y'}\isoto\pi'_*\O_{X'}$ and $R^1\pi'_*\O_{X'}=R^2\pi'_*\O_{X'}=0$. In particular, $\pi':X'\ra Y'$ satisfies the assumptions of \lemref{weights in char p}, and so all 1-forms on $X'$ have positive weight.

To simplify the notations, we denote $H^0(X, \Omega^1_X)$, $H^0(X', \Omega^1_{X'})$ and $H^0(X'', \Omega^1_{X''})$ by $M$, $M'$ and $M''$. Let also $A$, $A'$ and $A''$ denote the rings of global functions $H^0(X, \O_{X})$, $H^0(X', \O_{X'})$ and $H^0(X'', \O_{X''})$. The construction that we are going to give is Zariski local, so we can assume that $Y=\Spec A$ and $Y''=\Spec A''$ ($Y'$ is automatically affine since it is conical).

The torus action on $Y'$ induces a grading on $A'$. Since $Y'$ is conical, the weights of the action are non-negative and $\mf m_{y'}$ is identified with $(A')^{>0}=\oplus_{k>0}(A')^k$, while $A'=\oplus_{k\ge 0}(A')^k$, where $(A')^0=\k$. The module $M'$ also possesses a grading and by \thmref{weights} $(M')^{<0}=\bigoplus_{k<0} (M')^k = 0$. The strictly positive part $(M')^{>0}$ is naturally a submodule, moreover $\mf{m}_{y'}\cdot M\subset M^{>0}$, so the quotient $M'/M'^{>0}$ is a finite-dimensional vector space over $\k$ and is supported at the point $y'$ as a module over $A'$.

The \'etale equivalence above provides isomorphisms $M\otimes_A A''\cong M''\cong M'\otimes_{A'} A''$. Without loss of generality we can assume that the preimage of $y'$ under $p_2$ consists of the single point $y''$ (if not we can throw out some divisors from $Y''$ that pass through the other points in the preimage, but do not pass through $y''$), so that $\mf m_{y'}\cdot A''\cong \mf m_{y''}$. Under these assumptions we define an $A''$-module $(M'')^{>0}=(M')^{>0}\otimes_{A'} A''$. Note that $\mf{m}_{y''}\cdot M''\subset (M'')^{>0}$, since $\mf m_{y''}\cong \mf m_{y'}\cdot A'' $ and $\mf{m}_{y'}\cdot M'\subset (M')^{>0}$. So, again, $M''/M''^{>0}$ is a finite-dimensional vector space supported at $y''$.

Since $M$ is torsion free, the map $M \ra M''\cong M\otimes_A A''$ is an embedding. We define $M_{\sim \pi'}^{>0}$ as the intersection $M\cap (M'')^{>0}$. This is a submodule, and since $\mf{m}_y\subset \mf{m}_{y''}$, we have $\mf{m}_y\cdot M\subset M_{ \sim \pi'}^{>0}$. $M_{\sim \pi'}^{>0}$ is a submodule of differential 1-forms $\alpha$ on $X$, such that $q_1^*\alpha=\sum f_i\cdot q_2^*\alpha'_i$ for some 1-forms $\a_i$ of positive weight on $X'$ and for some functions $f_i$ on $X''$. 

Submodule $M_{\sim \pi'}^{>0}$ a priori depends on the choice of a conical neighbourhood (and a choice of equivalence) and we do not have a good argument for why in fact it does not.
But this is not a big deal and instead we consider all conical neighbourhoods at once, namely we define a submodule $M_{\text{at }y}^{>0}\subset M$ as the sum of all $M_{\sim \pi'}^{>0}$ inside $M$. In other words, differential 1-form $\a$ lies in $M_{\text{at }y}^{>0}$ if and only if $\a=\a_1+\a_2+\cdots +\a_n$, where each $\a_i$ lies in $M_{\sim \pi_i'}^{>0}$ for some conical \'etale neighbourhood $(\pi,X,Y,y)\sim_{et}(\pi'_i,X'_i,Y'_i,y'_i)$ (possibly different for each $i$).

\begin{defn}\label{strictly positive at a point}
Let $\pi:X\ra Y$ be a resolution with conical slices. Differential 1-form $\a\in H^0(X,\Omega^1_X)$ is called \textit{strictly positive at a point} $y\in Y(\k)$ if $\a$ lies in $M_{\text{at }y}^{>0}$.
\end{defn}

\begin{rem} \label{M for A^n} If the point $y$ admits a conical neighbourhood $(\pi',X',Y',y')$ for which all differential 1-forms have strictly positive weight, then $M_{\text{at }y}^{>0}=M$. Indeed, in this case $(M')^{>0}=M'$, so $(M'')^{>0}=M''$ and as a result $M_{\sim \pi'}^{>0}=M$. In particular, if $y$ belongs to the locus $Y^\circ\subset Y$ on which $\pi$ is 1-to-1, then $(\pi,X,Y,y)\sim_{et} (\id,\mb A^n,\mb A^n,0)$. For $\mb A^n$ with contracting $\Gm$-action all 1-forms have positive weight, so $M_{\text{at }y}^{>0}=M$.
\end{rem}

\begin{defn} \label{totally positive}
Let $\pi:X\ra Y$ be a resolution with conical slices. Differential 1-form $\a\in H^0(X,\Omega^1_X)$ is called \textit{totally positive} if $\a\in M_{\text{at }y}^{>0}$ for all points $y\in Y(\k)$. We denote the subspace of all totally positive forms on $X$ by $H^0(X,\Omega^1_X)^{\gg 0}$.
\end{defn}

In other words $H^0(X,\Omega^1_X)^{\gg 0}=\cap_{y} M_{\text{at }y}^{>0}$. This way the subspace of totally positive 1-forms is naturally an $A$-submodule. Since $A$ is Noetherian and $M=H^0(X,\Omega^1_X)$ is finitely generated, $H^0(X,\Omega^1_X)^{\gg 0}$ is  finetely generated too. We denote the corresponding coherent sheaf on $Y=\Spec A$ by $M^{\gg 0}$ or
$(\pi_*\Omega^1_X)^{\gg 0}$. We have an embedding of sheaves $(\pi_*\Omega^1_X)^{\gg 0}\subset \pi_*\Omega^1_X$ and we call the quotient sheaf $ \pi_*\Omega^1_X/(\pi_*\Omega^1_X)^{\gg 0}$ by $\mc M_{\pi,\ul{\text{inv}}}$ and its global sections by $M_{\pi,{\ul{\text{inv}}}}=H^0(X,\Omega^1_X)/H^0(X,\Omega^1_X)^{\gg 0}$. 

\begin{lem}\label{fiber of M-inv}
Let $i_y:\Spec \k\hookrightarrow Y$ be the morphism given by a point $y\in Y(\k)$. Then there is a natural surjection $i_y^*(\mc M_{\pi,\ul{\text{inv}}})\twoheadrightarrow M/M_{\text{at }y}^{>0}$.

\end{lem}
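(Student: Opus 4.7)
The proof should be quite short and essentially formal, building on the two key containments that are already built into the definitions. Here is how I would lay it out.

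First I would reduce to the affine case. The statement is Zariski local on $Y$, so after shrinking we may assume $Y=\Spec A$ is affine and work directly with the $A$-modules, as in the setup preceding \defref{totally positive}. In this case $\mc M_{\pi,\ul{\text{inv}}}$ is the coherent sheaf associated to $M_{\pi,\ul{\text{inv}}}=M/M^{\gg 0}$, and pulling back along $i_y\colon\Spec\k\hookrightarrow Y$ gives
\[
i_y^*(\mc M_{\pi,\ul{\text{inv}}})\;=\;(M/M^{\gg 0})\otimes_A A/\mf m_y\;=\;(M/M^{\gg 0})\big/\mf m_y\cdot(M/M^{\gg 0}).
\]

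Next I would produce the surjection in two steps. The first ingredient is the definition of totally positive forms: $M^{\gg 0}=\bigcap_{y'\in Y(\k)}M_{\text{at }y'}^{>0}$, so in particular $M^{\gg 0}\subset M_{\text{at }y}^{>0}$. Consequently, the identity on $M$ descends to a canonical surjection
\[
p\colon M/M^{\gg 0}\twoheadrightarrow M/M_{\text{at }y}^{>0}.
\]
The second ingredient is the containment $\mf m_y\cdot M\subset M_{\text{at }y}^{>0}$, which was established during the construction of $M_{\text{at }y}^{>0}$: for any choice of \'etale conical neighbourhood $(\pi,X,Y,y)\sim_{et}(\pi',X',Y',y')$ one has $\mf m_y\cdot M\subset M_{\sim\pi'}^{>0}$ (because $\mf m_y\subset \mf m_{y''}$ and $\mf m_{y''}\cdot M''\subset (M'')^{>0}$), and summing over all such neighbourhoods gives $\mf m_y\cdot M\subset M_{\text{at }y}^{>0}$. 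Therefore $p$ annihilates $\mf m_y\cdot(M/M^{\gg 0})$, so it factors through the quotient by $\mf m_y\cdot(M/M^{\gg 0})$, yielding the desired surjection
\[
i_y^*(\mc M_{\pi,\ul{\text{inv}}})\;=\;(M/M^{\gg 0})\big/\mf m_y(M/M^{\gg 0})\;\twoheadrightarrow\; M/M_{\text{at }y}^{>0}.
\]

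There is really no hard step here — both ingredients are tautological consequences of the definitions set up in \secref{totally positive forms}. The only thing worth double-checking is naturality: the surjection is canonical because it is built from the tautological map $M\to M/M_{\text{at }y}^{>0}$, which factors through $M/M^{\gg 0}$ by the first containment and through the fiber by the second. No choice of conical neighbourhood is made in the argument, even though $M_{\text{at }y}^{>0}$ is built from all such neighbourhoods at once.
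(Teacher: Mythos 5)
Your proof is correct and uses exactly the same two ingredients as the paper's ($M^{\gg 0}\subset M_{\text{at }y}^{>0}$ and $\mf m_y\cdot M\subset M_{\text{at }y}^{>0}$); the paper packages them as a diagram chase between the two short exact sequences tensored with $A/\mf m_y$, whereas you factor the tautological surjection $M/M^{\gg 0}\twoheadrightarrow M/M_{\text{at }y}^{>0}$ through the fiber. Your phrasing is if anything slightly cleaner, since it sidesteps the paper's (unneeded) claim that $M^{\gg 0}/\mf m_yM^{\gg 0}$ injects into $M_{\text{at }y}^{>0}/\mf m_y M_{\text{at }y}^{>0}$.
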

\begin{proof} As we have seen, $\mf {m}_{y'}\cdot M/M_{\text{at }y'}^{>0}=0$, so for each point $y'\neq y$ we have $\mf m_{y}\cdot M_{\text{at }y'}^{>0}= M_{\text{at }y'}^{>0}$. We have a short exact sequence
$$
0\ra M^{\gg 0}\ra M\ra M_{\pi,{\ul{\text{inv}}}}\ra 0.
$$ 
Tensoring it up with $A/\mf m_y$, we get an exact sequence
$$
M^{\gg 0}/\mf m_yM^{\gg 0}\ra M/\mf m_yM \twoheadrightarrow M_{\pi,{\ul{\text{inv}}}}/\mf m_y M_{\pi,{\ul{\text{inv}}}}\ra 0
$$
The last term in the sequence is by definition $i_y^*(\mc M_{\pi,\ul{\text{inv}}})$. We also have a short exact sequence 
$$
0\ra M_{\text{at }y}^{>0}\ra M\ra M/M_{\text{at }y}^{>0}\ra 0,
$$
which, being tensored with $A/\mf m_y$, gives
$$
 M_{\text{at }y}^{>0}/\mf m_y M_{\text{at }y}^{>0}\ra M/\mf m_y M \twoheadrightarrow M/M_{\text{at }y}^{>0}\ra 0.
$$
By definition of $M^{\gg 0}$ we have an embedding $M^{\gg 0}\subset M_{\text{at }y}^{>0}$. Moreover 

$$
\mf m_y M^{\gg 0}=\mf m_y \left(\bigcap_{y'} M_{\text{at }y'}^{>0}\right) = \mf m_y M_{\text{at }y}^{>0}\cap \left(\bigcap_{y'\neq y} M_{\text{at }y'}^{>0} \right )\subset \mf m_y M_{\text{at }y}^{>0},
$$
so we also have an embedding $M^{\gg 0}/\mf m_yM^{\gg 0}\hookrightarrow M_{\text{at }y}^{>0}/\mf m_y M_{\text{at }y}^{>0}$. Finally, the commutative diagram
$$
  \xymatrix{
  M^{\gg 0}/\mf m_yM^{\gg 0} \ar@{^{(}->}[d] \ar[r] & M/\mf m_y M \ar@{=}[d]  \ar@{->>}[r] & i_y^*(\mc M_{\pi,\ul{\text{inv}}})\ar[r] \ar[d]^s &0\\
  M_{\text{at }y}^{>0}\ar[r]/\mf m_y M_{\text{at }y}^{>0} \ar[r] & M/\mf m_y M\ar[r] &  M/M_{\text{at }y}^{>0}\ar[r]& 0
      }
$$
given by the embedding $M^{\gg 0}\hookrightarrow M_{\text{at }y}^{>0}$, shows that the map $s$ should be a surjection.
\end{proof}

In particular if $\mc M_{\pi,\ul{\text{inv}}}=0$, then $M_{\text{at }y}^{>0}=M$ for all points $y\in Y(\k)$.

\begin{lem} \label{M zero on open}
Let $j:U\hookrightarrow Y$ be an embedding of an open set and assume that for each $y\in U(\k)$ we have $M/M_{\text{at }y}^{>0}=0$. Then $j^*\mc M_{\pi,\ul{\text{inv}}}=0$.
\end{lem}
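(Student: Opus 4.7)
\emph{The plan is to} refine the surjection of \lemref{fiber of M-inv} to an isomorphism at each closed point of $U$. Since $\mc M_{\pi,\ul{\text{inv}}}$ is coherent on the Noetherian scheme $Y$, by Nakayama applied to stalks at closed points it is enough to show $i_y^*\mc M_{\pi,\ul{\text{inv}}}=0$ for every $y\in U(\k)$. Combining the hypothesis $M/M_{\text{at }y}^{>0}=0$ with the surjection of \lemref{fiber of M-inv} and the inclusion $\mf m_y M\subset M_{\text{at }y}^{>0}$, I would reduce the problem to showing $M_{\text{at }y}^{>0}\subset M^{\gg 0}+\mf m_y M$. Via the decomposition $M_{\text{at }y}^{>0}=\sum_i M_{\sim\pi_i'}^{>0}$, linearity, and the identity $g=g(y)+(g-g(y))$, this in turn would follow from the claim: for every etale conical neighborhood $(\pi,X,Y,y)\sim_{et}(\pi',X',Y',y')$ realised by a common refinement $(\pi'',X'',Y'',y'')$ and every $\a\in M_{\sim\pi'}^{>0}$, there exists $g\in A\setminus\mf m_y$ with $g\a\in M^{\gg 0}$.

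\emph{To construct $g$}, I would let $V=p_1(Y'')\subset Y$ (open and containing $y$) and choose $g\in A$ with $g(y)\ne 0$ vanishing on $Y\setminus V$. For $y_1\in(Y\setminus V)(\k)$, $g\in\mf m_{y_1}$ gives $g\a\in\mf m_{y_1}M\subset M_{\text{at }y_1}^{>0}$ at once. For $y_1\in V(\k)$, since $M_{\text{at }y_1}^{>0}$ is an $A$-submodule it would be enough to verify $\a\in M_{\text{at }y_1}^{>0}$; aggregating over all $y_1$ would then yield $g\a\in\bigcap_{y_1}M_{\text{at }y_1}^{>0}=M^{\gg 0}$. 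To exhibit $\a\in M_{\text{at }y_1}^{>0}$, I would lift $y_1$ to some $y_1''\in Y''$, set $z_1=p_2(y_1'')\in Y'$, and invoke the restricted equivalence $(\pi,X,Y,y_1)\sim_{et}(\pi',X',Y',z_1)$. Since $\pi'$ is itself a resolution with conical slices by \lemref{direct image for neighbourhood}, the point $z_1$ admits an etale conical neighborhood in $Y'$, which by composition furnishes an etale conical neighborhood $(\bar\pi,\bar X,\bar Y,\bar y)$ of $y_1$ in $Y$; it remains to show $\a\in M_{\sim\bar\pi}^{>0}$.

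\emph{The hard part will be} this last step, a weight-transfer computation. Given the expansion $q_1^*\a=\sum_i h_i\cdot q_2^*\a_i'$ with $\a_i'\in(M')^{>0}$, one has to show that upon pulling back each $\a_i'$ to the new etale conical neighborhood at $z_1$ it remains strictly positive under the new $\Gm$-action. When $z_1=y'$ one may take $\bar\pi=\pi'$ and the claim is immediate. When $z_1\ne y'$, the new neighborhood is built via a Luna slice $\Gm\times\Sigma\to Y'$ transverse to the $\Gm$-orbit at $z_1$, the reparametrisation $\ul{\Gm}_{\{1\}}\sim_{et}\ul{\mb A}^1_{\{0\}}$ through $\mu=s+1$, and the conical-slice construction of \propref{conic slice}. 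A form of original weight $k>0$ on $\Gm\times\Sigma$ has the schematic shape $\mu^{k-1}A_0(\sigma)\,d\mu+\mu^k B_0(\sigma)\,d\sigma$; expanding in $s$, each resulting monomial couples a non-negative power of $s$, a non-negative weight from $A_0,B_0$ on the conical slice, and the strictly positive weight of $ds$ (respectively $d\sigma$), landing in the strictly positive part. The delicate point will be handling potential $\Gm$-invariant $1$-forms on the slice: these do not exist in characteristic $0$ by \corref{weights in char 0}, while in positive characteristic they are precisely what motivates the ``totally positive'' framework of this subsection.
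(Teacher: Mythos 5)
Your proposal goes a completely different (and much harder) route than the paper, and it has a genuine gap at exactly the step you flag as ``delicate''. The paper's proof is two lines: the construction of $\mc M_{\pi,\ul{\text{inv}}}$ is declared Zariski local on $Y$ (this is used repeatedly, e.g.\ in the proof of \propref{etale M-inv}), so one may assume $U=Y$; then $M^{\gg 0}=\bigcap_{y}M^{>0}_{\text{at }y}=\bigcap_y M=M$ and the quotient vanishes. You are implicitly refusing to use that locality and instead work with the global module $M^{\gg 0}=\bigcap_{y\in Y(\k)}M^{>0}_{\text{at }y}$, which forces you to control $M^{>0}_{\text{at }y_1}$ at points $y_1$ \emph{outside} $U$ as well as at nearby points inside $V$. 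The reduction to ``find $g\notin\mf m_y$ with $g\a\in M^{\gg 0}$'' and the treatment of $y_1\notin V$ via $g\in\mf m_{y_1}$ are fine; the problem is the claim that $\a\in M^{>0}_{\text{at }y_1}$ for every $y_1\in V$.

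That claim is not established, and it is not a routine computation. The membership $\a\in M^{>0}_{\sim\pi'}$ is defined relative to the grading on $M'=H^0(X',\Omega^1_{X'})$ coming from the $\Gm$-action contracting $Y'$ to $y'$. At a point $z_1\neq y'$ the conical structure you must use is a \emph{different} $\Gm$-action, produced by the Luna slice and \lemref{slice-conic}, and there is no mechanism in the paper for transporting strict positivity from one conical structure to another at a non-central point. Your schematic expansion on $\Gm\times\Sigma$ only yields that the slice components $A_0,B_0$ have ``non-negative weight'', and the leftover weight-zero part on the slice is precisely a possible $\Gm$-invariant $1$-form; in characteristic $p$ ruling these out is \conjref{conj}, which is open. (This is why \propref{descent} handles non-central points by induction through $\mc Q_{\pi,N}$ and \lemref{smprod}, and why strict positivity is imposed as a hypothesis rather than derived pointwise.) So the argument as written cannot be completed without either proving \conjref{conj} or falling back on the Zariski-locality of the definition — in which case the lemma is immediate as in the paper.
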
 
\begin{proof}
All definitions are Zariski local on $Y$, so without loss of generality we can assume $U=Y$. But then by definition $M^{\gg 0}=\cap_y M_{\text{at }y}^{>0}=M$, so $M_{\pi,\ul{\text{inv}}}=0$.
\end{proof}

Following the discussion in \remref{M for A^n}, $Y^\circ$ is an example of such $U$. It follows from the lemma that $\mc M_{\pi,\ul{\text{inv}}}$ is always supported on the exceptional locus $Y_{exc}=Y\setminus Y^\circ$. It is some strange invariant of the resolution with conical slices which seems to be pretty hard to compute explicitely.

While considering several resolutions at the same moment, we will freely switch between the notation $M$ and $H^0(X,\Omega^1_X)$, hopefully avoiding any confusions. Also if the source of {\'etale} equivalence needs to be specified we use a notation $M_{\pi\sim\pi'}^{>0}$ instead of $M_{\sim\pi'}^{>0}$
\begin{prop}\label{etale M-inv}
Let $(\pi_1,X_1,Y_1,y_1)\sim_{et}(\pi_2,X_2,Y_2,y_2)$ be an \'etale equivalence between resolutions with conical slices. Then the stalk $(\mc M_{\pi_1,\ul{\text{inv}}})_{y_1}$ is zero if and only if the stalk $(\mc M_{\pi_2,\ul{\text{inv}}})_{y_2}$ is zero. 

\end{prop}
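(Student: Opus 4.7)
I would realize the \'etale equivalence by a span $(\pi_1,X_1,Y_1,y_1)\xleftarrow{(p_1,q_1)}(\pi'',X'',Y'',y'')\xrightarrow{(p_2,q_2)}(\pi_2,X_2,Y_2,y_2)$ with all horizontal maps \'etale, shrinking $Y''$ if necessary so that $y''$ is the unique preimage of $y_i$ under $p_i$. By \lemref{etalecartesian} both squares are Cartesian, and flat base change identifies $M'' := H^0(X'',\Omega^1_{X''})$ with both $M_1\otimes_{A_1}A''$ and $M_2\otimes_{A_2}A''$, where $A_i,A''$ denote the relevant rings of global functions and $M_i := H^0(X_i,\Omega^1_{X_i})$. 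Since $y''$ is a $\k$-point and $p_1,p_2$ are \'etale, the local ring maps $A_{i,(y_i)}\to A''_{(y'')}$ are faithfully flat local homomorphisms with matching residue field $\k$, so a stalk of a coherent sheaf at $y_i$ vanishes if and only if the stalk of its pullback at $y''$ vanishes. Hence the proposition reduces to comparing the stalks of $p_1^*\mc M_{\pi_1,\ul{\text{inv}}}$ and $p_2^*\mc M_{\pi_2,\ul{\text{inv}}}$ at $y''$.

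The core claim is the equality
\[
(M_1)^{>0}_{\text{at }y_1}\otimes_{A_1}A''_{(y'')} \;=\; (M_2)^{>0}_{\text{at }y_2}\otimes_{A_2}A''_{(y'')}
\]
as submodules of $M''_{(y'')}$. Conical \'etale neighborhoods of $(\pi_1,X_1,Y_1,y_1)$ and of $(\pi_2,X_2,Y_2,y_2)$ coincide as isomorphism classes in $\mathsf{Res}_*^{et}$; for a fixed common such neighborhood $(\pi',X',Y',y')$, I would pass to a pointed \'etale neighborhood $\tilde Y$ (obtained as a fiber product in $\mathsf{Res}_*$) that realizes simultaneously the equivalences $\pi_1\sim\pi'$, $\pi_2\sim\pi'$ and $\pi_1\sim\pi_2$. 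Inside $\tilde M := M'\otimes_{A'}\tilde A$ both $M^{>0}_{\pi_1\sim\pi'}$ and $M^{>0}_{\pi_2\sim\pi'}$ pull back to the same submodule $(M')^{>0}\otimes_{A'}\tilde A$; since flat base change preserves intersections of finitely generated submodules, each of them determines the same submodule of $M''_{(y'')}$ after restriction. Summing over all common conical \'etale neighborhoods yields the core claim.

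Now suppose $(\mc M_{\pi_2,\ul{\text{inv}}})_{y_2}=0$. By coherence $\mc M_{\pi_2,\ul{\text{inv}}}$ vanishes on some Zariski open $V_2\ni y_2$; by \lemref{fiber of M-inv}, $(M_2)^{>0}_{\text{at }y}=M_2$ for every $y\in V_2(\k)$. The subset $V_1:=p_1(p_2^{-1}(V_2))\subset Y_1$ is open (since $p_1$ is \'etale, hence open) and contains $y_1$; for each $y_1^\circ\in V_1(\k)$, choosing a lift $y''^\circ\in p_2^{-1}(V_2)$ with $p_1(y''^\circ)=y_1^\circ$ yields a restricted \'etale equivalence $(\pi_1,X_1,Y_1,y_1^\circ)\sim_{et}(\pi_2,X_2,Y_2,y_2^\circ)$ to which the pointwise version of the core claim applies. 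Using that $M_i/(M_i)^{>0}_{\text{at }y_i^\circ}$ is a $\k$-vector space supported at $y_i^\circ$ (hence preserved by the faithfully flat base change $A_{i,(y_i^\circ)}\to A''_{(y''^\circ)}$, whose residue fields match), one deduces $(M_1)^{>0}_{\text{at }y_1^\circ}=M_1$; \lemref{M zero on open} then forces $(\mc M_{\pi_1,\ul{\text{inv}}})_{y_1}=0$, with the converse following by symmetry. The main technical obstacle is the bookkeeping needed to commute the definition of $M^{>0}_{\text{at }y}$ (a sum of intersections) with flat \'etale base change; this rests on Noetherianity, coherence of all sheaves involved, and the standard identity that tensor with a flat module commutes with finite intersections of finitely generated submodules.
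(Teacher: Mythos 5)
Your proposal is correct and follows essentially the same route as the paper: both arguments rest on the facts that the submodules $M^{>0}_{\sim\pi'}$ commute with flat \'etale base change (being intersections of finitely generated submodules) and that conical \'etale neighbourhoods transfer along the correspondence, together with a reduction to Zariski opens via the support of $M/M^{>0}_{\text{at }y}$. The only difference is organizational — the paper isolates a biconditional lemma for a single surjective \'etale cover and applies it to each leg of the span, while you work with the span directly and argue pointwise using \lemref{fiber of M-inv} and \lemref{M zero on open} — but the technical content is the same.
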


\begin{proof}
Let $\pi:X\ra Y$ be a resolution with conical slices, and let $U\ra Y$ be a surjective {\'etale} morphism. Let $V=U\times_Y X$, and let $\xi:V\ra U$ be the corresponding resolution. Despite the fact that $U$ is not necessarily conical and so does not need to be a resolution with conical slices, every point $u\in U(\k)$ has an \'etale conical neighbourhood (e.g. coming from an {\'etale} conical neighbourhood for $p(u)\in Y(\k)$), and so the definition of $\mc M_{\xi,\ul{\text{inv}}}$ still makes sense. 

We have a commutative square
$$
\xymatrix{
V\ar[r]^q\ar[d]_{\xi} & X \ar[d]_{\pi}\\
U\ar[r]^p & Y
}
$$
where both $p$ and $q$ are \'etale and surjective.

\begin{lem}
$\mc M_{\xi,\ul{\text{inv}}}=0$ if and only if $\mc M_{\pi,\ul{\text{inv}}}=0$
\end{lem}

\begin{proof}
$\Leftarrow$. We will prove that $H^0(V, \Omega^1_V)=M^{>0}_{\text{at } u}$ for all points $u\in U(\k)$. Let $y=p(u)$. Let $(\pi,X, Y,y)\sim_{et}(\pi',X',Y', y')$ be some {\'etale} conical neighbourhood of $y$. Then it also gives rise to an {\'etale} neighbourhood $(\xi, V,U,u)\sim (\pi', X',Y',y')$ of $u$:

$$
\xymatrix{
&V''\ar[ld]\ar[rrd]\ar[dd]^{\xi''}\\
V\ar[dd]_{\xi}\ar[rrd] &&& X''\ar[dl]\ar[dd]_{\pi''}\ar[rrd]\\
& U''\ar[drr]\ar[dl]& X\ar[dd]_{\pi}&&&X'\ar[dd]_{\pi'}\\
U\ar[drr]^p&&&Y''\ar[dl]\ar[drr]\\
&&Y&&&Y'
}
$$

Here $\xi'':V''\ra U''$ is the pull-back of $\pi'':X''\ra Y''$ under $p:U\ra Y$. Let $U=\Spec B$. Now, since $H^0(X,\Omega^1_X)=M^{>0}_{\text{at } y}$, we know that any 1-form $\a$ on $X$ is a sum of forms $\a_i$ which lie in $M^{>0}_{\pi\sim \pi'_i}$ for some {\'etale} neighbourhoods $(\pi, X,Y,y)\sim(\pi_i',X_i',Y_i',y_i')$. It is easy to see from the definitions that if $\a_i$ lies in $M^{>0}_{\pi\sim \pi_i'}$, then $q^*(\a_i)$ lies in $M^{>0}_{\xi\sim \pi_i'}$ (taking the pull-back of the corresponding expression on $X''$ to $V''$). But $H^0(V,\Omega^1_V)=H^0(X,\Omega^1_X)\otimes_A B$, and any differential 1-form $\beta$ on $V$ is of the form $\sum_j f_jq^*(\tilde\a_j)$ for some 1-forms $\tilde{\a_j}$ on X. So $\beta$ lies in $M^{>0}_{\text{at } u}$, and we are done.

$\Rightarrow$ We will prove that $H^0(X,\Omega^1_X)=M^{>0}_{\text{at } y}$ for all points $y\in Y(\k)$. Let $u$ be a point which maps to $y$, and let $(\xi,V,U,u)\sim_{et}(\xi', V',U',u')$ be an {\'etale} conical neighbourhood of $u$. It also gives rise to a neighbourhood $(\pi,X,Y,y)\sim_{et}(\xi',X',Y',y')$.
$$
\xymatrix{
V'\ar[d]_{\xi'} &V''\ar[l]\ar[r]\ar[d]_{\xi''}&V\ar[d]_{\xi}\ar[r]^q&X\ar[d]_{\pi}\\
U'& U''\ar[r]\ar[l]& U \ar[r]^p& Y\\
}
$$
We have $A$-submodule $M_{\pi\sim\xi'}^{>0}\subset H^0(X,\Omega^1_X)$, and $B$-submodule $M_{\xi\sim\xi'}^{>0}\subset H^0(X,\Omega^1_X)$. However both of them are defined as intersections with the same $B''$-submodule of $H^0(V'',\Omega^1_{V''})$: $M_{\pi\sim\xi'}^{>0}=H^0(X,\Omega^1_X)\cap H^0(V'',\Omega^1_{V''})^{>0}$ and $M_{\xi\sim\xi'}^{>0}=H^0(V,\Omega^1_V)\cap H^0(V'',\Omega^1_{V''})^{>0}$. From faithfully flat descent we get that $M_{\xi\sim\xi'}^{>0}=M_{\pi\sim\xi'}^{>0}\otimes_A B$. Since $M^{>0}_{\text{at } u}=\cup_{\xi\sim\xi'} M_{\xi\sim\xi'}^{>0}$ from this we obtain that $M^{>0}_{\text{at } u}\subset M^{>0}_{\text{at } y}\otimes_A B$. But $M^{>0}_{\text{at } u}=H^0(V,\Omega^1_V)$, so again by faithfully flat descent $M^{>0}_{\text{at } y}=H^0(X,\Omega^1_X)$. 

\end{proof}
The statement of the proposition now easily follows from the lemma.  If one of the stalks, say $(\mc M_{\pi_1,\ul{\text{inv}}})_{y_1}$, is zero, it means that $\mc M_{\pi_1,\ul{\text{inv}}}$ is zero on some Zariski neighbourhood $U$ of $y_1$.  As definition of $\mc M_{\pi_1,\ul{\text{inv}}}$ was Zariski local we can replace $Y_1$ with $U$. An {\'etale} correspondence that gives the equivalence $(\pi_1,X_1\times_{Y_1}U,U,y_1)\sim_{et}(\pi_2,X_2,Y_2,y_2)$ is surjective on some Zariski neighbourhoods $U_1$, $U_2$ of points $y_1$ and $y_2$. Now, applying the lemma twice (for the first and then for the second arrow in the \'etale correspondence), we get that $\mc M_{\pi_2,\ul{\text{inv}}}$ is zero on $U_2$, so the stalk $(\mc M_{\pi_2,\ul{\text{inv}}})_{y_2}$ is zero.  

\end{proof}

\begin{lem}
\label{Mprod}
Let $\pi_1:X_1\ra Y_1$ and $\pi_2:X_2\ra Y_2$ be two resolutions with conical slices, such that $\mc M_{\pi_1,\ul{\text{inv}}}$ and $\mc M_{\pi_2,\ul{\text{inv}}}$ are equal to 0. Let $\pi_1\times\pi_2:X_1\times X_2\ra Y_1\times Y_2$ be their product. Then $\mc M_{\pi\times\pi',\ul{\text{inv}}}=0$. 
\end{lem}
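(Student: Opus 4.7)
The plan is to reduce the statement to a point-by-point check and then exploit the fact that forms on a product decompose via the K\"unneth formula. First I would verify that $\pi_1\times\pi_2$ is itself a resolution with conical slices, so that $\mc M_{\pi_1\times\pi_2,\ul{\text{inv}}}$ is defined: the diagonal embedding $\Gm\hookra\Gm\times\Gm$ gives a contracting action on $Y_1\times Y_2$ with central point $(y_{0,1},y_{0,2})$; separability of the diagonal action follows from separability of each factor since $(Y_1'\times Y_2')^{d\Gm,\text{diag}}=(Y_1')^{d\Gm}\times(Y_2')^{d\Gm}$; the K\"unneth formula and the assumption $R^i\pi_{j,*}\O_{X_j}=0$ for $i=1,2$ give $R^i(\pi_1\times\pi_2)_*\O_{X_1\times X_2}=0$ for $i=1,2$ and $\O_{Y_1\times Y_2}\isoto (\pi_1\times\pi_2)_*\O_{X_1\times X_2}$; and products of \'etale conical neighborhoods yield \'etale conical neighborhoods at every non-central point of $Y_1\times Y_2$.

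Next, it suffices to show that for every point $(y_1,y_2)\in(Y_1\x Y_2)(\k)$, the submodule $M^{>0}_{\text{at }(y_1,y_2)}$ equals all of $M:=H^0(X_1\x X_2,\Omega^1_{X_1\x X_2})$. Using the decomposition
\[
\Omega^1_{X_1\x X_2}= pr_1^*\Omega^1_{X_1}\oplus pr_2^*\Omega^1_{X_2},
\]
any global $1$-form is a finite sum of forms of type $\a_1\bx f_2$ (with $\a_1\in H^0(X_1,\Omega^1_{X_1})$, $f_2\in H^0(X_2,\O_{X_2})$) or $f_1\bx\a_2$; by symmetry it is enough to handle the first type. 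The hypothesis $\mc M_{\pi_1,\ul{\text{inv}}}=0$ provides a decomposition $\a_1=\sum_i\a_{1,i}$ with $\a_{1,i}\in M^{>0}_{\pi_1\sim\pi_{1,i}'}$ for certain \'etale conical neighborhoods $(\pi_{1,i}',X_{1,i}',Y_{1,i}',y_{1,i}')$ of $y_1$, so we may assume $\a_1\in M^{>0}_{\pi_1\sim\pi_1'}$ for a single such $\pi_1'$.

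Now pick any \'etale conical neighborhood $(\pi_2',X_2',Y_2',y_2')$ of $y_2$ (which exists since $\pi_2$ has conical slices). Taking products gives an \'etale equivalence $(\pi_1\x\pi_2,X_1\x X_2,Y_1\x Y_2,(y_1,y_2))\sim_{et}(\pi_1'\x\pi_2',X_1'\x X_2',Y_1'\x Y_2',(y_1',y_2'))$ realized via the product ``middle'' $X_1''\x X_2''$; the target is conical for the diagonal $\Gm$-action. By assumption on $\a_1$, the pullback to $X_1''$ has the form $q_{1,1}^*\a_1=\sum_i h_i\cdot q_{1,2}^*\beta_i$, where each $\beta_i\in H^0(X_1',\Omega^1_{X_1'})$ has strictly positive $\Gm$-weight $k_i>0$. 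Hence on $X_1''\x X_2''$,
\[
q_{1,1}^*(\a_1)\bx q_{2,1}^*(f_2)=\sum_i\bigl(h_i\cdot q_{2,1}^*(f_2)\bigr)\cdot (q_{1,2}\x q_{2,2})^*(\beta_i\bx 1),
\]
and each $\beta_i\bx 1$ on $X_1'\x X_2'$ has diagonal $\Gm$-weight $k_i>0$. This exhibits $\a_1\bx f_2$ as an element of $M^{>0}_{\pi_1\x\pi_2\sim\pi_1'\x\pi_2'}\subseteq M^{>0}_{\text{at }(y_1,y_2)}$.

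Combining both types of summands gives $M=M^{>0}_{\text{at }(y_1,y_2)}$ for every $(y_1,y_2)$, so $M^{\gg 0}=\bigcap_y M^{>0}_{\text{at }y}=M$ and therefore $\mc M_{\pi_1\x\pi_2,\ul{\text{inv}}}=0$. The main nuisance I anticipate is the bookkeeping needed to check that products of \'etale conical neighborhoods (with the diagonal $\Gm$-action) genuinely produce \'etale conical neighborhoods of arbitrary points of $Y_1\x Y_2$ and that separability is preserved; the core computation with weights is transparent.
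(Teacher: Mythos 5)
Your proposal is correct and follows essentially the same route as the paper's proof: decompose $\Omega^1_{X_1\times X_2}$ via the K\"unneth-type splitting into $\Omega^1_{X_1}\boxtimes\O_{X_2}\oplus\O_{X_1}\boxtimes\Omega^1_{X_2}$, reduce to decomposable summands, use the vanishing of $\mc M_{\pi_i,\ul{\text{inv}}}$ on one factor to write the form as a sum of pieces of positive weight for various \'etale conical neighbourhoods, and pair each with an arbitrary \'etale conical neighbourhood of the other factor so that the product (with the diagonal $\Gm$-action) witnesses strict positivity at every point. Your additional preliminary check that $\pi_1\times\pi_2$ is itself a resolution with conical slices is a reasonable piece of extra care that the paper leaves implicit, but it does not change the argument.
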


\begin{proof}
We have $\Omega^1_{X_1\times X_2} = \Omega^1_{X_1}\!\boxtimes \O_{X_2}\oplus \O_{X_1}\!\boxtimes \Omega^1_{X_2}$. We will show that $\pi_*(\O_{X_1}\!\boxtimes \Omega^1_{X_2})$ lies in $(\pi_*\Omega^1_{X_1\times X_2})^{\gg 0}$. Let $\alpha$ be a section of $\pi_*(\O_{X_1}\boxtimes \Omega^1_{X_2})$, and $(y_1,y_2)\in (Y_1\times Y_2)(\k)$ be a point. We can assume that $\alpha$ is decomposable: $\alpha=f\boxtimes \omega$. Since $\mc M_{\pi_2,\ul{\text{inv}}}=0$, $\omega$ is equal to $\omega_1+\cdots+\omega_n$, where $\omega_i\in M_{\sim\pi'_i}^{>0}$ for some \'etale conical neighbourhoods $(\pi_i',X_i',Y_i',y_i')$ of $y_2\in Y_2(\k)$. Now consider any \'etale neighbourhood $(\widetilde\pi_1, \widetilde{X_1}, \widetilde{Y_1}, \tilde{y_1})$ of the point $y_1\in Y_1(\k)$. Each differential form $f\boxtimes \omega_i$ lies in $M_{\sim\pi'_i\times \widetilde \pi_1}^{>0}$, so $f\boxtimes \omega$ is strictly positive at $(y_1,y_2)$. Since the point $(y_1,y_2)$ was arbitrary, we get that $f\boxtimes \omega$ is totally positive, and  $\pi_*(\O_{X_1}\!\boxtimes \Omega^1_{X_2})\subset(\pi_*\Omega^1_{X_1\times X_2})^{\gg 0}$. 
Analogously, $\pi_*(\Omega^1_{X_1}\!\boxtimes \O_{X_2})\subset(\pi_*\Omega^1_{X_1\times X_2})^{\gg 0}$. Thus $\mc M_{\pi\times\pi',\ul{\text{inv}}}=0$.
\end{proof}

Setting $\pi_2$ to be $\id_{\mb A^1}:\mb A^1\rightarrow \mb A^1$, we get that $\mc M_{\pi}=0$ implies $\mc M_{\pi\times\id_{\mb A^1}}=0$. The converse is also true:

\begin{lem}
\label{A1bolshenula}
Let $\pi:X\ra Y$  be a resolution with conical slices such that $\mc M_{\pi\times\id_{\mb A^1}, \ul{\text{inv}}}=0$. Then $\mc M_{\pi, \ul{\text{inv}}}=0$.
\end{lem}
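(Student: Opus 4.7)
The plan is to construct an injective map of coherent sheaves
\[
\iota\colon \mc M_{\pi,\ul{\text{inv}}} \hookrightarrow s^* \mc M_{\pi\times\id_{\mb A^1},\ul{\text{inv}}},
\]
where $s\colon Y\hookrightarrow Y\times\mb A^1$ is the zero section $y\mapsto (y,0)$. Since the right-hand side vanishes by hypothesis, this will force $\mc M_{\pi,\ul{\text{inv}}}=0$.

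To define $\iota$, I use the splitting $\Omega^1_{X\times\mb A^1}=\mr{pr}_X^*\Omega^1_X\oplus\mr{pr}_{\mb A^1}^*\Omega^1_{\mb A^1}$ and adjunction to obtain a natural inclusion $\pi_*\Omega^1_X\hookrightarrow s^*\bigl((\pi\times\id)_*\Omega^1_{X\times\mb A^1}\bigr)$ as a direct summand. To check this descends to a map of quotients, I adapt the argument of \lemref{Mprod} to show that $\mr{pr}_X^*$ sends $(\pi_*\Omega^1_X)^{\gg 0}$ into $((\pi\times\id)_*\Omega^1_{X\times\mb A^1})^{\gg 0}$: if $\alpha=\sum_i\alpha_i$ is a decomposition with each $\alpha_i\in M^{>0}_{\pi\sim\pi_i''}$ for a conical neighborhood $\pi_i''$ of some point $y$, then $\mr{pr}_X^*\alpha_i\in\tilde M^{>0}_{\pi\times\id\sim\pi_i''\times\id_{\mb A^1}}$ via the product conical neighborhood. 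At a point $(y,t)$ with $t\neq 0$, one combines this with the \'etale equivalence given by Luna's slice theorem under the diagonal $\Gm$-action.

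The heart of the proof is injectivity of $\iota$: given $\alpha\in H^0(X,\Omega^1_X)$ with $\mr{pr}_X^*\alpha$ totally positive on $X\times\mb A^1$, I must show $\alpha$ is totally positive on $X$. Fix $y\in Y(\k)$ and decompose $\mr{pr}_X^*\alpha=\sum_i\tilde\alpha_i$ with $\tilde\alpha_i\in\tilde M^{>0}_{\sim\pi_i'}$ for \'etale conical neighborhoods $(\pi\times\id,(y,0))\sim_{et}(\pi_i',X_i',Y_i',y_{i,0}')$. The idea is to rewrite each summand using a \emph{product} conical neighborhood: by \propref{conic slice} applied to $\pi$ at $y$ (and directly at $y=y_0$), one has $(\pi,X,Y,y)\sim_{et}(\pi_y,X_y,Y_y,y_{y,0})\times\ul{\mb A}^1$ with $\pi_y$ a resolution with conical slices, so $(\pi\times\id,(y,0))\sim_{et}(\pi_y,y_{y,0})\times\ul{\mb A}^2$. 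One then restricts the decomposition along $\tilde s\colon X\times\{0\}\hookrightarrow X\times\mb A^1$: the $\mr{pr}_{\mb A^1}^*\Omega^1_{\mb A^1}$-components vanish at $t=0$, while the remaining positive-weight components on the intermediate variety restrict to positive-weight forms on the "slice" factor. Tracking through the \'etale diagrams, the restriction of each $\tilde\alpha_i$ lands in $M^{>0}_{\sim\pi_i''}$ for a conical neighborhood $\pi_i''$ of $y$, and summing gives $\alpha\in M^{>0}_{\text{at }y}$.

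The main obstacle is the rewriting step: the given decomposition uses arbitrary conical neighborhoods $\pi_i'$ of $(y,0)$, which need not be of product form. This is handled by noting that every \'etale conical neighborhood of $(y,0)$ is itself \'etale-equivalent to a product neighborhood (via \propref{conic slice} applied to $\pi\times\id$ at $(y,0)$, followed by \lemref{slice-slice} to cancel a common $\ul{\mb A}^1$-factor with the product structure $(\pi,X,Y,y)\times\ul{\mb A}^1$ at $(y,0)$). Since $M^{>0}_{\text{at }(y,0)}$ is by definition the sum over \emph{all} \'etale conical neighborhoods and the relevant diagrams can be refined along \'etale covers, the decomposition can be arranged in product form, making the restriction argument apply. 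Putting everything together, $\alpha$ is totally positive on $X$, $\iota$ is injective, and the vanishing $\mc M_{\pi\times\id_{\mb A^1},\ul{\text{inv}}}=0$ propagates to $\mc M_{\pi,\ul{\text{inv}}}=0$.
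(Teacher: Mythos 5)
Your overall strategy coincides with the paper's: pull $\a$ back along the projection $p:X\times\mb A^1\ra X$, use the hypothesis to write $p^*\a=\sum_i\tilde\a_i$ with $\tilde\a_i\in M^{>0}_{\sim\pi_i'}$ for \'etale conical neighbourhoods $\pi_i'$ of $(y,0)$, and restrict along the zero section. (The sheaf-map packaging and the well-definedness step are superfluous: since $\mc M_{\pi\times\id_{\mb A^1},\ul{\text{inv}}}=0$, \emph{every} $1$-form on $X\times\mb A^1$ is totally positive, so $p^*\a$ is, and you never need to know that $p^*$ preserves positivity.) The genuine gap is in your ``rewriting step''. You claim that, because each $\pi_i'$ is \'etale equivalent to a product neighbourhood $\pi_i''\times\ul{\mb A}^1$, ``the decomposition can be arranged in product form''. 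This does not follow. The submodule $M^{>0}_{\sim\pi_i'}$ is defined through the $\Gm$-grading on $H^0(X_i',\Omega^1_{X_i'})$ attached to the \emph{specific} conical structure of $\pi_i'$ and the specific correspondence; \secref{totally positive forms} explicitly warns that these submodules a priori depend on the choice of neighbourhood and of equivalence, which is exactly why $M^{>0}_{\text{at }y}$ is defined as a sum over all of them. Knowing $\tilde\a_i\in M^{>0}_{\sim\pi_i'}$ gives no control over the weights of $\tilde\a_i$ for the grading of an \'etale-equivalent product neighbourhood, and membership of $p^*\a$ in the full sum $M^{>0}_{\text{at }(y,0)}$ does not allow you to choose which neighbourhoods occur in a decomposition. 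So the restriction argument, which needs the product form together with the product $\Gm$-action, cannot be run on the summands you are actually given.

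The repair — and this is what the paper's proof does — is to work with each $\pi_i'$ as given instead of replacing it: following the proof of \lemref{slice-conic}, choose a homogeneous $f\in\mf m_{y_i'}$ whose differential is nonzero on the tangent direction of the curve corresponding to $\{y\}\times\mb A^1$, and set $W_i'=f^{-1}(0)$, $Z_i'=X_i'\times_{Y_i'}W_i'$. Homogeneity of $f$ makes $W_i'$ a $\Gm$-invariant subvariety, so $(\xi_i',Z_i',W_i',y_i')$ is a conical \'etale neighbourhood of $y$ in $Y$ itself, and the restriction $H^0(X_i',\Omega^1_{X_i'})\ra H^0(Z_i',\Omega^1_{Z_i'})$ is $\Gm$-equivariant, hence preserves weights and carries the positivity data of $\tilde\a_i$ to positivity data for $\iota^*\tilde\a_i\in M^{>0}_{\sim\xi_i'}$. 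Summing over $i$ gives $\a=\iota^*p^*\a\in M^{>0}_{\text{at }y}$ without ever invoking a product structure on the $\pi_i'$.
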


\begin{proof}
Conisder the projection $p:X\times \mb A^1\ra X$ and the embedding $\iota: X\times\{0\}\hookrightarrow X\times \mb A^1$. Let $\alpha \in H^0(X,\Omega^1_X)$ be a 1-form on $X$. We have $p^*\alpha\in H^0(X\times \mb A^1,\Omega^1_{X\times \mb A^1})$. Since $\mc M_{\pi\times\id_{\mb A^1}, \ul{\text{inv}}}=0$, for any $y\in Y(\k)$ there exist differential 1-forms $\a_1,\ldots, \a_n$ on $Y\times \mb A^1$ such that $p^*\a=\a_1+\cdots +\a_n$, and $\a_i\in M_{\sim\pi'_i}^{>0}$ for some {\'e}tale conical neighbourhood $(\pi_i', X_i', Y_i', y_i')$ of $y\times\{0\}\in (Y\times \mb A^1)(\k)$. Following the proof of \lemref{slice-conic}, for each $i$ we can find a $\Gm$-invariant subvariety $W_i'\hookrightarrow Y_i'$, such that the conical neighbourhood above restricts to a conical \'etale neighbourhood $(\pi,X,Y,y)\sim_{et}(\xi_i',Z_i',W_i',y_i')$, where $Z_i'= X_i'\times_{Y_i'}W_i'$. Weights are preserved under restriction, so $\iota^*\a_i$ lies in $M_{\sim\xi'_i}^{>0}$ for each $i$. On the other hand, $\a=\iota^*p^*\a= \iota^*\a_1+\cdots\iota^*\a_n$, so $\a$ is strictly positive at $y$. It follows that $\mc M_{\pi, \ul{\text{inv}}}=0$.
\end{proof}

\begin{defn}\label{stricty positive resolution}
A resolution with conical slices $\pi:X\ra Y$ is called \textit{strictly positive} if the sheaf $\mc M_{\pi, \ul{\text{inv}}}$ is zero. In other words, the defining property of strict positivity is that all 1-forms on $X$ should be totally positive.
\end{defn}

\lemref{Mprod} and \lemref{A1bolshenula} show that $\pi:X\ra Y$ is strictly positive if and only if $\pi\times \id_{\mb A^1}:X\times \mb A^1\ra Y\times \mb A^1$ is.

\subsection{Application to resolutions with conical slices}
 Recall the statement of \propref{reduction}: for a map $\pi:\mc X\ra \mc Y$  of schemes over $S=\Spec R$,  such that the geometric generic fiber is a resolution of singularities with conical slices, we can find an \'etale open $S'\ra S$, such that for any geometric point $s:\Spec \k_s\ra S'$ the base change $\pi_s:X_s\ra Y_s$ is a resolution with conical slices as well.  

\begin{prop}
\label{application} In the context of \propref{reduction}, we can find an \'etale open $S'\ra S$ such that for any $s:\Spec \k_s\ra S'$ we have $\mc M_{\pi_s, \ul{\text{inv}}}=0$. In other words, for any $s:\Spec \k_s\ra S'$ all differential 1-forms on $X_s$ are totally positive. 
\end{prop}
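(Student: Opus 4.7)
The plan is to proceed by induction on $n=\dim X_\eta$, combining \propref{reduction}, \thmref{weights}, \remref{slice-cover}, \lemref{Mprod}, and \propref{etale M-inv}. The base cases $n=0,1$ are trivial (when $\pi_\eta$ is an isomorphism, every point is smooth and \remref{M for A^n} gives $\mc M_{\pi_s,\ul{\text{inv}}}=0$). For the inductive step, I would first shrink $S$ so that \propref{reduction} applies, making $\pi_s$ a resolution with conical slices for every geometric point $s$. Then I would shrink further using \thmref{weights} so that every $H^0(X_s,\Omega^1_{X_s})$ has all $\Gm$-weights strictly positive.

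Next I would exploit \remref{slice-cover}: in the generic fiber there is a \emph{finite} collection of conical resolutions with conical slices $\xi_{i,\eta}\colon Z_{i,\eta}\to W_{i,\eta}$ of dimension strictly less than $n$, together with \'etale correspondences producing, at every non-central point $y\in Y_\eta(\mb K)$, an \'etale equivalence $(\pi_\eta,X_\eta,Y_\eta,y)\sim_{et}(\xi_{i,\eta}\times\id_{\mb A^1},Z_{i,\eta}\times\mb A^1,W_{i,\eta}\times\mb A^1,(w_{i,0},0))$. Spreading out over a finite \'etale extension of $S$ and applying the induction hypothesis to each of the finitely many families $\mc Z_i\to\mc W_i$, after a further shrinking of $S'$ we obtain $\mc M_{\xi_{i,s},\ul{\text{inv}}}=0$ for every $i$ and every geometric $s$. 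By \lemref{Mprod} this gives $\mc M_{\xi_{i,s}\times\id_{\mb A^1},\ul{\text{inv}}}=0$ as well, and then \propref{etale M-inv} transports this vanishing across the \'etale equivalence: the stalk $(\mc M_{\pi_s,\ul{\text{inv}}})_y$ vanishes at every non-central $y\in Y_s(\k_s)$, which by \lemref{fiber of M-inv} forces $M^{>0}_{\text{at } y}=H^0(X_s,\Omega^1_{X_s})$.

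For the central point $y_{0,s}$ the argument is direct: take the tautological \'etale equivalence of $(\pi_s,X_s,Y_s,y_{0,s})$ with itself as the conical \'etale neighborhood. By the shrinking from \thmref{weights}, all $\Gm$-weights of $H^0(X_s,\Omega^1_{X_s})$ are strictly positive, so $(M_s)^{>0}=M_s$ in the notation of \secref{totally positive forms}, and therefore $M^{>0}_{\text{at } y_{0,s}}=H^0(X_s,\Omega^1_{X_s})$ as well. Combining the central and non-central points, $M^{\gg 0}=\bigcap_y M^{>0}_{\text{at } y}=H^0(X_s,\Omega^1_{X_s})$, and since $Y_s$ is affine this implies $\mc M_{\pi_s,\ul{\text{inv}}}=0$.

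The main obstacle is bookkeeping the shrinkings: one must check that the finitely many \'etale opens of $S$ coming from (i) \propref{reduction}, (ii) \thmref{weights}, (iii) the spreading out of the slice correspondences of \remref{slice-cover}, and (iv) the inductive hypotheses for each of the finitely many slices $\xi_i$ can all be intersected to a single \'etale open on which the full argument runs. Finiteness of the covering in \remref{slice-cover} is what makes this work; the rest is a careful application of the already established transport properties of $\mc M_{\pi,\ul{\text{inv}}}$ under \'etale equivalence (\propref{etale M-inv}) and products (\lemref{Mprod}).
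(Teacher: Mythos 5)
Your proposal is correct and follows essentially the same route as the paper: induction on $\dim X_\eta$, spreading out the finite slice cover of \remref{slice-cover} over an \'etale open of $S$, applying the inductive hypothesis together with \lemref{Mprod} and \propref{etale M-inv} to kill the stalks of $\mc M_{\pi_s,\ul{\text{inv}}}$ away from the central point, and invoking \thmref{weights} at the central point. Your write-up is merely more explicit about the central-point step and the intersection of the finitely many shrinkings of $S$, both of which the paper leaves implicit.
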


\begin{proof}
We proceed by induction on $\dim X_\eta$ using \remref{slice-cover}. Following the proof of \propref{reduction}, we can find a finite collection of resolutions with conical slices $\theta_{i}: \mc Z_{i}\ra \mc W_{i}$ defined over $S'$, such that there exists an {\'e}tale correspondence between $\pi:\mc X\setminus {\pi^{-1}((y_0)}\ra \mc Y\setminus {(y_0)}$ and $\bigsqcup\theta_{i}\times \id:\mc Z_{i}\times\mb A^1\ra \mc W_{i}\times\mb A^1$, defined over $S'$ as well. So by the induction hypothesis and \lemref{Mprod}, we can find $S'$, such that $\mc M_{\pi_s, \ul{\text{inv}}}$ is supported at the central point $y_0$. Finally, for the central point the statement follows from \thmref{weights}. 

\end{proof} 

\begin{prop}
\label{strictlypositivereduction}
Let $\pi:\mc X\ra \mc Y$ be a map of schemes over $S=\Spec R$ finite type and flat over $\mb Z$, such that the generic fiber $\pi_\eta:X_\eta\ra Y_\eta$ is a resolution with conical slices. Then there exists an \'etale open $S'\ra S$, such that for any geometric point  
$s:\Spec \k_s\ra S'$ the corresponding fiber $\pi_s:X_s\ra Y_s$ is a strictly positive resolution with conical slices.
\end{prop}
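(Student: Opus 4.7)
The plan is simply to combine \propref{reduction} and \propref{application}, which together already establish the two conditions comprising the definition of a strictly positive resolution with conical slices (\defref{stricty positive resolution}).

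First I would apply \propref{reduction} to obtain an \'etale open $S_1\to S$ such that for every geometric point $s\colon\Spec\k_s\to S_1$, the base change $\pi_s\colon X_s\to Y_s$ is a resolution with conical slices. Next I would apply \propref{application}, which under the very same hypotheses furnishes an \'etale open $S_2\to S$ (one may take $S_2\to S_1$ without loss of generality, since the proof of \propref{application} already assumes the conclusion of \propref{reduction} and only shrinks $S_1$ further) with $\mc M_{\pi_s,\ul{\text{inv}}}=0$ for every geometric point $s\colon\Spec\k_s\to S_2$.

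Form the fiber product $S'=S_1\times_S S_2$, which is an \'etale open of $S$. Any geometric point $s\colon\Spec\k_s\to S'$ projects to geometric points of $S_1$ and of $S_2$ and has the same fiber $\pi_s\colon X_s\to Y_s$ as a base change from $\mc X\to\mc Y$. Hence $\pi_s$ is a resolution with conical slices (from $S_1$) and satisfies $\mc M_{\pi_s,\ul{\text{inv}}}=0$ (from $S_2$). By \defref{stricty positive resolution} this is exactly the assertion that $\pi_s$ is a strictly positive resolution with conical slices, so $S'$ is the desired \'etale open.

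There is essentially no obstacle here: the real content has already been distributed between \propref{reduction} (controlling the geometric/structural side via Luna's slice theorem and the base change / separability arguments of \lemref{denseseparable} and \remref{densedirectimage}) and \propref{application} (controlling the totality of positivity via \lemref{Mprod} combined with induction on $\dim X_\eta$ using \remref{slice-cover}, plus \thmref{weights} at the central point). The only mild subtlety to watch for is compatibility of the \'etale opens: one should either take a common refinement as above, or observe that the proof of \propref{application} is organized so as to shrink the output of \propref{reduction} further, making the refinement automatic.
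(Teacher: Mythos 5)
Your proposal is correct and matches the paper's own argument, which is precisely the one-line observation that the statement follows directly from \propref{reduction} and \propref{application}. The extra care you take about refining the two \'etale opens to a common one is a harmless elaboration of the same idea.
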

\begin{proof}
Follows directly from \propref{reduction} and \propref{application}.
\end{proof}

\section{Descent for resolutions with conical slices}
\label{desc}

\subsection{Sheaf $\mc Q_{\pi,N}$ and {\'e}tale equivalences} Recall (\defref{Q-n}) the $N$-th obstruction sheaf $\mc Q_{\pi,N}=\pi_*\Omega^1_{X^{(N)}}/\mc S_{\pi,N}$, where $\mc S_{\pi,N}$ is the subsheaf of 1-forms that can be obtained as the $N$-th power of the Cartier operator applied to a K{\"a}hler differential on $Y$.

 Recall the notion of an {\'e}tale equivalence from \defref{etale equivalence}. It would be useful to know that to prove $\mc Q_{\pi,N}=0$ it is enough to prove $\mc Q_{\pi',N}=0$ for some $(\pi',X',Y',y')$ \'etale equivalent to $(\pi,X,Y,y)$. Some form of this is indeed true:

\begin{lem}
\label{etalezero}
Given an {\'etale} equivalence $(\pi,X,Y,y)\sim_{et} (\pi',X',Y',y')$, $(\mc Q_{\pi,N})_{y}=0$ if and only if $(\mc Q_{\pi',N})_{y'}=0$.
\end{lem}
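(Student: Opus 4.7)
The plan is to show that $\mc Q_{\pi,N}$ behaves functorially under {\'e}tale morphisms: the two pullbacks in the {\'e}tale correspondence realizing the equivalence give canonically isomorphic sheaves on the intermediate base, and faithful flatness of {\'e}tale local ring extensions then transports vanishing of stalks from one side to the other.

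First I would verify that every ingredient entering the construction of $\mc S_{\pi,N}$ (and hence $\mc Q_{\pi,N}$) is compatible with {\'e}tale base change. Using the correspondence
\[
(\pi,X,Y,y)\xleftarrow{(p_1,q_1)}(\pi'',X'',Y'',y'')\xrightarrow{(p_2,q_2)}(\pi',X',Y',y')
\]
supplied by \defref{etale equivalence}, with $p_1,p_2,q_1,q_2$ all {\'e}tale, \lemref{etalecartesian} guarantees that both squares are Cartesian; in particular $X''\cong X\times_Y Y''\cong X'\times_{Y'}Y''$. Since {\'e}taleness is stable under the Frobenius twist and the natural square relating the relative Frobenius of $Y''$ and that of $Y$ is Cartesian, we have $(X'')^{(n)}\cong X^{(n)}\times_{Y^{(n)}}(Y'')^{(n)}$, and analogously for $X'$, $Y'$. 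K{\"a}hler differentials pull back along {\'e}tale maps ($q_i^*\Omega^1\cong \Omega^1$ and $p_i^*\Omega^1\cong \Omega^1$), flat base change applies to $\pi_*$ of quasi-coherent sheaves, and the Cartier operator $\sC$ -- being constructed {\'e}tale-locally out of $g^{p-1}dg$ -- commutes with {\'e}tale pullback. Consequently the inductively defined subsheaves $\mc C_{\pi,n}=\sC^{-1}(\mc C_{\pi,n-1}\fr)$ pull back to $\mc C_{\pi'',n}$ (flat pullback preserves preimages), the intersection $\Omega^1_{Y^{(\bullet)}}\cap \mc C_{\pi,n}$ pulls back to the corresponding intersection for $\pi''$, its image under $\sC^N$ is $\mc S_{\pi'',N}$, and therefore $(p_1^{(N)})^*\mc Q_{\pi,N}\cong \mc Q_{\pi'',N}$; the symmetric argument gives $(p_2^{(N)})^*\mc Q_{\pi',N}\cong \mc Q_{\pi'',N}$.

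Finally, since $p_1$ is {\'e}tale with $p_1(y'')=y$ and $\k$ is algebraically closed, the induced local homomorphism $\O_{Y^{(N)},y}\to \O_{(Y'')^{(N)},y''}$ is flat and local with residue field $\k$, hence faithfully flat. This yields
\[
(\mc Q_{\pi,N})_y=0 \ \Longleftrightarrow\ (\mc Q_{\pi,N})_y\otimes_{\O_{Y^{(N)},y}}\O_{(Y'')^{(N)},y''}=0 \ \Longleftrightarrow\ (\mc Q_{\pi'',N})_{y''}=0,
\]
and the same reasoning applied to $p_2$ gives the equivalence with $(\mc Q_{\pi',N})_{y'}$, proving the lemma. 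The main technical point is the compatibility of the iterated Cartier operator with {\'e}tale base change; once that is in hand, the remainder is formal flat base change together with faithful flatness of {\'e}tale local extensions.
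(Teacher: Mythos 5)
Your proposal is correct and follows essentially the same route as the paper's proof: flat base change along the \'etale correspondence identifies $\pi''_*\Omega^1$ with both pullbacks, functoriality of the Cartier operator identifies the subsheaves $\mc S_{\pi,N}$ and hence the quotients $\mc Q_{\pi,N}$, and faithfully flat descent transfers vanishing of stalks through $y''$. You spell out the compatibility of the iterated Cartier construction with \'etale pullback in more detail than the paper does, but the argument is the same.
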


\begin{proof}
Let the equivalence be given by a commutative diagram

$$
  \xymatrix{
  X  \ar[d]_{\pi}  & X'' \ar[d]_{\pi''} \ar[l]_{q_1} \ar[r]^{q_2}& X' \ar[d]_{\pi'} \\
      Y  &  Y'' \ar[l]_{p_1} \ar[r]^{p_2}& Y'
      }
$$ By definition $q_i$'s and $p_i$'s are {\'e}tale, in particular they are flat. From flat base change we have isomorphisms $p_1^* \pi_*\Omega^1_{X^{(N)}} \simeq \pi''_*\Omega^1_{X''^{(N)}} \simeq p_2^* \pi'_*\Omega^1_{X'^{(N)}} $.  Moreover, by functoriality of Cartier operator,  we also get isomorphisms $p_1^*\mc S_{\pi,N}\simeq \mc S_{\pi'',N} \simeq p_2^* \mc S_{\pi',N}$ and, consequently, $p_1^*\mc Q_{\pi,N}\simeq \mc Q_{\pi'',N} \simeq p_2^* \mc Q_{\pi',N}$.  Let $y''\in Y''(\k)$ be a point which maps to $y$ and $y'$. By faithfully flat descent, we get that $(p_1^*\mc Q_{\pi,N})_{y''}=0$ if and only if $(\mc Q_{\pi,N})_y =0$. The same is true if we replace $y$ with $y'$. So, $(\mc Q_{\pi',N})_{y'}=0 \Leftrightarrow (\mc Q_{\pi'',N})_{y''}=0 \Leftrightarrow (\mc Q_{\pi,N})_{y}=0$.

\end{proof}

Recall also that we have the subsheaf $(\pi_*\Omega^1_X)^{\gg 0}\subset \pi_*\Omega^1_X$ (see \defref{totally positive}) of totally positive forms. We have the natural map $(\pi_*\Omega^1_{X^{(N)}})^{\gg 0}\rightarrow \mc Q_{\pi,N}$ and we denote the image by $\mc Q_{\pi,N}^{\gg 0}$.
If $\pi:X\ra Y$ is strictly positive (see \defref{stricty positive resolution}) we have $(\pi_*\Omega^1_{X^{(N)}})^{\gg 0}=\pi_*\Omega^1_{X^{(N)}}$ and so 
$\mc Q_{\pi,N}^{\gg 0}=\mc Q_{\pi,N}$ for all $N$.

\subsection{Descent for a strictly positive resolution with conical slices}
\label{finaldescent}
Let $\pi:X\ra Y$ be a conical resolution with the central point $y_0$ and $Y=\Spec A$. We recall the discussion about weights and Cartier operator from \lemref{weights in char p}. The action of $\Gm$ on $\pi:X\ra Y$ gives an action of $\mb G_m^{(1)}$ on $\pi:X^{(1)}\ra Y^{(1)}$. This produces an action of $\mb G_m^{(1)}$ on $H^0(X^{(1)},\Omega^1_{X^{(1)}})$ of which we think as the standard one. Note that $H^0(X^{(1)},(\Fr_X)_*\Omega^1_{X})$ is also endowed with the natural action of $\mb G_m$ (without twist) coming from the action on $X$. These actions differ by $\mr{Fr}_{\Gm}$, and this agrees with the natural $\k$-structures on both spaces. Cartier operator $\sC$ is $\k$-linear and so maps the component of weight $k$ to the component of weight $\frac{k}{p}$. In particular all components with  weights not divisible by $p$ are killed by $\sC$.

The $A^{(N)}$-submodule $S_{\pi,N}\subset H^0(X^{(N)},\Omega^1_{X^{(N)}})$ defines a $\mb G_m^{(N)}$-invariant subspace (since the subspace $H^0(Y,\Omega^1_Y)\subset H^0(X,\Omega^1_{X})$ is $\Gm$-invariant and $\sC$ is $\k$-linear), so $\mc S_{\pi,N}$ has a structure of $\Gm$-equivariant sheaf on $Y^{(N)}$. This way $\mc Q_{\pi, N}$ also obtains a $\mb G_m^{(N)}$-equivariant structure, in particular its support should be a $\mb G_m^{(N)}$-invariant subvariety of $Y^{(N)}$. Note that the central point of the resolution $\pi:X^{(N)}\ra Y^{(N)}$ is the point $\mr{Fr}_Y^N(y_0)$.

We are ready to prove the key lemma:

\begin{lem}\label{key lemma}
Let $\pi:X\ra Y$ be a conical resolution. Then $Q_{\pi,N}$ is finite-dimensional over $\k$ if and only if the support of $\mc Q_{\pi,N}$ is the central point. In this case, there exists $s>0$, such that the image of positive weight 1-forms $H^0(X\fr,\Omega^1_{X\fr})^{>0}$ in $Q_{\pi,N+s}$ is zero. 

\end{lem}

\begin{proof}
Since $Q_{\pi,N}$ is finite-dimensional over $\k$, the corresponding sheaf $\mc Q_{\pi,N}$ has support only at a finite set of point. But since the support should be $\mb G_m^{(N)}$-invariant, the only option is the central point $\mr{Fr}_Y^N(y_0)$. This proves one implication. To prove the other, note that 
$\mc Q_{\pi,N}$ is coherent, so if it is supported at the single point $\mr{Fr}_Y^N(y_0)$, its global sections should be finite-dimensional.
We see that in this case  $S_{\pi,N}$ has finite codimension in $H^0(X^{N},\Omega^1_{X^{(N)}})$, in particular there is an integer $r$ such that all forms of weight greater then $r$ lie in $\mc S_{\pi,N}$. 

Let's take a $\mb G_m^{(N)}$-equivariant lift $Q_{\pi,N}\dashrightarrow H^0(X^{(N)},\Omega^1_{X^{(N)}})$ (as $\k$-vector spaces). Let $\alpha_1, \ldots, \alpha_n$ be a homogeneous basis of the image and let $d_i\ge 0$ be the weight of $\a_i$. Without loss of generality we can assume that $\a$ is equal to some $\a_i$ with $d_i>0$. Then by surjectivity of the Cartier operator there exists a homogeneous closed 1-form $\beta_1$ on $X^{(N-1)}$ of weight $pd_i$, such that $\sC(\beta_1)=\alpha_i$. Repeating this procedure $s_i$ times, where $s_i>0$ is any number with $p^{s_i}d_i>r$, we obtain a 1-form $\beta_{s_i}\in S_{\pi,N}$, such that $\sC^{s_i}(\beta_{s_i})=\alpha_i$. It follows that $\alpha_i\in S_{\pi,N+s_i}$. Finding the maximum $s$ of all $s_i$ (for all $i$ such that $d_i>0$), we get that all 1-forms with positive weight lie in $S_{\pi,N+s}$ and so their image in  $Q_{\pi,N+s}$ is 0. 
\end{proof}

\begin{prop}
\label{descent}
Let $\pi: X\ra Y$ be a strictly positive resolution with conical slices. Then $\mc Q_{\pi,N}=0$ for $N\gg 0$.

\end{prop}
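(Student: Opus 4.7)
My plan is to proceed by induction on $\dim X$, combining \remref{slice-cover} (to reduce support of $\mc Q_{\pi,N}$ to the central point) with strict positivity and the key lemma just established (to kill the remaining piece supported at $y_0$). The base case $\dim X = 0$ is trivial since $\Omega^1_X = 0$.

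For the inductive step, I would first use \remref{slice-cover} to find a finite collection of resolutions with conical slices $\xi_i:Z_i\to W_i$ with $\dim Z_i = \dim X - 1$ and an \'etale correspondence between $\pi:X\setminus\pi^{-1}(y_0)\to Y\setminus\{y_0\}$ and $\bigsqcup_i(\xi_i\times\id_{\mb A^1}):Z_i\times\mb A^1\to W_i\times\mb A^1$. Strict positivity of $\pi$ transfers to each $\xi_i$: by \propref{etale M-inv}, every point $(w,t)\in W_i\times\mb A^1$ has vanishing stalk $(\mc M_{\xi_i\times\id,\ul{\text{inv}}})_{(w,t)}=0$ (as it is \'etale equivalent to some point in $Y\setminus\{y_0\}$), so $\mc M_{\xi_i\times\id,\ul{\text{inv}}}=0$, and \lemref{A1bolshenula} then yields $\mc M_{\xi_i,\ul{\text{inv}}}=0$. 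Applying the inductive hypothesis gives $\mc Q_{\xi_i,N_1}=0$ for a common $N_1\gg 0$; \lemref{smprod} produces $\mc Q_{\xi_i\times\id,N_1}=0$, and \lemref{etalezero} then forces all stalks of $\mc Q_{\pi,N_1}$ at non-central points to vanish. Hence $\mc Q_{\pi,N_1}$ is supported at the central point and, being coherent, $Q_{\pi,N_1}$ is a finite-dimensional $\k$-vector space.

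To finish, I would use strict positivity at the central point together with the key lemma. Pick representatives $\a_1,\dots,\a_k\in H^0(X^{(1)},\Omega^1)$ of a $\k$-basis of $Q_{\pi,N_1}$. By strict positivity each $\a_i$ is totally positive, hence a finite sum $\a_i=\sum_j\a_{ij}$ with $\a_{ij}\in M^{>0}_{\sim\pi_{ij}'}$ for some conical \'etale neighborhoods $\pi_{ij}':X_{ij}'\to Y_{ij}'$ of $y_0$. By \lemref{etalezero} each $\mc Q_{\pi_{ij}',N_1}$ is supported at its own central point, so the key lemma yields $s_{ij}>0$ such that every strictly positive-weight form on $X_{ij}'$ lies in $S_{\pi_{ij}',N_1+s_{ij}}$. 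Via the compatibility $S_{\pi'',N}=S_{\pi',N}\otimes_{A'}A''$ (from functoriality of the Cartier operator under \'etale pullback of K\"ahler differentials) the pullback $q_1^*\a_{ij}$ lies in $S_{\pi_{ij}'',N_1+s_{ij}}$; faithfully flat descent along the \'etale correspondence (surjective onto a neighborhood of $y_0$, which contains the support) then forces $\a_{ij}\in S_{\pi,N_1+s_{ij}}$. Taking $s=\max_{i,j}s_{ij}$, every $\a_i$ dies in $Q_{\pi,N_1+s}$; since the surjection $Q_{\pi,N_1}\twoheadrightarrow Q_{\pi,N_1+s}$ carries generators to generators, $Q_{\pi,N_1+s}=0$ and therefore $\mc Q_{\pi,N_1+s}=0$.

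The main obstacle I expect is the bookkeeping around the faithfully flat descent step: one must check that $\mc S_{\pi,N}$ really does pull back correctly under \'etale base change, compatibly with all the Frobenius twists $Y^{(N)}$, so that descent of the vanishing of $\a_{ij}$ in $Q_{\pi'',N_1+s_{ij}}$ down to $Q_{\pi,N_1+s_{ij}}$ is legitimate; getting this right requires a careful tracking of coherent sheaves on the various Frobenius twists. A secondary technical point is ensuring that strict positivity is inherited by the slices $\xi_i$ — this relies on combining the stalk-level statement of \propref{etale M-inv} over the whole \'etale cover of $W_i\times\mb A^1$ with \lemref{A1bolshenula}, rather than a single point.
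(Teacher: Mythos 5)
Your proof is correct and follows essentially the same route as the paper's: induction on $\dim X$, using the \'etale cover by slices times $\ul{\mb A}^1$ together with \lemref{smprod} and \lemref{etalezero} to reduce the support of $\mc Q_{\pi,N}$ to the central point, then strict positivity at $y_0$ plus the key lemma on each conical neighbourhood to kill the remaining finite-dimensional piece after finitely many more applications of the Cartier operator. The only cosmetic differences are that the paper works with a $\Gm$-equivariant homogeneous lift of $Q_{\pi,N}$ (not essential) and cites \lemref{Mprod} where your appeal to \lemref{A1bolshenula} is the more apt reference for transferring strict positivity to the slices.
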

\begin{proof}
We proceed by induction on $\dim X$. The base of induction is the case of the trivial resolution $\pi:\Spec \k \ra \Spec \k$, where $\mc Q_{\pi,N}^{\gg 0}=\mc Q_{\pi, N}=0$ is zero for all $N$. To prove that the sheaf $\mc Q_{\pi,N}^{\gg 0}$ is zero in general, we need to prove that its stalk at each point is. Let $y\neq y_0$ be a non-central point of $Y$. Then, by \lemref{conic slice}, $(\pi,X,Y,y)$ is {\'e}tale equivalent to the product $(\pi', X',Y', y_0')\times \ul{\mb A}^1$ for some resolution with conical slices $\pi':X'\ra Y'$ with the central point $y_0'$. By \propref{etalezero} it is enough to prove that $(\mc Q_{\pi'\times \id,N})_{ \mr{Fr}_{Y'}^N(y'_0)\times\{0\}}=0$. Since $\pi$ is strictly positive, by \lemref{Mprod} and \propref{etale M-inv} we get that $\pi'$ is strictly positive too. So, by induction, we can assume that there exists $N$, such that $\mc Q_{\pi',N}=0$. Now, by \lemref{smprod}, we get that the stalk $(\mc Q_{\pi'\times\id,N})_{\mr{Fr}_{Y'}^N(y'_0)\times\{0\}}$ is zero. Moreover, by \remref{slice-cover} we can choose such $N$ uniformly. So there exists $N$, such that $(\mc Q_{\pi,N})_{\mr{Fr}_{Y}^N(y)}=0$ for all $y\neq y_0$. This means that the sheaf $\mc Q_{\pi,N}$ is supported at $\mr{Fr}_{Y}^N(y_0)$. We also know that it is coherent, so we get that the space of global sections of $Q_{\pi,N}$ is finite-dimensional.

 Let's take a $\Gm$-equivariant lift $Q_{\pi,N}\dashrightarrow H^0(X^{(N)},\Omega^1_{X^{(N)}})$. Let $\a_1,\ldots,\a_n$ be a homogeneous basis of the image of this lift. Since $\M_{\pi, \ul{\text{inv}}}=0$ we know that each $\a_i$ is totally positive. In particular, it is strictly positive at $y_0$, so $\a_i=\beta_{i1}+\cdots+\beta_{in}$, where for each $k$ there exists an \'etale conical neighbourhood $(\pi, X, Y, y_0)\sim_{et} (\pi'_{ik}, X'_{ik}, Y'_{ik}, y'_{ik})$, such that $\beta_{ik}\in M^{>0}_{\sim \pi'_{ik}}$ (see \defref{strictly positive at a point}). By definition it means that $q_1^*(\beta_{ik})=\sum_j f_j \cdot q_2^*(\beta_{j}')$ for some differential 1-forms $\beta_{j}'\in H^0(X_{ik}',\Omega^1_{X_{ik}'})$ with positive weight and some functions $f_j$ on $X''_{ik}$. Using the isomorphisms $q_1^*\mc Q_{\pi,N}\cong \mc Q_{\pi''_{ik},N}\cong q_2^*\mc Q_{\pi'_{ik},N}$ from the proof of \lemref{etalezero} we see that it is enough to prove that the classes $[\beta'_{j}]\in Q_{\pi'_{ik},N}$ are zero. Moreover from the same isomorphisms we see that the stalk $(\mc Q_{\pi'_{ik},N})_{\mr{Fr}_Y^N(y_{ik}')}$ is finite-dimensional. Since the support should be $\mb G_m^{(N)}$-invariant we see that it is actually supported at $\mr{Fr}_Y^N(y_{ik}')$. So we are in the framework of \lemref{key lemma}. Fixing a decomposition as above for each $\a_i$, and a resolution for each $\beta_{ik}$, using \lemref{key lemma}, we see that there exists $s>0$, such that the images in $Q_{\pi'_{ik},N+s}$ of all $\beta_{j}'$'s for all $i$ and $k$ are zero. So each $\a_i$ in $Q_{\pi, N+s}$ is 0, and since this was the basis of representatives for $Q_{\pi, N}$ and $Q_{\pi, N}\twoheadrightarrow Q_{\pi, N+s}$ we get that $Q_{\pi, N+s}=0$.

\end{proof}

  Recall that if $\mc Q_{\pi,N}=0$, then $[\D_{X\a}]$ descends to $Y\fr$ for all differential 1-forms $\a$ (\defref{Q-n}). This way from \propref{descent} we obtain
\begin{thm}
Let $\pi:X\ra Y$ be a strictly positive resolution with conical slices. Then the classes~$[\D_{X,\a}]$ descend to $Y\fr$ for all $\a\in H^0(X\fr,\Omega^1_{X\fr})$.
\end{thm}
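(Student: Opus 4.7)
The plan is essentially to read off the theorem as an immediate consequence of \propref{descent}, together with the defining property of the sheaves $\mc Q_{\pi,N}$ from \defref{Q-n}. First I would note that since $\pi\colon X\to Y$ is a conical resolution, $Y$ is affine (the contracting $\Gm$-action with a unique fixed point forces this), say $Y=\Spec A$, so we are in the setup of \secref{sheaves Q} and the obstruction spaces $Q_{\pi,N}$ together with the surjections $\psi_N\colon H^0(X\fr,\Omega^1_{X\fr})\twoheadrightarrow Q_{\pi,N}$ are defined. Moreover $\pi_*\O_X=\O_Y$ and $R^i\pi_*\O_X=0$ for $i=1,2$ by the definition of a resolution with conical slices, so the constructions of \secref{sheaves Q} apply without modification.

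Next I would apply \propref{descent}: since $\pi$ is a strictly positive resolution with conical slices, there exists $N\gg0$ such that the coherent sheaf $\mc Q_{\pi,N}$ on $Y^{(N)}$ vanishes. Because $Y^{(N)}$ is affine, taking global sections gives $Q_{\pi,N}=0$, and hence $\psi_N(\a)=0$ for every $\a\in H^0(X\fr,\Omega^1_{X\fr})$. By the defining property of $Q_{\pi,N}$ recorded in \defref{Q-n} (namely, that $\psi_N(\a)=0$ implies $[\D_{X,\a}]$ descends to $Y\fr$), we conclude that $[\D_{X,\a}]$ lies in the image of $\pi^*\colon\Br(Y\fr)\to\Br(X\fr)$ for all $\a$.

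The only subtlety is that the statement is phrased for $\a\in H^0(X,\Omega^1_X)$ while the whole theory of $\D_{X,\a}$ and $\mc Q_{\pi,N}$ is written for 1-forms on $X\fr$; but via the canonical bijection $\omega\mapsto \omega\fr$ between $H^0(X,\Omega^1_X)$ and $H^0(X\fr,\Omega^1_{X\fr})$ this is a harmless identification (as explained in \secref{Frobenius twist}). There is therefore no genuine obstacle here: all the work lies upstream, in \propref{descent}, whose proof in turn relies on the inductive structure provided by \propref{conic slice} and \remref{slice-cover}, the product decomposition \lemref{smprod}, the strict positivity input through \lemref{Mprod}--\lemref{A1bolshenula}, and the Cartier-operator surjectivity argument at the central point (\lemref{key lemma}). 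Thus the theorem itself requires no new argument beyond invoking \propref{descent} and unwinding \defref{Q-n}.
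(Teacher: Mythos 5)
Your proposal is correct and follows exactly the paper's own argument: the theorem is obtained by combining \propref{descent} (which gives $\mc Q_{\pi,N}=0$ for $N\gg 0$) with the descent criterion built into \defref{Q-n}. The remarks about $Y$ being affine and the identification $\omega\mapsto\omega\fr$ are accurate and harmless bookkeeping; no further argument is needed.
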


Finally, by \propref{strictlypositivereduction} we also get the following result, which is the main theorem in this paper:

\begin{thm}
\label{maintheorem}
Let $\pi:\mc X\ra \mc Y$ be a map of schemes over $S=\Spec R$ finite type and flat over $\mb Z$, such that the generic fiber $\pi_\eta:X_\eta\ra Y_\eta$ is a resolution with conical slices. Then there exists an \'etale open $S'\ra S$, such that for any geometric point  
$s:\Spec \k_s\ra S'$
\begin{itemize}
\item the corresponding fiber $\pi_s:X_s\ra Y_s$ is a strictly positive resolution with conical slices,
\item the classes~$[\D_{X_s,\a}]$ descend to $Y_s\fr$ for all $\a\in H^0(X_s\fr,\Omega^1_{X_s\fr})$.
\end{itemize}

\end{thm}

\section{Examples of resolutions with conical slices}
\label{examples}

To construct examples, we use \thmref{maintheorem}: we prove that certain well-known resolutions in characteristic 0 are actually resolutions with conical slices. All of them are the so-called symplectic resolutions (see \secref{symplectic}), and particular examples include Nakajima quiver varieties (\secref{qv}), hypertoric varieties (\secref{hypertoric}) and Slodowy slices (\secref{slodowy}). If we prove that all of them are resolutions with conical slices, \thmref{maintheorem} will say that for a general reduction $s:\Spec \k_s\ra S'$ to characteric $p$, all classes $[\D_{X_s,\a}]$ in $\Br(X_s\fr)$ descend to $Y\fr_s$. 

\subsection{Background on symplectic resolutions}
\label{symplectic}
Let $\mb K$ be a field of characteristic 0.

\begin{defn}
A \textit{symplectic resolution} is a smooth algebraic variety $X$ over $\mb K$ equipped with a non-degenerate differential 2-form $\omega$, such that the affinisation map $\pi:X\ra Y=\Spec H^0(X,\O_X)$ is a birational projective map. 

\end{defn}
 Note that, since $Y=\Spec H^0(X,\O_X)$, we have $\O_Y\isoto\pi_*\O_X$ and so $Y$ is normal. Conversely, if $Y$ is normal and affine, then $\O_Y\isoto\pi_*\O_X$ and so $Y=\Spec H^0(X,\O_X)$.
 
  Since $\Omega$ is non-degenerate, it is easy to see that the dimension of $X$ is even and that $\omega^{\dim X/2}$ gives a trivialisation of the canonical bundle $K_X:=\Omega_X^{\dim X}$. Grauert-Riemenschneider theorem (Theorem 4.3.9 in \cite{La}) states that $R^\blt\pi_*(K_X)=\pi_*(K_X)$ for any resolution $\pi:X\ra Y$. Since in our case $K_X=\O_X$, we get that every symplectic resolution is necessarily rational, namely $R^\blt\pi_*(\O_X)=\O_Y$. 
 
 In particular $R^1\pi_*\O_X=R^2\pi_*\O_X=0$, so to prove that $\pi:X\ra Y$ is a resolution with conical slices it is enough to construct: 1) a contracting $\Gm$-action on $\pi$, 2) an {\'e}tale conical neighbourhood for any point $y\in Y(\mb K)$.

\subsection{Marsden-Weinstein reductions for representations of quivers}
\label{Marsden-Weinstein reductions}
In this subsection we mostly follow Crawley-Boevey's paper \cite{CrB1} on normality of Marsden-Weinstein reductions for representations of quivers. The main results we use are Theorem 4.9 and Corollary 4.10 from \cite{CrB1}, but with a slight elaboration, see \propref{localcentral}. We give a sketch of the content needed from \cite{CrB1}, so that it becomes clear that \propref{localcentral} follows easily from the mentioned results. 

Let $\mb K$ be an algebraically closed field of characteristic 0 and let $A$ be a finite-dimensional semisimple associative $\mb K$-algebra. Since $\mb K$ is algebraically closed, we have that $A$ is just a direct sum of matrix algebras. Any left or right $A$-module and any $A$-$A$-bimodule is semisimple.

Let $M$ be a finite-dimensional $A$-$A$-bimodule. A symplectic bilinear form $\omega$ on $M$ is called \textit{balanced} if $\omega(xa,y)=-\omega(y,ax)=-\omega(ya,x)=\omega(x,ya)$ for all $a\in A$ and $x,y\in M$. Subspace $U\subset M$ is called Lagrangian if $U=U^\perp$ and (co)isotropic if $U\subset U^{\perp}$ ($U^\perp\subset U$).

Not very difficult to prove is the following analogue of Darboux Theorem for bimodules with a balanced symplectic form:

\begin{thm}[\cite{CrB1}, Theorem 2.2] \label{Darboux} If $M$ is an $A$-$A$-bimodule with a balanced symplectic form $\omega$, then any maximal isotropic
sub-bimodule $S$ of $M$ has an isotropic bimodule complement. Both $S$ and the complement are Lagrangian. Moreover, there is an isomorphism of bimodules $M\cong S\oplus S^*$, under which $\omega((s,f), (s',f'))=f(s')-f'(s)$.
\end{thm}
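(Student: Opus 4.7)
The plan is to establish \thmref{Darboux} in three steps: (i) show that any maximal isotropic sub-bimodule $S$ satisfies $S = S^\perp$; (ii) produce a Lagrangian bimodule complement $L$ to $S$; and (iii) identify $L$ with $S^*$ and read off the formula for $\omega$.

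For step (i), the balanced identities for $\omega$ directly imply that $S^\perp$ is itself a sub-bimodule, and non-degeneracy of $\omega$ gives $(S^\perp)^\perp = S$ together with $\dim S + \dim S^\perp = \dim M$; hence the induced form $\bar\omega$ on $S^\perp/S$ is a non-degenerate balanced symplectic form on a semisimple $A$-$A$-bimodule. Suppose $S \subsetneq S^\perp$ and write $S^\perp = S \oplus T$ using semisimplicity. It suffices to exhibit a nonzero isotropic sub-bimodule $T' \subset T$, for then its preimage in $S^\perp$ is isotropic in $M$ and properly contains $S$, contradicting maximality. To construct $T'$ I would invoke the structure of $A$-$A$-bimodules for $A = \bigoplus_i M_{n_i}(\mb K)$: the simples are $K_{ij} = \mb K^{n_i}\otimes(\mb K^{n_j})^*$. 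A short computation parametrising a balanced form on $K_{ii} \cong M_{n_i}(\mb K)$ as $\omega(x,y) = \phi(xy)$ with $\phi(ab)+\phi(ba)=0$, then writing $\phi(\cdot) = \on{tr}(X\,\cdot)$ and concluding $Xz+zX = 0$ for all $z$ (hence $X = 0$), shows that no $K_{ii}$ supports a nonzero balanced symplectic form. An analogous Schur-type argument shows that a balanced form can only pair $K_{ij}$ with $K_{ji}$ for $i \neq j$. Consequently any $T$ carrying a non-degenerate balanced symplectic form decomposes into hyperbolic pairs $K_{ij}^m \oplus K_{ji}^m$, and $K_{ij}^m$ is isotropic inside each such pair, yielding the required $T'$.

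For step (ii), with $S$ now known to be Lagrangian, pick any bimodule splitting $M = S \oplus L_0$ (available by semisimplicity). The pairing $\omega\colon L_0 \times S \to \mb K$ is perfect, giving a bimodule isomorphism $L_0 \cong S^*$, while $\omega|_{L_0 \times L_0}$ corresponds to an antisymmetric bimodule map $\tilde\alpha\colon L_0 \to L_0^*$. Seeking the corrected complement as $L = \{\ell + \sigma(\ell) : \ell \in L_0\}$ for some bimodule map $\sigma\colon L_0 \to S \cong L_0^*$, the isotropy condition $\omega|_{L \times L} = 0$ unpacks into the linear equation $\tilde\sigma - \tilde\sigma^{t} = -\tilde\alpha$ in bimodule maps $L_0 \to L_0^*$; since $\mr{char}\,\mb K = 0$, the explicit solution $\tilde\sigma = -\tfrac{1}{2}\tilde\alpha$ works. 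Dimension counting combined with (i) then shows the resulting $L$ is automatically Lagrangian. Finally for (iii), with Lagrangian complements $M = S \oplus L$, the map $L \to S^*$, $\ell \mapsto \omega(-,\ell)|_S$, is a bimodule isomorphism, under which $\omega$ transports to the asserted formula $\omega((s,f),(s',f')) = f(s') - f'(s)$.

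The principal obstacle is the structural claim in step (i): verifying by direct computation that no simple bimodule $K_{ii}$ admits a nonzero balanced symplectic form, and that balanced forms pair only $K_{ij}$ with $K_{ji}$. Once this is established, the shearing argument in step (ii) is routine characteristic-zero linear algebra and step (iii) is purely formal.
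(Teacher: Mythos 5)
Your argument is correct, and it is essentially the proof the paper delegates to \cite{CrB1}: the paper does not reprove \thmref{Darboux} but cites Crawley--Boevey's Theorem 2.2, whose proof is exactly your steps (i)--(iii), with his Lemma 2.1 playing the role of your structural claim that every nonzero bimodule with a balanced symplectic form contains a nonzero isotropic sub-bimodule. Note the two points of contact with the paper itself: your shearing step (ii), replacing a complement $C$ by $\{c-\tfrac12\theta(c)\}$, is reproduced verbatim in the proofs of \propref{weakDarboux} and \propref{complement2} in the Hamiltonian-reduction setting, and your matrix computation that no $K_{ii}$ carries a nonzero balanced symplectic form is what \remref{Schur} packages as the statement that the self-dual simple bimodules $\mathrm{End}(X_i)$ have Frobenius--Schur indicator $+1$ (the unique balanced invariant pairing is the symmetric trace form), never $-1$. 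The only spot needing a word of care is the diagonal isotypic component $K_{ii}^{\oplus m}$, which is not covered by the phrase ``hyperbolic pairs $K_{ij}^m\oplus K_{ji}^m$'' as written: there a nondegenerate balanced skew form is an antisymmetric $m\times m$ scalar matrix against the trace form, so a single copy of $K_{ii}$ is still isotropic and the contradiction with maximality goes through.
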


\begin{cor}[\cite{CrB1}, Corollary 2.3] 
\label{complement}If $M$ is an $A$-$A$-bimodule with a balanced symplectic form $\omega$, then any isotropic sub-bimodule $S$ of $M$ has a coisotropic bimodule complement.
\end{cor}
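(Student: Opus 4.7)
The plan is to reduce \corref{complement} to \thmref{Darboux} by first enlarging $S$ to a maximal isotropic sub-bimodule and then exploiting the semisimplicity of $A$ to split off~$S$ inside this enlargement. Since $M$ is finite-dimensional, Zorn's lemma (in fact, a simple dimension argument) lets us pick a maximal isotropic sub-bimodule $\wt S\subset M$ containing~$S$. By \thmref{Darboux}, $\wt S$ is automatically Lagrangian and admits a Lagrangian bimodule complement $L$, giving a decomposition $M=\wt S\oplus L$ as $A$-$A$-bimodules.

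Next, because $A$ is a finite direct sum of matrix algebras over~$\mb K$, the category of finite-dimensional $A$-$A$-bimodules is semisimple, so every short exact sequence of such bimodules splits. In particular, the inclusion $S\hookrightarrow \wt S$ admits a bimodule retraction, i.e.\ there is a sub-bimodule $S''\subset\wt S$ with $\wt S=S\oplus S''$. I then propose to take
\[
T:=S''\oplus L
\]
as the desired coisotropic complement to~$S$.

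The verification that $T$ works is the only remaining step, and it is short. First, $M=S\oplus S''\oplus L=S\oplus T$ as bimodules, so $T$ is indeed a bimodule complement of~$S$. Second, since $L$ is Lagrangian one has $L^\perp=L$, so
\[
T^\perp=(S''+L)^\perp=S''{}^\perp\cap L^\perp=S''{}^\perp\cap L\subset L\subset T,
\]
which is exactly the coisotropy of~$T$. There is no real obstacle here; the only point where a structural hypothesis is used is the splitting of $S\hookrightarrow\wt S$, which relies on the semisimplicity of~$A$, so if one tried to work over a non-semisimple base this step would be the place that breaks.
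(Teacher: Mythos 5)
Your proof is correct and follows essentially the same route as the proof the paper cites (Crawley--Boevey, Corollary 2.3): enlarge $S$ to a maximal isotropic sub-bimodule $\wt S$, which \thmref{Darboux} makes Lagrangian with a Lagrangian bimodule complement $L$, split $\wt S=S\oplus S''$ using the semisimplicity of the category of $A$-$A$-bimodules, and conclude via $T^\perp\subset L^\perp=L\subset T$. It is worth noting that this trick is specific to the bimodule setting, where maximal isotropic sub-bimodules are automatically Lagrangian; in the group-representation analogue \propref{complement2} this fails, and the paper has to argue differently there.
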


\begin{defn}\label{quadruple}
By {\em a quadruple} $(A,M,\omega,\zeta)$ we will mean a data of a finite-dimensional semisimple associative $\mb K$-algebra $A$, a finite-dimensional $A$-$A$-bimodule $M$, a balanced symplectic form $\omega$ on $M$, and a linear map $\zeta:A\ra \mb K$ satisfying $\zeta(ab)=\zeta(ba)$ for all $a,b\in A$ (in other words $\zeta\in (A/[A,A])^*\subset A^*$).
\end{defn}

Let $G=A^\x$ be the group of units of $A$. Since $A$ is a direct sum of matrix algebras, $G$ is a product of copies of general linear groups. $G$ acts on any $A$-$A$-bimodule $M$ via $g\circ m=gmg^{-1}$ (where the multiplication on the right hand side of the equation is given by the action of $A$ from the left and the right on $M$). Note that $\mr{Lie}(G)=A $, where the bracket on $A$ is given by commutator.

Let $\mu:M\ra A^*$ be the map defined by $\langle\mu(m),a\rangle=\omega(m,am)$. Note that $\omega(m,am)=\omega(ma,m)=-\omega(m,ma)$, so $\langle\mu(m),a\rangle=\frac{1}{2}\omega(m,[a,m])$, where $[a,m]= am-ma$. 

\begin{lem}[\cite{CrB1}, Lemma 3.1] $\mu $ is a moment map for the action of $G$ on $M$.
\end{lem}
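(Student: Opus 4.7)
The plan is to verify the two conditions defining a moment map for the conjugation action of $G = A^\times$ on $(M,\omega)$: the Hamiltonian relation $d\mu^a = \iota_{X_a}\omega$ for each $a \in \mr{Lie}(G) = A$, together with $G$-equivariance $\mu(gmg^{-1}) = \mr{Ad}^*(g)\mu(m)$. Here $\mu^a(m) := \langle \mu(m), a\rangle = \omega(m, am)$ and $X_a$ denotes the fundamental vector field of $a$.

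First I would compute the fundamental vector field: differentiating $t \mapsto e^{ta}me^{-ta}$ at $t=0$ yields $X_a(m) = am - ma = [a,m]$, which is well-defined as an element of $M$ by the bimodule structure. For the Hamiltonian condition, directly differentiating $\mu^a$ by bilinearity of $\omega$ gives
\[
d\mu^a\big|_m(v) \;=\; \omega(v, am) + \omega(m, av).
\]
The balanced property is precisely what is needed to move the argument $v$ into the first slot of the second summand. Applying $\omega(xa, y) = -\omega(y, ax)$ with $x = v$, $y = m$ together with skew-symmetry yields $\omega(m, av) = -\omega(va, m) = \omega(m, va)$; a further application of $\omega(xa, y) = \omega(x, ya)$ with $x = m$, $y = v$ followed by skew-symmetry gives $\omega(m, va) = \omega(ma, v) = -\omega(v, ma)$. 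Combining,
\[
d\mu^a\big|_m(v) \;=\; \omega(v, am - ma) \;=\; \omega(v, [a,m]) \;=\; \omega(v, X_a(m)),
\]
which is the Hamiltonian condition (up to the standard sign convention for moment maps).

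For $G$-equivariance, since $G = A^\times$ is connected, being a product of general linear groups, it suffices by a standard argument to check that $G$ preserves $\omega$ together with the infinitesimal equivariance identity $\{\mu^a, \mu^b\} = \mu^{[a,b]}$. The invariance $\omega(gmg^{-1}, gng^{-1}) = \omega(m, n)$ follows by iterated applications of the balanced identities $\omega(xa, y) = \omega(x, ya)$ and its variants to transfer $g$ and $g^{-1}$ across the two slots. The Poisson bracket identity then reduces, via the Hamiltonian relation just established, to another short consequence of balanced-ness.

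The main technical obstacle is careful bookkeeping with the four equivalent forms of the balanced identity; conceptually, however, this is routine since the balanced condition is engineered precisely so that $m \mapsto \omega(m, am)$ becomes the Hamiltonian generator of the infinitesimal conjugation action, and the only real work is to identify which form of the identity to invoke at each step.
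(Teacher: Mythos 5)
Your verification of the Hamiltonian condition is exactly the paper's argument: differentiate $\omega(m,am)$ to get $\omega(v,am)+\omega(m,av)$, use the balanced identities plus skew-symmetry to rewrite the second term as $-\omega(v,ma)$, and conclude $d\mu^a|_m(v)=\omega(v,[a,m])=\omega(v,a_m)$. The additional discussion of $G$-equivariance goes beyond what the paper checks (it takes the Hamiltonian identity as the defining property of a moment map), but it is correct and routine, so the proposal is fine as written.
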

\begin{proof}
By the definition of a moment map we need to show that $\langle d\mu_m(v),a\rangle = \omega_m(v, a_m)$ for all $m\in M$, $v\in T_mM$ and $a \in \mr{Lie}(G)$. Here $\omega_m$ is the symplectic form on $T_mM$ induced by $\omega$ and $m\mapsto a_m$ is the vector field on $M$ induced by $a$.

 After the natural identification of $T_m M$ and $\mr{Lie}(G)$ with $A$, the vector field $a_m$ induced by $a$ is given by the map $M\ra M$, sending $m$ to $[a,m]=am-am$. 
 
 On the other hand, from the formula for $\mu$ we see that $\langle d\mu_m(v),a\rangle=\omega(v,am)+\omega(m,av)= \omega(v, am) +\omega(ma,v) = \omega(v,[a,m]) =\omega(v, a_m) $. So $\mu$ is a moment map.
\end{proof}
 
\begin{defn}
\label{MarsdenWeinstein}
Categorical quotient
$$
N(A,M,\omega,\zeta)=\mu^{-1}(\zeta)\slash \!\!\slash G
$$
is by definition the \textit{Marsden-Weinstein reduction} associated to the quadruple $(A,M,\omega,\zeta)$.
\end{defn} 
 
 Let $Q$ be a finite quiver with a vertex set $I$, and let $h(a)$ and $t(a)$ denote the head and tail vertices of an arrow $a$ in $Q$. For $\a\in \mb N^I$, the space of representations of $Q$ with dimension vector $\a$ is by definition the $\mb K$-vector space 
$$
\mr{Rep}(\ol Q,\a)=\bigoplus_{a\in \ol Q}\mr{Mat}(\a_{h(a)}\x \a_{t(a)},\mb K). 
$$
 
 Let $\ol Q$ be the double of $Q$, obtained from $Q$ by adjoining a reverse arrow $a^*$ for each arrow $a\in Q$. 
Finite-dimensional $\mb K$-algebra $\mr{End}(\a)=\prod_{i\in I} \mr{Mat}(\a_i,\mb K)$ is semisimple and acts naturally on $
\mr{Rep}(\ol Q,\a)$ both from the left and the right, turning it into $\mr{End}(\a)$-$\mr{End}(\a)$-bimodule. The space $
\mr{Rep}(\ol Q,\a)$ can be identified with the total space of cotangent bundle to $
\mr{Rep}(Q,\a)$. From this identification $
\mr{Rep}(\ol Q,\a)$ obtains the canonical symplectic form $\omega_\a$, which in explicit form is given by 
$$
\omega_\a(x,y)=\sum_{a\in Q}\left(\tr(x_{a^*}y_a))-\tr(x_ay_{a^*}) \right).
$$
It is easy to see that $\omega_\a$ is balanced with respect to the action of $\mr{End}(\a)$. The group $\mr{GL}(\a)=\mr{End}(\a)^\times=\prod_{i\in I} \mr{GL}(\a_i)$ acts on $
\mr{Rep}(\ol Q,\a)$ by symplectomorphisms and there is a moment map $\mu_{\a}$
$$
\mu_{\a}(x)=\sum_{a\in \ol Q} [x_a,x_{a^*}]\in \mr{End}(\a),
$$
where in the formula $\mr{End}(\a)$ is identified with its dual using the trace pairing.
Finally, to a vector $\lambda \in \mb K^I$ we can associate a function $\zeta_\lambda: \mr{End}(\a)\ra \mb K$, defined by $\zeta_\lambda(\theta)=\sum_{i\in I}\lambda_i \tr(\theta_i)$. 

This way a quiver $Q$ with the vertex set $I$, elements $\a\in \mb N^I$ and $\lambda\in \mb K^I$, give rise to a quadruple $(\mr{End}(\a), \mr{Rep}(\ol Q,\a), \omega_{\alpha},\zeta_\lambda)$. We have an affine variety $N(\mr{End}(\a), \mr{Rep}(\ol Q,\a), \omega_\a,\zeta_{\lambda})$ (see \defref{MarsdenWeinstein}) which in this case will be denoted simply by $N_Q(\lambda,\alpha)$. 

\begin{lem}[\cite{CrB1}, Lemma 3.3]
\label{comesfromquiver}
 Every quadruple $(A,M,\omega,\zeta)$ is of the form $(\mr{End}(\a),\mr{Rep}(\ol Q,\a),\omega_\a,\zeta_{\lambda})$ for some $Q$, $\a$ and $\lambda$.
\end{lem}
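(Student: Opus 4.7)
The plan is to build the quiver $Q$, the dimension vector $\a$ and the vector $\lambda$ from the structure of the quadruple in three essentially independent steps, using the classification of finite-dimensional semisimple algebras and their bimodules over an algebraically closed field of characteristic $0$, together with \thmref{Darboux} to identify the symplectic form.

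First I would handle the algebra and dimension vector. Since $\mb K$ is algebraically closed of characteristic zero and $A$ is finite-dimensional semisimple, Artin--Wedderburn gives a decomposition $A \cong \prod_{i \in I} \mr{Mat}(\a_i,\mb K)$ for a finite set $I$ and a tuple $\a \in \mb N^I$, which identifies $A$ with $\mr{End}(\a)$ as an algebra. Let $V_i = \mb K^{\a_i}$ denote the simple left $A$-module corresponding to vertex $i$. Then every finite-dimensional $A$-$A$-bimodule decomposes uniquely as a direct sum of isotypic components $M_{j,i}$, where $M_{j,i}$ is a (finite) direct sum of copies of the simple bimodule $V_j \otimes V_i^*$; this classifies bimodules by their multiplicity matrix.

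Next I would use the balanced symplectic form to produce the quiver. The balanced condition $\omega(xa,y) = \omega(x,ya)$ and its left analogue forces $\omega$ to pair the isotypic component $M_{j,i}$ only with $M_{i,j}$ (simples on different $(j,i)$-blocks are non-isomorphic as bimodules, so a non-zero balanced pairing between them would force an isomorphism $M_{j,i} \to (M_{i',j'})^*$ which identifies $(j,i) = (j',i')$ with the roles swapped). Now apply \thmref{Darboux}: pick a maximal isotropic sub-bimodule $S \subset M$, then \thmref{Darboux} provides an isomorphism of bimodules $M \cong S \oplus S^*$ under which $\omega$ becomes the canonical pairing $\omega((s,f),(s',f')) = f(s') - f'(s)$. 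Decompose $S = \bigoplus_{j,i} S_{j,i}$ into bimodule isotypic components, and let $q_{j,i}$ be the number of copies of $V_j \otimes V_i^*$ in $S_{j,i}$. Define $Q$ to be the quiver on vertex set $I$ with exactly $q_{j,i}$ arrows from $i$ to $j$. Then by construction $S \cong \mr{Rep}(Q,\a)$ and $S^* \cong \mr{Rep}(Q^{\mr{op}},\a)$ as bimodules, hence $M \cong \mr{Rep}(\ol Q, \a)$, and under this identification $\omega$ becomes exactly the canonical symplectic form $\omega_\a$ of the cotangent bundle, which is balanced by direct inspection.

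Finally I would handle $\zeta$. Since $A \cong \prod_i \mr{Mat}(\a_i,\mb K)$ and the commutator subspace of $\mr{Mat}(\a_i,\mb K)$ is exactly the kernel of the trace, one has $A/[A,A] \cong \mb K^I$ with the isomorphism given by $\theta \mapsto (\tr \theta_i)_{i \in I}$. Therefore any $\zeta \in (A/[A,A])^*$ is of the form $\theta \mapsto \sum_i \lambda_i \tr \theta_i$ for a unique $\lambda \in \mb K^I$, i.e.~$\zeta = \zeta_\lambda$. Combining the three steps produces the desired isomorphism of quadruples $(A,M,\omega,\zeta) \cong (\mr{End}(\a), \mr{Rep}(\ol Q,\a), \omega_\a, \zeta_\lambda)$.

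The main (and really only non-formal) obstacle is verifying that the Lagrangian complement provided by \thmref{Darboux} can be chosen as a bimodule direct sum of full isotypic pieces in a way that matches the bimodule structure on $S^*$ to give genuinely the ``double quiver'' pattern; this is precisely what is built into \thmref{Darboux} since it is stated for sub-bimodules and guarantees both summands are Lagrangian bimodules, so once one observes that balancedness pairs $(j,i)$-isotypics with $(i,j)$-isotypics, the quiver interpretation is forced.
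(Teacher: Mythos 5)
Your proposal is correct and follows essentially the same route as the paper: Artin--Wedderburn to identify $A$ with $\mr{End}(\a)$, \thmref{Darboux} to write $M\cong S\oplus S^*$ as bimodules with the standard pairing, reading off the arrow multiplicities from the isotypic decomposition of $S$ into simples $V_j\otimes V_i^*$, and identifying $\zeta$ with $\zeta_\lambda$ via the traces. The extra verification you flag at the end is indeed exactly what \thmref{Darboux} supplies, so there is no gap.
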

\begin{proof}
Using the analogue of Darboux theorem \thmref{Darboux} we can find a maximal isotropic submodule $S$ inside $M$ and identify $M$ with $S\oplus S^*$. The construction of the corresponding quiver $Q$ will depend on $S$. 

The vertex set $I$ of $Q$ is defined as the set of simple left $A$-modules up to isomorphism. The dimension vector $\a$ is defined by setting $\a_i$ to be equal to the dimension of the corresponding simple module. Since $A$ is a product of matrix algebras, we have $\mr{End}(\a)=A$. 

The number of arrows between $X$ and $Y$ in $Q$ is defined as the multiplicity of $X\otimes Y^*$ in $S$ (this multiplicity can be 0). Note that if $X$ and $Y$ are simple left $A$-modules, then $X\otimes Y^*$ is a simple $A$-$A$-bimodule and every simple $A$-$A$-bimodule can be obtained this way. It follows that $S=\mr{Rep}(Q,\a)$ and $M=\mr{Rep}(\ol Q,\a)$. It is also not hard to check that the described moment maps coincide.
 
Finally, $\zeta$ is an Ad-invariant element of $A^*\cong \mr{End}(\a)^*$ and these are generated by the trace functions on each of the simple components of $A$. The coordinates of $\zeta$ in this basis define the vector $\lambda\in \mb K^I$, such that $\zeta=\zeta_{\lambda}$.
\end{proof}

\subsubsection{Properties of the map $\mu_\a$}

For any $\lambda\in \mb K^I$ we denote by $\lambda\cdot \a$ the sum $\sum_{i\in I}\a_i\lambda_i$. Note that the action of $\mr{GL(\a)}$ on $\mr{Rep}(\ol Q,\a)$ factors through $\mb PG(\a)=(\prod_{i\in I}\mr{GL}(\a_i))/\mb G_m$, where the scalars $\Gm$ are embedded diagonally. One-dimensional subspace $\mb K =\mr{Lie}(\Gm)\subset \mb K^I$ (where $\mb K^I$ is identified with $(\mr{End}(\a)/[\mr{End}(\a),\mr{End}(\a)])^*$ using the trace paring) is spanned by the vector $(\a_i)_{i\in I}\in \mb K^I$ (since the trace of a scalar matrix $t$ of size $\a_i$ is equal to $\a_i\cdot t$). In particular, the image of $\mu_\alpha$ lies in $\mr{Lie}(\Gm)^\perp\subset \mr{End}(\a)$, or, in other words, if $\lambda$ is in the image of $\mu_\alpha$, then $\lambda\cdot \a=0$. 

Associated to the quiver $Q$, there is also a quadratic form 
$$
q_Q(\a)=\sum_{i\in I}\a_i^2 - \sum_{a\in Q} \a_{h(a)}\a_{t(a)}
$$
on $\mb Z^I$. We denote by $(\a,\beta)_Q$ the symmetric bilinear form with $(\a,\a)_Q=2q(\a)$ and we define $p_Q(\a)$ as $1-q_Q(\a)$. We also define a subset $\Delta^+\subset \mb Z^I$ of positive roots (positive roots of the corresponding Kac-Moody algebra) which corresponds exactly to the dimension vectors of indecomposable representations of $Q$ (this is Kac theorem, see \cite{Kac}). 

\begin{thm}[\cite{CrB2}, Theorem 1.1] \label{muflat}
For $\a\in \mb N^I$ the following are equivalent

\begin{enumerate}
\item $\mu_\a$ is a flat morphism;
\item $\mu_\a^{-1}(0)$ has dimension $\a\cdot \a -1 + 2p(\a)$;
\item $p(\a)\ge \sum_{t=1}^r p(\beta^{(t)})$ for any decomposition $\a=\beta\fr+\ldots +\beta^{(r)}$ into positive roots;
\item $p(\a)\ge \sum_{t=1}^r p(\beta^{(t)})$ for any decomposition $\a=\beta\fr+\ldots +\beta^{(r)}$ into nonzero $\beta^{(t)}\in \mb N^I$.
\end{enumerate}
\end{thm}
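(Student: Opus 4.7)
The plan is to prove the four conditions equivalent in a cycle, with the core geometric input being a dimension formula for $\mu_\a^{-1}(0)$. For $(1)\Leftrightarrow(2)$: the source $\mr{Rep}(\ol Q,\a)$ is a smooth affine space, and $\mu_\a$ factors through the affine subspace $\mr{Lie}(\Gm)^\perp\subset\mr{End}(\a)^*$ of dimension $\a\cdot\a-1$. By miracle flatness (a morphism from a Cohen--Macaulay scheme to a regular one is flat iff every nonempty fiber has dimension equal to the difference), $\mu_\a$ is flat iff every fiber has dimension exactly $\dim\mr{Rep}(\ol Q,\a)-(\a\cdot\a-1)=\a\cdot\a-1+2p(\a)$. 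The scaling $\Gm$-action $x\mapsto tx$ on $\mr{Rep}(\ol Q,\a)$ sends $\mu_\a(x)$ to $t^2\mu_\a(x)$, so it identifies $\mu_\a^{-1}(\xi)$ with $\mu_\a^{-1}(t^2\xi)$; combined with upper semicontinuity of fiber dimension and the fact that $0$ lies in the closure of every orbit this reduces the check to the single fiber over $0$, yielding $(1)\Leftrightarrow(2)$.

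For $(2)\Leftrightarrow(3)$: this is the heart of the argument. Points of $\mu_\a^{-1}(0)$ are naturally in bijection with $\a$-dimensional representations of the preprojective algebra $\Pi(Q)=\k\ol Q/\bigl(\sum_{a\in Q}[a,a^*]\bigr)$. Stratify by isomorphism type of the underlying $\Pi(Q)$-module: by Krull--Schmidt and Kac's theorem (indecomposable dimension vectors of $\ol Q$ are exactly positive roots of $Q$), each stratum corresponds to a decomposition $\a=\sum_t\beta^{(t)}$ into positive roots. The key lemma is that the dimension of the stratum indexed by such a decomposition equals $\a\cdot\a-1+2\sum_t p(\beta^{(t)})$; this is computed from $\dim\mr{Ext}^1_{\Pi(Q)}(M,M)-\dim\mr{End}_{\Pi(Q)}(M)$ for the generic module $M$ in the stratum, using the two-term self-dual projective resolution of a $\Pi(Q)$-module coming from the Koszul presentation of $\Pi(Q)$. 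Taking the union of strata gives $\dim\mu_\a^{-1}(0)=\a\cdot\a-1+2\max\sum_t p(\beta^{(t)})$, and equating this with $\a\cdot\a-1+2p(\a)$ recovers $(3)$.

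Finally, for $(3)\Leftrightarrow(4)$: the implication $(4)\Rightarrow(3)$ is immediate. For the converse, use the general lower bound $\max_{\text{pos-root decomp}}\sum p(\beta^{(t)})\ge p(\beta)$ for every $\beta\in\mb N^I$, which follows from lower semicontinuity of fiber dimension applied to $\mu_\beta$ (every fiber of a morphism to an affine space has dimension at least the expected one). Given an arbitrary decomposition $\a=\sum_t\gamma^{(t)}$ with $\gamma^{(t)}\in\mb N^I\setminus\{0\}$, refine each $\gamma^{(t)}$ into positive roots realizing this maximum; concatenation yields a positive-root decomposition of $\a$ whose sum of $p$-values is at least $\sum_t p(\gamma^{(t)})$, and applying $(3)$ to this refinement yields $(4)$. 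The main obstacle throughout is the stratum dimension computation in $(2)\Leftrightarrow(3)$, which genuinely requires representation-theoretic input: reflection functors on $\Pi(Q)$, Crawley--Boevey's ``adding a vertex'' trick to produce additional indecomposables, and an induction on $\sum_i\a_i$ to handle the case of imaginary roots where the generic module is not rigid. Everything else is bookkeeping with the quadratic form $q$.
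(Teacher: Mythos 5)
You are proving a statement that the paper itself does not prove --- it is quoted verbatim from [CrB2], Theorem 1.1 --- so the comparison below is with Crawley--Boevey's argument. Your handling of $(1)\Leftrightarrow(2)$ (miracle flatness for the map to the affine space $\mathrm{Lie}(\Gm)^\perp$ of dimension $\alpha\cdot\alpha-1$, then the scaling action plus semicontinuity to reduce all fibers to the zero fiber) and of $(3)\Leftrightarrow(4)$ (refining an arbitrary decomposition into root decompositions via the bound $\max\sum_i p(\sigma^{(i)})\ge p(\gamma)$, itself extracted from the lower bound on fiber dimension) is correct and is essentially what Crawley--Boevey does.

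The gap is in the key lemma driving $(2)\Leftrightarrow(3)$: the claimed stratum dimension $\alpha\cdot\alpha-1+2\sum_t p(\beta^{(t)})$, hence the formula $\dim\mu_\alpha^{-1}(0)=\alpha\cdot\alpha-1+2\max\sum_t p(\beta^{(t)})$, is false. Take $Q$ to be one vertex with one loop and $\alpha=(n)$: then $\mu_\alpha^{-1}(0)$ is the commuting variety, of dimension $n^2+n$, while every positive root $(m)$ has $p((m))=1$, so your formula predicts $n^2-1+2n$. The correct statement is $\dim\mu_\alpha^{-1}(0)=\alpha\cdot\alpha-1+p(\alpha)+\max\sum_t p(\beta^{(t)})$; the two expressions coincide precisely when $\max\sum_t p(\beta^{(t)})=p(\alpha)$, i.e.\ precisely when $(3)$ holds, which is why your incorrect formula still appears to yield the equivalence --- but the lemma as stated is wrong and your proposed derivation cannot produce the right one. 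The underlying issue is that the number of parameters of indecomposable $\Pi^0$-modules of dimension $\beta$ is \emph{not} $p(\beta)$ (for the loop quiver and $\beta=(1)$ it is $2$, not $1$), so stratifying $\mu^{-1}(0)$ by $\Pi$-module type and quoting Kac does not compute the dimension. Crawley--Boevey instead fibers $\mu^{-1}(0)$ over $\mathrm{Rep}(Q,\alpha)$ by forgetting the starred arrows: the moment map equation is \emph{linear} in the starred arrows, the map $y\mapsto\sum_a[x_a,y_{a^*}]$ is adjoint to $\theta\mapsto[\theta,x]$, so the fiber over $x$ is a linear subspace of dimension $p(\alpha)-1+\dim\mathrm{End}_{\k Q}(x)$; combining this with Kac's parameter count for indecomposable $\k Q$-modules (where the count genuinely is $p(\beta)$ for a root $\beta$) gives the formula above. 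Note also that reflection functors and the ``adding a vertex'' trick, which you invoke at the end, belong to the proof of Theorem 1.2 of [CrB2] (the description of $\Sigma_\lambda$), not of this theorem; here the only representation-theoretic black box needed is Kac's theorem for $\k Q$. With the stratification and the dimension formula replaced as above, the rest of your outline goes through.
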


The way to study the fibers $\mu_\a^{-1}(\lambda)$ and the varieties $N_Q(\lambda,\a)$, introduced by Crawley-Boevey, was to relate them to the representations of the certain algebra $\Pi^\lambda$, called the \textit{deformed preprojective algebra}. It is defined as 
$$
\Pi^\lambda=\mb K\ol Q/\left(\sum_{a\in \ol Q}[a,a^*]-\sum_{i\in I}\lambda_ie_i\right),
$$
where $\mb K\ol Q$ is the path algebra of $\ol Q$. For $\lambda\in \mb K^I$, such that $\lambda\cdot \a=0$, the variety $\mu_\a^{-1}(\lambda)$ is identified with the space of representations of $\Pi^\lambda$ with dimension vector $\a$. Closed orbits of $G$-action on $\mr{Rep}(\ol Q, \a)$ correspond to semisimple representations of $\ol Q$. So points of $N_Q(\lambda,\a)=\mu_\a^{-1}(\lambda)\slash\!\!\slash G$ correspond to isomorphism classes of semisimple representations of $\Pi^\lambda$ of dimension $\a$.

For $\lambda \in \mb K^I$ let $\Delta_\lambda^+$ denote the set of positive roots $\a$ with $\lambda\cdot\a=0$. Following Crawley-Boevey we define $\Sigma_\lambda\subset \Delta_\lambda^+$ to be the set of $\a\in \Delta_\lambda^+$, such that 
$$
p(\a)> \sum_{t=1}^r p(\beta^{(t)})
$$
for any decomposition of $\a=\beta\fr+\ldots +\beta^{(r)}$ where $r\ge 2$ and $\beta^{(t)}$ lie in $\Delta_{\lambda}^+$.

\begin{thm}[\cite{CrB2}, Theorem 1.2]\label{mulambda}
 For $\lambda \in \mb K^I$ and $\a\in \mb N^I$ the following are equivalent

\begin{enumerate}
\item There is a simple representation of $\Pi^\lambda$ of dimension vector $\a$;
\item $\a$ is an element of $\Sigma_\lambda$.
\end{enumerate}

In this case $\mu_\a^{-1}$ is a reduced and irreducible complete intersection of dimension $\a\cdot \a-1+2p(\a)$ and the general element of $\mu_\a^{-1}(\lambda)$ is a simple representation of $\Pi^\lambda$.
\end{thm}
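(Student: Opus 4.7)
The plan is to prove both implications via the representation theory of $\Pi^\lambda$, with the key inputs being a homological dimension formula for $\Pi^\lambda$-modules and reflection functors at loop-free vertices. The backward direction $(2)\Rightarrow(1)$ is substantially harder and requires an inductive reduction via Weyl-group reflections.

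For the direction $(2)\Rightarrow(1)$, I would use induction on $|\a| = \sum_i \a_i$. The base case is $\a = e_i$: membership in $\Sigma_\lambda$ then forces $\lambda_i = 0$ and the vertex $i$ to be loop-free, and the one-dimensional module $\mb K$ supported at $i$ is the desired simple representation. For the inductive step, the tool is the reflection functor at a loop-free vertex $i$, which induces an equivalence between appropriate subcategories of $\Pi^\lambda$-mod and $\Pi^{r_i(\lambda)}$-mod sending dimension vector $\a$ to the reflected $s_i(\a)$. Using that $\Sigma_\lambda$ is stable under these reflections (i.e.\ $s_i: \Sigma_\lambda \to \Sigma_{r_i(\lambda)}$) and that simplicity is preserved under the equivalence, the problem reduces to showing that any non-coordinate $\a \in \Sigma_\lambda$ admits a loop-free vertex $i$ with $(\a, e_i)_Q < 0$, so that $s_i(\a) < \a$ in the standard partial order on $\mb N^I$. \textbf{The main obstacle is this combinatorial reduction lemma}: one must show that either such a vertex exists, or $\a$ already lies in the fundamental region, in which case the strict-inequality condition defining $\Sigma_\lambda$ can be contradicted by decomposing $\a$ into smaller positive roots --- the argument parallels Kac's proof for indecomposable representations but must carefully track the role of $\lambda$ and the behavior of $p$ under decompositions inside $\Delta_\lambda^+$.

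For the direction $(1)\Rightarrow(2)$, I would establish the Crawley-Boevey--Holland homological formula
\[
\dim \Hom(M,N) - \dim \mr{Ext}^1(M,N) + \dim \Hom(N,M) = (\dim M, \dim N)_Q,
\]
derived from the standard two-term projective resolution of $\Pi^\lambda$-modules coming from the preprojective relation. Specialising to a simple $M$ of dimension $\a$ (so $\mr{End}(M) = \mb K$) gives $\dim \mr{Ext}^1(M,M) = 2p(\a)$. Given a decomposition $\a = \sum_t \beta^{(t)}$ into $r\ge 2$ positive roots in $\Delta_\lambda^+$, one constructs via Kac's theorem indecomposable $\mb K\ol Q$-representations of dimension $\beta^{(t)}$ and uses $\lambda\cdot\beta^{(t)} = 0$ to lift them to $\Pi^\lambda$-modules $N_t$; the semisimple $N = \bigoplus N_t$ lies in a $\mr{GL}(\a)$-invariant closed subvariety of $\mu_\a^{-1}(\lambda)$ disjoint from the orbit of the simple $M$. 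Comparing orbit dimensions via Voigt-style upper-semicontinuity of $\dim\mr{Ext}^1$ along a degeneration of $M$ toward $N$ then yields the strict inequality $p(\a) > \sum_t p(\beta^{(t)})$.

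Finally, for the complete intersection and irreducibility statement: once $(1)$ holds, the simple module $M$ is a formally smooth point of $\mu_\a^{-1}(\lambda)$ of tangent-space dimension $\a\cdot\a - 1 + 2p(\a)$, matching the expected dimension of a complete intersection cut out by the $\a\cdot\a - 1$ independent components of the moment-map equation in $\mr{Rep}(\ol Q,\a)$ (the rank drop by $1$ reflecting the single constraint $\lambda\cdot\a = 0$ forced on the image of $\mu_\a$). By \thmref{muflat} and miracle flatness, $\mu_\a^{-1}(\lambda)$ is Cohen--Macaulay of the expected dimension and hence reduced; irreducibility then follows from density and irreducibility of the open locus of simple representations, which forms a single smooth stratum (a principal $\mr{PGL}(\a)$-bundle over its image in $N_Q(\lambda,\a)$).
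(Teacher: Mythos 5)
First, a caveat: the paper itself gives no proof of this statement --- it is imported verbatim from \cite{CrB2}, Theorem 1.2 --- so your sketch has to be measured against Crawley-Boevey's published argument. Your overall architecture is the right one (reflection functors plus a fundamental-region base case for $(2)\Rightarrow(1)$; the Crawley-Boevey--Holland $\mr{Ext}$-formula plus a stratification for $(1)\Rightarrow(2)$), but the sketch breaks precisely where the real work happens. In $(2)\Rightarrow(1)$ you dispose of the fundamental-region case by asserting that membership in $\Sigma_\lambda$ "can be contradicted" there. It cannot: for the quiver with one vertex and $g\ge 2$ loops and $\lambda=0$, every $\a=n e_1$ lies in $\Sigma_0$ \emph{and} in the fundamental region, and no reflection is available since there is no loop-free vertex. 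This is the genuine base of the induction, and it is handled by \emph{constructing} simples there, via a dimension count showing that the locus of points of $\mu_\a^{-1}(\lambda)$ whose underlying module admits a proper nonzero subrepresentation has dimension strictly less than $\a\cdot\a-1+2p(\a)$; this is exactly where the strict inequality defining $\Sigma_\lambda$ is consumed. (Two further slips in the reduction: to get $s_i(\a)<\a$ you need $(\a,e_i)_Q>0$, not $<0$; and the reflection functor at $i$ is an equivalence only when $\lambda_i\neq 0$, so the combinatorial dichotomy must be phrased with that constraint.)

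Second, the deduction of the complete-intersection, reducedness and irreducibility claims from \thmref{muflat} is invalid, because $\a\in\Sigma_\lambda$ does not imply the hypotheses of \thmref{muflat}: that criterion quantifies over \emph{all} decompositions of $\a$ into positive roots, whereas $\Sigma_\lambda$ only constrains decompositions inside $\Delta_\lambda^+$ (compare \remref{Sigma}, which records the implication only for $\lambda=0$). Concretely, for $\wt{A}_2$ with $\a=(2,1,1)$ and $\lambda=(1,-3,1)$ one checks that no positive root $\beta<\a$ satisfies $\lambda\cdot\beta=0$, so $\a\in\Sigma_\lambda$ vacuously; yet the decomposition $\a=\delta+e_1$ with $\delta=(1,1,1)$ gives $p(\delta)+p(e_1)=1>0=p(\a)$, so $\mu_\a$ is \emph{not} flat and $\mu_\a^{-1}(0)$ does not have the expected dimension --- only the single fiber $\mu_\a^{-1}(\lambda)$ does. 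One must therefore bound $\dim\mu_\a^{-1}(\lambda)$ from above directly, by estimating the dimensions of the strata of non-simple representations in that one fiber (using the $\mr{Ext}$-formula and the inductively established cases of smaller dimension); the lower bound $\a\cdot\a-1+2p(\a)$ from counting the defining equations then forces equality, whence complete intersection, Cohen--Macaulay, generically reduced along the (smooth) simple locus, hence reduced, with irreducibility following from the same stratum bounds. Your appeal in $(1)\Rightarrow(2)$ to a "Voigt-style degeneration of $M$ toward $N$" should be replaced by this same stratum-dimension comparison: a simple module does not degenerate to a prescribed semisimple of the same dimension vector.
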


We also will need the following corollary: 

\begin{cor}
[\cite{CrB2}, Corollary 1.4]\label{irreducible}
If $\a\in \Sigma_\lambda$, then $N_Q(\lambda,\a)=\mu_\alpha^{-1}(\lambda)\slash\!\!\slash G$ is a reduced and irreducible scheme of dimension $2p(\a)$.
\end{cor}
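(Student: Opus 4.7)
The plan is to bootstrap from \thmref{mulambda}, which already gives us strong information about the fiber $\mu_\a^{-1}(\lambda)$ upstairs, and to transport these properties through the GIT quotient by $G=\mr{GL}(\a)$. Concretely, \thmref{mulambda} tells us that $\mu_\a^{-1}(\lambda)$ is a reduced irreducible complete intersection of dimension $\a\cdot\a-1+2p(\a)$ and, crucially, that its generic point parametrizes a simple representation of $\Pi^\lambda$. I will use the first two facts to deduce reducedness and irreducibility of $N_Q(\lambda,\a)$, and the third to nail down the dimension.

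For reducedness and irreducibility, I would invoke the general principle that if $R$ is a reduced (resp.\ integral) $\mb K$-algebra equipped with the action of a reductive group $G$, then $R^G$ is again reduced (resp.\ integral): the inclusion $R^G\hookrightarrow R$ is injective, so $R^G$ has no nilpotents and no zero-divisors if $R$ doesn't. Applied to $R=\Gamma(\mu_\a^{-1}(\lambda),\O)$ and the $\mr{GL}(\a)$-action, this immediately gives that $N_Q(\lambda,\a)=\Spec R^G$ is reduced and irreducible.

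For the dimension, the task is to compute the generic orbit dimension on $\mu_\a^{-1}(\lambda)$. The $\mr{GL}(\a)$-action factors through $\mb PG(\a)=\mr{GL}(\a)/\Gm$ because scalars act trivially by conjugation, so $\dim\mb PG(\a)=\a\cdot\a-1$. Now by \thmref{mulambda} the generic point of $\mu_\a^{-1}(\lambda)$ is a simple $\Pi^\lambda$-module, and by Schur's lemma (over the algebraically closed field $\mb K$) its automorphism group — equivalently its $\mr{GL}(\a)$-stabilizer — consists only of scalars, giving trivial stabilizer in $\mb PG(\a)$. Moreover simple modules correspond to closed orbits, so the generic $G$-orbit is closed of dimension $\a\cdot\a-1$. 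Since $G$-orbits that are closed and of maximal dimension are in bijection with points of the GIT quotient over a dense open, we get
\[
\dim N_Q(\lambda,\a)\;=\;\dim\mu_\a^{-1}(\lambda)\;-\;(\a\cdot\a-1)\;=\;(\a\cdot\a-1+2p(\a))-(\a\cdot\a-1)\;=\;2p(\a).
\]

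The main delicate point, and the only step that is not quite formal, is the orbit-dimension/closedness claim for the generic point: one needs to know both that simple representations give closed orbits (a standard consequence of Artin's theorem identifying closed orbits with semisimple modules) and that ``generic simple $\Rightarrow$ generic orbit is of maximal dimension equal to $\dim\mb PG(\a)$''. Everything else — reducedness, irreducibility, and the arithmetic — follows mechanically from \thmref{mulambda} and standard GIT. I would therefore spend most of the effort on checking that simple representations of $\Pi^\lambda$ form a dense open subset whose orbits are closed and free for the $\mb PG(\a)$-action, which is essentially built into the hypothesis $\a\in\Sigma_\lambda$.
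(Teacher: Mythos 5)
Your proof is correct and is essentially the argument of the cited reference: the paper itself only quotes this as \cite{CrB2}, Corollary~1.4, and Crawley-Boevey deduces it from Theorem~1.2 exactly as you do — reducedness and irreducibility because $\Gamma(\mu_\a^{-1}(\lambda),\O)^G$ is a subring of a reduced integral domain, and the dimension count via the fact that the generic point is a simple $\Pi^\lambda$-module, hence has closed orbit with stabilizer $\Gm$ and orbit dimension $\a\cdot\a-1$. Nothing further is needed.
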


\begin{rem}
\label{Sigma} Note that if $\a\in \Sigma_0$, then $\a$ lies in $\Sigma_\lambda$ for all $\lambda\in \mb K^I$, such that $\lambda\cdot\a=0$, and also satisfies the conditions of \thmref{muflat}.
\end{rem}

\subsubsection{GIT and resolutions of singularities of $N_Q(\lambda,\a)$}
Let $\theta$ be a character of $\mb PG(\a) = \mr{GL}(\a)/\Gm=(\prod_{i=I}\mr{GL}(\a_i))/\Gm$. $\theta$ can be considered as an element of $\mb Z^I$, such that $\theta\cdot \a=0$. Recall that the natural action of $\mr{GL}(\a)$ on $\mr{Rep}(\ol Q,\a)$ factors through $\mb PG(\a)$, so we can consider open subsets of $\theta$-semistable and $\theta$-stable points of $\mr{Rep}(\ol Q,\a)$. The corresponding GIT quotient $N_Q(\lambda,\a)^\theta=\mu_\a^{-1}(\lambda)\slash\!\!\slash_\theta G$ is defined as the categorical quotient $\mu^{-1}(\lambda)^{\theta-ss}\slash\!\!\slash G$, where $\mu^{-1}(\lambda)^{\theta-ss}$ is by definition the intersection of $\mu^{-1}(\lambda)$ with the set of $\theta$-semistable points $\mr{Rep}(\ol Q,\a)^{\theta-ss}$. This is a variety endowed with the natural projective ``semisimplification" map $\pi_\lambda^\theta: N_Q(\lambda,\a)^\theta\ra N_Q(\lambda,\a)$, which is better seen from the equivalent definition of $N_Q(\lambda,\a)^\theta$ as
$$
N_Q(\lambda,\a)^\theta= \mr{Proj} \bigoplus_{m=0}^\infty\left\lbrace f \in \mc O(\mu^{-1}(0))\ |\ g\circ f=\theta(g)^m\cdot f, \text{ for any }g\in G\right\rbrace.
$$
The map $\pi_\lambda^\theta: N_Q(\lambda,\a)^\theta\ra N_Q(\lambda,\a)$ is then just the projection from $\mr{Proj}$ to the $\mr{Spec}$ of its 0th graded component. 
 
 The stable locus $\mu^{-1}(\lambda)^{\theta-st}\subset \mu^{-1}(\lambda)^{\theta-ss}$ is defined as the subset of points with a finite stabiliser. Geometric invariant theory says that this is an open subset and if $\mu^{-1}(\lambda)^{\theta-st}$ is smooth, the quotient $\mu^{-1}(\lambda)^{\theta-st}/\!\!/G$ is smooth too.

In the classical paper by King \cite{Ki} it is proven that for the space of representations of a quiver, $\theta$-(semi)stability in the sense of GIT is the same as $\theta$-(semi)stability in terms of slopes. In particular a simple representation is $\theta$-stable for any character $\theta$. It also follows that if $\a$ is indivisible, meaning that integers $\a_i$ do not have common divisors, one can find a character $\theta$, such that the set of $\theta$-semistable points coincides with the set of $\theta$-stable ones (\cite{Ki}, Remark 5.4).

If $\a\in \Sigma_\lambda$, then by \thmref{mulambda} $\mu^{-1}_\a(\lambda)$ is irreducible, and the general element is a simple representation of $\Pi^\lambda$ and, in particular, is $\theta$-stable. So the set of $\theta$-semistable points is dense, and it follows that the semisimplification map
$$
\pi_\lambda^\theta:N_Q^\theta(\lambda,\a)\ra N_Q(\lambda,\a)
$$
is a birational map of irreducible varieties. $\theta$-stable $\Pi^\lambda$-modules have trivial endomorphisms, so by (\cite{CrB3}, Lemma 10.3) stable locus is smooth. If in addition $\a$ is indivisible, taking $\theta$, such that all $\theta$-semistable points are $\theta$-stable, we get that $N_Q^\theta(\lambda,\a)$ is smooth, and that $\pi_\lambda^\theta$ is a resolution of singularities. Moreover $\mu_\a^{-1}(\lambda)\slash\!\!\slash_\theta \mb PG(\a)$  is a symplectic reduction, so is endowed with a natural symplectic form. Finally, in (\cite{CrB1}, Theorem 8.2) it is proven that $N(\lambda,\a)$ is normal for all $\lambda$ and $\a$. This turns $\pi_\lambda^\theta$ into a symplectic resolution. 

We can summarise this discussion in the following:
\begin{prop}[\cite{CrB1},\cite{CrB2},\cite{CrB3}]\label{Marsden-Weinstein is a resolution}
 Let $\a\in \mb N^I$ and $\lambda\in \mb K^I$ be such that 
\begin{itemize}
\item $\lambda\cdot\a=0$,
\item $\a\in \Sigma_{\lambda}$,
\item $\a$ is indivisible.
\end{itemize}
Then the map $\pi_\lambda^\theta:N_Q^\theta(\lambda,\a)\ra N_Q(\lambda,\a)$ is a symplectic resolution of singularities.
 
\end{prop}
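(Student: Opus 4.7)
The plan is to verify each component of ``symplectic resolution'' separately, using the results quoted from Crawley-Boevey, King, and the general machinery of Hamiltonian reduction.

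First, I would use the assumption that $\a$ is indivisible to invoke King's remark \cite{Ki} and choose the character $\theta$ so that every $\theta$-semistable point of $\mr{Rep}(\ol Q,\a)$ is already $\theta$-stable. Then $N_Q^\theta(\lambda,\a)=\mu_\a^{-1}(\lambda)^{\theta\text{-ss}}/\!\!/G=\mu_\a^{-1}(\lambda)^{\theta\text{-st}}/G$ is a geometric quotient. By King, $\theta$-stability for representations of quivers coincides with the slope-theoretic notion, so every simple $\Pi^\lambda$-module is $\theta$-stable. Since $\a\in\Sigma_\lambda$, \thmref{mulambda} gives that $\mu_\a^{-1}(\lambda)$ is reduced, irreducible, and that its general point is a simple $\Pi^\lambda$-module; hence the $\theta$-stable locus is a dense open subset of $\mu_\a^{-1}(\lambda)$. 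Stable points have trivial stabilisers in $\mb PG(\a)$, and by \cite{CrB3}, Lemma 10.3, the stable locus is smooth; passing to the quotient by the free $\mb PG(\a)$-action yields smoothness of $N_Q^\theta(\lambda,\a)$.

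Next, I would address the map $\pi_\lambda^\theta$ itself. By the $\mr{Proj}$-description of the GIT quotient recalled in the text, $\pi_\lambda^\theta$ is automatically projective. Birationality follows from the fact that the stable locus $\mu_\a^{-1}(\lambda)^{\theta\text{-st}}$ is open and dense in $\mu_\a^{-1}(\lambda)$ and that on the stable locus orbits are already closed, so $\pi_\lambda^\theta$ restricts to an isomorphism between a dense open of $N_Q^\theta(\lambda,\a)$ and a dense open of $N_Q(\lambda,\a)$. For the latter to count as a resolution of singularities one also needs $N_Q(\lambda,\a)$ to be an honest variety, i.e.\ reduced and irreducible; this is \corref{irreducible}, and its normality is the main content of \cite{CrB1}, Theorem 8.2.

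Finally, for the symplectic structure: $\omega_\a$ is $\mr{GL}(\a)$-invariant and balanced, and $\mu_\a$ is the associated moment map, so $\mu_\a^{-1}(\lambda)^{\theta\text{-st}}/G$ inherits a closed non-degenerate 2-form by Hamiltonian reduction on a smooth level set. The level set is smooth on the stable locus because stability forces the differential of $\mu_\a$ to be surjective (equivalently, the $G$-action is infinitesimally free), so the standard Marsden-Weinstein construction applies verbatim. Putting these pieces together yields a projective birational map from a smooth symplectic variety to a normal affine variety, i.e.\ a symplectic resolution in the sense of \secref{symplectic}. The one step where I would expect to need genuine care rather than mere citation is checking that the chosen $\theta$ really does make semistability coincide with stability for \emph{this} specific $\a$ (not just generically), and ensuring compatibility with the constraint $\lambda\cdot\a=0$; everything else reduces to assembling the quoted Crawley-Boevey and King results.
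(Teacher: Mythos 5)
Your proposal is correct and follows essentially the same route as the paper: King's Remark 5.4 for indivisible $\a$ to arrange semistable $=$ stable, \thmref{mulambda} with $\a\in\Sigma_\lambda$ for irreducibility and density of the stable (simple) locus giving birationality, Lemma 10.3 of \cite{CrB3} for smoothness of the stable locus, the symplectic reduction construction for the $2$-form, and Theorem 8.2 of \cite{CrB1} for normality of the base. The only cosmetic difference is that you make explicit the freeness of the $\mb{PG}(\a)$-action on stable points and the surjectivity of $d\mu_\a$ there, which the paper leaves implicit in its citation of the symplectic reduction machinery.
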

For a dimension vector $\a\in \mb N^I$, let $\mf{h}_\a$  be the subspace of $ \mb K^I$ consisting of $\lambda$ with $\lambda\cdot\a=0$. If $\lambda\in \mf{h}_\a$ is such that  $\lambda\notin \mf{h}_\beta$ for all $0<\beta<\a$ (meaning $\a-\beta$ and $\beta$ are sums of positive roots), then the condition $\a\in \Sigma_\lambda$ is equivalent to $\a$ being a positive root. In this case all elements of $\mu^{-1}_\a(\lambda)$ are simple $\Pi^\lambda$-modules (\cite{CH}, Lemma 4.1).  It follows that all points in $\mu^{-1}_\a(\lambda)$ are $\theta$-stable, so $N_Q(\lambda,\a)$ is equal to $N_Q^\theta(\lambda,\a)$ and is smooth. If $\a$ is indivisible, the set of such $\lambda$ is non-empty and by definition is the complement of $\mf h_\a$ to the union of hyperplanes $\mf h_\a\cap \mf h_\beta$ over all $\beta$ such that $0<\beta<\a$.

Let $\a\in \Sigma_0$.  Then (see \remref{Sigma}) $\a\in \Sigma_\lambda$ for all $\lambda\in \mf{h}_\a$, map $\mu_{\alpha}$ is flat and produces a flat family $\mu^{-1}_\a(\mf h_\a)\ra \mf h_\a$. From \corref{irreducible} the induced family $\mu^{-1}_\a(\mf h_\a)\slash\!\!\slash G \ra \mf h_\a$ is equidimensional and is flat as well. If in addition $\a$ is indivisible, the general fiber is smooth, and the whole family admits a simultaneous resolution 

$$
\pi_{\mf h_\a}^\theta:\mu_\a^{-1}(\mf h_\a)\slash\!\!\slash_\theta \mb PG(\a) \ra \mu_\a^{-1}(\mf h_\a)\slash\!\!\slash \mb PG(\a),
$$ 
which restricts to resolutions
$$
\pi_\lambda^\theta: N_Q^\theta(\lambda,\a)\ra N_Q(\lambda,\a)
$$
for each $\lambda\in \mf{h}_\a$.

\subsubsection{{\'E}tale local structure}

Let $(A,M,\omega,\zeta)$ be a quadruple (see \defref{quadruple}). Recall that we have $G=A^{\times}$, acting on $M$ with a moment map $\mu$ given by $\langle\mu(m),a\rangle=\omega(m,am)$. Points of $\mu^{-1}(\zeta)\slash\!\!\slash G$ correspond to closed $G$-orbits on $\mu^{-1}(\zeta)$. Let $x\in \mu^{-1}(\zeta)$ be a point, whose $G$-orbit $G\cdot x$ is closed. Note that the tangent space to the orbit is given by $[A,x]\subset M$, if we identify $T_xM$ with $M$. 
\begin{lem}
[\cite{CrB1}, Lemma 1] $[A,x]$ is an isotropic subspace of $M$. In other words $[A,x]\subseteq [A,x]^\perp$.
\end{lem}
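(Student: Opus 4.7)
The goal is to show $\omega([a,x],[b,x])=0$ for all $a,b\in A$. The plan is to extract this from purely formal manipulations with the balanced property and the moment-map equation, without ever needing the closed-orbit hypothesis (which presumably enters only in the subsequent parts of Crawley-Boevey's analysis, e.g.\ for constructing Lagrangian complements via \thmref{Darboux}). The four tools at hand are: (i) the balanced property, whose infinitesimal consequence is the $A$-invariance of~$\omega$, namely $\omega([a,y],z)+\omega(y,[a,z])=0$ for all $a\in A$ and $y,z\in M$ (a short bookkeeping with antisymmetry from the balanced identities); (ii) the moment-map identity $\omega(m,[a,m])=2\langle\mu(m),a\rangle$ established in the proof of the preceding lemma; (iii) the Jacobi identity $[a,[b,x]]=[[a,b],x]+[b,[a,x]]$; and (iv) the assumption that $\zeta$ factors through $A/[A,A]$, so $\zeta$ annihilates commutators.

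The computation itself proceeds in three short steps. First, $A$-invariance moves one commutator onto $x$:
\[
\omega([a,x],[b,x])=-\omega(x,[a,[b,x]]).
\]
Second, expand the inner double bracket by Jacobi to obtain
\[
\omega([a,x],[b,x])=-\omega(x,[[a,b],x])-\omega(x,[b,[a,x]]).
\]
Apply $A$-invariance once more to the last term: $-\omega(x,[b,[a,x]])=\omega([b,x],[a,x])$, which by antisymmetry equals $-\omega([a,x],[b,x])$. Collecting everything on one side yields
\[
2\,\omega([a,x],[b,x])=-\omega(x,[[a,b],x]).
\]
Third, apply the moment-map identity with $c=[a,b]$: the right-hand side equals $-2\langle\mu(x),[a,b]\rangle=-2\langle\zeta,[a,b]\rangle$, and this vanishes because $\zeta\in(A/[A,A])^*$. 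Hence $\omega([a,x],[b,x])=0$, which is exactly the isotropy of $[A,x]$.

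I do not anticipate a serious obstacle. The only mildly fiddly point is the first bullet of the toolbox: one must unpack the balanced property carefully, together with the antisymmetry of $\omega$, in order to arrive at the clean infinitesimal invariance $\omega([a,y],z)+\omega(y,[a,z])=0$. After that, the argument is a two-line formal manipulation identical in spirit to the proof that the image of a moment map on a symplectic leaf lies in $(\mathfrak g^*)^{G_x}$. No use is made of the closed-orbit condition on $G\cdot x$, nor of any finer structure of the quadruple beyond what is encoded in the balanced condition, the moment-map formula, and the trace-like nature of $\zeta$.
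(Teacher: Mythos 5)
Your argument is correct and is essentially the paper's proof in different packaging: both reduce $\omega([a,x],[b,x])$, via the balanced property and the moment-map identity together with $\mu(x)=\zeta$, to the vanishing of $\zeta$ on the commutator $[a,b]$ (the paper by expanding $[a,x]$ and $[b,x]$ into four terms directly, you by first recording the infinitesimal invariance $\omega([a,y],z)=-\omega(y,[a,z])$ and then applying the Jacobi identity), and neither uses the closed-orbit hypothesis. The only substantive deviation is your final division by $2$, which is harmless since $\mb K$ has characteristic $0$ in this section.
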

\begin{proof}
\begin{multline*}
\omega([a,x],[b,x])=\omega(ax,bx)-\omega(ax, xb)-\omega(xa, bx) +\omega(xa, xb)=\\=\omega(axb,x)+\omega(xb,ax)-\omega(x,abx)+\omega(x,axb)=\\ 
=\omega(x,bax)-\omega(x,abx)=\langle\mu(x),ba-ab\rangle=
\zeta(ba-ab)=0
\end{multline*}
\end{proof}

Moreover, by Matsushima's theorem \cite{Ma} the stabiliser $G_x$ is reductive, and from this it follows that $A_x=\{a\in A|ax-xa\}$ is a semisimple algebra (\cite{CrB1}, Lemma 4.2). 

Let now $L$ be an $A_x$-$A_x$-bimodule complement to $A_x$ in $A$: $A=A_x\oplus L$. Note that $[A,x]$ is an $A_x$-$A_x$-subbimodule of $M$, so by \corref{complement} we can choose a coisotropic $A_x$-$A_x$-bimodule complement $C$, so that $M=[A,x]\oplus C$. Let $W=C\cap [A,x]^\perp$. Let $\mu_x:M\ra A_x^*$ be the map obtained by composing $\mu$ with the restriction map $A^*\ra A_x^*$. Let $\hat{\mu}$ denote the restriction of $\mu_x$ to $W$. 

\begin{lem} [\cite{CrB1}, Lemma 4.3]
The restriction of symplectic form $\omega$ to $W$ is non-degenerate and is hence a symplectic form. It is balanced for the $A_x$-$A_x$-bimodule structure and the corresponding moment map is $\hat\mu$.  
\end{lem}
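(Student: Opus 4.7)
The plan is to exhibit the direct sum decomposition $[A,x]^\perp=[A,x]\oplus W$, which is the symplectic-reduction picture for the isotropic subspace $[A,x]$, and then read off non-degeneracy, the sub-bimodule property, and the moment map formula from this one identification.

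First I would establish the decomposition. The inclusion $[A,x]+W\subseteq[A,x]^\perp$ is immediate, since $[A,x]$ is isotropic and $W\subseteq[A,x]^\perp$ by definition. For the reverse inclusion, given $v\in[A,x]^\perp$, write $v=v_1+v_2$ via $M=[A,x]\oplus C$; isotropy gives $v_1\in[A,x]^\perp$, hence $v_2=v-v_1\in C\cap[A,x]^\perp=W$. The sum is direct because $[A,x]\cap W\subseteq[A,x]\cap C=0$. Non-degeneracy of $\omega|_W$ then follows at once: if $w\in W$ satisfies $\omega(w,W)=0$, combining with $\omega(w,[A,x])=0$ gives $\omega(w,[A,x]^\perp)=0$, and non-degeneracy of $\omega$ on $M$ yields $([A,x]^\perp)^\perp=[A,x]$, so $w\in[A,x]\cap W=0$.

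Next I would verify that $W$ is an $A_x$-$A_x$-sub-bimodule, so that the balanced and moment-map statements make sense on it. The complement $C$ is a sub-bimodule by construction. For $[A,x]^\perp$, first observe that $[A,x]$ itself is an $A_x$-sub-bimodule of $M$: for $a\in A_x$ and $[b,x]\in[A,x]$ one has $a\cdot[b,x]=[ab,x]$ and $[b,x]\cdot a=[ba,x]$, using $ax=xa$. Then the perpendicular of any $A_x$-sub-bimodule is again an $A_x$-sub-bimodule via the balanced identity: for $v\in[A,x]^\perp$, $a\in A_x$, and $y\in[A,x]$, balancedness lets one rewrite $\omega(va,y)$ and $\omega(av,y)$ as $\omega(v,y')$ for some $y'\in[A,x]$, which vanishes. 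Hence $W=C\cap[A,x]^\perp$ is an $A_x$-sub-bimodule, and the balanced property of $\omega$ for the full $A$-action restricts automatically to a balanced property of $\omega|_W$ for the $A_x$-action.

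Finally, the moment map identification is essentially tautological once the preceding points are in place: the moment map for the $G_x=A_x^\times$-action on the symplectic space $(W,\omega|_W)$ sends $w$ to the functional $a\mapsto\omega|_W(w,aw)=\omega(w,aw)$ on $A_x$, where $aw\in W$ because $W$ is an $A_x$-sub-bimodule. By the definition of $\mu$ this is exactly $\langle\mu(w),a\rangle$ for $a\in A_x$, i.e. the restriction $\mu_x(w)|_{A_x}=\hat\mu(w)$. The main (minor) obstacle is the first step: the identification $[A,x]^\perp=[A,x]\oplus W$ is what realises $(W,\omega|_W)$ as the symplectic reduction of $M$ along the isotropic subspace $[A,x]$, and everything else reduces to formal manipulations using the isotropy of $[A,x]$, the coisotropy of $C$, and the balanced identity for $\omega$.
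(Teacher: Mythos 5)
Your proof is correct and follows essentially the same route the paper takes: the lemma itself is only cited from \cite{CrB1}, but the paper's own proof of the identical statement in the Hamiltonian-reduction setting (\lemref{W is symplectic}) rests on exactly your key identification $S^\perp=S\oplus W$ with $S$ the kernel of $\omega|_{S^\perp}$, and the sub-bimodule, balancedness, and moment-map claims are the same routine verifications from the balanced identity and the formula $\langle\mu(m),a\rangle=\omega(m,am)$. Nothing is missing.
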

 This way each point $x\in M$ with a closed $G$-orbit provides a new quadruple $(A_x, W, \omega, \zeta)$. 
 
 Crawley-Boevey then defines a map $\nu:C\ra L^*$ by $\langle\nu(c),l\rangle=\omega(c,lx)+\omega(c,lc)+\omega(x,lc)$. The map is defined in such a way that 
$$
\langle\mu(x+c),a+l\rangle=\zeta(a+l)+\langle\mu_x(c),a\rangle +\langle \nu(c),l\rangle
$$
for $c\in C$, $a\in A_x$ and $l\in L$. There is an important lemma:

\begin{lem}[\cite{CrB1}, Lemma 4.4]\label{Slice1Weinstein}

The assignment $c\mapsto x+c$ induces a $G_x$-equivariant map $\mu_x^{-1}(0)\cap \nu^{-1}(0)\ra \mu^{-1}(\zeta)$, and the induced map $(\mu_x^{-1}(0)\cap \nu^{-1}(0))\slash\!\!\slash G_x\ra \mu^{-1}(\zeta)\slash\!\!\slash G$ is {\'e}tale at $0$. 
\end{lem}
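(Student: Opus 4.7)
The plan is to first verify the formal statement (well-definedness, landing in $\mu^{-1}(\zeta)$, and $G_x$-equivariance) directly from the formula provided, and then to obtain étaleness at $0$ by invoking Luna's étale slice theorem for the closed orbit $G \cdot x \subset M$ and compatibility of GIT quotients.

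For the formal part, note first that $C$ is a $G_x$-stable subvariety of $M$: it is an $A_x$-$A_x$-subbimodule, and $G_x \subset A_x^{\times}$ normalises $A_x$, so conjugation by $g \in G_x$ preserves the bimodule structure and hence $C$. Since $G_x$ also fixes $x$, the affine map $c \mapsto x + c$ is automatically $G_x$-equivariant. Using the direct sum decomposition $A = A_x \oplus L$, the formula
$$\langle\mu(x+c),a+l\rangle=\zeta(a+l)+\langle\mu_x(c),a\rangle +\langle \nu(c),l\rangle$$
shows that $\mu(x+c) = \zeta$ iff $\mu_x(c)$ vanishes on $A_x$ and $\nu(c)$ vanishes on $L$, which is exactly the condition $c \in \mu_x^{-1}(0) \cap \nu^{-1}(0)$.

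For étaleness at $0$, I would apply Luna's étale slice theorem to $x \in M$: the orbit $G \cdot x$ is closed by assumption, the stabiliser $G_x$ is reductive by Matsushima's theorem, and $C$ is a $G_x$-invariant linear complement to the tangent space $T_x(G \cdot x) = [A,x]$ inside $T_xM = M$. Luna's theorem then provides that the natural $G$-equivariant morphism
$$\Phi \colon G \times_{G_x} C \longrightarrow M,\qquad (g,c)\longmapsto g\cdot(x+c),$$
is strongly étale at $[e,0]$ and that the induced map on affine quotients $C/\!\!/G_x \to M/\!\!/G$ is étale at the image of $0$. Since $\zeta \in (A/[A,A])^*$ is coadjoint invariant, $\mu^{-1}(\zeta)$ is $G$-stable, so $\Phi^{-1}(\mu^{-1}(\zeta))$ has the form $G \times_{G_x} Z$ for some $G_x$-invariant closed subvariety $Z \subseteq C$; by the computation above, $Z = \mu_x^{-1}(0) \cap \nu^{-1}(0)$. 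Restricting $\Phi$ yields a $G$-equivariant étale morphism $G \times_{G_x} (\mu_x^{-1}(0) \cap \nu^{-1}(0)) \to \mu^{-1}(\zeta)$ in a neighbourhood of $[e,0]$.

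Finally, I would pass to affine GIT quotients via the standard identification $(G \times_{G_x} S)/\!\!/G \cong S/\!\!/G_x$ for a $G_x$-variety $S$. The étaleness of the slice map on quotients is part of Luna's theorem; restricting to the closed subvariety cut out by the $G$-invariant equations $\mu = \zeta$ preserves étaleness at the image of $0$. This gives the desired étale map $(\mu_x^{-1}(0) \cap \nu^{-1}(0))/\!\!/G_x \to \mu^{-1}(\zeta)/\!\!/G$ at $0$. The only mildly subtle point is keeping track of the passage to closed subschemes cut out by the moment map equation under the étale slice, but because these equations are pulled back from the $G$-invariant function $\mu - \zeta$ and $\Phi$ is étale, the compatibility is automatic.
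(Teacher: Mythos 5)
Your proposal is correct and follows essentially the same route as the paper: both verify via the displayed formula that $x+c\in\mu^{-1}(\zeta)$ iff $c\in\mu_x^{-1}(0)\cap\nu^{-1}(0)$, then apply Luna's fundamental lemma / étale slice theorem to $G\times_{G_x}C\to M$ at the closed orbit of $x$, base-change to the $G$-stable subvariety $\mu^{-1}(\zeta)$ using the strong étaleness (fiber-product) property, and conclude with the identification $(G\times_{G_x}C')/\!\!/G\cong C'/\!\!/G_x$.
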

\begin{proof}
 The formula above shows that $x+c\in\mu^{-1}(\zeta)$ if and only if $c\in \mu_x^{-1}(0)\cap \nu^{-1}(0)$. The action of $G$ on $M$ gives a map $\phi:G\times_{G_x} C\ra M$ which is {\'e}tale at $(1,0)$. Then Luna's Fundamental Lemma (\cite{Lu},p.94) applies to $\phi$: there exists $U\subset X=G\times_{G_x} C$ which is saturated for $\pi_X:X\ra X\slash\!\!\slash G$, such that $\phi|_U$ is \'etale, $V=\phi(U)$ is an affine open subset of $M$, saturated for $\pi_M:M\ra M\slash\!\!\slash G$, the corresponding morphism $\phi\slash G: U\slash\!\!\slash G\ra V\slash\!\!\slash G$ is {\'e}tale and induces a $G$-equivariant isomorphism $U\isoto V\times_{V\slash\!\!\slash G}U\slash\!\!\slash G$.

The statement of the lemma is then obtained by the base change to the closed subvariety $V'=V\cap \mu^{-1}(\zeta)\subset V$ which is $G$-invariant. Namely $U'=V'\times_{V\slash\!\!\slash G}U\slash\!\!\slash G$ is actually $X'\cap U$, where $X'=G\times_{G_x}C'$ and $C'= \mu_x^{-1}(0)\cap \nu^{-1}(0)$. It remains to note that $X\slash\!\!\slash G\cong C\slash\!\!\slash G_x$  and that $X'\slash\!\!\slash G\cong C'\slash\!\!\slash G_x$.
\end{proof}

We would like to lift this \'etale map to the level of GIT quotients. Let us fix a character $\theta$ of $G$, then its restriction to $G_x$ gives a character $\theta_x$. Let $C$ be a vector space with $G_x$-action and consider its GIT quotient 
$$
C\slash\!\!\slash_{\theta_x} G_x= \mr{Proj}\bigoplus_{n=0}^\infty\Gamma(C,\mc O_C)^{\theta_x^n},
$$
where $\Gamma(C,\mc O_C)^{\theta_x^n}$ is the subspace of functions which are multiplied by $\theta_x(g)^n$ under the action of $g\in G_x$ for all $g$. We also have $X=G\times_{G_x} C$ and, since $G_x$ is reductive, $X$ is affine. Consider the corresponding GIT-quotient with respect to $\theta$:
$$
X\slash\!\!\slash_{\theta} G= \mr{Proj}\bigoplus_{n=0}^\infty\Gamma(X,\mc O_X)^{\theta^n}.
$$
 
\begin{lem}\label{GITtoo}
These two GIT quotients coincide: $X\slash\!\!\slash_{\theta} G= C\slash\!\!\slash_{\theta_x} G_x$
\end{lem}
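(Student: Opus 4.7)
The plan is to establish a degree-preserving isomorphism between the two graded rings
\[
R_G := \bigoplus_{n\ge 0}\Gamma(X,\O_X)^{\theta^n}, \qquad R_{G_x} := \bigoplus_{n\ge 0}\Gamma(C,\O_C)^{\theta_x^n},
\]
and then take $\mathrm{Proj}$. Once this is done the statement of the lemma will follow immediately from the definition of GIT quotients used above.

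First I would unpack $X=G\times_{G_x}C$ as the quotient of $G\times C$ by the free $G_x$-action $h\cdot(g,c)=(gh^{-1},hc)$. Since $G_x$ is reductive and the action is free, $\Gamma(X,\O_X)$ is identified with the ring of $G_x$-invariant functions on $G\times C$, and the left $G$-action on $X$ corresponds to left translation on the $G$-factor. Consequently, an element of $\Gamma(X,\O_X)^{\theta^n}$ is the same thing as a regular function $f\colon G\times C\to\k$ satisfying the two conditions
\[
 f(g'g,c)=\theta(g')^n f(g,c)\quad(g'\in G),\qquad f(gh^{-1},hc)=f(g,c)\quad(h\in G_x).
\]

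Second, using the first relation, such an $f$ is determined by its restriction $\phi(c):=f(e,c)$, via $f(g,c)=\theta(g)^n\phi(c)$; and conversely any regular $\phi\colon C\to\k$ extends to a well-defined $G$-semi-invariant function on $G\times C$ by this formula. Plugging the formula into the second ($G_x$-equivariance) relation gives $\theta(h)^{-n}\phi(hc)=\phi(c)$, i.e.\ $\phi(hc)=\theta_x(h)^n\phi(c)$. Thus $\phi\in\Gamma(C,\O_C)^{\theta_x^n}$, and the assignment $f\mapsto\phi$ is a bijection which is visibly multiplicative and degree-preserving. This produces the graded ring isomorphism $R_G\cong R_{G_x}$.

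Finally, taking $\mathrm{Proj}$ of both sides yields the desired identification $X\slash\!\!\slash_\theta G=C\slash\!\!\slash_{\theta_x}G_x$. The only technical point worth checking carefully is the identification $\Gamma(X,\O_X)=\Gamma(G\times C,\O)^{G_x}$, which uses that $G_x$ is reductive so that the quotient map $G\times C\to X$ is affine and faithfully flat; but this is standard and not where the substance of the lemma lies. The real content is simply the induction-of-representations trick $f\leftrightarrow\phi$ described above.
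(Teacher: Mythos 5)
Your proof is correct and is essentially the paper's argument: both reduce to the observation that $\theta^n$-semi-invariants on $G$ under translation are one-dimensional, so a semi-invariant on $G\times_{G_x}C$ is determined by a function on $C$ that must transform by $\theta_x^n$, giving a degree-preserving isomorphism of the two graded rings before taking $\mathrm{Proj}$. Your "restrict to $\{e\}\times C$" bijection $f\leftrightarrow\phi$ is literally the same map the paper writes as $f\mapsto\theta^n\otimes f$ in the tensor-product decomposition $\Gamma(X,\O_X)=(\Gamma(G,\O_G)\otimes\Gamma(C,\O_C))^{G_x}$.
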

\begin{proof}
$\Gamma(X,\O_X)=(\Gamma(G,\O_G)\otimes_{\mb K} \Gamma(C,\O_C))^{G_x}$. For each character $\theta$ of $G$, the subspace $\Gamma(G,\O_G)^\theta$ is one-dimensional and is spanned by the character itself, considered as a function $\theta:G\ra \mb A^1$. We have a $G$-equivariant embedding $\Gamma(X,\O_X)\hookrightarrow \Gamma(G,\O_G)\otimes_{\mb K} \Gamma(C,\O_C)$, corresponding to the $G$-equivariant map $G\times C\ra X$. In the tensor product $G$ acts only on $\Gamma(G,\O_G)$, and so  $\Gamma(X,\O_X)^\theta$ is actually a subspace of $\mb K\cdot \theta \otimes_{\mb K}\Gamma(C,\O_C)$. Moreover, since $\Gamma(X,\O_X)=(\Gamma(G,\O_G)\otimes_{\mb K} \Gamma(C,\O_C))^{G_x}$, it is equal to its $G_x$-invariants $(\mb K\cdot \theta \otimes_{\mb K}\Gamma(C,\O_C))^{G_x}$. Action of $G_x$ on $G\times C$ is given by $g_x\circ(g,c)=(g\cdot g_x, g_x^{-1}\circ c)$. Since $g_x\circ \theta=\theta_x(g_x)\cdot\theta$, we have $g_x\circ (\theta\otimes f)= \theta_x(g_x)\cdot (\theta\otimes (g_x^{-1}\circ f))$ and so, for $(\theta\otimes f)$ to be $G_x$-invariant, we need $f$ to be rescaled by $\theta_x(g_x)$ under the action of $g_x$. In other words, we get that $\Gamma(X,\O_X)^\theta=\mb K\cdot\theta \otimes_{\mb K} \Gamma(C,\O_C)^{\theta_x}$. This way we obtain an isomorphism 
$$
\bigoplus_{n=0}^\infty\Gamma(C,\mc O_C)^{\theta_x^n}\isoto \bigoplus_{n=0}^\infty\Gamma(X,\mc O_X)^{\theta^n} \text{ where }f\mapsto \theta^n\otimes f, \text{ for } f\in \Gamma(C,\mc O_C)^{\theta_x^n},
$$

and so $X\slash\!\!\slash_{\theta} G= C\slash\!\!\slash_{\theta_x} G_x$.
\end{proof}

Note that the isomorphism is constructed in a way such that the square 
$$
\xymatrix{X\slash\!\!\slash_{\theta} G\ar[r]\ar[d]&C\slash\!\!\slash_{\theta_x} G_x \ar[d]\\
X\slash\!\!\slash G\ar[r]&C\slash\!\!\slash G_x
}
$$
is commutative. So
\lemref{GITtoo} shows that in the proof of \lemref{Slice1Weinstein} we can replace the categorical quotient with the GIT-quotient with respect to some character $\theta$ (or $\theta_x$ correspondingly), and extend the {\'e}tale map to the map of resolutions. Being even more precise, following the terminology of \secref{category etale}, we obtain an {\'e}tale equivalence of pointed resolutions
$$
(\pi, C'\slash \!\!\slash_{\theta_x} G_x, C'\slash \!\!\slash G_x,0)\sim_{et} (\pi_\lambda^\theta, N_Q(\lambda,\a)^\theta, N_Q(\lambda,\a), x),
$$
where $C'=\mu_x^{-1}(0)\cap\nu^{-1}(0)$.
\begin{lem}
[\cite{CrB1}, Lemma 4.8] There is a morphism $(\mu_x^{-1}(0)\cup\nu^{-1}(0))\slash\!\!\slash G_x\ra \hat\mu^{-1}(0)\slash\!\!\slash G_x$, sending 0 to 0, which is {\'e}tale at 0. 
\end{lem}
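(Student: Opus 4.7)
The plan is to realize the morphism as a linear projection along the isotropic radical of $C$, verify \'etaleness infinitesimally by combining this projection with $\nu$, and then descend to the categorical quotients via Luna's Fundamental Lemma.

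First I would analyze the structure of $C$. Since $C$ is coisotropic one has $C^\perp\subset C$, while
\[
C^\perp\cap W\;\subset\;C^\perp\cap[A,x]^\perp\;=\;(C+[A,x])^\perp\;=\;M^\perp\;=\;0,
\]
and the dimension count $\dim C^\perp=\dim[A,x]=\dim L=\dim C-\dim W$ yields a decomposition $C=W\oplus C^\perp$ as $A_x$-$A_x$-bimodules ($C^\perp$ is $A_x$-stable since the balanced identity gives $\omega(av,c)=\omega(v,ac)$ and $\omega(va,c)=\omega(v,ac)$ for $v\in C^\perp$, $c\in C$, $a\in A_x$). The associated projection $p\colon C\to W$ along $C^\perp$ is $G_x$-equivariant, and writing $c=w+c^\perp$ the expansion
\[
\omega(c,ac)=\omega(w,aw)+\omega(w,ac^\perp)+\omega(c^\perp,aw)+\omega(c^\perp,ac^\perp)
\]
has its last three terms vanishing by $C^\perp\perp C$; this gives the identity $\mu_x(c)=\hat\mu(p(c))$ for every $c\in C$.

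Next I would assemble the morphism $\Psi\colon C\to W\oplus L^*$ defined by $\Psi(c)=(p(c),\nu(c))$ and check that it is \'etale at $0$. Using the balanced identities, the three summands in the definition of $\nu$ collapse, at the linear level, into
\[
d\nu_0(c)(l)\;=\;-\omega(c,lx-xl),
\]
so $\ker d\nu_0=C\cap[A,x]^\perp=W$ and consequently $\ker d\Psi_0=W\cap C^\perp=0$; since $\dim C=\dim W+\dim L$, this forces $d\Psi_0$ to be an isomorphism, hence $\Psi$ to be \'etale at $0$. The identity $\mu_x=\hat\mu\circ p$ gives $\Psi^{-1}(\hat\mu^{-1}(0)\times\{0\})=\mu_x^{-1}(0)\cap\nu^{-1}(0)$, so base-changing $\Psi$ along $\hat\mu^{-1}(0)\times\{0\}\hookrightarrow W\oplus L^*$ yields a $G_x$-equivariant morphism $\mu_x^{-1}(0)\cap\nu^{-1}(0)\to\hat\mu^{-1}(0)$ which is \'etale at $0$ and sends $0$ to $0$.

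Finally, to descend to the categorical quotients I would invoke Luna's Fundamental Lemma exactly as in the proof of \lemref{Slice1Weinstein}: the $G_x$-orbit of $0$ is closed (being a single fixed point) and our morphism is $G_x$-equivariant and \'etale there, so one obtains saturated affine $G_x$-stable neighborhoods on which the induced map $(\mu_x^{-1}(0)\cap\nu^{-1}(0))\slash\!\!\slash G_x\to\hat\mu^{-1}(0)\slash\!\!\slash G_x$ is \'etale at the image of~$0$. The main calculational point is the identification $\ker d\nu_0=W$, which is where the balanced-form axioms really do the work; once the decomposition $C=W\oplus C^\perp$ is in hand, the rest is formal bookkeeping.
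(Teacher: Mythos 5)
Your proof is correct and follows essentially the same route as the paper's: the decomposition $C=W\oplus C^\perp$, the tangent-space computation $\ker d\nu_0=C\cap[A,x]^\perp=W$ (your sign in $d\nu_0$ is off but immaterial for the kernel), the identity $\mu_x=\hat\mu\circ p$, and descent via Luna's Fundamental Lemma. Packaging $p$ and $\nu$ into the single map $\Psi=(p,\nu)$ rather than first restricting $p$ to $\nu^{-1}(0)$ is only a cosmetic reorganization of the same argument.
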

\begin{proof}
(Sketch) One first proves that $C=C^\perp\oplus W$ as $A_x$-$A_x$-bimodules and that the restriction of the corresponding projection $p:C\ra W$ to $\nu^{-1}(0)\subset C$ is {\'e}tale at 0. Then again, using Luna's Fundamental Lemma, there is an open subvariety $U'$ of $\nu^{-1}(0)\slash\!\!\slash G_x$ containing 0, such that the restriction of
$p\slash G_x:\nu^{-1}(0)\slash\!\!\slash G_x\ra W\slash\!\!\slash G_x$  to $U'$ is {\'e}tale, and the image $V'=(p\slash G_x)(U')$ is an open affine subset. 

The {\'e}tale map from the statement of the lemma is then obtained by the base change to $\hat\mu^{-1}(0)\subset W$. Its preimage in $\nu^{-1}(0)$ is exactly $\mu_x^{-1}(0)\cup\nu^{-1}(0)$.
\end{proof}

 Note that following the proof, the desired map was induced by an {\'e}tale map $U'\ra V'\times_{ W\slash\!\!\slash G_x} \nu^{-1}(0)\slash\!\!\slash G_x$, which also gives an {\'e}tale map between the open subsets of $\theta_x$-semistable points. This way it extends to an \'etale  map between the corresponding resolutions. We obtain an \'etale equivalence
 $$
(\pi, C'\slash \!\!\slash_{\theta_x} G_x, C'\slash \!\!\slash G_x,0)\sim_{et} (\hat\pi,\hat\mu^{-1}(0)\slash\!\!\slash_{\theta_x}G_x, \hat\mu^{-1}(0)\slash\!\!\slash G_x, 0).
 $$
Composing it with the previous one, we get the following:

\begin{prop}
There exists an {\'e}tale equivalence of pointed resolutions
$$
(\pi_\lambda^\theta,N_Q(\lambda,\a)^\theta, N_Q(\lambda,\a),x)\sim_{et}(\hat\pi,\hat\mu^{-1}(0)\slash\!\!\slash_{\theta_x}G_x, \hat\mu^{-1}(0)\slash\!\!\slash G_x, 0).
$$
\end{prop}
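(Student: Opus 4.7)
The plan is to obtain the claimed \'etale equivalence simply by composing the two equivalences that have just been established in the preceding discussion. Concretely, the text has already upgraded Crawley-Boevey's Lemma 4.4 and Lemma 4.8 from statements about categorical quotients to statements about GIT quotients (using \lemref{GITtoo}, which identifies $X\slash\!\!\slash_{\theta}G$ with $C\slash\!\!\slash_{\theta_x}G_x$ for $X=G\times_{G_x}C$, together with the commutative square relating GIT and categorical quotients). These upgrades yield, respectively,
\[
(\pi,\,C'\slash\!\!\slash_{\theta_x}G_x,\,C'\slash\!\!\slash G_x,\,0)\sim_{et}(\pi_\lambda^\theta,\,N_Q(\lambda,\a)^\theta,\,N_Q(\lambda,\a),\,x)
\]
and
\[
(\pi,\,C'\slash\!\!\slash_{\theta_x}G_x,\,C'\slash\!\!\slash G_x,\,0)\sim_{et}(\hat\pi,\,\hat\mu^{-1}(0)\slash\!\!\slash_{\theta_x}G_x,\,\hat\mu^{-1}(0)\slash\!\!\slash G_x,\,0),
\]
where $C'=\mu_x^{-1}(0)\cap\nu^{-1}(0)$ and both equivalences send $0$ to $0$ (respectively $0$ to $x$).

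Since $\sim_{et}$ is the isomorphism relation in the category $\mathsf{Res}_*^{et}$ of \'etale germs of resolutions constructed in \secref{category etale}, it is in particular symmetric and transitive. Thus composing the inverse of the first equivalence with the second yields the desired \'etale equivalence
\[
(\pi_\lambda^\theta,\,N_Q(\lambda,\a)^\theta,\,N_Q(\lambda,\a),\,x)\sim_{et}(\hat\pi,\,\hat\mu^{-1}(0)\slash\!\!\slash_{\theta_x}G_x,\,\hat\mu^{-1}(0)\slash\!\!\slash G_x,\,0).
\]
The only thing one should verify is that the distinguished points match up under the correspondences, but this is built into the construction: in both Luna-type arguments one uses the neighbourhood of the point $x\in\mu^{-1}(\zeta)$ (respectively of $0\in C'$, of $0\in W$), and the \'etale maps are arranged so that these points are identified.

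The only mildly delicate step is that Crawley-Boevey's statements are about \'etale maps of affine bases, and we need the \'etale equivalence to be realised by a commutative square as in \defref{etale equivalence} involving the total spaces of the resolutions as well. This is exactly what the two GIT upgrades above provide: the \'etale map on bases that comes from Luna's Fundamental Lemma is induced by a $G$-equivariant map on the ambient affine varieties, and passing to the open subvarieties of $\theta$-semistable points (preserved by $G$-equivariance) one obtains by GIT a simultaneous \'etale morphism between the resolutions. The composition of correspondences is then handled by the fibre-product construction of \lemref{fiberprod}, which guarantees that the composite of two \'etale morphisms of pointed resolutions is again an \'etale morphism of pointed resolutions, completing the proof.
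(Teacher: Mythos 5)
Your proposal is correct and is essentially identical to the paper's argument: the paper also obtains the two \'etale equivalences through the GIT upgrades of Crawley-Boevey's Lemmas 4.4 and 4.8 (via \lemref{GITtoo} and the restriction of the Luna-type \'etale maps to the $\theta$-semistable loci) and then simply composes them, using that $\sim_{et}$ is the isomorphism relation in $\mathsf{Res}_*^{et}$. The bookkeeping of base points and the use of \lemref{fiberprod} for composing correspondences match what is built into \secref{category etale}.
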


Recall that by \lemref{comesfromquiver} every quadruple $(A,M,\omega,\zeta)$ is equivalent to a one coming from the quiver. Let $Q_x$ be the quiver associated to the quadruple $(A_x, W, \omega, 0)$. We also have the corresponding dimension vector $\a_x\in \mb N^{I_x}$, where $I_x$ is the set of vertices of $Q_x$. We can then reformulate the proposition above in the following way:
\begin{prop}
\label{localcentral}
Let $Q$ be a quiver with a vertex set $I$, $\a\in \mb N^I$ be a dimension vector and $\lambda$  a vector in $\mb K^I$. Then there exist a quiver $Q_x$ with a vertex set $I_x$ and a dimension vector $\a_x$, such that 
$$
(\pi_\lambda^\theta,N_Q(\lambda,\a)^\theta, N_{Q}(\lambda,\a),x)\sim_{et}(\pi_0^{\theta_x},N_{Q_x}(0,\a_x)^{\theta_x}, N_{Q_x}(0,\a_x),0)
$$
\end{prop}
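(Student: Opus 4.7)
The plan is to combine the unnamed proposition immediately preceding the statement with the dictionary between quadruples and quivers provided by \lemref{comesfromquiver}; there is essentially nothing new to do beyond a careful bookkeeping of the moment maps and characters under this dictionary.

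First I would fix a point $x\in N_Q(\lambda,\a)$ with closed $G$-orbit, form the local quadruple $(A_x, W, \omega, 0)$ as constructed in \lemref{Slice1Weinstein} and the lemma following it, and invoke \lemref{comesfromquiver} to produce a quiver $Q_x$ with vertex set $I_x$ and dimension vector $\a_x\in \mb N^{I_x}$, together with an identification
\[
(A_x, W, \omega, 0) \;=\; (\mr{End}(\a_x), \mr{Rep}(\ol Q_x, \a_x), \omega_{\a_x}, \zeta_0).
\]
Under this identification $G_x = A_x^\times = \mr{GL}(\a_x)$ and the moment map $\hat\mu$ becomes $\mu_{\a_x}$, so that $\hat\mu^{-1}(0)$ coincides with $\mu_{\a_x}^{-1}(0)$. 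Next I would track the character: the restricted character $\theta_x$ of $G_x$ corresponds, via the same identification, to a character of $\mb P\mr{GL}(\a_x)$, which I still denote $\theta_x\in \mb Z^{I_x}$. Then by definition the GIT quotients $\hat\mu^{-1}(0)\slash\!\!\slash_{\theta_x}G_x$ and $\hat\mu^{-1}(0)\slash\!\!\slash G_x$ are exactly $N_{Q_x}(0,\a_x)^{\theta_x}$ and $N_{Q_x}(0,\a_x)$ respectively, and the projective semisimplification map between them is $\pi_0^{\theta_x}$.

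The final step is to apply the unnamed proposition just above the statement, which gives an \'etale equivalence of pointed resolutions
\[
(\pi_\lambda^\theta,N_Q(\lambda,\a)^\theta, N_{Q}(\lambda,\a),x)\;\sim_{et}\;(\hat\pi,\hat\mu^{-1}(0)\slash\!\!\slash_{\theta_x}G_x, \hat\mu^{-1}(0)\slash\!\!\slash G_x, 0).
\]
Substituting the quiver-theoretic descriptions from the previous paragraph into the right-hand side yields the desired equivalence with $(\pi_0^{\theta_x}, N_{Q_x}(0,\a_x)^{\theta_x}, N_{Q_x}(0,\a_x), 0)$.

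The main points requiring care, rather than genuine obstacles, are: (i) verifying that the \'etale base-level maps of \lemref{Slice1Weinstein} and Lemma~4.8 respect $\theta$- versus $\theta_x$-semistability, so that Luna's saturation argument really does lift to the semistable loci --- this is already invoked in the preceding proposition via \lemref{GITtoo} and the remark after Lemma~4.8, and what one is using is that pulling back along an \'etale map that is saturated for the affinization preserves (semi)stability sections; (ii) checking that the reference point $0\in W$ is indeed sent to the distinguished central point of $N_{Q_x}(0,\a_x)$, which is clear because $0$ corresponds to the trivial semisimple representation of $\Pi^0$. Once these are in place, the proof is purely a change of notation via \lemref{comesfromquiver}.
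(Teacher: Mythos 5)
Your proposal is correct and follows exactly the route the paper takes: the paper derives \propref{localcentral} by applying \lemref{comesfromquiver} to the quadruple $(A_x,W,\omega,0)$ and then rewriting the étale equivalence of the preceding proposition in quiver language, with \lemref{GITtoo} already having handled the lift to the GIT quotients. Your extra bookkeeping of the moment map, the character $\theta_x$, and the base point $0$ only makes explicit what the paper leaves implicit.
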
 

Proposition says that the resolution of singularities of a Marsden-Weinstein reduction for representations of quivers {\'e}tale locally  at any point looks like the resolution of singularities at the central point of some other Marsden-Weinstein reduction.

\subsubsection{$\Gm$-actions} Let $\lambda=0$ and let's consider some $N_Q(0,\a)$. Recall that it was obtained as a Hamiltonian reduction from a symplectic vector space $\mr{Rep}(\ol Q,\a)$. Let's now consider a $\Gm$-action on $\mr{Rep}(\ol Q,\a)$ which is rescaling all the operators in the representation, namely $t\circ x_a=tx_a$ for any arrow $a\in \ol Q$. This action action commutes with the action of $\mr{GL}(\a)$ and rescales $\omega$: we have $t\circ \omega = t^2\omega$. Though this action does not preserve the fibers of $\mu$, it preserves $\mu^{-1}(0)$ and so defines an action on it. Moreover, this action is contracting: the whole $\mr{Rep}(\ol Q,\a)$ and $\mu^{-1}(0)$ in particular are contracted to a single point $0$. Since this action commutes with the action of $\mr{GL}(\a)$ it descends to a contracting action on $N_Q(0,\a)$. Moreover it also  defines an action on $N_Q(0,\a)^\theta$ and turns $\pi_0^\theta:N_Q(0,\a)^\theta\ra N_Q(0,\a)$ into a conical resolution.

Combining this with the \propref{localcentral} we get that any point $x\in N_Q(\lambda,\a)$ has a conical {\'e}tale neighbourhood (see \defref{conic neighbourhood}). In other words:
\begin{thm}\label{quiver rep conic slices}
Resolution $\pi_0^\theta:N_Q(0,\a)^\theta\ra N_Q(0,\a)$ of singularities of Marsden-Weinstein reduction is a resolution with conical slices.
\end{thm}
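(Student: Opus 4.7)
The plan is to verify the three defining conditions of a resolution with conical slices (Definition~3.4.3) for $\pi_0\colon N_Q(0,\a)^\theta\to N_Q(0,\a)$: that it is a conical resolution, that the higher direct images of the structure sheaf vanish (and $\O_Y\isoto\pi_*\O_X$), and that every point admits an \'etale conical neighbourhood. Since all hard technical work has been prepared in the preceding subsections, each verification is essentially a matter of assembling the right ingredients.

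First I would dispose of the conical structure. The $\Gm$-action on $\mr{Rep}(\ol Q,\a)$ by simultaneous dilation $t\circ x_a=tx_a$ commutes with $\mr{GL}(\a)$ and preserves $\mu^{-1}(0)$, as was verified just before the theorem; it rescales $\omega$ by weight $2$, and contracts all of $\mr{Rep}(\ol Q,\a)$ (and hence $\mu^{-1}(0)$) to the origin. Since it commutes with $\mr{GL}(\a)$ it descends to a contracting action on $N_Q(0,\a)$, and lifts to $N_Q(0,\a)^\theta$ (the semistable locus is preserved because $\theta$-semistability is an open condition invariant under rescaling all arrows). The action is given by non-zero linear weights on the coordinate functions, so over a characteristic-$0$ field it is automatically separable (by \defref{separable}, any $\Gm$-action in characteristic $0$ is separable). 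This gives a conical resolution with central point $0$.

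Next I would verify the sheaf-cohomology conditions. By \propref{Marsden-Weinstein is a resolution} applied with $\lambda=0$ (and $\a\in\Sigma_0$, indivisible, as required for the map to be a resolution of singularities), the map $\pi_0^\theta$ is a symplectic resolution. As recalled in \secref{symplectic}, any symplectic resolution satisfies $R^\bullet\pi_*\O_X=\pi_*\O_X$ by the Grauert--Riemenschneider vanishing applied to the canonical bundle, which is trivialised by $\omega^{\dim X/2}$; so $R^1\pi_*\O_X=R^2\pi_*\O_X=0$. Normality of $N_Q(0,\a)$ is Crawley-Boevey's theorem cited above (\cite{CrB1}, Theorem 8.2), and because $\pi_0$ is birational and proper with $N_Q(0,\a)$ normal and affine, $\O_Y\isoto\pi_*\O_X$ follows automatically, as noted at the start of \secref{symplectic}.

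Finally, for conical \'etale neighbourhoods of arbitrary points, I invoke \propref{localcentral} directly (with $\lambda=0$). For any $x\in N_Q(0,\a)$ it produces a quiver $Q_x$ and dimension vector $\a_x$ with
\[
    (\pi_0^\theta,\,N_Q(0,\a)^\theta,\,N_Q(0,\a),\,x)\;\sim_{et}\;(\pi_0^{\theta_x},\,N_{Q_x}(0,\a_x)^{\theta_x},\,N_{Q_x}(0,\a_x),\,0),
\]
and the right-hand side is conical with central point $0$ by the construction of the first paragraph. This is precisely a conical \'etale neighbourhood in the sense of \defref{conic neighbourhood}, completing the verification.

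There is no serious obstacle: the only mildly delicate point is matching the output of \propref{localcentral} to the notion of \defref{withconicslice}, and checking that the dilation $\Gm$-action on the slice model $N_{Q_x}(0,\a_x)^{\theta_x}\to N_{Q_x}(0,\a_x)$ makes it a conical resolution with $0$ as central point. Since this action exists on any $N_{Q'}(0,\a')$ uniformly, and \propref{localcentral} identifies the \'etale germ at $x$ with the germ at the central point $0$ of such a variety, the match is immediate.
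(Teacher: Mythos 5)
Your proposal is correct and follows essentially the same route as the paper: the contracting dilation action $t\circ x_a = tx_a$ on $\mr{Rep}(\ol Q,\a)$ descends to a contracting $\Gm$-action making $\pi_0^\theta$ conical, the symplectic-resolution discussion of \secref{symplectic} together with Crawley-Boevey's normality theorem gives $\O_Y\isoto\pi_*\O_X$ and the vanishing of $R^{1,2}\pi_*\O_X$, and \propref{localcentral} with $\lambda=0$ supplies the conical \'etale neighbourhoods. The only addition you make beyond the paper's (quite terse) argument is the explicit remark that the $\Gm$-action preserves the $\theta$-semistable locus and that separability is automatic in characteristic $0$ — both correct and harmless.
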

\subsubsection{Example: Nakajima quiver varieties} \label{qv}The main examples of Marsden-Weinstein reductions for representations of quivers are Nakajima quiver varieties. Here we follow the \textit{Remarks added after 2000} in \cite{CrB2}.

Let $Q_0$ be a quiver with a vertex set $I$. For $\ul v,\ul w\in \mb N^I$, let $M(\ul v,\ul w)$ be the space
$$
\mr{Rep}(\ol{Q_0},\ul v)\oplus \bigoplus_{k\in I}\mr{Mat}(v_k\times w_k,\mb K)\oplus \bigoplus_{k\in I}\mr{Mat}(w_k\times v_k, \mb K).
$$

We denote the coordinates on this space by triples $(B,i,j)$.
The natural action of $G_{\ul v}=\prod_{k\in I}\mr{GL}(v_k)$ on $M(\ul v,\ul w)$ has a moment map $\mu:M(\ul v,\ul w)\ra \oplus_{k\in I} \mr{Mat}(v_k,\mb K)$ whose $k$-th component is given by the map, sending $(B,i,j)$ to 
\[
\sum_{\substack{a\in Q_0 \\ h(a)=k}} B_aB_{a^*}- \sum_{\substack{a\in Q_0 \\ t(a)=k}} B_{a^*}B_a +\sum_{k\in I}i_kj_k.
\]
Affine Nakajima quiver variety $\mf{M}_0(\ul v,\ul w)$ is defined as $\mu^{-1}(0)/\!\!/G_{\ul v}$. More generally, Nakajima quiver varieties $\mf{M}_\lambda^\theta(\ul v,\ul w)$ are defined as $\mu^{-1}(\la)/\!\!/_\theta G_{\ul v}$.

Let's now consider a quiver $Q$, obtained from $Q_0$ by adjoining a new vertex $\infty$ and $w_k$ arrows from $\infty$ to $k$ for each $k\in I$. Let $\alpha$ be the dimension vector for $Q$, whose restriction to $I$ is equal to $\ul v$ and such that $\a_\infty=1$. Dividing the matrices in $\mr{Mat}(v_k\times w_k,\mb K)$ into their columns and the matrices in $\mr{Mat}(w_k\times v_k,\mb K)$ into their rows, we get an identification $M(\ul v,\ul w)\cong \mr{Rep}(\ol Q,\a)$. Since $\a_\infty=1$, we also get that $\mb PG(\a)=(G_{\ul v}\times \mb G_m)/\mb G_m \cong G_{\ul v}$. It is easy to see that moment maps coincide too, and that
$$
\mf{M}_\lambda^\theta(\ul v,\ul w)\cong N_Q(\lambda,\a)^\theta
$$
for all $\lambda$ and $\theta$. Assume that the map $\pi^\theta_0: \mf{M}_0^\theta(\ul v,\ul w) \ra \mf{M}_0(\ul v,\ul w)$ is a resolution of singularities (e.g. when  the conditions of \propref{Marsden-Weinstein is a resolution} are satisfied for $\a$, $\lambda$ and $Q$. Note that $\a$ is indivisible, so it is enough to check that $\a\in \Sigma_\lambda$). We obtain the following theorem:
\begin{thm}
If the map $\pi^\theta_0: \mf{M}_0^\theta(\ul v,\ul w) \ra \mf{M}_0(\ul v,\ul w)$ is a resolution of singularities, it is a resolution with conical slices.

\end{thm}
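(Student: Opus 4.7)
The proof will be an immediate application of \thmref{quiver rep conic slices} via the identification of Nakajima quiver varieties with Marsden--Weinstein reductions for representations of the extended quiver, as already described in the discussion preceding the theorem. The plan is to verify that the identification $\mf{M}^\theta_0(\ul v,\ul w) \cong N_Q(0,\a)^\theta$ is compatible with everything that enters the definition of a resolution with conical slices, and then simply invoke the previous theorem.

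First, I would spell out the identification: take $Q$ to be obtained from $Q_0$ by adjoining the vertex $\infty$ with $w_k$ arrows to each $k \in I$, and let $\a \in \mb N^{I \sqcup \{\infty\}}$ be defined by $\a|_I = \ul v$ and $\a_\infty = 1$. As already noted in the text, columns of matrices in $\mr{Mat}(v_k \times w_k, \mb K)$ and rows of matrices in $\mr{Mat}(w_k \times v_k,\mb K)$ give a $G_{\ul v}$-equivariant identification $M(\ul v,\ul w) \cong \mr{Rep}(\ol Q, \a)$. Since $\a_\infty = 1$, the scalar torus $\Gm \subset \mr{GL}(\a)$ acts trivially on $\mr{Rep}(\ol Q,\a)$ through the extra factor, so $\mb P G(\a) \cong G_{\ul v}$. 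A direct comparison shows that the moment maps and the $\theta$-stability conditions match, giving
\[
\mf M^\theta_0(\ul v,\ul w) \cong N_Q(0,\a)^\theta \quad\text{and}\quad \mf M_0(\ul v,\ul w) \cong N_Q(0,\a),
\]
in a way that intertwines the semisimplification maps $\pi^\theta_0$.

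Under this identification, the $\Gm$-action on $\mf M^\theta_0(\ul v,\ul w)$ that rescales all the linear maps $(B,i,j) \mapsto (tB, ti, tj)$ corresponds to the contracting $\Gm$-action on $\mr{Rep}(\ol Q,\a)$ used in the previous subsection (which rescales all arrows by $t$); both descend to contracting $\Gm$-actions on the affine base and lift to the GIT resolution. Hence the isomorphism above is an isomorphism of conical resolutions.

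Finally, by hypothesis $\pi^\theta_0 : \mf M^\theta_0(\ul v,\ul w) \to \mf M_0(\ul v,\ul w)$ is a resolution of singularities, so the same is true for $\pi^\theta_0 : N_Q(0,\a)^\theta \to N_Q(0,\a)$ after transport. Then \thmref{quiver rep conic slices} directly yields that this resolution has conical slices, and transporting back we conclude that $\pi^\theta_0 : \mf M^\theta_0(\ul v,\ul w) \to \mf M_0(\ul v,\ul w)$ is a resolution with conical slices. There is no genuine obstacle here, only the bookkeeping to check that the identification intertwines the contracting $\Gm$-actions; this is immediate from the explicit formulas for the moment map and for the $\Gm$-action on arrows.
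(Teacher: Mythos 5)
Your proof is correct and follows the same route as the paper: the paper likewise reduces the statement to \thmref{quiver rep conic slices} via the identification $\mf M^\theta_\lambda(\ul v,\ul w)\cong N_Q(\lambda,\a)^\theta$ for the extended quiver with $\a_\infty=1$, and the compatibility of the contracting $\Gm$-actions that you verify is exactly the content of the preceding subsection on $\Gm$-actions. Nothing is missing.
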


\subsection{Hamiltonian reductions of vector spaces} \label{Hamiltonian reduction} Marsden-Weinstein reduction for  representations of a quiver is a particular case of an algebraic Hamiltonian reduction of a vector space (which can be called Marsden-Weinstein reduction as well). Namely, let $(V,\omega)$ be a vector space with a symplectic form and let $G\ra \mr{Sp}(V)$ be a symplectic representation of a reductive group $G$ over $\mb K$. We will assume that the corresponding action is effective, namely that the map $G\ra \mr{Sp}(V)$ is an embedding. 
Note that the action of $G$ on $V$ also induces an action of its Lie algebra $\mf g$.

Action of $G$ on $V$ defines an action on the corresponding affine space, which is naturally a symplectic manifold. The symplectic $G$-action on $V$ always has an algebraic moment map $\mu: V\ra \mf g^*$. It is given by the following formula:
$$
\langle\mu(v),a\rangle=\frac{1}{2} \omega( v,a\circ v)
$$
for any $v\in V$ and $a\in \mf g$.

 Let $Z(\mf g^*)=(\mf g^*)^G\subset \mf g^*$ denote the subspace of the invariants of the coadjoint action. For $\lambda\in Z(\mf g^*)$, the subvariety $\mu^{-1}(\lambda)$ is stable under the $G$-action. Given a character $\theta:G\ra \Gm$ we can consider the GIT quotient with respect to $\theta$:
$$
\mf M(G,V)^\theta_\lambda=\mu^{-1}(\lambda)/\!\!/_\theta G.
$$

We will omit the superscript $\theta$ in the case when $\theta=0$. For any $\theta$ we have a projective morphism $\pi_{\lambda}^{\theta}:\mf M(G,V)^{\theta}_\lambda\ra \mf M(G,V)_{\lambda}$. 
\subsubsection{Darboux-type theorems}

Unfortunately, the analogue of \thmref{Darboux} is not true in general in this context. The important new feature is that an irreducible subrepresentation of $V$ does not need to be isotropic. In particular, a maximal isotropic $G$-subrepresantation does not need to be maximal isotropic as a vector space. 

Nevertheless a weaker form of Darboux theorem is still true:

\begin{prop}\label{weakDarboux}
Let $(V,\omega)$ be a symplectic representation of a reductive group $G$. Let $S\subset V$ be a maximal isotropic $G$-submodule, which is Lagrangian: $S=S^\perp$. Then $S$ has an isotropic $G$-equivariant complement which is also Lagrangian. Moreover $V$ is isomorphic to $S\oplus S^*$ as a symplectic representation of $G$.
\end{prop}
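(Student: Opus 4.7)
The plan is to reduce the proposition to the classical symplectic linear algebra argument, using only that $G$ is reductive over a field of characteristic $0$. First, I would invoke complete reducibility for $G$-representations to pick \emph{some} $G$-equivariant linear complement $T\subset V$ to $S$, so that $V=S\oplus T$ as $G$-modules. The Lagrangian condition $S=S^{\perp}$ is equivalent to saying that the restriction $\omega|_{S\times T}$ is a non-degenerate pairing, which gives a $G$-equivariant isomorphism $\iota\colon T\iso S^{*}$, $t\mapsto \omega(\cdot,t)|_{S}$. The obstruction to $T$ being Lagrangian is measured by $B:=\omega|_{T\times T}$, a $G$-invariant antisymmetric $2$-form on $T$.

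Next I would look for a $G$-equivariant Lagrangian complement of the form $T':=\{t+\phi(t):t\in T\}$ for some $G$-equivariant linear map $\phi\colon T\to S$. A direct computation using that $S$ is isotropic yields
\[
\omega(t_{1}+\phi(t_{1}),\,t_{2}+\phi(t_{2}))=B(t_{1},t_{2})+\omega(\phi(t_{1}),t_{2})+\omega(t_{1},\phi(t_{2})).
\]
Transporting $\phi$ across $\iota$, the unknown becomes a $G$-equivariant map $\tilde\phi\colon S^{*}\to S$, equivalently a $G$-invariant bilinear form on $S^{*}$, and the Lagrangian condition reads that its antisymmetrization equals $-\tfrac{1}{2}\tilde B$, where $\tilde B$ is the transported form. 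Since $\mr{char}\,\mb K=0$, one simply takes $\tilde\phi$ to be the antisymmetric bilinear form $-\tfrac{1}{2}\tilde B$ itself; this is manifestly $G$-equivariant and solves the equation. Thus $T'$ is a $G$-equivariant Lagrangian complement of $S$.

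Finally, I would assemble the isomorphism with $S\oplus S^{*}$. The map
\[
S\oplus T'\longrightarrow S\oplus S^{*},\qquad s+t'\longmapsto (s,\iota(t'))
\]
is a $G$-equivariant linear isomorphism, and one checks directly that it intertwines $\omega$ with the standard symplectic form $((s_{1},f_{1}),(s_{2},f_{2}))\mapsto f_{1}(s_{2})-f_{2}(s_{1})$, because $\omega|_{T'\times T'}=0$, $\omega|_{S\times S}=0$, and on mixed terms $\omega$ is by construction the duality pairing transported through $\iota$.

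I do not expect a serious obstacle in this argument: both ingredients, reductivity for the initial splitting and characteristic $0$ for the division by $2$, are built into the standing hypotheses, and every manipulation respects the $G$-action by construction. The only mild subtlety is book-keeping the various $G$-equivariant identifications $T\cong S^{*}$, $T^{*}\cong S$ consistently; this is a matter of fixing sign conventions rather than a real difficulty.
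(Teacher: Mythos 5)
Your argument is correct and is essentially the paper's own proof: both start from a $G$-equivariant complement furnished by complete reducibility, identify it with $S^*$ via $\omega$, and then shear it by $-\tfrac12$ of the $G$-equivariant map $C\to S$ encoding $\omega|_{C\times C}$ to make it Lagrangian (the paper's $D=\{c-\tfrac12\theta(c)\}$ is exactly your graph $\{t+\phi(t)\}$). The only discrepancy is an overall sign in your final identification with the standard form on $S\oplus S^*$, which, as you note, is a matter of convention and not a gap.
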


\begin{proof}
Since any representation of $G$ is completely reducible, $S$ has a $G$-equivariant complement $C$. $S$ is Lagrangian, so $\dim V=2\dim S$ and $\dim C=\dim S$. Since $G$ preserves the symplectic form on $V$, the natural map $C\ra S^*$ induced by $\omega$ is $G$-equivariant. Moreover, since $S$ is Lagrangian and $\omega$ is non-degenerate, this map is an isomorphism. 

Let $f:S\ra C^*$ be the dual map, it is an isomorphism as well. The restriction of $\omega$ to $C$ induces a $G$-equivariant map $\tilde\omega|_C: C\ra C^*$. Let $\theta:C\ra S$ be equal to  $(f)^{-1}\circ\tilde\omega|_C$. We have $\tilde\omega|_C= f\circ\theta$ and so $\omega(\theta (c),c')=\omega(c,c')$ for all $c,c'\in C$. 

Let $D=\{c-\frac{1}{2}\theta(c)\ |\ c\in C\}$. Since $\theta$ was $G$-equivariant, this is a $G$-invariant subspace. Clearly $D\cap S=0$ and $V=S\oplus D$. Then
\begin{multline*}
\omega(c-\frac{1}{2}\theta(c),c'-\frac{1}{2}\theta(c'))= \omega(c,c')-\omega(c,\frac{1}{2}\theta(c'))-\omega(\frac{1}{2}\theta(c),c') +\omega(\frac{1}{2}\theta(c),\frac{1}{2}\theta(c'))=\\=\omega(c,c') -\frac{1}{2}\omega(c,c') -\frac{1}{2}\omega(c,c') +0 =0.
\end{multline*}
This means that $D$ is isotropic and has dimension $\frac{1}{2}\dim V$, so is Lagrangian. The map $\tilde\omega$ induces a $G$-equivariant isomorphism $D\isoto S^*$ and so $V=S\oplus S^*$.

\end{proof}

\begin{rem}\label{Schur}
Let $T\subset V$ be an irreducible $G$-submodule, such that $\omega|_T\neq 0$. Then, by irreducibility, $T$ is symplectic and $\tilde{\omega}_T:T\isoto T^*$. For any self-dual irreducible representation $T$ one can compute its Frobenius-Schur indicator $\mr{FS}(T)$ \cite{Bou}. Since $\omega$ is skew-symmetric and $G$-invariant, we get that $\mr{FS}(T)=-1$ or in other words that $T$ is {\em quaternionic}. From this it is easy to show that if all irreducible subrepresentations are \textit{ortogonal} or {\em isotropic} (meaning $\mr{FS}(T)=1$ or $\mr{FS}(T)=0$) then any maximal isotropic subrepresentation is Lagrangian and the full statement of \thmref{Darboux} holds. This way (\cite{CrB1}, Lemma 2.1) can be considered as a statement about Frobenius-Schur indicators of self-dual representations coming from quivers.
\end{rem}

With this weaker version of Darboux theorem we still are able to prove the analogue of \corref{complement}. We will first need to prove a simple lemma:

\begin{lem}\label{W is symplectic}
Let $S\subset V$ be an isotropic subrepresentation. Let $C$ be a $G$-equivariant complement and let $W$ be the intersection $S^\perp \cap C$. Then the restriction of $\omega$ to $W$ is non-degenerate and $W$ is symplectic.
\end{lem}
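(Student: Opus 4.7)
The plan is a short linear-algebraic argument; there is essentially one key observation and the rest is dimension counting. I expect no serious obstacle.

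First I would show that $S^\perp = S \oplus W$ as vector spaces. Since $\omega$ is $G$-invariant and $S$ is a $G$-subrepresentation, $S^\perp$ is automatically a $G$-subrepresentation, and so is $W = S^\perp \cap C$. Because $S$ is isotropic, $S \subset S^\perp$, and since $V = S \oplus C$ we obtain a direct sum decomposition $S^\perp = S \oplus (S^\perp \cap C) = S \oplus W$. By non-degeneracy of $\omega$ on $V$, $\dim S^\perp = \dim V - \dim S$, so $\dim W = \dim V - 2\dim S$.

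Next I would check non-degeneracy of $\omega|_W$ directly. Suppose $w \in W$ satisfies $\omega(w,w') = 0$ for every $w' \in W$. Since $w$ lies in $W \subset S^\perp$, we also have $\omega(w,s) = 0$ for every $s \in S$. Combining these and using $S^\perp = S \oplus W$, we conclude $\omega(w,v) = 0$ for every $v \in S^\perp$, i.e.\ $w \in (S^\perp)^\perp$. By non-degeneracy of $\omega$ on $V$, $(S^\perp)^\perp = S$, so $w \in S$. But $w \in C$ as well, and $S \cap C = 0$, whence $w = 0$.

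This shows that $\omega|_W$ has trivial radical, hence is non-degenerate; together with the $G$-invariance of $W$ noted above, this makes $W$ a symplectic $G$-subrepresentation of $V$, as claimed. The whole argument is a three-line manipulation with $(\cdot)^\perp$ and the decomposition $V = S \oplus C$; the only point that deserves attention is the identity $(S^\perp)^\perp = S$, which relies on $\omega$ being non-degenerate on the ambient $V$ and not on any Darboux-type statement for the $G$-action.
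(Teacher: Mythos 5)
Your argument is correct and is essentially the paper's own proof: both identify $S^\perp = S\oplus W$ using $S\subset S^\perp$ and $V=S\oplus C$, and both deduce non-degeneracy of $\omega|_W$ from the fact that the radical of $\omega|_{S^\perp}$ is $S^\perp\cap(S^\perp)^\perp = S$, which meets $W$ trivially. The explicit element-chase you give for the radical computation is just a spelled-out version of the paper's one-line statement, so there is nothing to add.
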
 

\begin{proof}
$S$ is isotropic, so $S\subset S^\perp$. Since $C$ is a complement  to $S$, $S^\perp\cong S\oplus W$. Moreover $\omega$ is non-degenerate on $V\oplus V^*$, so $(S^\perp)^\perp= S$. It follows that $S$ is the kernel of the restriction of $\omega$ to $S^\perp$ and so the restriction of $\omega$ to $W$ should be non-degenerate.
\end{proof}

Note that since $\omega$ is preserved by $G$, $S^\perp$ is $G$-invariant, and so $W$ is a subrepresentation of $G$.

\begin{prop}\label{complement2}
Let $S\subset V$ be an isotropic subrepresentation. Then there exists a $G$-equivariant complement $C$ which is coisotropic: $C^\perp\subset C$. In this case $C=W\oplus\Ker\omega|_C$.  
\end{prop}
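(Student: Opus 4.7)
The strategy is to reduce to the already-established Lagrangian case of \propref{weakDarboux} by first splitting off a symplectic $G$-subrepresentation on which $S$ becomes Lagrangian. Since $G$ is reductive and every submodule of $S^\perp$ is $G$-invariant, I can pick a $G$-equivariant complement $S'$ to $S$ inside $S^\perp$, giving $S^\perp=S\oplus S'$. The first step is to verify that $S'$ is actually a \emph{symplectic} subrepresentation of $V$. For this, using that $V$ is symplectic and $S\subset S^\perp$ is isotropic, one has $(S^\perp)^\perp=S$, so the kernel of $\omega|_{S^\perp}$ is $S^\perp\cap(S^\perp)^\perp=S$. Consequently $\omega$ descends to a non-degenerate form on $S^\perp/S\cong S'$.

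Next I form the symplectic complement $(S')^\perp$ of $S'$ in $V$. Since $S'$ is symplectic, $V=S'\oplus(S')^\perp$ is a $G$-equivariant orthogonal decomposition, and a dimension count gives $\dim(S')^\perp=\dim V-\dim S'=2\dim S$. Because $S'\subset S^\perp$, every $s\in S$ is orthogonal to $S'$, so $S\subset(S')^\perp$. Thus $S$ sits inside the symplectic $G$-subrepresentation $(S')^\perp$ as a $G$-submodule of half-dimension, and isotropy of $S$ implies $S$ is Lagrangian in $(S')^\perp$. Now \propref{weakDarboux} applied inside $(S')^\perp$ produces a Lagrangian $G$-equivariant complement $D$ to $S$ in $(S')^\perp$, yielding the $G$-equivariant decomposition $V=S\oplus S'\oplus D$.

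I now claim $C:=S'\oplus D$ is the desired coisotropic complement. It is a $G$-equivariant complement to $S$ by construction. To compute $C^\perp$, observe that $S'\subset D^\perp$ (since $D\subset(S')^\perp$) and $D\subset D^\perp$ (since $D$ is isotropic), so $S'\oplus D\subset D^\perp$; comparing dimensions gives $D^\perp=S'\oplus D$. Therefore
\[
C^\perp=(S'\oplus D)^\perp=(S')^\perp\cap D^\perp=(S\oplus D)\cap(S'\oplus D)=D,
\]
where the final equality follows from $V=S\oplus S'\oplus D$. Hence $C^\perp=D\subset C$, so $C$ is coisotropic. Finally, with $W=S^\perp\cap C$, an easy application of the decomposition $V=S\oplus S'\oplus D$ gives $W=S'$, while $\Ker\omega|_C=C\cap C^\perp=C\cap D=D$. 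Thus $C=W\oplus\Ker\omega|_C$ as required.

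The only genuinely non-formal ingredient is the reduction to \propref{weakDarboux}, i.e.\ verifying that after splitting off the symplectic piece $S'$, the module $S$ lands as a Lagrangian inside $(S')^\perp$. Once this is in place, everything else is a bookkeeping exercise with the direct sum $V=S\oplus S'\oplus D$ and the formulas for symplectic complements.
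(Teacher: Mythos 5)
Your proof is correct and follows essentially the same route as the paper's: isolate a symplectic $G$-stable complement of $S$ inside $S^\perp$ (your $S'$ is exactly the paper's $W=C\cap S^\perp$ for an arbitrary starting complement $C$), observe that $S$ sits as a Lagrangian in the symplectic orthogonal of that piece, apply \propref{weakDarboux} there to get the Lagrangian complement $D$, and assemble $C=W\oplus D$. The only (harmless) difference is that you verify the decomposition $C=W\oplus\Ker\omega|_C$ only for the $C$ you construct, while the paper's ``in this case'' clause is also asserted for an arbitrary coisotropic complement; the same bookkeeping handles that case as well.
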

\begin{proof}
Let's start with any $G$-equivariant complement $C$ and consider $W=C\cap S^\perp$. By the lemma above, $W$ is symplectic. It follows that $W^\perp$ is symplectic too, and $W\cap W^\perp=0$. 

Note that $\dim W=\dim V- 2\dim S$. Indeed, by the definition of $C$, $\dim C= \dim V-\dim S$. We have $S\subset S^\perp$ and so $S^\perp+C=V$. Since $W=C\cap S^\perp$ and $\dim S^\perp = \dim V- \dim S$, we get that:
$$
\dim W=\dim C + \dim S^\perp -\dim V= \dim V -\dim S + \dim V -\dim S -\dim V =\dim V -2\dim S.
$$
 From this we obtain that $\dim W^\perp=2\dim S$. It follows that $S\subset W^\perp$ is a Lagrangian subrepresentation. Applying \thmref{weakDarboux} to $S\subset W^\perp$, we know that there exists a $G$-equivariant Lagrangian complement $D\cong S^*$: $W^\perp=S\oplus D$. 
 
 Now let's take $C'=W\oplus D$. Clearly $C'$ is a $G$-equivariant complement to $S$. Also, $(C')^\perp=W^\perp \cap D^\perp = D\subset C'$, so $C'$ is coisotropic. Note that $D$ is exactly $\Ker \omega|_{C'}$. 
 
  Finally if $C$ is any coisotropic $G$-equivariant complement, by the same argument we get that $W^\perp \cap C$ is equal to $\Ker\omega|_C$ and so $C=W\oplus \Ker\omega|_C$.
\end{proof}

\subsubsection{{\'E}tale local structure} We will generalise Crawley-Boevey's result to the case of the Hamiltonian reduction of the cotangent bundle of a vector space. 

For any vector space $V$, the direct sum $V\oplus V^*$ has a natural symplectic form: $\omega((z_1,w_1),(z_2,w_2))=\langle z_1,w_2\rangle-\langle z_2,w_1\rangle$. Action of $\mr{GL}(V)$ on $V$ preserves the pairing $\langle\cdot,\cdot\rangle$, so given any linear representation $G\ra \mr{GL}(V)$, the induced representation $G\ra \mr{GL}(V\oplus V^*)$ is symplectic. The space $V\oplus V^*$ is identified with the total space $T^*V$ of the cotangent bundle on $V$.

Let $x\in \mu^{-1}(\lambda)$ be a point with a closed $G$-orbit. Then, by the theorem of Matsushima \cite{Ma}, the stabiliser $G_x$ is a reductive group. Let $\mf g_x$ be the Lie algebra of $G_x$; group $G_x$ acts by conjugation on $\mf g$, preserving $\mf g_x\subset \mf g$. Let $L$ be a $G_x$-equivariant complement to $\mf g_x$ inside $\mf g$: $\mf g\cong\mf g_x \oplus L$ as representations of $G_x$. Such $L$ exists because $G_x$ is reductive. Dually, we have a $G_x$-equivariant decomposition $\mf g^*\cong\mf g_x^*\oplus L^*$. We have a natural restriction map $r:\mf g^*\ra \mf g_x^*$ and it is easy to see that $r(Z(\mf g^*))$ is a subspace of $Z(\mf g_x^*)$.

 $G_x$ acts on $V$, preserving $x$, moreover it preserves the subspace $\mf g\circ x\cong \mf g/\mf g_x$, which is identified with the tangent space to the $G$-orbit of $x$. 

\begin{lem} $\mf g/\mf g_x\subset V\oplus V^*$ is an isotropic subspace: $\mf g/\mf g_x\subset (\mf g/\mf g_x)^\perp$

\end{lem}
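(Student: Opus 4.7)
The plan is to reduce the claim to a direct consequence of the moment map identity together with the hypothesis that $\lambda = \mu(x)$ lies in $Z(\mathfrak g^*) = (\mathfrak g^*)^G$. Concretely, for any $a,b \in \mathfrak g$ I want to show $\omega(a\circ x, b\circ x) = 0$; since the map $\mathfrak g \to V\oplus V^*$, $a \mapsto a\circ x$ has kernel precisely $\mathfrak g_x$ and image $\mathfrak g \circ x = \mathfrak g/\mathfrak g_x$, this will give exactly the isotropy assertion.

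First I will differentiate the defining formula $\langle \mu(v), a\rangle = \tfrac12 \omega(v, a\circ v)$ at $v = x$ in an arbitrary direction $w \in V \oplus V^*$. Using that the $G$-action is linear and symplectic (so $\omega(x, a\circ w) = -\omega(a\circ x, w) = \omega(w, a\circ x)$), the two terms produced by Leibniz coincide, and I get the standard moment map identity
\[
    \langle d\mu_x(w), a\rangle = \omega(w, a\circ x) \qquad \forall\, a \in \mathfrak g.
\]
Next I specialise to $w = b\circ x$. On the left, differentiating the $G$-equivariance relation $\mu(e^{tb}\!\cdot v) = \mathrm{Ad}^*_{e^{tb}}\mu(v)$ at $t=0$, $v=x$ yields $d\mu_x(b\circ x) = \mathrm{ad}^*_b(\mu(x)) = \mathrm{ad}^*_b(\lambda)$. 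Since $\lambda \in Z(\mathfrak g^*)$, we have $\mathrm{ad}^*_b(\lambda) = 0$ for every $b\in\mathfrak g$. Combined with the previous display and the skew-symmetry of $\omega$, this gives
\[
    \omega(a\circ x,\, b\circ x) \;=\; -\langle d\mu_x(b\circ x), a\rangle \;=\; -\langle \mathrm{ad}^*_b(\lambda), a\rangle \;=\; 0,
\]
which is exactly the statement that $\mathfrak g/\mathfrak g_x \subseteq (\mathfrak g/\mathfrak g_x)^\perp$ inside $V\oplus V^*$.

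There is no real obstacle here: the whole content is the general principle that the tangent space to a coadjoint-type $G$-orbit through a point of $\mu^{-1}(\lambda)$ is $\omega$-isotropic whenever $\lambda$ is coadjoint-fixed. The only care needed is to check that $G$-equivariance of $\mu$ (used to identify $d\mu_x(b\circ x)$ with $\mathrm{ad}^*_b\mu(x)$) is automatic in this linear setting; this follows from the explicit formula for $\mu$ together with the $G$-invariance of $\omega$, and can alternatively be verified by expanding $\langle \mu(x + tb\circ x), a\rangle$ directly in $t$ via the defining formula. The argument is parallel to Crawley-Boevey's Lemma~4.1 in the quiver setting, but uses $\lambda\in Z(\mathfrak g^*)$ in place of the identity $\zeta(ab) = \zeta(ba)$.
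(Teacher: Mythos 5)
Your proof is correct and is essentially the paper's argument in moment-map packaging: the identity $\omega(a\circ x,b\circ x)=\langle\mu(x),[b,a]\rangle=\lambda([b,a])=0$ is exactly what the paper computes directly from the pairing on $V\oplus V^*$, with the Ad-invariance of $\lambda$ (equivalently $\operatorname{ad}^*_b\lambda=0$) doing the same work in both versions. No gaps.
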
 

\begin{proof} The pairing $\langle\cdot,\cdot\rangle$ between $V$ and $V^*$ is being preserved by $G$, so $\langle a\circ z, w\rangle + \langle z, a\circ w\rangle =0$ for any $a\in \mf g$, $z\in V$, $w\in V^*$. Now 

\begin{multline*}
\omega(a \circ (z,w), b \circ(z,w))=\langle a\circ z, b\circ w\rangle - \langle b\circ z, a\circ w\rangle=\\
= \langle b\circ(a\circ z),w\rangle -\langle a\circ(b\circ z),w\rangle=\mu(z,w)([b,a])=\lambda([b,a])=0,
\end{multline*}
since $\lambda:\mf g_x\ra \mb K$ is Ad-invariant. 
\end{proof}

\begin{rem}
This lemma is the main reason why we consider only symplectic representations of the form $V\oplus V^*$. In general setting, if the subspace $\mf g/\mf g_x$ is isotropic all arguments below are still valid and one can obtain the similar results about the {\'e}tale local structure.
\end{rem}

Let $C$ be a $G_x$-equivariant coisotropic complement to  $\mf g\circ x$ inside $V\oplus V^*$. By \propref{complement2} such $C$ exists. We have a $G_x$-equivariant decomposition $V\oplus V^*\cong\mf g/\mf g_x\oplus C$. Let $W$ be the intersection $C\cap (\mf g/\mf g_x)^\perp$. By \lemref{W is symplectic} the restriction of $\omega$ to $W$ is non-degenerate and so $W$ is a symplectic representation of $G_x$. 

 Recall that we have the moment map $\mu: T^*V\ra \mf g^*$. Let $\mu_x$ be its composition with the restriction $r:\mf g^*\ra \mf g_x^*$ and let $\hat\mu$ be the restriction of $\mu_x$ to $W$.
 
 We define the map $\nu:C\ra L^*$ by $\langle\nu(c),l\rangle=\frac{1}{2}(\omega(c,l\circ x)+\omega(c, l\circ c)+\omega(x, l\circ c)) $. It is easy to check that for any $a\in \mf g_x$ we have 
 $$
 \langle\mu(x+c),a+l\rangle = \lambda(a+l) + \langle\mu_x(c),a\rangle +\langle\nu(c),l\rangle.
 $$ 
 Recall that $\mf g^*\cong \mf g_x^*\oplus L^*$. Formula above can be viewed as the fact that the moment map $\mu:T^*V\ra \mf g^*$ restricted to $x+C$ decomposes as a sum of its $\mf g_x^*$-component $\mu_x$ and the $L^*$-component $\nu:C\ra L^*$.

\begin{lem}\label{hamiltonianlocal1}
The assignment $c\mapsto x+c$ induces a $G_x$-equivariant map $\mu_x^{-1}(0)\cap \nu^{-1}(0)\ra \mu^{-1}(\lambda)$ and the induced map $(\mu_x^{-1}(0)\cap\nu^{-1}(0))/\!\!/ G_x\ra \mu^{-1}(\lambda)/\!\!/G$ is {\'e}tale.
\end{lem}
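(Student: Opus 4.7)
My plan is to mirror very closely the proof of \lemref{Slice1Weinstein} given earlier in the paper, which is Crawley-Boevey's original argument; the only thing to check is that every step survives in the more general context of a symplectic $G$-representation of the form $V\oplus V^*$. I would first verify the key identity
$$\langle\mu(x+c),a+l\rangle = \lambda(a+l) + \langle\mu_x(c),a\rangle +\langle\nu(c),l\rangle,\qquad a\in\mf g_x,\ l\in L,$$
which is already displayed just above the lemma; since $\mf g=\mf g_x\oplus L$, this identity shows that $\mu(x+c)=\lambda$ if and only if $\mu_x(c)=0$ and $\nu(c)=0$ simultaneously. Hence the assignment $c\mapsto x+c$ carries $\mu_x^{-1}(0)\cap\nu^{-1}(0)$ into $\mu^{-1}(\lambda)$, and $G_x$-equivariance is automatic since $G_x$ fixes $x$.

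To get étaleness at $0$ of the induced map on GIT quotients, the main tool is the action map
$$\phi:G\times_{G_x} C\longrightarrow V\oplus V^*,\qquad [g,c]\mapsto g\circ(x+c).$$
The crucial computation is the differential at $(1,0)$: the tangent space of the source is $(\mf g/\mf g_x)\oplus C$, and by construction of $C$ as a $G_x$-equivariant complement to $\mf g\circ x=\mf g/\mf g_x$ inside $V\oplus V^*$ (\propref{complement2}), $d\phi_{(1,0)}$ is exactly the identification $(\mf g/\mf g_x)\oplus C\iso V\oplus V^*$. Thus $\phi$ is étale at $(1,0)$. Since $G_x$ is reductive by Matsushima's theorem, $G\times_{G_x}C$ is affine, and Luna's Fundamental Lemma applies: there is an open subset $U\subset G\times_{G_x}C$ containing $(1,0)$, saturated with respect to $\pi_X:X=G\times_{G_x}C\to X/\!\!/G=C/\!\!/G_x$, such that $\phi|_U$ is étale, the image $V_0=\phi(U)$ is a saturated open affine in $V\oplus V^*$, the induced map on quotients $U/\!\!/G\to V_0/\!\!/G$ is étale, and moreover $U\cong V_0\times_{V_0/\!\!/G}U/\!\!/G$.

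The last step, which I expect to be the subtlest point, is to base-change this to the closed $G$-invariant subvariety $V_0'=V_0\cap\mu^{-1}(\lambda)$. By the identity in the first paragraph, the preimage $\phi^{-1}(V_0')\cap U$ is precisely $U\cap X'$ where $X'=G\times_{G_x}\bigl(\mu_x^{-1}(0)\cap\nu^{-1}(0)\bigr)$. Using that $U$ was $\pi_X$-saturated one gets $X'/\!\!/G=(\mu_x^{-1}(0)\cap\nu^{-1}(0))/\!\!/G_x$ and a Cartesian square whose horizontal arrows are open immersions, showing that the induced map
$$\bigl(\mu_x^{-1}(0)\cap\nu^{-1}(0)\bigr)/\!\!/G_x\longrightarrow \mu^{-1}(\lambda)/\!\!/G$$
is étale at $0$. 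The main obstacle is ensuring that Luna's lemma and the ensuing base-change argument go through unchanged; this relies on the fact that $\mf g/\mf g_x$ is still isotropic in $V\oplus V^*$ (which was proved in the lemma preceding the statement, and used the special form $T^*V$ of the representation), so that \propref{complement2} actually produces the required coisotropic complement $C$ from which $W$ and $\nu$ are built.
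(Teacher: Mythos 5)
Your proposal is correct and follows exactly the route the paper takes: the paper's own proof of this lemma consists of the single remark that the argument of \lemref{Slice1Weinstein} applies verbatim, and your write-up is precisely that argument (the displayed identity for $\mu(x+c)$, étaleness of the action map $G\times_{G_x}C\to V\oplus V^*$ at $(1,0)$, Luna's Fundamental Lemma, and base change to $\mu^{-1}(\lambda)$), together with the correct observation that the only input specific to the $V\oplus V^*$ setting is the isotropy of $\mf g/\mf g_x$ feeding into \propref{complement2}.
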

\begin{proof}
Exactly the same argument as in \lemref{Slice1Weinstein} works here as well.
\end{proof}

\begin{lem}\label{differential}
The map $\nu:C\ra L^*$ is smooth at 0, so 0 is a smooth point of $\nu^{-1}(0)$. Its tangent space at 0 is identified with $W$.
\end{lem}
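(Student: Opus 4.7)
The plan is to compute the differential $d\nu_0\colon C\to L^*$ explicitly, identify its kernel with $W$, and show it is surjective; the implicit function theorem then gives both the smoothness of $\nu$ at $0$ and the smoothness of $\nu^{-1}(0)$ at $0$ with tangent space $W$. Since $\nu$ is polynomial (affine plus quadratic in $c$), surjectivity of the differential is exactly what is needed.

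First I would unravel the formula
$$\langle\nu(c),l\rangle=\tfrac12\bigl(\omega(c,l\circ x)+\omega(c,l\circ c)+\omega(x,l\circ c)\bigr).$$
The middle term is quadratic in $c$, so it does not contribute to $d\nu_0$. Differentiating the other two terms gives
$$\langle d\nu_0(c),l\rangle=\tfrac12\bigl(\omega(c,l\circ x)+\omega(x,l\circ c)\bigr).$$
The infinitesimal $G$-invariance of $\omega$ (which is the identity $\omega(l\circ u,v)+\omega(u,l\circ v)=0$) rewrites $\omega(x,l\circ c)$ as $-\omega(l\circ x,c)=\omega(c,l\circ x)$, so both summands coincide and
$$\langle d\nu_0(c),l\rangle=\omega(c,l\circ x).$$

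Next I would identify $\ker d\nu_0$. Because $\mf g_x$ is the Lie algebra of the stabilizer, $l\mapsto l\circ x$ kills exactly $\mf g_x$, hence restricts to a linear isomorphism $L\xrightarrow{\sim}\mf g\circ x$. Therefore $d\nu_0(c)=0$ precisely when $\omega(c,y)=0$ for every $y\in\mf g\circ x$, i.e.~$c\in C\cap(\mf g\circ x)^\perp=W$.

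For surjectivity I would exploit the coisotropy of $C$. Consider the pairing $C\times(\mf g\circ x)\to\k$, $(c,y)\mapsto\omega(c,y)$. Its right kernel is $C^\perp\cap(\mf g\circ x)$, which vanishes: indeed $C^\perp\subset C$ by coisotropy and $C\cap(\mf g\circ x)=0$ since $C$ is a complement to $\mf g\circ x$. Hence the induced map $C\to(\mf g\circ x)^*\cong L^*$ (the last identification via $l\mapsto l\circ x$) is surjective, so $d\nu_0$ is surjective. By the submersion theorem, $\nu$ is smooth at $0$ and $\nu^{-1}(0)$ is smooth at $0$ with $T_0\nu^{-1}(0)=\ker d\nu_0=W$. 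There is no real obstacle here; the only point requiring care is keeping track of which identifications (the isomorphism $L\simeq\mf g\circ x$ and the role of coisotropy) make the pairing with $L$ non-degenerate in the right variable.
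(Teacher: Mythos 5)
Your proof is correct and takes essentially the same route as the paper: the same computation of $d\nu_0$, the same use of $G$-invariance of $\omega$ to reduce to $\langle d\nu_0(c),l\rangle=\omega(c,l\circ x)$, and the same identification of $\Ker d\nu_0$ with $W$. The only (minor) divergence is the final surjectivity step, where the paper uses the dimension count $\dim W+\dim L=\dim C$ while you invoke the coisotropy of $C$ to see that the pairing $C\times(\mf g\circ x)\to\k$ has trivial right kernel; both arguments are valid.
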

\begin{proof}
Identifying the tangent spaces at 0 with $C$ and $L^*$ correspondingly, by the definition of $\nu$, the differential of $\nu$ at 0 is given by
$$
\langle d\nu_0(c),l\rangle=\frac{1}{2}(\omega(c,l\circ x) +\omega(x,l\circ c)).
$$
Since $\omega$ is preserved by $G$, $\omega(x,l\circ c)=-\omega(l\circ x,c)= \omega(c,l\circ x)$. From this
$$
\langle d\nu_0(c),l\rangle= \omega(c,l\circ x),
$$ 
and so $\Ker d\nu_0 = C\cap (\mf g/\mf g_x)^\perp=W$. Since $\dim W+\dim L= \dim C$, we get that $d\nu_0$ is surjective and so $\nu$ is smooth at 0.
\end{proof}

Since $C$ is isotropic, $W=C\cap (\mf g/\mf g_x)^\perp$ is the symplectic part of $C$, and $C=W\oplus \Ker \omega|_C$. This composition is $G_x$-equivariant.

\begin{lem}\label{hamiltonianlocal2}
There is a morphism $(\mu_x^{-1}(0)\cap \nu^{-1}(0))/\!\!/G_x\ra \hat\mu^{-1}(0)/\!\!/G_x$ sending 0 to 0 and which is {\'e}tale at 0.
\end{lem}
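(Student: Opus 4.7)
The plan is to follow the template of Crawley-Boevey's Lemma 4.8 closely, using the symplectic-linear-algebra input already provided by \propref{complement2} and \lemref{differential}. The key extra ingredient, which is specific to our Hamiltonian setup, is the compatibility of $\mu_x$ with the projection onto $W$; in Crawley-Boevey's preprojective setting this is packaged inside the bimodule formalism, but here it must be extracted from the moment map formula directly.

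First, I would exploit the $G_x$-equivariant decomposition $C = W \oplus \ker\omega|_C$ supplied by \propref{complement2} (note that for the coisotropic complement $C$ we have $\ker\omega|_C = C^\perp$) to define the $G_x$-equivariant linear projection $p\colon C\to W$. By \lemref{differential}, $\nu^{-1}(0)$ is smooth at $0$ with tangent space equal to $W\subset C$, so $(dp)_0$ restricted to $T_0\nu^{-1}(0)$ is the identity on $W$. Hence $p\big|_{\nu^{-1}(0)}$ is \'etale at $0$.

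Next, I would verify the crucial identity $\mu_x = \hat\mu \circ p$ on $C$. Writing $c = w+c'$ with $w = p(c)\in W$ and $c'\in C^\perp$, the formula $\langle \mu(x+c),a+l\rangle = \lambda(a+l)+\langle\mu_x(c),a\rangle+\langle\nu(c),l\rangle$ reduces (using $a\circ x = 0$ and $\lambda(a) = \tfrac12\omega(x,a\circ x) = 0$ for $a\in\mf g_x$) to $\langle\mu_x(c),a\rangle = \tfrac12\omega(c,a\circ c)$. Since $C^\perp$ is $G_x$-invariant (because $C$ is, and $G_x$ preserves $\omega$), we have $a\circ c'\in C^\perp$ and $a\circ w\in C$, so the cross-terms $\omega(w,a\circ c')$, $\omega(c',a\circ w)$, and $\omega(c',a\circ c')$ all vanish. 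This leaves $\langle\mu_x(c),a\rangle = \tfrac12\omega(w,a\circ w) = \langle\hat\mu(w),a\rangle$, as desired. In particular $p^{-1}(\hat\mu^{-1}(0)) = \mu_x^{-1}(0)$ (and therefore also on $\nu^{-1}(0)$).

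Finally, I would apply Luna's Fundamental Lemma to the \'etale map $p\big|_{\nu^{-1}(0)}$ at $0$: this produces a $G_x$-saturated affine open $U'\subset \nu^{-1}(0)$ containing $0$ whose image $V' = p(U')\subset W$ is a $G_x$-saturated affine open, such that both $p\big|_{U'}\colon U'\to V'$ and the induced map $U'/\!\!/G_x\to V'/\!\!/G_x$ are \'etale, and $U' \isoto V'\times_{V'/\!\!/G_x} U'/\!\!/G_x$ $G_x$-equivariantly. Base changing along the closed embedding $V'\cap\hat\mu^{-1}(0)\hookrightarrow V'$ and invoking the identity $p^{-1}(\hat\mu^{-1}(0))\cap U' = \mu_x^{-1}(0)\cap U'$ established above, one obtains an \'etale morphism $(\mu_x^{-1}(0)\cap U')/\!\!/G_x\to(\hat\mu^{-1}(0)\cap V')/\!\!/G_x$ sending $0$ to $0$, which is the required \'etale morphism $(\mu_x^{-1}(0)\cap\nu^{-1}(0))/\!\!/G_x\to\hat\mu^{-1}(0)/\!\!/G_x$ near $0$.

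The only non-formal step is the identity $\mu_x = \hat\mu\circ p$, which is the real content; once this is in hand, the rest is a standard application of Luna's slice machinery, exactly parallel to \lemref{hamiltonianlocal1}. I do not expect any serious obstacle beyond this short computation, although one must be careful that the coisotropic complement chosen in \propref{complement2} can indeed be taken $G_x$-equivariantly (which it can, by complete reducibility of representations of the reductive group $G_x$).
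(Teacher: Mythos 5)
Your proposal is correct and follows essentially the same route as the paper: the linear projection $p\colon C=W\oplus\Ker\omega|_C\to W$, the identity $\mu_x=\hat\mu\circ p$ (the paper asserts the vanishing of the cross-terms $\omega(w,a\circ c')$, $\omega(c',a\circ w)$, $\omega(c',a\circ c')$ without the justification you supply via $G_x$-invariance of $C^\perp$), \'etaleness of $p|_{\nu^{-1}(0)}$ at $0$ from \lemref{differential}, and then Luna's Fundamental Lemma followed by base change along $\hat\mu^{-1}(0)\hookrightarrow W$. No gaps.
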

\begin{proof}
Let $p:C\ra W$ be the projection along $\Ker \omega|_C$. Let $c=w+c'\in W\oplus \Ker \omega|_C$ and $a\in \mf g_x$. By the formula for $\mu_x$:
$$
\langle\mu_x(c),a\rangle=\frac{1}{2}\omega(w+c',a\circ (w+c'))=\frac{1}{2}\omega(w,a\circ w)=\langle \hat\mu(w),a\rangle,
$$
so $\mu_x=\hat\mu\circ p$. The restriction $\phi:\nu^{-1}(0)\ra W$ of $p$ is {\'e}tale at 0 by \lemref{differential} and is $G_x$-equivariant.

Now Luna's Fundamental Lemma can be applied to $\phi$: there exists an affine open subvariety $U'$ of $\nu^{-1}(0)/\!\!/G_x$ containing 0 and such that the restriction of the map
$$
\phi/\!\!/G_x:\nu^{-1}(0)/\!\!/G_x\ra W/\!\!/G_x
$$
to $U'$ is {\'e}tale.

Since $\mu_x=\hat\mu\circ p$, we have $\phi^{-1}(\hat\mu^{-1}(0))= \mu_x^{-1}(0)\cap \nu^{-1}(0)$. So taking the pull-back of $\phi/\!\!/G_x$ along the closed embedding $\hat\mu^{-1}(0)/\!\!/G_x\hookrightarrow W/\!\!/G_x$ we get a morphism $(\mu_x^{-1}(0)\cap \nu^{-1}(0))/\!\!/G_x\ra W/\!\!/G_x$, which is {\'e}tale at 0.
\end{proof}
\subsubsection{{\'E}tale equivalences}
Let's assume that a representation $G\ra \mr{GL(V)}$, a vector $\lambda\in Z(\mf g^*)^G$ and a character $\theta:G\ra \mb G_m$ are chosen such that the map $\pi_{\lambda}^{\theta}:\mf M(G,V\oplus V^*)_{\lambda}^\theta\ra \mf M(G,V\oplus V^*)_{\lambda}$  is a resolution of singularities.

Using \lemref{GITtoo} we can lift the {\'e}tale maps from \lemref{hamiltonianlocal1} and \lemref{hamiltonianlocal2} to the level  of the corresponding resolutions. We obtain the following proposition: 
\begin{prop}\label{as in Crawley-Boevey}
Let $V$ be a representation of a reductive group $G$ over $\mb K$. Let $\lambda\in Z(\mf g)^G$ and $\theta:G\ra \mb G_m$ be such that $\pi_{\lambda}^{\theta}:\mf M(G,V\oplus V^*)_{\lambda}^\theta\ra \mf M(G,V\oplus V^*)_{\lambda}$  is a resolution of singularities. Then for any point $x\in \mf M(G,V\oplus V^*)_{\lambda}$ there exists a symplectic subspace $W$ of $V\oplus V^*$, invariant under the action of the stabiliser $G_x$, together with an {\'e}tale equivalence
$$
\left(\pi_{\lambda}^{\theta},\mf M(G,V\oplus V^*)_{\lambda}^\theta, \mf M(G,V\oplus V^*)_{\lambda},x\right)\sim_{et}\left(\pi^{\theta_x}_0,\mf M(G_x,W)^{\theta_x}_0, \mf M(G_x,W)_0,0\right).
$$
\end{prop}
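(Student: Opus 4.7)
The plan is to follow the strategy of \propref{localcentral} from the quiver case \emph{verbatim}, since the two main geometric ingredients, \lemref{hamiltonianlocal1} and \lemref{hamiltonianlocal2}, have already been established in this more general setting. What remains is to combine them into an étale correspondence of pointed resolutions, and to lift this correspondence from the categorical quotients to the GIT quotients via the mechanism of \lemref{GITtoo}.

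First, I would choose $W$ to be the symplectic part of a $G_x$-invariant coisotropic complement $C$ to $\mf g/\mf g_x\subset V\oplus V^*$, as constructed preceding \lemref{hamiltonianlocal1}; by \propref{complement2} such a $C$ exists, and by \lemref{W is symplectic} the subspace $W=C\cap(\mf g/\mf g_x)^\perp$ is a symplectic $G_x$-representation whose moment map is $\hat\mu$. The intermediate pointed object in the correspondence will be
$$
(\pi'',X'',Y'',0),\quad Y''=(\mu_x^{-1}(0)\cap\nu^{-1}(0))/\!\!/G_x,\quad X''=(\mu_x^{-1}(0)\cap\nu^{-1}(0))/\!\!/_{\theta_x}G_x,
$$
where $\theta_x$ is the restriction of $\theta$ to $G_x$, and $\pi''$ is the semisimplification map.

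Next, I would construct the two legs of the correspondence. For the leg to $\mf M(G,V\oplus V^*)^\theta_\lambda$, the action map $G\times_{G_x}C\to V\oplus V^*$ and Luna's fundamental lemma produce an open saturated subset of $G\times_{G_x}C$ on which the map is étale and which descends to an étale map of categorical quotients (this is exactly the proof of \lemref{hamiltonianlocal1}, after base change to $\mu^{-1}(\lambda)$). Invoking \lemref{GITtoo} with the character $\theta$, the corresponding GIT quotients satisfy $(G\times_{G_x}C)/\!\!/_\theta G\cong C/\!\!/_{\theta_x}G_x$, compatibly with the projections to the categorical quotients; so the étale map of bases lifts to an étale map of the resolutions obtained as $\theta$- respectively $\theta_x$-GIT quotients, giving the first arrow of the correspondence. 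For the leg to $\mf M(G_x,W)_0^{\theta_x}$, I would apply the same recipe to the $G_x$-equivariant projection $p\colon C=W\oplus\Ker\omega|_C\twoheadrightarrow W$ of \lemref{hamiltonianlocal2}: its restriction to $\nu^{-1}(0)$ is étale at $0$, and after base change to $\hat\mu^{-1}(0)$ and passing to $\theta_x$-GIT quotients (directly, since the group is already $G_x$ on both sides) one obtains the second étale arrow.

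The main obstacle will be verifying that Luna's étale saturated neighborhoods can be chosen \emph{compatibly with $\theta$-semistability}, i.e.\ that the $\theta$-semistable locus of $\mu^{-1}(\lambda)$ pulls back under the slice map to the $\theta_x$-semistable locus of $\mu_x^{-1}(0)\cap\nu^{-1}(0)$, and similarly for the projection to $\hat\mu^{-1}(0)$. This requires the Hilbert-Mumford criterion plus the observation that $1$-parameter subgroups of $G_x$ are exactly the $1$-parameter subgroups of $G$ that fix $x$, together with properness of the map $G\times_{G_x}\overline{C}\to \overline{\mu^{-1}(\lambda)}$ on an open neighborhood of the closed orbit through $x$; once this is in place, the étale maps automatically restrict to étale maps between the stable/semistable loci, and \lemref{GITtoo} finishes the lift to the resolutions. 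All the remaining bookkeeping — checking that base points map correctly and that $\pi''$ matches the pulled-back resolutions under both arrows — is then a direct unwinding of definitions, as in the quiver case.
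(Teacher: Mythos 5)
Your proposal is correct and follows essentially the same route as the paper: the paper's proof consists precisely of combining \lemref{hamiltonianlocal1} and \lemref{hamiltonianlocal2} and lifting the resulting \'etale maps to the GIT quotients via \lemref{GITtoo}, with the same choice of $W$ and the same intermediate pointed resolution. The semistability compatibility you worry about in the last paragraph is exactly what \lemref{GITtoo} already supplies (the identification of the graded rings of semi-invariants is compatible with the projections to the categorical quotients, so restricting to Luna's saturated opens automatically gives \'etale maps of the $\theta$- and $\theta_x$-GIT quotients), so no separate Hilbert--Mumford argument is needed.
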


The contracting $\Gm$-action on $W$, which is given by $t\circ w=tw$, commutes with the action of $G_x$ and descends to both $\mf M(G_x,W)$ and $\mf M(G_x,W)^{\theta_x}$, turning $\pi:\mf M(G_x,W)^{\theta_x}\ra\mf M(G_x,W)$ into a conical resolution of singularities. So \propref{as in Crawley-Boevey} implies the following theorem:
\begin{thm}\label{generalexample}
Let $V$ be a representation of a reductive group $G$ over $\mb K$. Let $\theta:G\ra \mb G_m$ be a character such that $\pi_{0}^{\theta}:\mf M(G,V\oplus V^*)_{0}^\theta\ra \mf M(G,V\oplus V^*)_{0}$  is a resolution of singularities and assume in addition that $\mf M(G,V\oplus V^*)_{0}$ is normal. Then $\pi_{0}^{\theta}$ is a resolution with conical slices.
\end{thm}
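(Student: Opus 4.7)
The plan is to verify in turn the four defining conditions of a resolution with conical slices (\defref{withconicslice}) for $\pi_0^\theta$, with essentially all of the real work being done by the preceding proposition.

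First, I would construct the contracting $\Gm$-action making $\pi_0^\theta$ conical. The dilation action $t\cdot v = tv$ on $V\oplus V^*$ commutes with the $G$-action, and since the moment map is quadratic we have $\mu(tv) = t^2\mu(v)$, so this action preserves $\mu^{-1}(0)$. The action also commutes with $\theta$-semistability, hence descends to both $\mf M(G,V\oplus V^*)_0$ and $\mf M(G,V\oplus V^*)_0^\theta$, intertwining $\pi_0^\theta$, contracting everything to the image of $0$. In characteristic $0$ separability is automatic, so $\pi_0^\theta$ is conical with central point the origin.

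Next, for the cohomological conditions: the normality assumption together with affineness of $\mf M(G,V\oplus V^*)_0$ gives, by the discussion opening \secref{symplectic}, the identification of $\mf M(G,V\oplus V^*)_0$ with $\Spec H^0(\mf M(G,V\oplus V^*)_0^\theta,\O)$ and hence $\O_Y \isoto (\pi_0^\theta)_*\O_X$. Because $\pi_0^\theta$ is symplectic, Grauert--Riemenschneider (as recalled in \secref{symplectic}) forces $R^i(\pi_0^\theta)_*\O_X = 0$ for all $i \geq 1$, and in particular for $i = 1,2$.

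The crucial step is the existence of an étale conical neighbourhood at every point. But this is exactly the content of \propref{as in Crawley-Boevey}: for any $x \in \mf M(G,V\oplus V^*)_0$ there is a symplectic $G_x$-subrepresentation $W \subseteq V\oplus V^*$ and an étale equivalence
$$
(\pi_0^\theta, \mf M(G,V\oplus V^*)_0^\theta, \mf M(G,V\oplus V^*)_0, x) \sim_{et} (\pi_0^{\theta_x}, \mf M(G_x,W)_0^{\theta_x}, \mf M(G_x,W)_0, 0).
$$
On the right, the dilation $t\cdot w = tw$ on $W$ commutes with $G_x$ and gives a contracting separable $\Gm$-action with central fixed point $0$, so the right-hand pointed resolution is conical. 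This exhibits the desired conical neighbourhood.

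The one subtle point I expect to require care is that the right-hand side of the étale equivalence genuinely is a pointed resolution in the sense of \defref{pointed resolution}, i.e.\ that $\pi_0^{\theta_x}$ is a resolution of singularities. This is implicit in the statement of \propref{as in Crawley-Boevey} but relies on the smoothness of $\mf M(G_x,W)_0^{\theta_x}$, which one checks étale-locally from the smoothness of $\mf M(G,V\oplus V^*)_0^\theta$ via the concrete étale maps of \lemref{hamiltonianlocal1} and \lemref{hamiltonianlocal2}, together with faithfully flat descent of birationality and normality from $\mf M(G,V\oplus V^*)_0$ to $\mf M(G_x,W)_0$. Once this is verified, the three steps above assemble directly into the statement of the theorem.
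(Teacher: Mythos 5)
Your proposal is correct and follows essentially the same route as the paper: the dilation action on $V\oplus V^*$ supplies the contracting separable $\Gm$-action, normality plus the rationality of symplectic resolutions (Grauert--Riemenschneider, as in \secref{symplectic}) give the sheaf-theoretic conditions, and \propref{as in Crawley-Boevey} provides the étale conical neighbourhoods. Your closing remark about checking that the slice side is genuinely a resolution is a point the paper leaves implicit, and your sketch of how to verify it via \lemref{hamiltonianlocal1}, \lemref{hamiltonianlocal2} and descent is the right one.
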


\begin{rem}
\label{Hamiltoniansmooth}
When the representation $V$ is such that the action of $G$ for the general point of $V\oplus V^*$ is locally trivial (meaning that the stabiliser of the general point is finite), it is not hard to see that $p$ is a smooth point of $\mu^{-1}(\lambda)$ if and only if its stabiliser is finite. Under this assumptions the moment map $\mu$ is surjective, and $p$ is smooth if and only if $d\mu_p$ is surjective too. On the other hand, $\langle d\mu_p(v),a\rangle=\omega(v,a_p)$, so this is equivalent to $a_p\neq 0$ for any $a\in \mf g$. In other words, the map $a\mapsto a_p$ from $\mf g$ to $T_p (T^*V)$ should be an embedding, and consequently the stabiliser $G_p$ should be finite. If $p\in\mu^{-1}(\lambda)^{\theta-ss}$, then this condition is equivalent to $p$ being not only $\theta$-semistable, but $\theta$-stable as well. We get that $\mf M(G,V\oplus V^*)_{\lambda}^\theta$ is smooth if and only if all $\theta$-semistable points are $\theta$-stable. Note that this is not enough though to guarantee that the map $\pi_{\lambda}^{\theta}$ is a resolution of singularities: we need also to know that it is birational. This seems to be hard to show in general.

The first potential issue is that $\mf M(G,V\oplus V^*)_{\lambda}^\theta$ can happen to be empty (when the $\theta$-semistable locus $\mu^{-1}(\lambda)^{\theta-ss}\subset \mu^{-1}(\lambda)$ is empty). One way to check that it is not, is to use hyperk{\"a}hler geometry, namely a consequence of Kempf-Ness theorem (for Kempf-Ness theorem see \cite{KN}, for the consequence see e.g. Lemma 3 in \cite{CrB4}) which states that over $\mb C$ varieties $\mu^{-1}(\theta)/\!\!/G$ and $\mu^{-1}(0)/\!\!/_\theta G$ are diffeomorphic (here some additional conditions on $G$, $V$, $\theta$ and $\lambda$ should be imposed). In particular one is empty if and only if the other one is.

 Even bigger potential issue is that the map $\pi_\lambda^\theta$ can happen to be not generically 1-to-1 or surjective. This is equivalent to the set of points with closed orbits being dense in $\mu^{-1}(\lambda)$, together with the existence of a $\theta$-semistable point in each of the irreducible components of $\mu^{-1}(\lambda)$. 
 
 Finally, one also needs to check that the affine variety $\mf M(G,V\oplus V^*)_{\lambda}$ is normal. It is not clear how to check this part in general too.
\end{rem}

\begin{rem}
It can easily happen that for a fixed $\lambda$ there does not exist any $\theta$, such that $\mf M(G,V\oplus V^*)^\theta_{\lambda}$ is smooth. For example, if $G$ is semisimple then $\theta$ automatically equals to 0 and $\mf M(G,V\oplus V^*)^\theta_{\lambda} = \mf M(G,V\oplus V^*)_{\lambda} $, which is often very singular (especially in the case $\lambda=0$).
\end{rem}

\subsubsection{Example: Marsden-Weinstein reductions for representations of quivers}

Marsden-Weinstein reduction $N_Q(\lambda,\a)$ is a particular example of $\mf M(G,V\oplus V^*)_{\lambda}$. Indeed a choice of an orientation on $Q$ provides an identification of $\mr{Rep}(\ol{Q},\alpha)$ with $\mr{Rep}(Q,\alpha)\oplus \mr{Rep}(Q,\alpha)^*$ as a representation of $\mb PG(\a)$. It is easy to see that after that two constructions literally coincide. So $N_Q^\theta(\lambda,\a)=\mf M(\mb PG(\a),\mr{Rep}(Q,\alpha)\oplus \mr{Rep}(Q,\alpha)^*)^\theta_{\lambda}$.

In this case, \propref{Marsden-Weinstein is a resolution} provides the sufficient conditions on $Q$, $\lambda$ and $\a$ that guarantee that the map $\pi_\lambda^\theta$ is a resolution of singularities.

\subsubsection{Example: hypertoric varieties}
\label{hypertoric}
Hypertoric varieties are a particular case of Hamiltonian reduction, namely in the case when the reductive group $G$ is an algebraic torus. Hypertoric varieties (over $\mb C$) are hyperk{\"a}hler analogues of toric varieties. Their geometry is related to complicated combinatorics of polytops, such as zonotopal tilings \cite{AP} and Stanley-Reisner rings of simplicial complexes (\cite{HS}, \cite{PW}).  

Let $V=\mb K^n$ be a representation of the $k$-dimensional algebraic torus $T^k\cong \mb G_m^k$.  
 After a change of coordinates we can assume that the action of $T^k$ is given by an embedding $\iota: T^k\ra T^n$, where $T^n$ is a standard $n$-dimensional torus, acting on $V$. We assume that the representation is faithful, or in other words, that $\iota$ is an embedding.

The quotient $T^n/T^k$ is also an algebraic torus, namely $T^n/T^k\cong T^d$, where $d=n-k$. Let $X_*(T^n)=\mr{Hom}_{\mb K}(\Gm,T^n)$ be the cocharacter lattice. We have a non-canonical isomorphism $X_*(T^n)\cong \mb Z^n$. A short exact sequence of tori
$$
1\ra T^k\ra T^n\ra T^d\ra 1
$$
induces a short exact sequence of cocharacters:
$$
0\ra X_*(T^k)\ra X_*(T^n)\ra X_*(T^d)\ra 0.
$$

For each torus $T$ we also have its character lattice $X^*(T)=\mr{Hom}_{\mb K}(T, \Gm)$. The lattice $X^*(T)$ is canonically dual to $X_*(T)$ under the pairing given by the composition of a cocharacter with a character (this composition lies in $\mr{Hom}_{\mb K}(\Gm, \Gm))\cong \mb Z$). 
Let $\{x_1,\ldots, x_n\}$ be a basis of $X_*(T^n)$ and let $\{e_1,\ldots, e_n\}$ be the dual basis of $X^*(T^n)$. Let $\{a_1,\ldots,a_n\}\in  X_*(T^d)$ be the images of $\{x_1,\ldots, x_n\}$. The map $X_*(T^n)\ra X_*(T^d)$ is surjective, so $\{a_1,\ldots,a_n\}$ span $X_*(T^d)$. The set $\{a_1,\ldots,a_n\}\in  X_*(T^d)$ uniquely defines the map $\iota:T^k\ra T^n$.

The hypertoric variety $\mf {HT}^\a$ is defined as $\mf M(T^k, V\oplus V^*)^\a_0$ in the notations of \secref{Hamiltonian reduction}. Note that $\a$ here is a character of $T^k$ and so is an element of the lattice $X^*(T^k)$. Let $r=(r_1,\ldots,r_n)\in X_*(T^n)$ be any lift of $\alpha$ with respect to $\iota^*:X^*(T^n)\ra X^*(T^k)$.

The data which we used to construct $\mf {HT}^\a_\lambda$ can be encoded in terms of an arrangement of affine hyperplanes in $X^*(T^d)$ endowed with some additional structure. Namely a {\em weighted, cooriented} hyperplane $H\subseteq X^*(T^d)$ is an affine hyperplane along with a choice of a nonzero $a\in X_*(T^d)\cong (X^*(T^d))^*$, such that $a|_H=0$.  For each $i=1,\ldots, n$ we have a weighted cooriented hyperplane
$$
H_i=\{v\in X^*(T^d)\ |\ \langle v,a_i\rangle +r_i=0\}
$$
with the weight and coorintation given by $a_i\in  X_*(T^d)$. We will call this hyperplane arrangement by $\mc A$. A choice of a different lift of $\a$, say $r'$, corresponds to a simultaneous translation of all hyperplanes in $\mc A$ by the vector $r'-r\in X_*(T^d)$. This arrangement uniquely defines the map $\iota:T^k \ra T^n$ along with the choice of $\alpha$ and we call by $\mf {HT}(\mc A)$ the corresponding hypertoric variety.

An arrangement $\mc A$ is called {\em simple} if for every subset of $m$ hyperplanes in $\mc A$ their intersection has codimension $m$.
\begin{rem}
Here we use the convention that $\dim \emptyset = -\infty$, and so the codimension of any empty intersection is $+\infty$.
\end{rem} 

  An arrangement $\mc A$ is called {\em unimodular} if any collection of $d$ linearly independent weight-orientation vectors $\{a_{i_1},\ldots, a_{i_d}\}$ spans $X_*(T^d)$ over $\mb Z$. Note that the condition of being unimodular does not depend on the choice of a character $\a$. If the arrangement is simple and unimodular, it is called {\em smooth}. The following theorem explains why:

\begin{thm}[\cite{BD}, Theorem 3.2] The variety $\mf M(\mc A)$ is smooth if and only if $\mc A$ is smooth.
\end{thm}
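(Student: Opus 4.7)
The plan is to apply \remref{Hamiltoniansmooth}: since $\mf M(\mc A)=\mf M(T^k,V\oplus V^*)^\a_0=\mu^{-1}(0)/\!\!/_\a T^k$ with $T^k$ a torus, smoothness of $\mf M(\mc A)$ is equivalent to the conjunction of two conditions at every $\a$-semistable point $p=(z,w)\in \mu^{-1}(0)$: (i) $p$ is in fact $\a$-stable, and (ii) the stabilizer $(T^k)_p$ is trivial (for a torus acting on an affine scheme, the GIT quotient at stable loci is smooth as a scheme precisely when stabilizers are trivial, not merely finite). The strategy is to translate both conditions into the simplicity and unimodularity conditions on $\mc A$, treating each as a separate combinatorial statement.

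First I would write down, in standard coordinates, an explicit formula for the moment map, $\mu(z,w)=\iota^*\bigl(\sum_i z_iw_i\,e_i\bigr)\in X^*(T^k)\otimes_{\mb Z}\mb K$. The stabilizer of a point $p=(z,w)$ in $T^n$ depends only on the ``support pattern'' $I=I(p)=\{i\mid z_i\neq 0\text{ or }w_i\neq 0\}\subset\{1,\dots,n\}$: it is cut out by the characters $e_i$ for $i\in I$. Pulling back to $T^k$ through $\iota$, the stabilizer $(T^k)_p$ is precisely the kernel of the restriction map $X^*(T^k)\twoheadleftarrow X^*(T^n)/\langle e_i: i\in I\rangle$ tensored with $\mb G_m$, and by the short exact sequence $0\to X_*(T^k)\to X_*(T^n)\to X_*(T^d)\to 0$, this kernel is the cokernel of the map $\mb Z^{\{1,\ldots,n\}\setminus I}\to X_*(T^d)$ sending the standard basis vector at $i$ to $a_i$. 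In particular, $(T^k)_p$ is trivial precisely when the $\{a_i:i\notin I\}$ span $X_*(T^d)$ over $\mb Z$ (and is at most finite when they span over $\mb Q$).

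Next I would perform the King-style analysis of $\a$-(semi)stability: for a torus action on an affine space, $p$ is $\a$-semistable iff the character $\a$ lies in the cone $C(p)\subset X^*(T^k)_{\mb R}$ generated by $\{\iota^*e_i:z_i\neq 0\}\cup\{-\iota^*e_i:w_i\neq 0\}$, and $\a$-stable iff $\a$ lies in the relative interior of this cone and the cone is top-dimensional. The condition $\mu(p)=0$ combined with the defining relations $z_iw_i\neq 0$ for $i\in I$ forces the image of $p$ (in a suitable moment picture) to project onto an element of $\bigcap_{i\notin I}H_i$: the equation $\mu=0$ with the shift $r\in X_*(T^n)$ lifting $\a$ translates precisely into the defining equations $\langle v,a_i\rangle+r_i=0$ for $i\notin I$. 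Hence $\a$-semistable points supported exactly on $I$ exist iff $\bigcap_{i\notin I}H_i$ is nonempty in $X^*(T^d)$ (equivalently the $\{a_i:i\notin I\}$ are linearly independent plus a nonemptiness constraint), and the failure of $\a$-stability on such a stratum corresponds exactly to $\a$ lying on a proper face of $C(p)$.

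Finally I would assemble the equivalence in both directions. For the forward implication, assume $\mc A$ is simple and unimodular: simplicity guarantees that no point of $\bigcap_{i\notin I}H_i$ exists once $|\{1,\ldots,n\}\setminus I|$ would force the intersection to be nonempty of unexpected codimension, so that at any $\a$-semistable $p$ the complement of the support $\{a_i:i\notin I\}$ is $\mb Z$-linearly independent, putting $\a$ in the interior of $C(p)$ (stability), while unimodularity upgrades linear independence to spanning over $\mb Z$ (triviality of stabilizer); combining with \remref{Hamiltoniansmooth}, smoothness follows. For the converse, failure of unimodularity exhibits a support $I$ where $\{a_i:i\notin I\}$ spans only a finite-index sublattice, giving an $\a$-semistable point with nontrivial finite stabilizer, hence an orbifold singularity of $\mf M(\mc A)$; failure of simplicity exhibits a nontransverse intersection of hyperplanes, yielding a strictly semistable point whose closed orbit contributes a non-smooth point of the GIT quotient. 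The main obstacle will be the careful bookkeeping in the nonemptiness analysis of $\bigcap_{i\notin I}H_i$ and the translation of the shift $r$ of $\a$ into the affine equations of $\mc A$; once this dictionary between support patterns and intersection patterns is set up correctly, both implications reduce to the lattice-theoretic statements above.
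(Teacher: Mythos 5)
The paper does not actually prove this statement: it quotes it from Bielawski--Dancer and, in the remark that follows, deduces the algebraic version from the hyperk\"ahler one via the Kempf--Ness diffeomorphism. Your plan --- a direct GIT and lattice-theoretic argument through \remref{Hamiltoniansmooth} --- is therefore a genuinely different route, but two of its key steps are wrong as stated. The first is the stabilizer dictionary. Writing $I^c=\{1,\ldots,n\}\setminus I$ and $f\colon\mb Z^{I^c}\ra X_*(T^d)$ for the map sending the basis vector at $i$ to $a_i$, the identity component of $(T^k)_p$ has cocharacter lattice $X_*(T^k)\cap\mb Z^{I^c}=\ker f$, and $X^*((T^k)_p)$ is the cokernel of the \emph{transpose} $X^*(T^d)\ra\mb Z^{I^c}$, not of $f$ itself. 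Consequently $(T^k)_p$ is finite iff the $\{a_i\}_{i\in I^c}$ are linearly independent over $\mb Q$, and trivial iff they extend to a $\mb Z$-basis of $X_*(T^d)$ --- not iff they span. Your criterion already fails at a point with full support ($I^c=\emptyset$ gives a trivial stabilizer, yet the empty set spans nothing when $d>0$) and would declare every stratum with $|I^c|<d$ to have positive-dimensional stabilizer. The two criteria happen to agree exactly when $|I^c|=d$, which is why the unimodularity condition still comes out right at the deepest stratum, but the proof has to treat all strata.

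The second problem is the source of the affine equations. Since $\ker\iota^*=X^*(T^d)\subset X^*(T^n)$, the equation $\mu(z,w)=0$ says precisely that $z_iw_i=\langle v,a_i\rangle$ for some $v\in X^*(T^d)\otimes\mb K$ and all $i$; for $i\in I^c$ this yields the \emph{central} equations $\langle v,a_i\rangle=0$, with no $r_i$ anywhere. The shifts $r_i$, i.e.\ the parameter $\a$, enter only through semistability: the assertion that $\a$ lies in (the relative interior of) the cone generated by the $\iota^*e_i$ and $-\iota^*e_i$ according to the signs of $(z_i,w_i)$ is equivalent, by Farkas' lemma applied to the sequence $0\ra X_*(T^k)\ra X_*(T^n)\ra X_*(T^d)\ra 0$, to the nonemptiness of the polyhedron in $X^*(T^d)_{\mb R}$ cut out by the $H_i$ together with those sign conditions. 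That Gale-duality step is the real content of the dictionary between support patterns and the arrangement, and it is absent from the proposal. Finally, in the converse direction you assert that a nontrivial finite stabilizer produces a singularity; this is false for general GIT quotients ($\mb A^1/(\mb Z/2)$ is smooth) and here requires the symplectic Luna slice of \propref{as in Crawley-Boevey}: a nontrivial finite subgroup of $\mr{Sp}(W)$ acting on the symplectic slice $W$ has singular quotient, and a strictly semistable closed orbit must likewise be shown to yield a singular local model rather than merely being asserted to do so.
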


\begin{rem}
In the cited paper this fact is proven for the hyperk\"ahler reduction, and the statement we need follows by the hyperk\"ahler trick, which establishes the diffeomorphism of $\mf M(\mc A)_{\mb C}$ and $\mu^{-1}(\a)_\mb C/\!\!/ G$ using Kempf-Ness theorem. Here the character $\a$ is also considered as an integral element of $(\mf t^k)^*$.
\end{rem}

Now consider the case when the character $\alpha$ is equal to 0. In this case the intersection of all hyperplanes from $\mc A$ is the point $0\in X^*{T^d}$, so $\mc A$ is not simple unless $n=d$ and $T^k=\{1\}$.  Given an arrangement $\mc A=\{H_1,\ldots,H_n\}$ with $r=0$ (so that $H_i=\{v\in X^*(T^d)\ |\ \langle v,a_i\rangle=0\}$ for some $a_i\in X_*(T^d)$), it is not hard to see, that for general $r=(r_1,\ldots,r_n)\in X_*(T^n)$, the arrangement $\widetilde{\mc A}=\{\tilde{H_1},\ldots,\tilde{H_n}\}$, where $\tilde{H_i}=\{v\in X^*(T^d)\ |\ \langle v,a_i\rangle+r_i=0\}$ is simple. In this case the arrangement $\widetilde{\mc A}$ is called a {\em simplification} of $\mc A$. Note that the set of weight-orientation vectors for $\widetilde{\mc A}$ and $\mc A$ is the same. In particular, $\widetilde{\mc A}$ is unimodular if and only if $\mc A$ is unimodular. Let $\alpha\in X^*(T^k)$ be the image of $r$ under the map $\iota^*:X_*(T^n)\ra X_*(T^d)$. 

We have a projective map $\pi_{\mc A}:\mf M(\widetilde{\mc A})\ra \mf M(\mc A)$ (given by the map $\pi_0^\a:\mf M(T^k, V\oplus V^*)^\a_0\ra \mf M(T^k, V\oplus V^*)_0$ in the terminology of \secref{Hamiltonian reduction}) which is known to be surjective and generically 1-to-1. If the arrangement $\mc A$ is unimodular, this is a resolution of singularities and the base is normal (\cite{BeKu}, Section 4). From \thmref{generalexample} we obtain the following:
\begin{thm}
The resolution of singularities $\pi_{\mc A}:\mf M(\widetilde{\mc A})\ra \mf M(\mc A)$ (in the case when $\mc A$ is unimodular) is a resolution with conical slices.
\end{thm}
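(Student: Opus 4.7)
The plan is to apply \thmref{generalexample} directly, so the bulk of the work is matching the combinatorial hypertoric data with the Hamiltonian reduction formalism and verifying the two hypotheses of that theorem. First I would translate: the embedding $\iota: T^k \hookrightarrow T^n$ gives a faithful representation of $G = T^k$ on $V = \mb K^n$, and a generic lift $r \in X_*(T^n)$ of the character $\a \in X^*(T^k)$ cuts out the simplification $\widetilde{\mc A}$. Under this identification one has
\[
\mf M(\widetilde{\mc A}) = \mf M(T^k, V\oplus V^*)_0^\a, \qquad \mf M(\mc A) = \mf M(T^k, V\oplus V^*)_0,
\]
and the map $\pi_{\mc A}$ is precisely $\pi_0^\a$ in the notation of \secref{Hamiltonian reduction}.

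Second, I would verify the two hypotheses of \thmref{generalexample}. The total space $\mf M(\widetilde{\mc A})$ is smooth because $\widetilde{\mc A}$ is simple (by genericity of $r$) and unimodular (inherited from $\mc A$, since unimodularity depends only on the weight-orientation vectors $a_i$, which are unchanged under translation of the hyperplanes); this is the cited smoothness criterion for hypertoric varieties. Surjectivity and generic injectivity of $\pi_{\mc A}$ are the statements quoted from \cite{BeKu} just above the theorem, so $\pi_{\mc A}$ is a resolution of singularities. Normality of $\mf M(\mc A)$ in the unimodular case is likewise cited from \cite{BeKu}, Section 4.

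Finally, applying \thmref{generalexample} with $G = T^k$, the representation $V = \mb K^n$, and $\theta = \a$ yields that $\pi_{\mc A}$ is a resolution with conical slices. There is no real obstacle here: the conceptual content sits inside \thmref{generalexample}, and what remains is the translation between the combinatorial language of weighted cooriented arrangements and the group-theoretic Hamiltonian reduction language. The only point requiring a moment of care is that unimodularity is a property of the weight-orientation vectors alone, so it transfers from $\mc A$ to any simplification $\widetilde{\mc A}$; this is precisely what combines with simpleness (forced by generic $r$) to give smoothness of the resolution.
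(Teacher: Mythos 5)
Your proposal is correct and follows essentially the same route as the paper: identify $\pi_{\mc A}$ with $\pi_0^{\a}$ for the torus $T^k$ acting on $V\oplus V^*$, verify smoothness of $\mf M(\widetilde{\mc A})$ via simpleness and unimodularity, cite surjectivity, generic injectivity and normality of the base from \cite{BeKu}, and apply \thmref{generalexample}. The paper's proof is exactly this (stated more tersely in the paragraph preceding the theorem), so there is nothing to add.
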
 

\begin{rem}
The stabiliser of any non-zero point $x\in T^*V$ in a hypertoric variety is a torus $T^s$ of smaller dimension. Moreover, there are no self-dual representations of $T^s$ except that of the trivial one. The vector subspace $W$ from \propref{as in Crawley-Boevey} can be $T^s$-equivariantly decomposed as $W=\mb K^m\oplus W'$, where $\mb K^m$ is the subspace of invariants and $W'$ is the $T^s$-equivariant complement. It is easy to see from the definitions that $\mf M(T^s, W)^\a_0$ can be decomposed as $\mb A^m\times \mf M(T^s, W')^\a_0$. Since $W'$ does not have any self-dual subresentations, by \remref{Schur} it can be $T^s$-equivariantly decomposed as $V_x\oplus V_x^*$. This way $\mf M(T^s, W')^\a_0$ is also a hypertoric variety (associated with the representation $V_x$). Denoting by $\mc A_x$ the corresponding hyperplane arrangement, we obtain an {\'e}tale equivalence
$$
(\pi_{\mc A}, \mf M(\widetilde{\mc A}), \mf M(\mc A), x)\sim_{et} \ul{\mb A}^m \times (\pi_{\mc A_x}, \mf M(\widetilde{\mc A}_x), \mf M(\mc A_x), 0).
$$

We point out that both in Crawley-Boevey's context and the context of hypertoric varieties, the varieties which appear in the {\'e}tale equivalences provided by \propref{as in Crawley-Boevey} are from the same class.
\end{rem}

\subsection{Slodowy slices}
\label{slodowy}
Let $\mf{g}$ be any split semisimple Lie algebra over $\mb K$, let $G$ be the adjoint group of $\mf{g}$ and let $\mc N\subset \mf g$ be the nilpotent cone. Let $\mb O\subset \mc N$ be a nonzero nilpotent adjoint orbit and let $e\in \mb O$ be a nilpotent element. By the Jacobson-Morozov theorem we can find an $\mf{sl}_2$-triple $(e,h,f)$, $h,f\in \mf g$ associated to $e$, meaning that $[h,e]=2e, [e,f]=h, [h,f]=-2f$. We fix such a triple. Let $Z(f)\subset\mf g$ be the centraliser of $f$ inside $\mf g$. Then the {\em Slodowy slice} $\mc S_e$ to $\mb O$ at $e$ is defined as $\mc S_e=\{e+Z(f)\}\bigcap\mc N$ (for more details see \cite{Sl}). 
Resolution of singularities of $\mc S_e$ is given by the restriction of the Springer resolution $\pi:T^*\mc B \ra \mc N$, where $\mc B$ is the flag variety of $G$. The map $\pi$ is $G\times \Gm$-equivariant: $G\times \Gm$ acts on $T^*\mc B$ through the natural action of $G$ on $\mc B$ and the contracting $\Gm$-action along the fibers of $T^*\mc B$, while the action on $\mc N$ is given by the adjoint action of $G$ and the natural action of $\Gm$ by dilation, which contracts $\mc N$ to 0. We denote the resolution by $\pi_e:\widetilde\S_e\ra \S_e$.

Construction of the contracting $\mb G_m$-action on  $\mc S_e$ is not very straightforward. The Lie algebra homomorphism $\mf{sl}_2\ra \mf g$ given by $\mf{sl}_2$-triple $(e,h,f)$ extends to a Lie group homomorphism $\tilde\gamma:\mr{SL}_2\ra G$. We put
$$
\gamma:\Gm\ra G, \ \gamma(t)=\tilde\gamma  \left( \begin{array}{ccc}
t & 0 \\
0 & t^{-1} \end{array} \right).
$$
Composition of $\gamma$ with the adjoint action produces a $\mb G_m$-action on $\mc N$. Unfortunately, it does not preserve $e$: $(\mr{Ad}\gamma(t))(e)= t^2 e$. The $\Gm$-action on the slice is produced from this one by normalisation: $\rho(t)(x)=t^2\mr{Ad}\gamma(t^{-1})(x)$. One can check that $\rho(t)$ preserves $Z(f)$ and contracts it to 0. So, since $\rho(t)(e+x)=e+\rho(t)x$, we get a $\mb G_m$-action on $\mc S_e$, contracting it to $e$. Since the map $\pi$ is $G\times \Gm$-equivariant, this action extends to the resolution $\pi_e$, turning it into a conical resolution of singularies. Moreover, it is known to be an example of a symplectic resolution (with the symplectic form given by the restriction of the canonical symplectic form on $T^*\B$).

To prove that every point admits a conical neighbourhood, we note that Slodowy slice is a slice to the orbit, so in our terminology it gives an {\'e}tale equivalence
$$
(\pi, T^*\B,\N, e)\sim_{et}(\pi_e,\widetilde\S_e,\S_e,e)\times\ul{\mb A}^{\dim \mb O}.
$$ 
Moreover, for every point $e'\in \S_e$ it gives an equivalence 
$$
(\pi, T^*\B,\N, e')\sim_{et}(\pi_e,\widetilde\S_e,\S_e,e')\times\ul{\mb A}^{\dim \mb O}.
$$ 
Let $\mb O'\subset \N$ be the orbit of $e'$. Then $\dim \mb O'\ge\dim \mb O$ and from the analogous decomposition for $\mb O'$ and $e'$ we get that 
$$
(\pi_e,\widetilde\S_e,\S_e,e')\times\ul{\mb A}^{\dim \mb O}\sim_{et}(\pi, T^*\B,\N, e')\sim_{et}(\pi_{e'},\widetilde\S_{e'},\S_{e'},e')\times\ul{\mb A}^{\dim \mb O'}.
$$ 
Finally \lemref{slice-slice} provides from this an equivalence 
$$
(\pi_e,\widetilde\S_e,\S_e,e')\sim_{et}(\pi_{e'},\widetilde\S_{e'},\S_{e'},e')\times\ul{\mb A}^{\dim \mb O'-\dim \mb O}.
$$  
Right hand side is a conical resolution of singularities, so we obtain the following
\begin{thm}
Resolution $\pi_e:\widetilde\S_e\ra \S_e$ of singularities of the Slodowy slice is a resolution with conical slices.
\end{thm}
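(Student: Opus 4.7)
The plan is to verify the three conditions of Definition of resolution with conical slices for $\pi_e \colon \widetilde{\mathcal S}_e \to \mathcal S_e$: that it is a conical resolution with a separable $\mathbb G_m$-action, that $\mathcal O_{\mathcal S_e} \isoto (\pi_e)_* \mathcal O_{\widetilde{\mathcal S}_e}$ and $R^i(\pi_e)_* \mathcal O_{\widetilde{\mathcal S}_e} = 0$ for $i=1,2$, and that every point $e' \in \mathcal S_e$ admits an étale conical neighborhood. The first condition is already established in the paragraph above by the construction of the normalized action $\rho(t)(x) = t^2 \on{Ad}\gamma(t^{-1})(x)$ on $\mathcal S_e$, and separability is automatic since $\on{char} \mathbb K = 0$. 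The cohomology vanishing is a consequence of $\pi_e$ being a symplectic resolution, as recalled in \secref{symplectic}: Grauert--Riemenschneider combined with the trivialization of the canonical bundle forces $R^\bullet (\pi_e)_* \mathcal O_{\widetilde{\mathcal S}_e} = \mathcal O_{\mathcal S_e}$, and normality of $\mathcal S_e$ is a classical fact (it is a transverse slice to a smooth locally closed orbit in the normal variety $\mathcal N$).

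The substantive content is the étale conical neighborhood at an arbitrary point $e' \in \mathcal S_e$. Here I would proceed exactly along the lines of the paragraph preceding the theorem. Let $\mathbb O' \subset \mathcal N$ be the adjoint $G$-orbit of $e'$; then $\dim \mathbb O' \geq \dim \mathbb O$ with equality iff $\mathbb O' = \mathbb O$, i.e.\ iff $e'$ lies in the open orbit intersection of $\mathcal S_e$. Since the Slodowy slice $\mathcal S_{e'}$ to $\mathbb O'$ at $e'$, along with its Springer-type resolution $\widetilde{\mathcal S}_{e'}$, is an étale slice to the $G$-orbit inside the Springer resolution $\pi \colon T^*\mathcal B \to \mathcal N$, one obtains an étale equivalence
\[
(\pi, T^*\mathcal B, \mathcal N, e') \sim_{et} (\pi_{e'}, \widetilde{\mathcal S}_{e'}, \mathcal S_{e'}, e') \times \underline{\mathbb A}^{\dim \mathbb O'}.
\]
But the same construction applied at $e$, viewed at the point $e'$ lying inside $\mathcal S_e$, produces a second étale equivalence
\[
(\pi, T^*\mathcal B, \mathcal N, e') \sim_{et} (\pi_e, \widetilde{\mathcal S}_e, \mathcal S_e, e') \times \underline{\mathbb A}^{\dim \mathbb O}.
\]
Combining these two and cancelling $\dim \mathbb O$ factors of $\underline{\mathbb A}^1$ via \lemref{slice-slice} (applied repeatedly) yields
\[
(\pi_e, \widetilde{\mathcal S}_e, \mathcal S_e, e') \sim_{et} (\pi_{e'}, \widetilde{\mathcal S}_{e'}, \mathcal S_{e'}, e') \times \underline{\mathbb A}^{\dim \mathbb O' - \dim \mathbb O}.
\]
The right hand side is conical (the Slodowy slice $\pi_{e'}$ has a contracting $\mathbb G_m$-action with fixed central point $e'$, and $\mathbb A^{\dim \mathbb O' - \dim \mathbb O}$ is acted on by standard dilation), so $e'$ has a conical étale neighborhood.

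The main technical point to verify carefully is the étale slice property at a general point $e' \in \mathcal S_e$ (not only at the distinguished $e$). For $e$ itself this is classical \cite{Sl}, but to obtain the equivalence at an arbitrary $e'$ one needs to know that the affine subspace $e + Z(f)$ remains transverse to the $G$-orbit $\mathbb O'$ at every intersection point; equivalently, one uses that the $\mathbb G_m$-action on $T^*\mathcal B$ combined with the Luna-type slice statement for the Springer resolution produces, after suitable normalization, the same slice variety at $e'$ that one would get from an $\mathfrak{sl}_2$-triple for $e'$. I expect this to be the main, though standard, piece of bookkeeping; everything else — cohomology vanishing, separability, conicity of the factor resolutions, and the inductive removal of affine factors — is direct from results already in the paper.
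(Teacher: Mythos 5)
Your proposal is correct and follows essentially the same route as the paper: the paper's argument is precisely the chain of étale equivalences obtained from the Slodowy slice property at $e$ and at $e'$ inside the Springer resolution, followed by cancellation of the $\ul{\mb A}^1$ factors via \lemref{slice-slice}, with the cohomology vanishing and conicity handled exactly as you describe. The one point you flag for careful verification — that $\S_e$ remains an étale slice to the $G$-orbit at every point $e'\in\S_e$, not just at $e$ — is indeed the step the paper asserts without elaboration, so your treatment matches the paper's in both substance and level of detail.
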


\end{document}